\documentclass{article}
\usepackage[T1]{fontenc}
\usepackage{lmodern}
\usepackage{mathtools, amsmath, amssymb, xcolor, amsthm, float, mathrsfs, algorithm2e, csquotes, enumerate}
\usepackage{algpseudocode}
\usepackage{booktabs}
\usepackage{pdfpages}
\usepackage[normalem]{ulem}
\mathtoolsset{showonlyrefs}
\usepackage{url}
\usepackage{hyperref}
\usepackage[export]{adjustbox}
\usepackage{geometry}
\usepackage{verbatim}
\usepackage{float}

\usepackage{bbm} 
\usepackage{pgfplots} 
\pgfplotsset{compat=1.17} 

\newcounter{dummy} 
\numberwithin{dummy}{section}

\newtheorem{remark}[dummy]{Remark}
\newtheorem{theorem}[dummy]{Theorem}
\newtheorem{lemma}[dummy]{Lemma}
\newtheorem{example}[dummy]{Example}

\newtheorem{proposition}[dummy]{Proposition}

\usepackage{appendix}

\DeclareMathOperator{\R}{\mathbb R}
\DeclareMathOperator{\N}{\mathbb N}
\let\P\relax
\DeclareMathOperator{\P}{\mathcal P}
\DeclareMathOperator{\C}{\mathcal C}
\DeclareMathOperator{\F}{\mathcal F}

\newcommand{\lebesgue}{\Lambda}
\DeclareMathOperator{\dom}{dom}

\DeclareMathOperator{\supp}{supp}

\DeclareMathOperator*{\argmin}{arg\,min}

\renewcommand{\d}{\mathop{}\!\mathrm{d}}

\usepackage[draft, todonotes={textsize=tiny}, commandnameprefix=always]{changes}
\setuptodonotes{color=red, backgroundcolor=red!60!white,
  bordercolor=red, tickmarkheight=10pt}
  
\definechangesauthor[name=Robert, color=cyan]{RB}

\definechangesauthor[name=Richard, color=orange]{RD}



\title{Wasserstein Gradient Flows of MMD Functionals with Distance Kernels under Sobolev Regularization
}

\author{
Richard Duong
\and
Nicolaj Rux
\and
Viktor Stein
\and
Gabriele Steidl
}

 \date{\today}
\footnotetext[1]{Institute of Mathematics,
TU Berlin,
Stra{\ss}e des 17. Juni 136, 
10587 Berlin, Germany,
\{duong, rux, stein, steidl\}@math.tu-berlin.de
}

\begin{document}

\maketitle

\begin{abstract}
    We consider Wasserstein gradient flows of  maximum mean discrepancy (MMD) functionals $\text{MMD}_K^2(\cdot, \nu)$   for positive and negative distance kernels
    $K(x,y) \coloneqq \pm |x-y|$ and given target measures $\nu$ on $\R$.    
    Since in one dimension the Wasserstein space can be isometrically embedded
    into the cone $\mathcal C(0,1) \subset L_2(0,1)$ of quantile functions, 
    Wasserstein gradient flows can be characterized by
    the solution of an associated Cauchy problem on $L_2(0,1)$.
        While for the negative kernel, the MMD functional is geodesically convex, this is not the case for the positive kernel, which needs to be handled to ensure the existence of the flow. We propose to add a regularizing Sobolev term $|\cdot|^2_{H^1(0,1)}$ corresponding to the Laplacian with Neumann boundary conditions
    to the Cauchy problem of quantile functions. Indeed, this ensures the existence of a generalized minimizing movement for the positive kernel. Furthermore, for the negative kernel, we demonstrate by numerical examples
    how the Laplacian rectifies a "dissipation-of-mass" defect of the MMD gradient flow. 
\end{abstract}

\section{Introduction}
Wasserstein gradient flows have been a research topic in stochastic analysis for a long time
and have recently received much interest in machine learning, among the huge amount of papers see, e.g., \cite{AKSG2019,CHHRS2023,HHABCS2023,LBADDP2022}.
In this paper, we study Wasserstein gradient flows of probability measures in $\P_2(\R)$ with respect to the maximum mean discrepancy (MMD) given by the positive or negative distance kernel $K(x,y) \coloneqq \pm |x-y|$. Such \emph{non-smooth} kernels exhibit a flexible flow behavior: while the MMD flow for smooth kernels is rigid in the sense that empirical measures stay empirical, for non-smooth kernels like the negative distance kernel, point measures can become absolutely continuous and vice-versa, see Figure \ref{fig:Dirac_to_Dirac_Intro} and \cite[Section 6.1]{HBGS2023}.

Given the interesting properties of MMD flows with non-smooth kernels, their analysis in the multivariate case $\P_2(\R^d)$  is quite difficult. In case of the negative distance kernel, the MMD func\-tional becomes geodesically non-convex, see \cite{HGBS2022}, and the mere existence of such Wasserstein gradient flows is unclear in general.

The situation becomes more tangible in one dimension: in \cite{BCDP15}, the \emph{interaction energy} part of the MMD was considered, also for both the negative and positive distance kernel. On the real line, Wasserstein gradient flows can be equivalently described by the flow of \emph{quantile functions} with respect to an associated functional on $L_2(0,1)$. For both kernels, this associated functional has the form $F = F_1 + F_2$, where $F_1$ is \emph{linear} and $F_2$ is a {convex indicator function}. It is hence convex on $L_2(0,1)$, and the existence of the quantile-/Wasserstein gradient flow is established by standard semigroup theory.

The problem gets more involved when considering the \emph{whole} MMD functional, including the interaction energy and \emph{the potential energy} part with respect to a fixed reference measure $\nu \in \P_2(\R)$. For the negative distance kernel, a comprehensive analysis of the MMD flow, including general existence,  properties and numerical approximation, was given  in \cite{DSBHS2024}. Here, the functional $F_1$ becomes \emph{nonlinear}, but still convex, and again, nonlinear semigroup theory can be employed by the convexity of $F = F_1 + F_2$. 

The situation becomes worse in case of the whole MMD with the \emph{positive} distance kernel. Here, the associated $L_2$-functional has the form $F = -F_1 + F_2$, where $-F_1$ is \emph{nonlinear and non-convex}. Now, the semigroup theory of subdifferentials of convex functionals cannot be applied to deduce the general existence of the Wasserstein gradient flow.

This theoretical shortfall also applies for the negative kernel in multiple dimensions, as already stated above: the MMD becomes geodesically non-convex. More practically speaking, the repulsive nature of the negative distance kernel leads to \emph{mass spreading to infinity}. Concerning another Riesz kernel, the {Coulomb kernel}, this problem was already observed in \cite{BV2023}. But also in the one-dimensional case of the negative distance kernel, a slightly toned-down version of this problem can be observed. More precisely, in \cite{HBGS2023}, the explicit MMD flow starting in a Dirac measure $\delta_{-1}$ towards a Dirac target $\delta_0$ was computed. The flow has the "\emph{dissipation-of-mass}" defect visualized in the top row of Figure \ref{fig:Dirac_to_Dirac_Intro}:
first, the mass extends until it reaches the target at $0$. 
Then, a Dirac point grows at $0$, 
while the mass on $(-1,0)$ slowly dissipates and gets arbitrarily slim.

\begin{figure}[t]
    \centering
    \includegraphics[width=.16\textwidth]{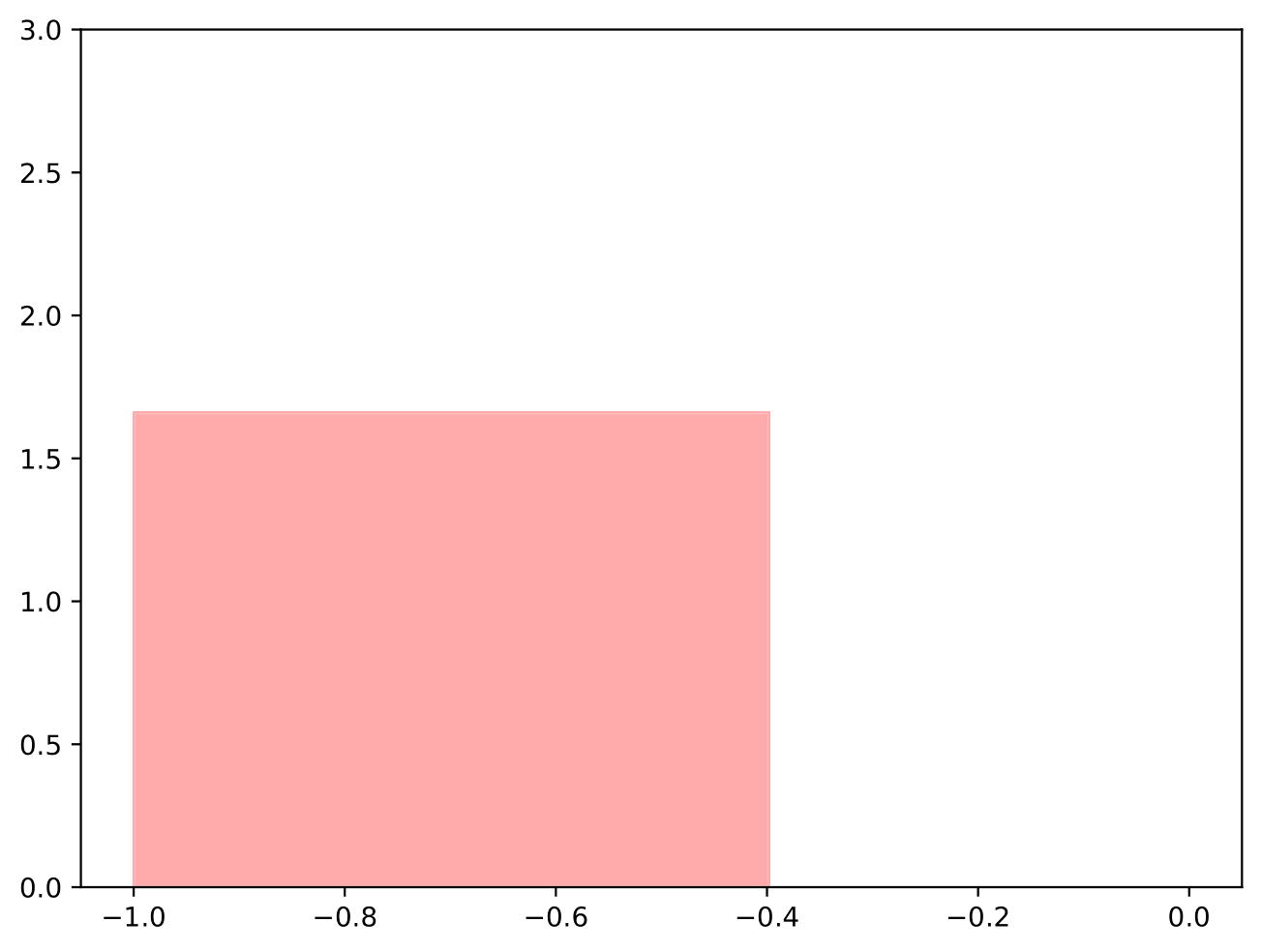}
    \includegraphics[width=.16\textwidth]{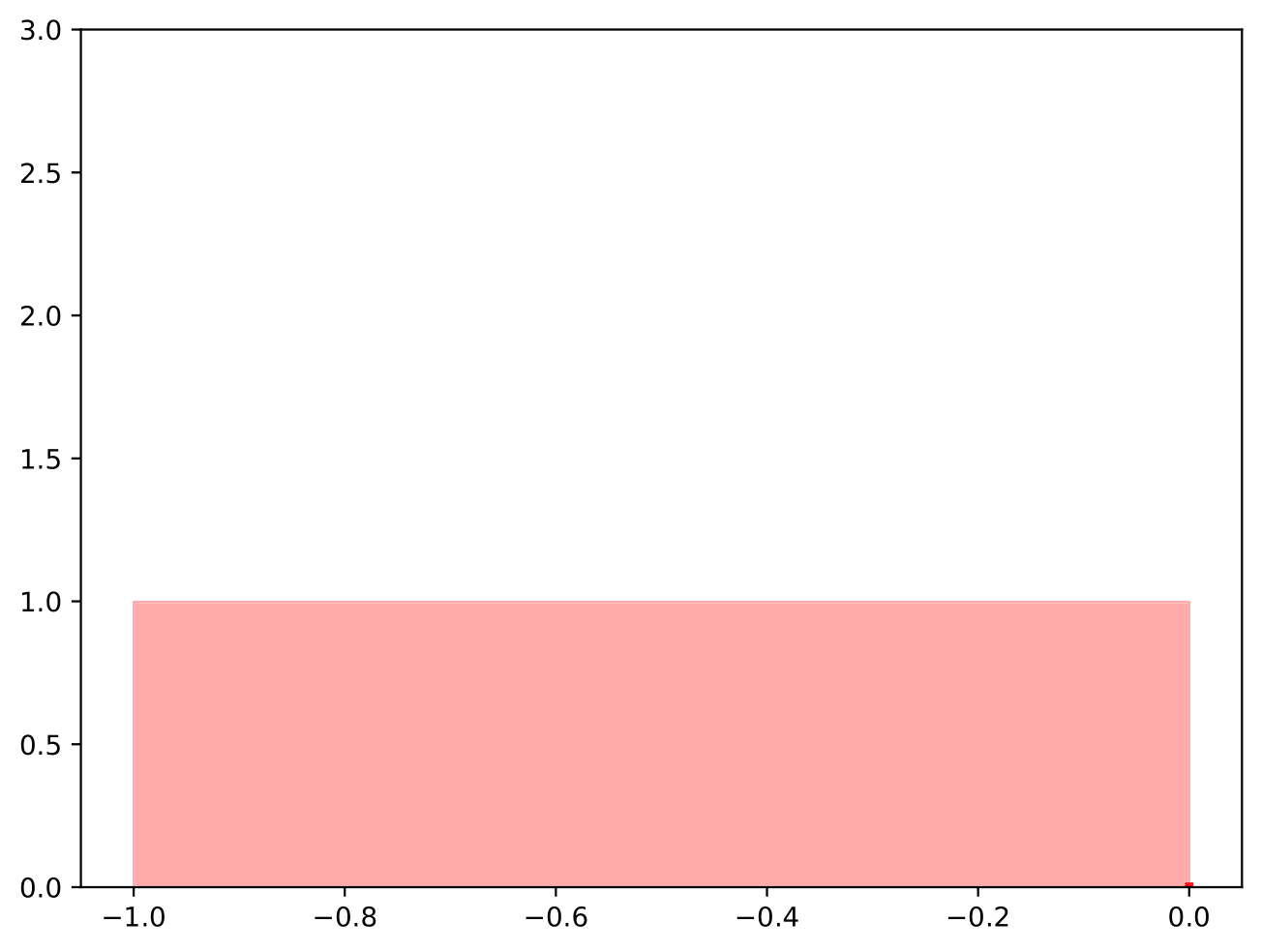}
    \includegraphics[width=.16\textwidth]{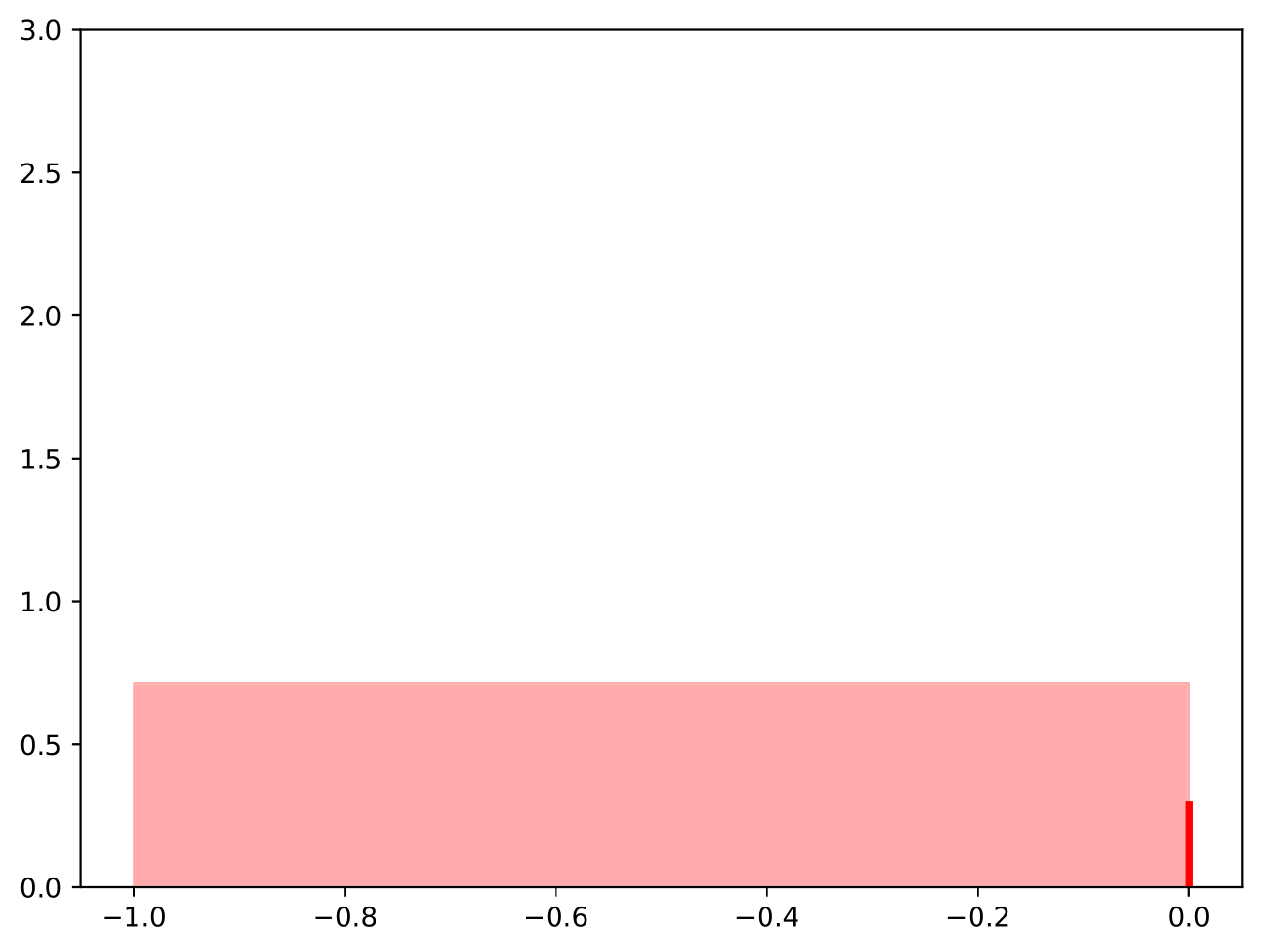}
    \includegraphics[width=.16\textwidth]{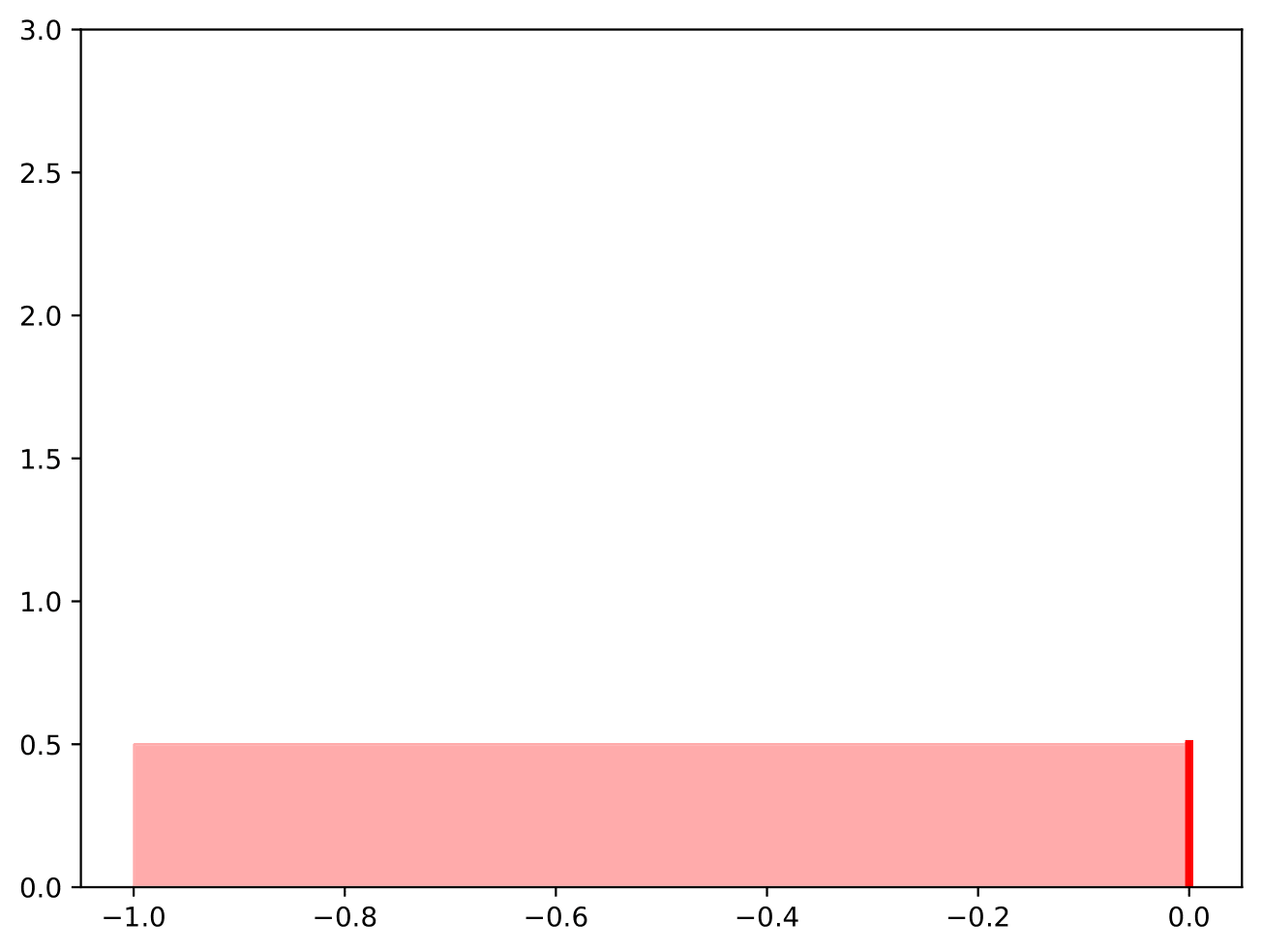}
    \includegraphics[width=.16\textwidth]{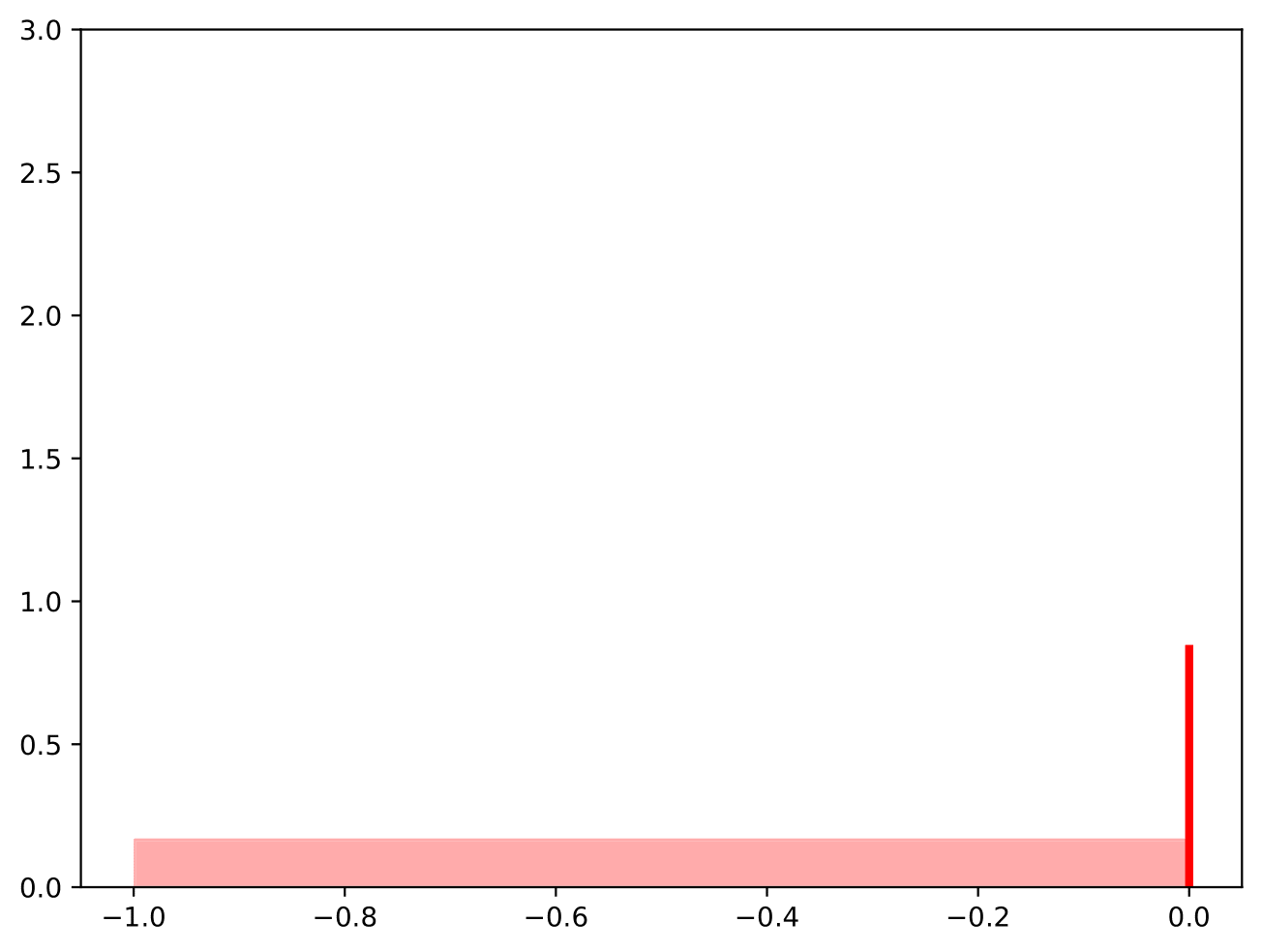}
    \includegraphics[width=.16\textwidth]{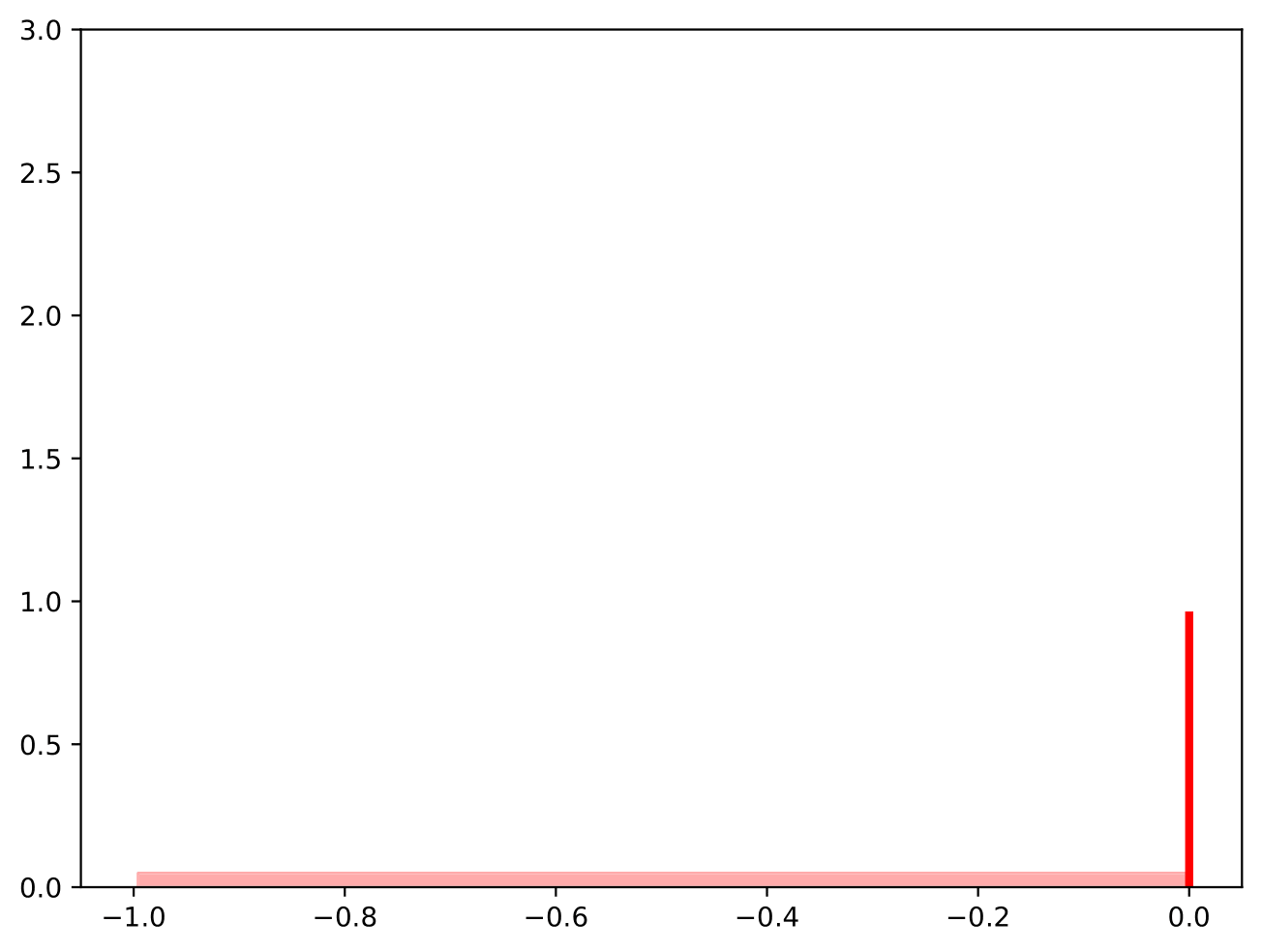}
\\[2ex]
    \includegraphics[width=.16\textwidth]{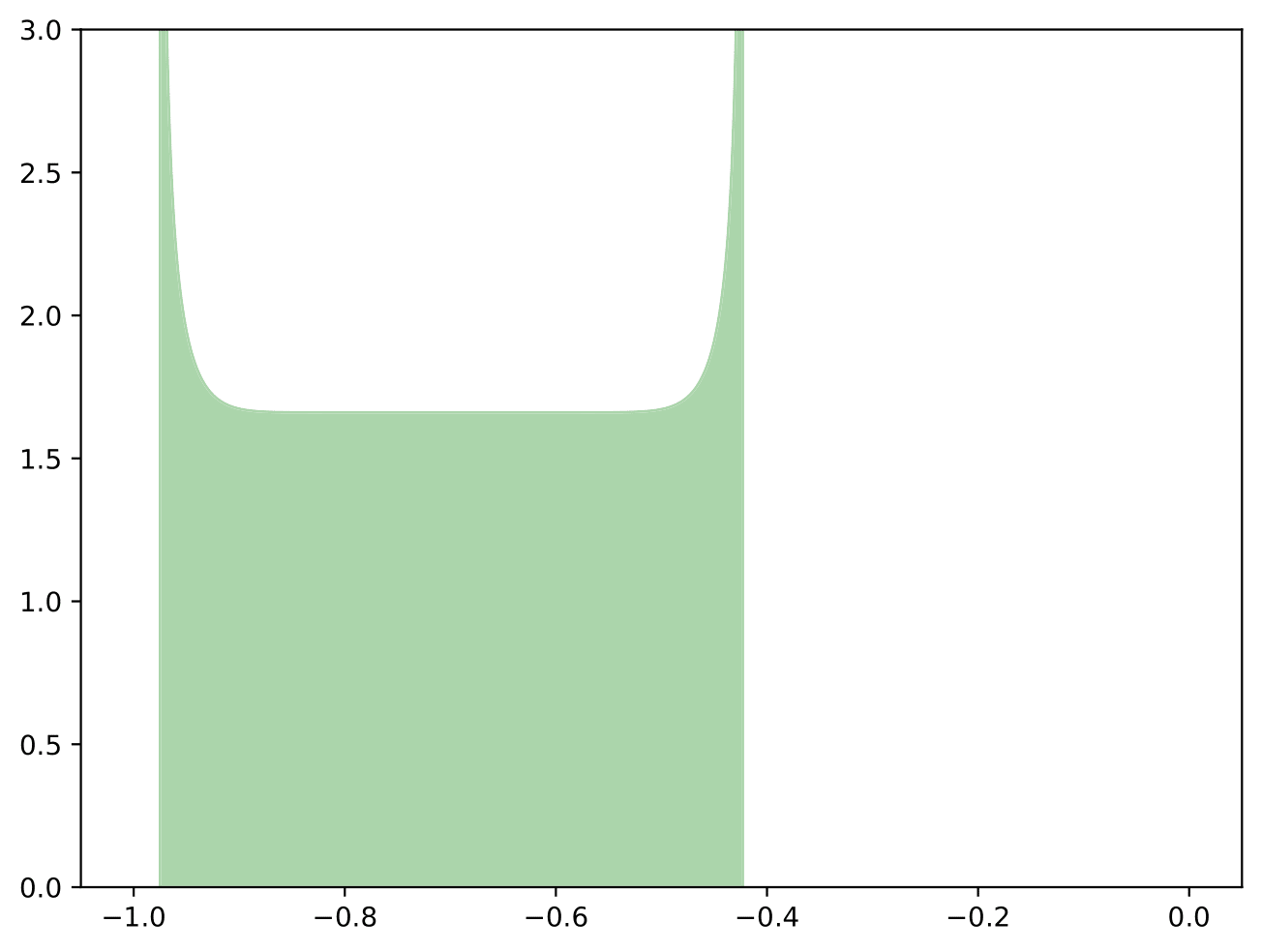}
    \includegraphics[width=.16\textwidth]{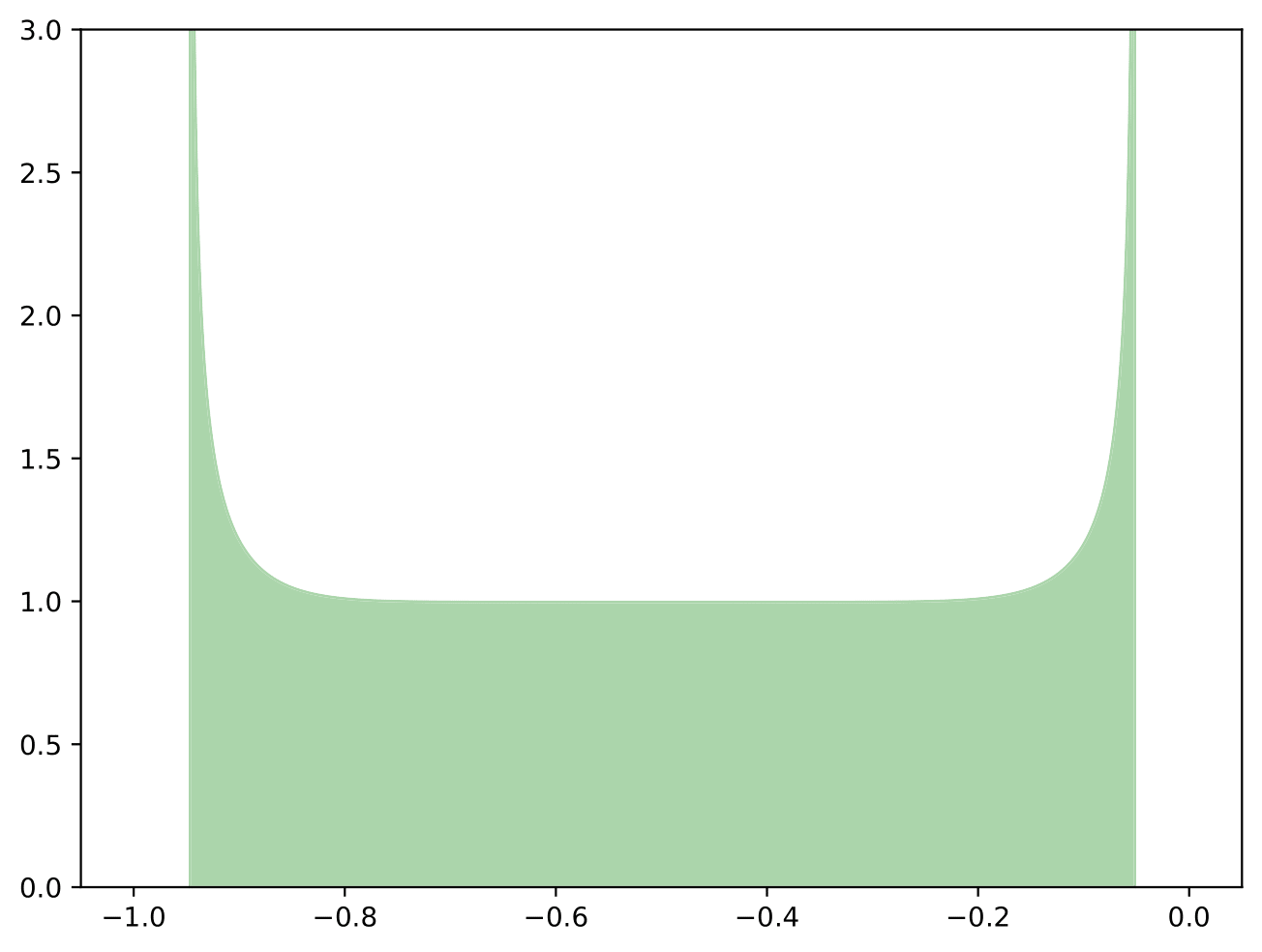}
    \includegraphics[width=.16\textwidth]{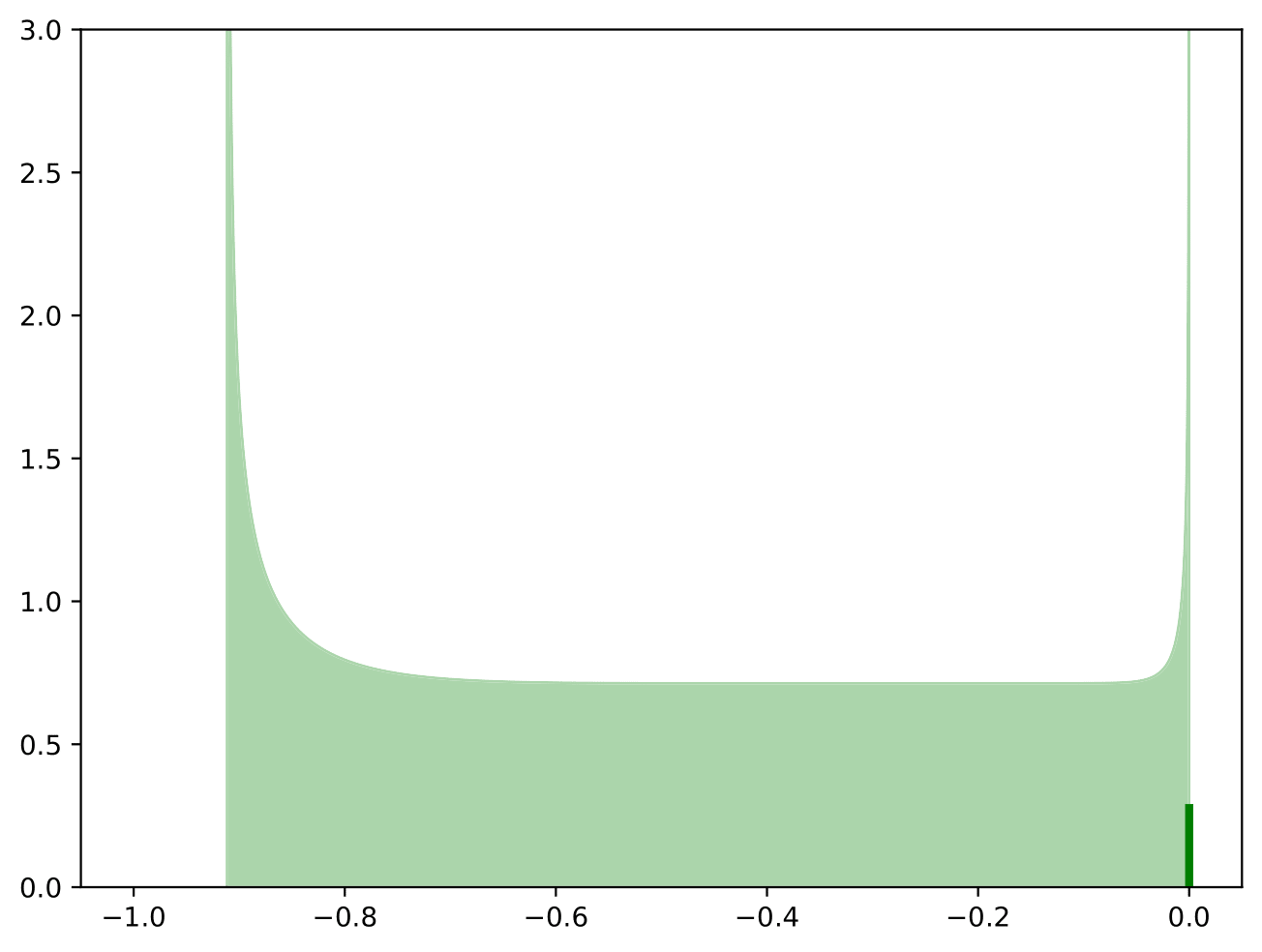}
    \includegraphics[width=.16\textwidth]{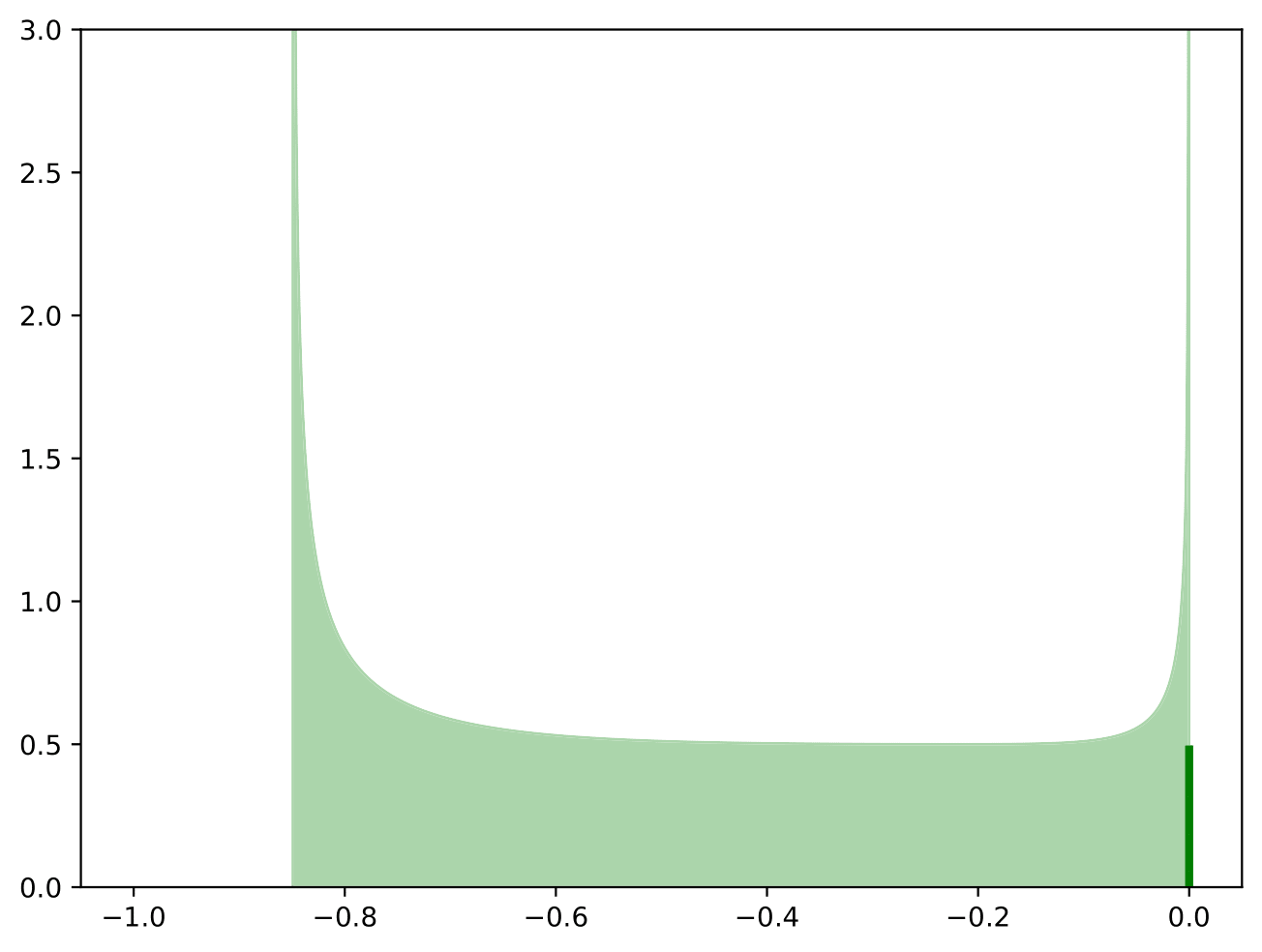}
    \includegraphics[width=.16\textwidth]{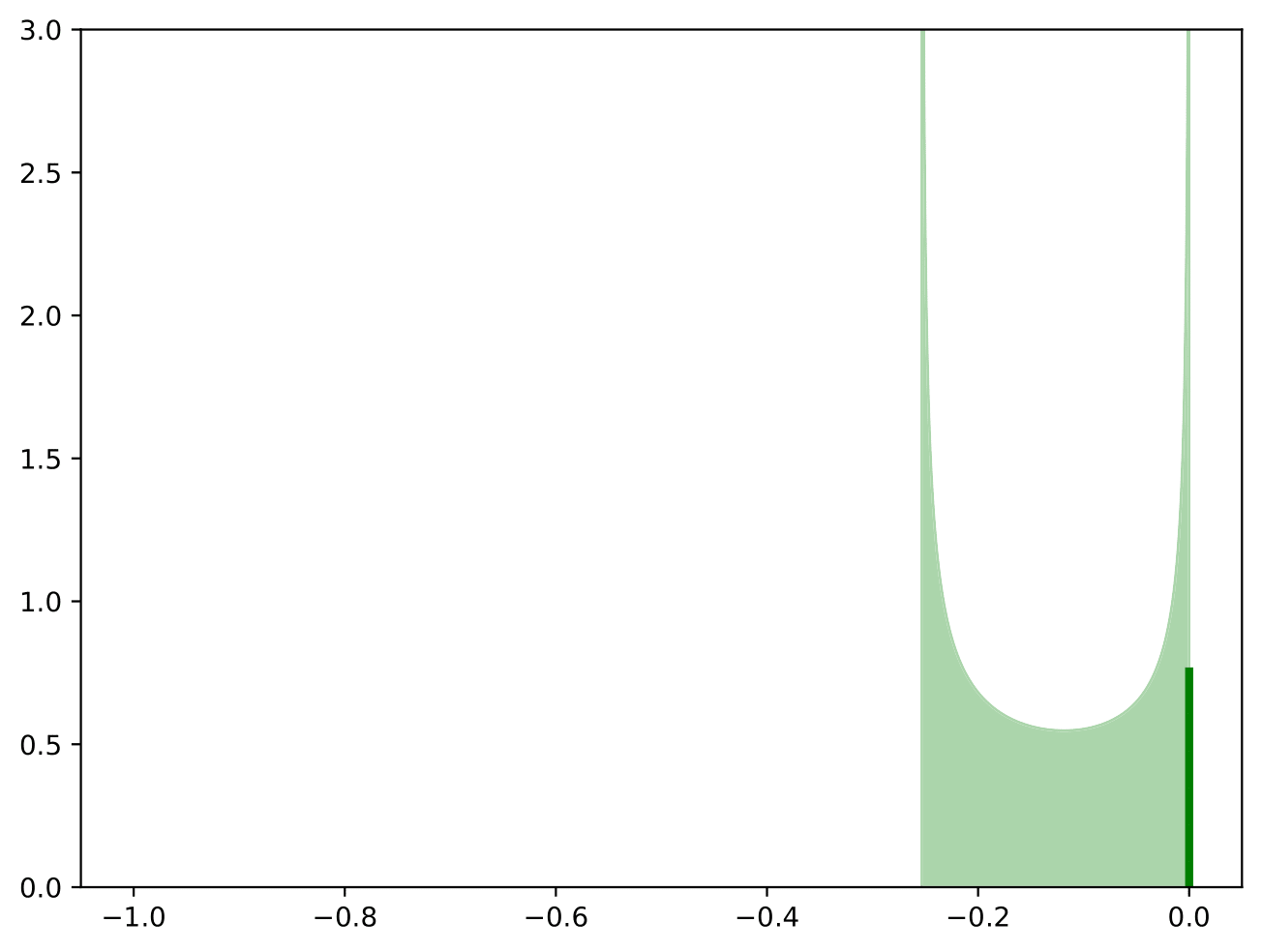}
    \includegraphics[width=.16\textwidth]{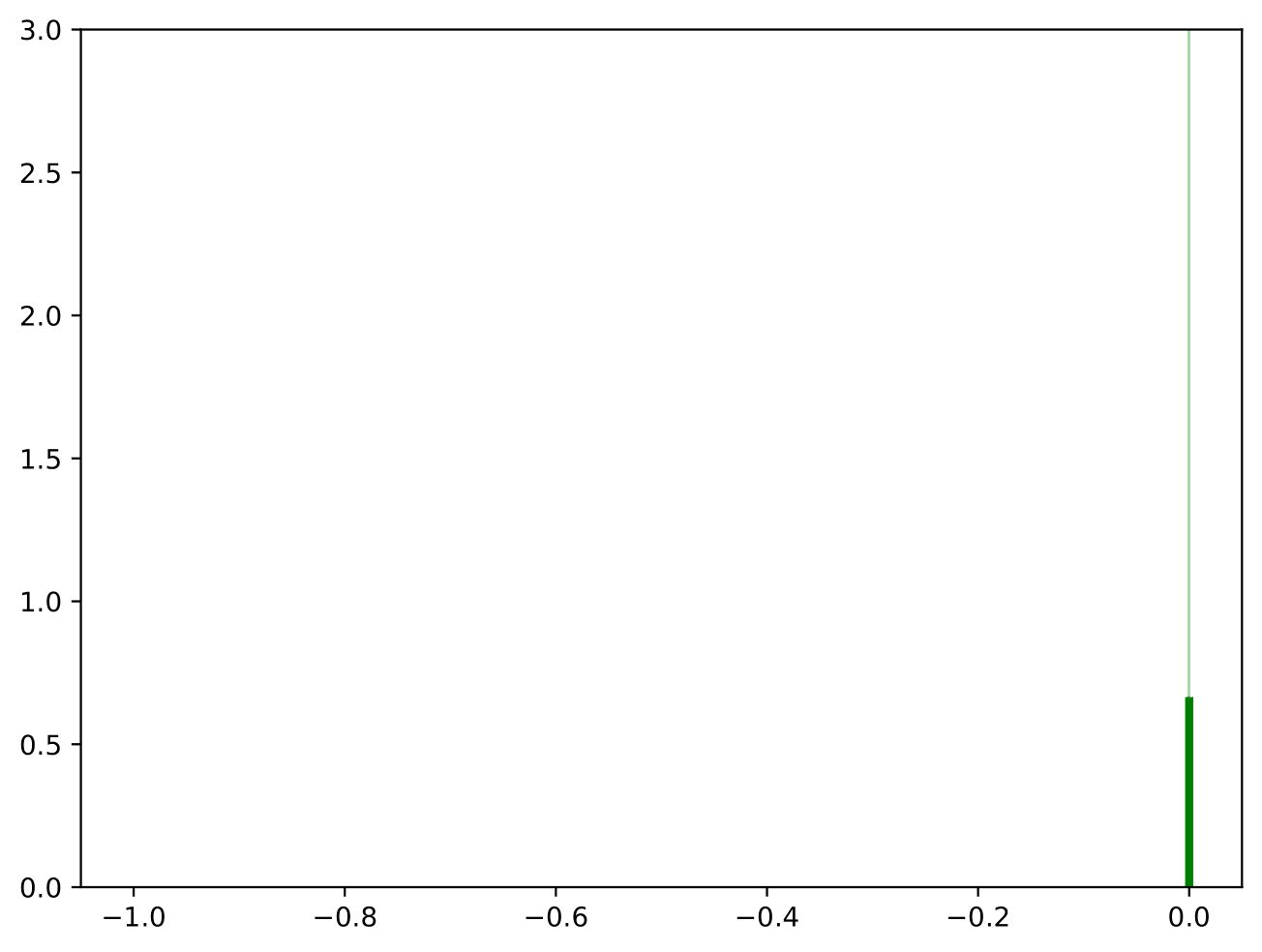}

    \caption{\textbf{Top}: MMD flow with negative distance kernel from $\delta_{-1}$ to $\delta_0$ has the "dissipation-of-mass" defect that the mass in $(-1,0)$ slowly dissipates towards zero, but does not vanish on any part of the interval.
    \textbf{Bottom}: Correction of the "dissipation-of-mass" defect by adding a Sobolev regularization term. 
        }
    \label{fig:Dirac_to_Dirac_Intro}
\end{figure}

In this paper, we show how to tackle the above shortcomings by introducing a regularized version of the associated $L_2$-functional by adding a Sobolev term $|\cdot|^2_{H^1(0,1)}$.
This entails multiple advantages:
\begin{itemize}
    \item Existence of the (whole) regularized MMD gradient flow in 1D for the \emph{negative} distance kernel, see Theorem \ref{main_mmd}; and existence of a generalized minimizing movement for the \emph{positive} kernel, see Theorem \ref{main_mmd2}. 
    In the latter case, the measure $\nu$ plays the role of a repulsion point, while in the former one, it attracts as a target.
    \item Circumvention of the dissipation-of-mass defect of the negative distance kernel as in the bottom row of Figure \ref{fig:Dirac_to_Dirac_Intro},
    see Example \ref{ex:dirac-to-dirac-reg} and Section \ref{sec:numerics-examples}. 
    This might be a promising approach in the multi-dimensional case to prevent the spreading of mass to infinity, or at least, to control it by the regularization in future work.
    \item Capitalizing on the strong repulsive behavior of the unregularized MMD with negative kernel, allowing initial Dirac points to instantly become absolutely continuous and resulting in a quick expansion of mass towards the target. The interplay of repulsion and regularization can be controlled via a regularization/diffusion constant $\lambda > 0$.
\end{itemize}
 
\paragraph{Outline of the paper.}
In Section~\ref{sec:prelim_1}, we recall all the notions needed to generally define Wasserstein gradient flows on the Wasserstein space $\P_2(\R^d)$. Starting in Subsection~\ref{subsec:1D-flows}, we restrict ourselves to the one-dimensional case $\P_2(\R^1)$, and recall the equivalence of Wasserstein gradient flows to the $L_2$-gradient flows of quantile functions. Section~\ref{sec:mmd} introduces the MMD functional, its associated $L_2$-functional as well as its newly proposed Sobolev regularization. Also, the main existence results, Theorem \ref{main_mmd} and \ref{main_mmd2}, next to explicit Examples \ref{ex:dirac-to-dirac-reg} (Dirac-to-Dirac) and \ref{ex:dirac-from-dirac} (Dirac-away-from-Dirac), are featured there.
The final Section \ref{sec:numerics} deals with the numerical approximation of the regularized MMD flow with the negative kernel, including several visualizations of the (quantile) flow, which illustrate its advantage over the unregularized MMD.

\section{Wasserstein Gradient Flows}\label{sec:prelim_1}
In this section, we  briefly introduce the notation of Wasserstein gradient flows for measures on $\R^d$
and  its simplifications for $\R$. For a more comprehensive introduction, we refer to 
\cite{BookAmGiSa05}, and for the one-dimensional case to \cite{DSBHS2024}.

\subsection{Flows on \texorpdfstring{$\mathcal P_2(\R^d)$}{P2(Rd)}}
The \emph{Wasserstein space}
$\P_2(\R^d)$  consists of all probability measures  on $\R^d$  with finite second moments together with
the \emph{Wasserstein distance} 
$W_2\colon\P_2(\R^d) \times \P_2(\R^d) \to [0,\infty)$ defined by
\begin{equation}        \label{def:W_2}
  W_2^2(\mu, \nu)
  \coloneqq 
  \min_{\pi \in \Gamma(\mu, \nu)} 
  \int_{\R^d\times \R^d}
  \|x - y\|_2^2
  \d \pi(x, y),
  \qquad \mu,\nu \in \P_2(\R^d),
\end{equation}
where 
$
\Gamma(\mu, \nu)
\coloneqq
\{ \pi \in \P_2(\R^d \times \R^d):
(\pi_{1})_{\#} \pi = \mu,\; (\pi_{2})_{\#} \pi = \nu\}
$ 
and $\pi_i(x) \coloneqq x_i$, $i = 1,2$ for $x = (x_1,x_2)$, see, e.g., \cite{Vil03,BookVi09}.
Let $L_{2,\mu}$ denote the Bochner space of (equivalence classes of) 
functions $\xi:\R^d \to \R^d$ with
$\|\xi \|_{L_{2,\mu}} ^2 \coloneqq \int_{\R^d} \|\xi(x)\|_2^2 \d \mu(x) < \infty$.

A curve $\gamma\colon (a,b) \to \P_2(\R^d)$ is \emph{absolutely continuous}\footnote{Here, we already consider the case ${AC}^2((a,b); \P_2(\R^d))$ according to \cite{BookAmGiSa05}.}, if there exists a Borel velocity field $v$ of functions $v_t \colon \R^d \to \R^d$ with $\int_a^b \| v_t \|_{L_{2,\gamma_t}}^2 \d t < \infty$ such that the \emph{continuity equation}
\begin{equation} \label{eq:CE}
  \partial_t \gamma_t + \nabla_x \cdot ( v_t \, \gamma_t) = 0
\end{equation}
holds on $(a,b) \times \R^d$ in the sense of distributions \cite[Thm.~8.3.1]{BookAmGiSa05}.
The velocity field $v_t$ in \eqref{eq:CE} can be chosen uniquely by demanding that it has minimal norm $\|v_t\|_{L_2, \gamma_t}$.
A locally absolutely continuous curve 
  $\gamma \colon (0, \infty) \to \P_2(\R^d)$ 
  with minimal norm velocity field $v_t$ 
  is called \emph{Wasserstein gradient flow 
  with respect to} a lower semicontinuous (lsc) function $\F\colon \P_2(\R^d) \to (-\infty, \infty]$ 
  if 
  \begin{equation}\label{wgf}
      v_t \in - \partial \F(\gamma_t) \quad \text{for a.e. } 0 < t < \infty,
  \end{equation}
  with the reduced Fr\'echet subdifferential $\partial \F(\gamma_t)$.

The next theorem from \cite[Thm.~11.2.1]{BookAmGiSa05}  ensures the existence of Wasserstein gradient flows via so-called minimizing movement schemes for 
$\lambda$-convex functions along (generalized) geodesics.
To this end, recall that a curve $\gamma \colon [a,b] \to \P_2(\R^d)$ is called a \emph{geodesic} 
if there exists a constant $C \ge 0$ 
such that 
$    W_2(\gamma_{t_1}, \gamma_{t_2}) = C |t_2 - t_1|$ 
for all $t_1, t_2 \in [a,b]$. For $\lambda \in \R$, a function $\F\colon \P_2(\R^d) \to (-\infty,\infty]$ is called 
\emph{$\lambda$-convex along geodesics} if, for every 
$\mu, \nu  \in \P_2(\R^d)$ with finite $\mathcal F$ value,
there exists at least one geodesic $\gamma \colon [0, 1] \to \P_2(\R^d)$ 
between $\mu$ and $\nu$ such that
\begin{equation} \label{def:lambda_convex}
    \F(\gamma_t) 
    \le
    (1-t) \, \F(\mu) + t \, \F(\nu) 
    - \tfrac{\lambda}{2} \, t (1-t) \, W_2^2(\mu, \nu), 
    \qquad t \in [0,1].
\end{equation}
Further, recall the notion of the {minimizing movement (MM) scheme}: 
Given $\mu_0 \in \P_2(\R^d)$, 
consider piecewise constant curves $\gamma_\tau$ 
constructed from the minimizers (assuming existence) 
    \begin{equation} \label{mms}
        \mu_{n+1} \in  \argmin_{\mu \in \P_2(\R^d)} \Big\{ \F(\mu) + \frac{1}{2 \tau} W_2^2(\mu_{n}, \mu) \Big\}, \qquad \tau > 0,
    \end{equation}
    via $\gamma_\tau|_{(n \tau,(n+1)\tau]} \coloneqq \mu_n, ~n \in \N_0$. We say that $\gamma:[0,\infty) \to \P_2(\R^d)$ is a \emph{Minimizing Movement}
    if for \emph{any} sequence of step sizes $\tau_k \downarrow 0$, the discrete curve $(\gamma_{\tau_k})$ satisfies
    \begin{equation}\label{eq:mms-convergence}
       \lim_{k \to \infty} \gamma_{\tau_k}(t) = \gamma(t) \quad \text{for all } t \in [0,\infty),
    \end{equation}
    where the limit is taken in $\P_2(\R^d)$,
    and we write $\gamma \in {\rm MM}(\F, \mu_0)$.

\begin{theorem}[Existence and uniqueness of Wasserstein gradient flows] \label{thm:WGF}
    Let $\F \colon \P_2(\R) \to (- \infty, \infty]$ be bounded from below, lsc and $\lambda$-convex along geodesics for some $\lambda \in \R$. 
    Let $\mu_0 \in \P_2(\R)$ with $\mathcal F(\mu_0) < \infty$.
    Then,  there exists a unique Wasserstein gradient flow $\gamma \colon (0, \infty) \to \P_2(\R)$ with respect to $\F$ with $\gamma(0+) = \mu_0$.
            Furthermore, the minimizers \eqref{mms} are unique, the gradient flow $\gamma$ is given by the Minimizing Movement $\gamma \in {\rm MM}(\F, \mu_0)$ and the convergence \eqref{eq:mms-convergence} is {uniform on} $(0,\infty)$.
    
    If $\lambda > 0$, then $\F$ admits a unique minimizer $\bar \mu$ and we observe exponential convergence of $\gamma_t$ to $\bar \mu$ as $t\to \infty$.
    If $\lambda = 0$ and $\bar{\mu}$ is a minimizer of $\F$, then we still have
    $
        \F(\gamma_t) - \F(\bar{\mu})
        \le \frac{1}{2 t} W_2^2(\mu_0, \bar{\mu}).
    $
    
The above also holds true in $\R^d$ when switching to so-called generalized geodesics, see \cite[Sections 9.2 and 11.2]{BookAmGiSa05}.
\end{theorem}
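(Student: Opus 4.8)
The plan is to run the minimizing movement scheme \eqref{mms} and pass to the limit $\tau\to0$, following \cite[Ch.~11]{BookAmGiSa05}. First I would check that the scheme is well-posed: for fixed $\tau>0$ and fixed $\mu_n$, the functional $\mu\mapsto\F(\mu)+\frac{1}{2\tau}W_2^2(\mu_n,\mu)$ is bounded from below (since $\F$ is), narrowly lower semicontinuous (lsc of $\F$ and of $W_2^2$), and has tight sublevels, because a bound on $W_2^2(\mu_n,\cdot)$ forces a uniform second-moment bound, so Prokhorov applies; hence minimizers $\mu_{n+1}$ exist. For $\tau$ small (automatic if $\lambda\ge0$) the functional is $(\lambda+\tfrac1\tau)$-convex along the generalized geodesics based at $\mu_n$ --- one combines $\lambda$-convexity of $\F$ with the $\tfrac1\tau$-convexity of $\mu\mapsto W_2^2(\mu_n,\mu)$ along those curves --- hence strictly convex, and the minimizer is unique. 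This is the one place where, in $\R^d$, one genuinely needs \emph{generalized} geodesics with base $\mu_n$ rather than ordinary $W_2$-geodesics, along which $W_2^2(\mu_n,\cdot)$ may fail to be convex; on $\R$ this distinction is immaterial since the Wasserstein space is flat.

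Next I would extract a priori estimates. Minimality gives the discrete energy--dissipation inequality $\F(\mu_{n+1})+\frac{1}{2\tau}W_2^2(\mu_n,\mu_{n+1})\le\F(\mu_n)$, and summing yields $\sum_n\frac{1}{2\tau}W_2^2(\mu_n,\mu_{n+1})\le\F(\mu_0)-\inf\F<\infty$; this bounds the length of the piecewise-geodesic interpolant $\gamma_\tau$ and gives the $\tfrac12$-Hölder estimate $W_2(\gamma_\tau(s),\gamma_\tau(t))\lesssim|t-s|^{1/2}+\sqrt\tau$. Using De~Giorgi's variational interpolation together with the $\lambda$-convexity along generalized geodesics, I would upgrade this to a discrete Evolution Variational Inequality: for every $\sigma\in\P_2(\R^d)$,
\[
\tfrac12\,\tfrac{d^+}{dt}W_2^2(\gamma_\tau(t),\sigma)+\tfrac{\lambda}{2}W_2^2(\gamma_\tau(t),\sigma)+\F(\gamma_\tau(t))\le\F(\sigma)+O(\tau).
\]

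Then I would pass to the limit. By the Hölder bound and a refined Arzelà--Ascoli theorem for curves in $(\P_2(\R^d),W_2)$ (using $W_2$-precompactness of sublevels of $\F$), a subsequence $\gamma_{\tau_k}$ converges locally uniformly to a locally $\tfrac12$-Hölder, hence $AC^2_{\loc}$, curve $\gamma$ with $\gamma(0+)=\mu_0$; letting $\tau_k\to0$ in the discrete inequality shows $\gamma$ solves the continuous $\mathrm{EVI}_\lambda$. From here, standard facts about $\mathrm{EVI}_\lambda$ solutions finish everything: such a solution is a curve of maximal slope for $\F$, equivalently its minimal velocity field satisfies $v_t\in-\partial\F(\gamma_t)$ a.e., so it is the Wasserstein gradient flow \eqref{wgf}; two $\mathrm{EVI}_\lambda$ solutions obey $W_2(\gamma^1_t,\gamma^2_t)\le e^{-\lambda t}W_2(\gamma^1_0,\gamma^2_0)$ by Gronwall applied to $t\mapsto W_2^2(\gamma^1_t,\gamma^2_t)$, which gives uniqueness and continuous dependence; consequently the \emph{whole} family $\gamma_\tau$ converges, its limit does not depend on the chosen discrete curve, so $\gamma\in\mathrm{MM}(\F,\mu_0)$, and the $O(\tau)$ term makes the convergence uniform on $(0,\infty)$. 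Uniqueness of the MM minimizers was already established above.

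Finally, the asymptotics. If $\lambda>0$, the contraction forces $t\mapsto\gamma_t$ to be Cauchy as $t\to\infty$; its limit $\bar\mu$ is a fixed point, hence the unique minimizer of $\F$, and inserting $\sigma=\bar\mu$ in $\mathrm{EVI}_\lambda$ and integrating gives $W_2(\gamma_t,\bar\mu)\le e^{-\lambda t}W_2(\mu_0,\bar\mu)$. If $\lambda=0$ and $\bar\mu$ minimizes $\F$, I would take $\sigma=\bar\mu$ in $\mathrm{EVI}_0$, integrate over $[0,t]$, and use that $s\mapsto\F(\gamma_s)$ is non-increasing (so $\int_0^t\F(\gamma_s)\,\d s\ge t\,\F(\gamma_t)$) to obtain $t\,(\F(\gamma_t)-\F(\bar\mu))\le\tfrac12 W_2^2(\mu_0,\bar\mu)-\tfrac12 W_2^2(\gamma_t,\bar\mu)\le\tfrac12 W_2^2(\mu_0,\bar\mu)$, which is the claim; the $\R^d$ statement is identical once ``geodesic'' is read as ``generalized geodesic''. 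The hard part will be the two middle steps --- the discrete $\mathrm{EVI}$ via De~Giorgi's variational interpolation and the limit passage with uniform control of the error terms and lower semicontinuity of the metric slope --- which is the technical heart of \cite[Ch.~11]{BookAmGiSa05}; the well-posedness of the scheme and the asymptotics are routine once the $\mathrm{EVI}$ is in hand.
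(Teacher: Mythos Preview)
The paper does not prove this theorem at all; it merely quotes it from \cite[Thm.~11.2.1]{BookAmGiSa05}. Your sketch is precisely the Ambrosio--Gigli--Savar\'e argument (minimizing movements, discrete EVI via De~Giorgi interpolation, Arzel\`a--Ascoli compactness, limit EVI, contraction and asymptotics), so it matches the reference the paper invokes.
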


\subsection{Flows on \texorpdfstring{$\mathcal P_2(\R)$}{P2(R)}}\label{subsec:1D-flows}
For $d =1$, Wasserstein gradient flows can be handled via gradient flows in the Hilbert space $L_2(0, 1)$. To this end, we have to introduce
the \emph{cumulative distribution function} (CDF) of $\mu \in \P(\R)$,
\begin{equation*}
    R_{\mu}^+ \colon \R \to [0, 1], \quad
    R_{\mu}^+(x) \coloneqq \mu\big( (-\infty, x] \big), 
\end{equation*}
and 
the related function
\begin{equation*}
    R_{\mu}^{-} \colon \R \to [0, 1], \quad
    R_{\mu}^{-}(x) \coloneqq \mu\big( (- \infty, x) \big) .
\end{equation*}
Then, the \emph{quantile function} of $\mu$ is given by 
\begin{equation*}
    Q_{\mu} \colon (0, 1) \to \R, \quad
    Q_{\mu}(s) \coloneqq \min\{ x \in \R: R_{\mu}^+(x) \ge s \}.
\end{equation*}
The functions are visualized for an empirical measure in Figure \ref{fig:R_and_Q}. For an overview on quantile functions in convex analysis, see also \cite{RoRo14}.
\begin{figure}[ht]
    \centering
     \includegraphics[height=3.5cm,valign=c]{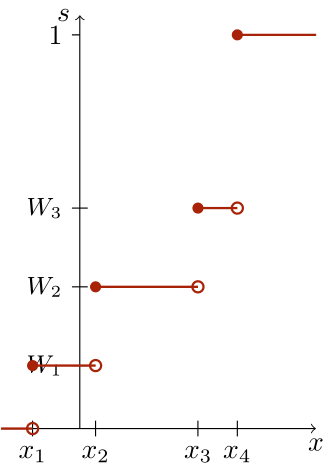}
     \hspace{0.5cm}
        \includegraphics[height=3.5cm,valign=c]{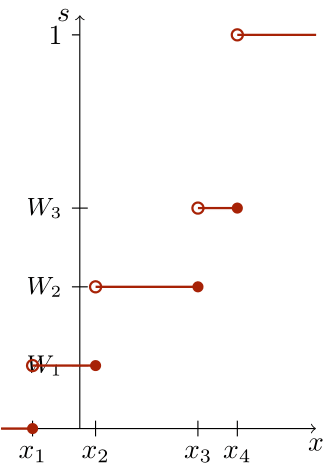}
     \hspace{1.5cm}
    \includegraphics[height=3.5cm,valign=c]{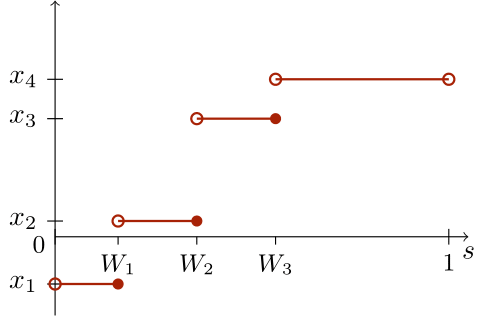}
    \caption{Functions $R_{\mu}^{+}$ (left), $R_{\mu}^{-}$ (middle) and $Q_{\mu}$ (right) for a discrete measure $\mu \coloneqq \sum_{k = 1}^{4} w_k \delta_{x_k}$ with $W_k \coloneqq \sum_{j = 1}^{k} w_j$. Source: see \cite{DSBHS2024}.}
    \label{fig:R_and_Q}
\end{figure}

\begin{remark}[Properties of $R_{\mu}^{\pm}$ and $ Q_{\mu} $]\label{l:DLOW-eq-lem}
The functions $R_\mu^{\pm}$ and $Q_\mu$ are monotonically increasing with only countably many discontinuities.
The function $Q_\mu$ is strictly increasing 
if and only if $R_\mu^+$ is continuous, and continuous if and only if $R_\mu^+$ is strictly increasing on $\overline{(R_\mu^+)^{-1}(0,1)}$. 
The points of continuity $x \in \R$ of $R_\mu^+$ and $R_\mu^-$ coincide and there, it holds $R_\mu^-(x) = R_\mu^+(x)$. 
We have $R_\mu^- \le R_\mu^+$ such that 
$$
[R_\mu^-(x), R_\mu^+(x)] \ne \emptyset \hspace*{4mm} \text{ for all } x \in \R.
$$
Both $R_\mu^-$ and $Q_\mu$ are left-continuous and, since they are increasing, also lsc, whereas $R_\mu^+$ is right-continuous and, since it is increasing, upper semicontinuous. If $\mu(\{x\}) = 0$ for all $x \in \R$, then the functions 
 $R_{\mu}^{\pm}$ are continuous and they coincide.
\end{remark}

The quantile functions of measures in $\mathcal P_2(\R)$ form the closed, convex cone
$$
\mathcal C (0,1) := \{ u \in L_2(0,1): u ~ \text{is increasing (a.e.)} \}.
$$

By the following theorem, see, e.g., \cite[Thm.~2.18]{Vil03}, the mapping 
$$\mathcal I \colon \mu \mapsto Q_\mu$$ 
is an isometric embedding 
of $\P_2(\R)$ into $L_2( 0,1 )$.

\begin{theorem}\label{prop:Q}
    For $\mu, \nu \in \P_2(\R)$,
    the quantile function $Q_{\mu} \in \mathcal C(0,1)$ 
		satisfies $\mu = (Q_{\mu})_{\#} \lebesgue_{(0,1)} \coloneqq \lebesgue_{(0,1)} \circ Q_{\mu}^{-1}$
  with the Lebesgue measure $\lebesgue_{(0,1)}$ on $(0,1)$,
   	and $\mathcal{I}$ is an isometry, i.e.,
    \begin{equation}
        W_2^2(\mu, \nu) = \int_{0}^1 |Q_{\mu}(s) - Q_{\nu}(s)|^2 \d s.
    \end{equation}    
\end{theorem}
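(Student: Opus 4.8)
The plan is to verify the three assertions in turn — that $Q_\mu \in \mathcal C(0,1)$, that $Q_\mu$ transports $\lebesgue_{(0,1)}$ onto $\mu$, and that $\mathcal I$ is distance-preserving — with essentially all of the work concentrated in the last one. The workhorse for the first two is the elementary equivalence
\[
Q_\mu(s) \le x \quad\Longleftrightarrow\quad s \le R_\mu^+(x), \qquad s \in (0,1),\ x \in \R,
\]
which follows from the definition $Q_\mu(s) = \min\{x' \in \R : R_\mu^+(x') \ge s\}$ together with the monotonicity and right-continuity of $R_\mu^+$ recorded in Remark~\ref{l:DLOW-eq-lem} (right-continuity ensuring that the minimum is attained). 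From it, for every $x \in \R$,
\[
\big((Q_\mu)_\# \lebesgue_{(0,1)}\big)\big((-\infty,x]\big) = \lebesgue_{(0,1)}\big(\{s \in (0,1): Q_\mu(s) \le x\}\big) = \lebesgue_{(0,1)}\big((0,R_\mu^+(x)]\big) = R_\mu^+(x),
\]
so $(Q_\mu)_\# \lebesgue_{(0,1)}$ and $\mu$ have the same CDF and hence coincide. The change-of-variables formula then gives $\int_0^1 |Q_\mu(s)|^2 \d s = \int_\R |x|^2 \d\mu(x) < \infty$, and $Q_\mu$ is increasing by construction, so indeed $Q_\mu \in \mathcal C(0,1)$.

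For the isometry, apply the above to both $\mu$ and $\nu$: the map $s \mapsto (Q_\mu(s),Q_\nu(s))$ pushes $\lebesgue_{(0,1)}$ forward to a coupling $\pi^\ast \in \Gamma(\mu,\nu)$, which immediately yields
\[
W_2^2(\mu,\nu) \le \int_0^1 |Q_\mu(s) - Q_\nu(s)|^2 \d s .
\]
For the reverse inequality I would expand $|x-y|^2 = |x|^2 + |y|^2 - 2xy$ and note that $\int_\R |x|^2 \d\mu + \int_\R |y|^2 \d\nu$ is the same for every coupling in $\Gamma(\mu,\nu)$; hence it suffices to show that $\pi^\ast$ \emph{maximizes} $\pi \mapsto \int_{\R\times\R} xy \, \d\pi(x,y)$ over $\Gamma(\mu,\nu)$, which is exactly the Hardy--Littlewood rearrangement inequality in measure-theoretic form.

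The cleanest way to obtain this is to invoke the standard sufficiency criterion from optimal transport theory: a transport plan concentrated on a $c$-cyclically monotone set is optimal for any lower semicontinuous cost $c$. Since $Q_\mu$ and $Q_\nu$ are both nondecreasing, the support of $\pi^\ast$ is monotone — if $(x_1,y_1),(x_2,y_2) \in \supp \pi^\ast$ with $x_1 < x_2$, then $y_1 \le y_2$ — hence $c$-cyclically monotone for $c(x,y) = |x-y|^2$, so $\pi^\ast$ is optimal and equality holds in the displayed estimate. Combining the two inequalities gives $W_2^2(\mu,\nu) = \|\mathcal I(\mu) - \mathcal I(\nu)\|_{L_2(0,1)}^2$, which is the asserted isometry; that the range of $\mathcal I$ is all of $\mathcal C(0,1)$ is the separate easy remark that any $u \in \mathcal C(0,1)$ equals $Q_{u_\# \lebesgue_{(0,1)}}$.

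The only genuinely non-trivial point is the optimality of the monotone coupling — the reverse of the Wasserstein inequality above; everything else is bookkeeping with distribution functions. There are two standard ways to close it, a direct rearrangement-inequality computation via approximation by simple functions, or the cyclical-monotonicity sufficiency theorem, and I would favour the latter, since it confines the difficulty to one citable fact of optimal transport theory (e.g.\ \cite{Vil03,BookVi09}) and avoids the approximation bookkeeping otherwise needed to pass from finitely supported measures to arbitrary elements of $\P_2(\R)$.
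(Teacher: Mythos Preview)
Your argument is correct. The paper does not actually supply a proof of this statement; it simply cites \cite[Thm.~2.18]{Vil03} in the sentence preceding the theorem, so there is nothing to compare against beyond noting that your argument is the standard one and is in line with the cited reference: the Galois-connection identity $Q_\mu(s)\le x \Leftrightarrow s\le R_\mu^+(x)$ for the pushforward, and optimality of the monotone coupling via $c$-cyclical monotonicity for the isometry.

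Two minor remarks. First, the claim that $\supp\pi^\ast$ is monotone deserves one more line: since $\pi^\ast$ is concentrated on the closure of the graph $\{(Q_\mu(s),Q_\nu(s)):s\in(0,1)\}$, you should check that monotonicity survives passage to the closure, which it does because $Q_\mu(s_n)<Q_\mu(t_n)$ forces $s_n<t_n$ and hence $Q_\nu(s_n)\le Q_\nu(t_n)$. Second, the sufficiency direction of the $c$-cyclical monotonicity criterion (c-CM $\Rightarrow$ optimal) is not entirely free in general; here it applies because $c(x,y)=|x-y|^2$ is continuous and the transport cost is finite on $\P_2(\R)\times\P_2(\R)$, which you may want to make explicit when invoking \cite{Vil03,BookVi09}.
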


The theorem gives rise to the following definition.
We call a functional $F \colon L_2( 0,1 ) \to (-\infty,\infty]$ \emph{associated with
a functional}
$\F \colon \P_2(\R) \to (-\infty,\infty]$ if
\begin{equation}\label{eq:assoc}
F (Q_\mu) = \F(\mu) \quad \text{for all } \mu \in \P_2(\R).  
\end{equation}
Clearly, there are many functions $F \colon L_2( 0,1 ) \to (-\infty,\infty]$ associated to a function $\F$,
since \eqref{eq:assoc} determines $F$ only on $\mathcal C(0,1) \subset L_2(0,1)$.
Recall that 
$F$
is called \emph{proper} if 
$\dom(F) \coloneqq \{u \in L_2(0,1): F(u) < \infty\} \ne \emptyset$.
Further, the  \emph{regular subdifferential} 
of a function $F \colon L_2( 0,1 ) \to (-\infty,\infty]$ 
is defined by 
\begin{equation} \label{eq:subdiff_1}
    \partial F(u) \coloneqq 
    \bigl\{ v \in L_2( 0,1 ): F(w) \ge F(u) + \langle v, w-u \rangle +o(\|w-u\|) \; ~\text{for all } w\in L_2(0,1)\bigr\}.
\end{equation}
If $F$ is convex, the $o$-term in \eqref{eq:subdiff_1} can be skipped. 
If $F$ is proper, then it holds 
$$
\dom(\partial F) \coloneqq \{u \in L_2(0,1) : \partial F(u) \ne \emptyset \}\subseteq \dom(F),
$$
and if $F$ is in addition convex and lsc, we have $\overline{\dom(\partial F)} = \dom(F)$.

Wasserstein gradient flows can be characterized by the strong solution of a corresponding Cauchy problem in $L_2(0,1)$.
To this end, recall that for an
operator $A \colon \dom(A) \subseteq L_2(0,1) \to 2^{L_2(0,1)}$ 
and an initial function $g_0 \in L_2(0,1)$, 
a \emph{strong solution}
$g \colon [0, T) \to L_2(0,1)$ of the Cauchy problem
\begin{align}\label{eq:cauchy-strong}
    \begin{cases}
    \partial_t g(t) + Ag(t) ~\ni~ 0,  \quad 0 < t < T \le \infty, \\
    g(0) ~=~ g_0,
    \end{cases}
\end{align}
is a locally absolutely continuous function $g \in H^{1}_{{\rm loc}} \big((0,T);L_2(0,1) \big)$
which is continuous in $t=0$, meets the initial condition and solves\footnote{Implicitly, it is required that $g(t) \in \dom(A)$ for a.e. $t$, and that the strong time derivative $\partial_t g(t)$ exists for a.e. $t$.} the differential inclusion in \eqref{eq:cauchy-strong} pointwise for a.e. $0 < t < T$.

\begin{theorem}\label{thm:L2_representation}
    Let $\F \colon \P_2(\R) \to (-\infty,\infty]$ and let
    $F \colon L_2(0,1) \to (-\infty,\infty]$ be its associated functional as in \eqref{eq:assoc}.
    Let $g_0 \in \mathcal C(0,1)$ be an initial datum. 
    Assume, there exists a strong solution $g \colon [0, \infty) \to \mathcal C(0, 1)$ 
    of the Cauchy problem 
    \begin{align}\label{eq:cauchy}
    \begin{cases}
    \partial_t g(t) + \partial F (g(t)) ~\ni~ 0, \quad t \in (0, \infty), \\
    g(0) ~=~ g_0.
    \end{cases}
    \end{align}
    Then, there exists a Wasserstein gradient flow $\gamma_t$ of $\F$, and it is given by
    $\gamma_t = (g(t))_{\#} \lebesgue_{(0,1)}$ and $\gamma(0+) = (g_0)_{\#} \lebesgue_{(0,1)}$. Furthermore, the curve $\gamma_t$ has quantile functions $Q_{\gamma_t} = g(t)$.
\end{theorem}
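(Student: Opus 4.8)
The plan is to set $\gamma_t \coloneqq (g(t))_\# \lebesgue_{(0,1)}$ for $t \ge 0$ and to verify the defining properties of a Wasserstein gradient flow one by one, transporting everything through the isometry $\mathcal I$. The first step is purely metric. Since $g(t) \in \mathcal C(0,1)$ is increasing (a.e.), reading the identity $\mu = (Q_\mu)_\# \lebesgue_{(0,1)}$ of Theorem~\ref{prop:Q} backwards together with Remark~\ref{l:DLOW-eq-lem} shows that $g(t)$ coincides $\lebesgue_{(0,1)}$-a.e.\ with $Q_{\gamma_t}$, i.e.\ $\mathcal I(\gamma_t) = Q_{\gamma_t} = g(t)$ in $L_2(0,1)$; in particular $\gamma_0 = (g_0)_\# \lebesgue_{(0,1)}$. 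Because $\mathcal I$ is an isometry, $W_2(\gamma_s,\gamma_t) = \|g(s) - g(t)\|_{L_2(0,1)}$, so the hypothesis $g \in H^1_{\loc}((0,\infty);L_2(0,1))$, continuous at $0$, transfers verbatim: $t \mapsto \gamma_t$ is a locally absolutely continuous curve in $\P_2(\R)$ with $\gamma(0+) = \gamma_0$, and its metric derivative equals $\|\partial_t g(t)\|_{L_2(0,1)}$ for a.e.\ $t$.

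Next I would construct the velocity field. Fix a time $t$ at which the strong derivative $\partial_t g(t) \in L_2(0,1)$ exists and $-\partial_t g(t) \in \partial F(g(t))$ (a.e.\ $t$, by the Cauchy problem \eqref{eq:cauchy}). Define $v_t$, $\gamma_t$-a.e., by $v_t \circ g(t) = \partial_t g(t)$ $\lebesgue_{(0,1)}$-a.e.; this is well posed because the difference quotients $h^{-1}(g(t+h) - g(t))$ are constant on any interval on which $g(t)$ is constant, hence so is their $L_2$-limit $\partial_t g(t)$, so $v_t$ is unambiguous on $\ran(g(t))$. A change of variables gives $\|v_t\|_{L_{2,\gamma_t}} = \|\partial_t g(t)\|_{L_2(0,1)}$, and for $\varphi \in C_c^\infty(\R)$,
\[
  \frac{\d}{\d t}\int_\R \varphi \,\d\gamma_t = \frac{\d}{\d t}\int_0^1 \varphi(g(t)(s))\,\d s = \int_0^1 \varphi'(g(t)(s))\,\partial_t g(t)(s)\,\d s = \int_\R \varphi'\, v_t \,\d\gamma_t ,
\]
so $(\gamma_t, v_t)$ solves the continuity equation \eqref{eq:CE}. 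Since the difference quotients, hence $\partial_t g(t)$, lie in the tangent cone of the convex cone $\mathcal C(0,1)$ at $g(t)$, the corresponding $v_t$ lies in the Wasserstein tangent space $\mathrm{Tan}_{\gamma_t}\P_2(\R)$ — the standard identification of the $L_2(0,1)$-geometry of quantile functions with Wasserstein geometry, see \cite{DSBHS2024} and \cite[Ch.~8]{BookAmGiSa05} — and therefore $v_t$ is the \emph{minimal-norm} velocity field of $\gamma$.

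The crux is transferring the subdifferential inclusion, i.e.\ showing $-v_t \in \partial \F(\gamma_t)$. Fix any $\nu \in \P_2(\R)$ with $\F(\nu) < \infty$ and put $w \coloneqq Q_\nu \in \mathcal C(0,1)$. From $-\partial_t g(t) \in \partial F(g(t))$, the definition \eqref{eq:subdiff_1} and \eqref{eq:assoc} (using $\|Q_\nu - Q_{\gamma_t}\|_{L_2(0,1)} = W_2(\gamma_t,\nu)$),
\[
  \F(\nu) \ge \F(\gamma_t) + \langle -\partial_t g(t),\, Q_\nu - Q_{\gamma_t}\rangle_{L_2(0,1)} + o\big(W_2(\gamma_t,\nu)\big).
\]
Let $T$ be the monotone optimal transport map from $\gamma_t$ to $\nu$; in one dimension $T = Q_\nu \circ R^+_{\gamma_t}$, so that $Q_\nu - Q_{\gamma_t} = (T - \id)\circ g(t)$ $\lebesgue_{(0,1)}$-a.e. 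By the change of variables and the definition of $v_t$,
\[
  \langle -\partial_t g(t),\, Q_\nu - Q_{\gamma_t}\rangle_{L_2(0,1)} = \int_0^1 \big(-v_t(g(t)(s))\big)\big(T - \id\big)(g(t)(s))\,\d s = \int_\R \big(-v_t(x)\big)\,\big(T(x) - x\big)\,\d\gamma_t(x),
\]
which is exactly the pairing in the definition of the reduced Fréchet subdifferential. Hence $-v_t \in \partial\F(\gamma_t)$ for a.e.\ $t$, and together with the minimality of $v_t$ this is \eqref{wgf}: $\gamma$ is a Wasserstein gradient flow of $\F$ with $\gamma(0+) = \gamma_0$ and $Q_{\gamma_t} = g(t)$.

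I expect the last paragraph to be the main obstacle, for two reasons. First, one must check that the error $o(\|Q_\nu - Q_{\gamma_t}\|_{L_2(0,1)})$ is admissible as an $o(W_2(\gamma_t,\nu))$-error in the reduced Fréchet subdifferential; here this is immediate since the two norms are equal, but it hinges on the fact that it suffices to test \eqref{eq:subdiff_1} only against the monotone competitors $w \in \mathcal C(0,1)$, which should be stated explicitly. Second, the identity $Q_\nu - Q_{\gamma_t} = (T - \id)\circ Q_{\gamma_t}$ converting the $L_2(0,1)$ inner product into the Wasserstein pairing is precisely where one-dimensionality enters: it rests on the optimality of monotone maps and on $T = Q_\nu \circ R^+_{\gamma_t}$. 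A secondary technical point is the well-definedness of $v_t$ and the verification that $\partial_t g(t)$ represents an element of the Wasserstein tangent space, so that the constructed $v_t$ is genuinely the minimal velocity field and not merely \emph{some} velocity field; on the cone $\mathcal C(0,1)$ this is clean but does use the monotonicity structure.
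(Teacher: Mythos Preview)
Your overall strategy---push everything through the isometry $\mathcal I$, identify the velocity, and transfer the subdifferential inequality---is exactly the paper's. The one substantive difference is the \emph{direction} in which you identify the velocity field, and this is where your argument has a gap.

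You construct $v_t$ by declaring $v_t\circ g(t)=\partial_t g(t)$ and justify well-definedness by asserting that the difference quotients $h^{-1}(g(t+h)-g(t))$ are constant on any interval where $g(t)$ is constant. That assertion is false in general: nothing forces $g(t+h)$ to be constant on the same interval (an atom of $\gamma_t$ can split or shrink), so the difference quotient, and hence its $L_2$-limit, need not factor through $g(t)$. The conclusion you want---that $\partial_t g(t)$ is a.e.\ constant on the level sets of $g(t)$---is true, but your argument does not prove it. The paper sidesteps this by going the other way: it takes the minimal velocity field $v_t\in T_{\gamma_t}\P_2(\R)$, which exists abstractly for any absolutely continuous curve, and uses \cite[Prop.~8.4.6]{BookAmGiSa05} together with the fact that $\mathrm{Id}+\tau v_t$ is monotone for small $\tau$ (since $v_t$ is tangent) to conclude $\|h^{-1}(g(t+h)-g(t))-v_t\circ g(t)\|_{L_2}\to 0$. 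This simultaneously yields $\partial_t g(t)=v_t\circ g(t)$ and the minimality of $v_t$, with no separate well-definedness check.

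A second, smaller point: in the subdifferential transfer you write the pairing via an optimal \emph{map} $T=Q_\nu\circ R^+_{\gamma_t}$ and the identity $Q_\nu-Q_{\gamma_t}=(T-\id)\circ g(t)$. When $\gamma_t$ has an atom this identity fails on the corresponding plateau of $g(t)$ (there $R^+_{\gamma_t}\circ g(t)$ collapses to the right endpoint). The paper instead uses the optimal \emph{plan} $\pi=(g(t),Q_\mu)_\#\lebesgue_{(0,1)}$ and writes the pairing as $\int v_t(x)(y-x)\,\d\pi(x,y)$, which is exactly the quantity appearing in the reduced Fr\'echet subdifferential and needs no map.
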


With  minor adjustments, the proof can be taken from \cite[Theorem 3.5, part 2]{DSBHS2024}. For completeness, we provide it in the appendix.
   
\section{Distance Kernel MMDs with Sobolev Regularization}\label{sec:mmd} 
In this section, we introduce the MMD functionals for the negative as well as the positive distance kernel and their Sobolev regularized versions $\widetilde{F}_\nu^-, \widetilde{F}_\nu^+$.  
Then, we consider flows of the negative kernel. We state the general existence of the Wasserstein gradient flow via minimizing movements, and derive a more explicit gradient flow equation \eqref{eq:cauchy3}. 
With it, we can give an explicit example of the $\widetilde{F}_\nu^-$-flow, and study the convergence of the gradient flow for vanishing regularization constants $\lambda \downarrow 0$. We also give a result concerning a boosted convergence towards the target $\nu$ in the asymptotic limit $t\to \infty$.
Finally, we show that there exists a so-called generalized minimizing movement in case of the positive kernel, and present an example of an $\widetilde{F}_\nu^+$-flow.

 \subsection{General modeling for both kernels}
We consider  MMD functionals given by
the positive \emph{and} negative distance kernel 
\begin{equation} \label{eq:riesz}
K(x,y) \coloneqq \pm |x-y|, \quad x,y \in \R.
\end{equation} 
The MMD  between $\mu,\nu \in \mathcal P_2(\R)$ is defined by
    \begin{align}         \label{eq:DK2}
        \text{MMD}_K^2(\mu, \nu) &\coloneqq 
        \frac12 \int_{\R^2} K(x,y) \d \left(\mu(x) - \nu(x) \right) \d \left(\mu(y) - \nu(y) \right ).       
\end{align}
For the negative distance kernel, which is \emph{conditionally positive definite}, the square root of the above formula defines a distance on $\P_2(\R)$, see also \cite{NS2023,MD2024} (and \cite{DSBHS2024} and the references therein for the relation to the \emph{energy} and \emph{Cramer distance}), so that
$\text{MMD}_K(\mu, \nu) \ge 0$ 
with equality if and only if $\mu = \nu$.
Fixing the target measure $\nu$ and skipping the arising constant term  in \eqref{eq:DK2}, 
the MMD functionals  $\F_{\nu}^{\pm} \colon \mathcal P_2(\R) \to (-\infty,+\infty]$
are defined according to the distance kernels \eqref{eq:riesz} by
\begin{equation}  \label{eq:dis-decomp_all}
    \F_{\nu}^{\pm}(\mu)    
    \coloneqq \frac12 \int_{\R^2} \pm |x-y| \d \mu(x) \d \mu(y) 
    -
    \int_{\R^2} \pm |x-y| \d \mu(x) \d \nu(y). 
\end{equation}
The first summand is known as \emph{interaction energy}, while
the second one is called \emph{potential energy} of 
$V_\nu \coloneqq \int_{\R} K(\cdot,y) \d \nu(y)$. 
Functionals of this kind were,
e.g., applied in \cite{TSGSW2011}
for function dithering.
In general, MMDs have seen use in statistics and machine learning \cite{mmd_energy_eq, BGRKSS06, GBRSS13}, 
and as loss functions in generative adversarial networks \cite{BSAG2018, DRG2015}, while their gradient flows gained popularity in machine learning \cite{AKSG2019} and image processing \cite{EGNS2021}.
The next result from \cite{DSBHS2024,HBGS2023}
determines  special associated functionals for $\F_{\nu}^{\pm}(\mu)$
with convenient properties. 

\begin{lemma}\label{lem:extended_fun}
Let $\F_\nu^{\pm}$ be given by \eqref{eq:dis-decomp_all}, 
and let  $F_\nu \colon L_2(0,1) \to \R$ be defined by 
\begin{align} \label{eq:Fnu}
    F_{\nu}(u)
    &\coloneqq \int_{0}^{1} \left( (1 - 2 s)  u(s)
    + \int_{0}^{1} | u(s) - Q_{\nu}(t) | \d{t} \right) \d{s}.
\end{align}
Then, it holds for all $u \in L_2(0, 1)$ that
 \begin{align} \label{eq:subdiff}
    \partial F_{\nu}(u) 
    = \big\{ f \in L_2(0,1): 
    f(s) \in 2 \big[ R_{\nu}^-\big(u(s)\big), R_{\nu}^+\big(u(s)\big)\big] - 2s
    ~\text{for a.e.} \; s \in (0, 1) \big\}.
    \end{align} 
For all $\mu\in\P_2(\R)$, we have the association
\begin{equation} \label{def_kern}
    \F_{\nu}^{\pm}(\mu)=  \mp  F_\nu(Q_\mu) \eqqcolon F_\nu^{\pm}(Q_\mu).
\end{equation}
The functional $F_{\nu}^{-} = F_\nu$ corresponding to the negative kernel is convex and continuous.
In particular, we have $\dom(\partial F_\nu^-) = \dom(F_\nu^-) = L_2(0,1)$.
The functional $F_{\nu}^{+} = - F_\nu$ is in general \emph{not} $\lambda$-convex for any $\lambda \in \R$.
\end{lemma}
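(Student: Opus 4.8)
The plan is to verify the three claims of Lemma~\ref{lem:extended_fun} in sequence: the subdifferential formula~\eqref{eq:subdiff}, the association~\eqref{def_kern}, and the convexity/non-convexity dichotomy. The key structural observation is that $F_\nu$ splits as $F_\nu = L + G$, where $L(u) \coloneqq \int_0^1 (1-2s)(u(s)-Q_\nu(s))\d s$ is affine (hence its subdifferential is the constant $s \mapsto 1-2s$), and $G(u) \coloneqq \int_0^1 \int_0^1 |u(s) - Q_\nu(t)| \d t\, \d s$ is convex and continuous as an integral functional of the convex integrand $u \mapsto |u - Q_\nu(t)|$. Since $L$ is everywhere continuous and affine, the sum rule for subdifferentials applies without qualification, giving $\partial F_\nu(u) = (1-2s) + \partial G(u)$ pointwise in the appropriate sense.

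For the subdifferential of $G$, I would first compute the subdifferential of the inner scalar function $\phi_t(r) \coloneqq |r - Q_\nu(t)|$, which is $\sgn(r - Q_\nu(t))$ when $r \neq Q_\nu(t)$ and $[-1,1]$ when $r = Q_\nu(t)$. Then I would integrate over $t$: for fixed $s$ and value $r = u(s)$, the set of admissible subgradient values is $\int_0^1 \partial_r |r - Q_\nu(t)| \d t$, interpreted as a set-valued integral (Aumann integral). The measure of $\{t : Q_\nu(t) < r\}$ equals $R_\nu^-(r)$ and the measure of $\{t : Q_\nu(t) \le r\}$ equals $R_\nu^+(r)$ — this is the standard duality between $Q_\nu$ and the CDF, using that $Q_\nu$ pushes forward $\lebesgue_{(0,1)}$ to $\nu$ (Theorem~\ref{prop:Q}) together with the properties in Remark~\ref{l:DLOW-eq-lem}. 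Hence the Aumann integral is the interval $[R_\nu^-(r) - (1 - R_\nu^-(r)),\ R_\nu^+(r) - (1 - R_\nu^+(r))] = 2[R_\nu^-(r), R_\nu^+(r)] - 1$. Adding back the affine part's contribution $1 - 2s$ yields exactly the claimed set $2[R_\nu^-(u(s)), R_\nu^+(u(s))] - 2s$. The step that requires care — and that I expect to be the main technical obstacle — is justifying that the subdifferential of the integral functional $G$ is obtained by pointwise (in $s$) integration of the integrand's subdifferential, i.e., an interchange of $\partial$ and $\int$; this is a measurable-selection / normal-integrand argument in the spirit of Rockafellar, and one must check the required integrability and measurability hypotheses (the integrand is a convex normal integrand, $G$ is finite and continuous on all of $L_2(0,1)$, so the standard theorem applies). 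Convexity and continuity of $F_\nu = F_\nu^-$, and thus $\dom(\partial F_\nu^-) = \dom(F_\nu^-) = L_2(0,1)$, then follow immediately since affine plus continuous-convex is continuous-convex and real-valued on the whole space.

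For the association~\eqref{def_kern}, I would substitute $u = Q_\mu$ into~\eqref{eq:Fnu} and identify the two terms with the interaction and potential energies in~\eqref{eq:dis-decomp_all}. Using $Q_\mu{}_\#\lebesgue_{(0,1)} = \mu$ and the one-dimensional identity $\int_{\R^2} |x-y| \d\mu(x)\d\nu(y) = \int_0^1\int_0^1 |Q_\mu(s) - Q_\nu(t)| \d t\, \d s$, the second summand of $F_\nu$ matches the potential term. For the first summand, the key is the classical formula $\frac12\int_{\R^2}|x-y|\d\mu(x)\d\mu(y) = \int_0^1 (2s-1)\, Q_\mu(s)\d s$ (equivalently $\int_\R R_\mu^+(x)(1-R_\mu^+(x))\d x$), which produces the $(1-2s)u(s)$ contribution up to the $Q_\nu$-dependent constant that was "skipped" in passing from $\mathrm{MMD}_K^2$ to $\F_\nu^\pm$; one checks the $Q_\nu$-terms in $F_\nu$ are precisely what is needed so that $\F_\nu^-(\mu) = F_\nu(Q_\mu)$ and $\F_\nu^+(\mu) = -F_\nu(Q_\mu)$. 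This is bookkeeping and can largely be cited from \cite{DSBHS2024,HBGS2023}.

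Finally, for the non-$\lambda$-convexity of $F_\nu^+ = -F_\nu$: since $F_\nu$ is already known to be convex and (generically) not affine, $-F_\nu$ is concave and non-affine, so it cannot be $\lambda$-convex for any $\lambda$. Concretely, I would exhibit a line segment $u_\theta = (1-\theta)u_0 + \theta u_1$ in $L_2(0,1)$ on which $F_\nu$ is strictly convex — e.g. choosing $u_0, u_1$ so that the values $u_\theta(s)$ sweep across an interval where $R_\nu^\pm$ is strictly increasing, making $G$ strictly convex along the segment — and then $-F_\nu$ is strictly concave there, violating~\eqref{def:lambda_convex} for every $\lambda \in \R$ once the segment is taken long enough (the quadratic lower bound $-\tfrac{\lambda}{2}t(1-t)\|u_1-u_0\|^2$ cannot dominate a strictly concave deficit of fixed sign on a fixed segment, rescaling if necessary). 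This last point is short; the bulk of the work is the subdifferential computation.
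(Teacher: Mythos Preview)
Your treatment of the subdifferential formula and the association \eqref{def_kern} is sound and matches the standard route (which the paper simply cites from \cite{DSBHS2024,HBGS2023}): split $F_\nu = L + G$ with $L$ affine and $G$ a convex integral functional, compute $\partial G$ via the pointwise Aumann integral of $\partial|\cdot - Q_\nu(t)|$, and identify the resulting interval with $2[R_\nu^-,R_\nu^+]-1$ using $(Q_\nu)_\#\lebesgue_{(0,1)}=\nu$. The continuity/convexity and hence $\dom(\partial F_\nu^-)=L_2(0,1)$ follow immediately.

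The gap is in your argument for the non-$\lambda$-convexity of $F_\nu^+=-F_\nu$. The implication ``$F_\nu$ convex and non-affine $\Rightarrow$ $-F_\nu$ is not $\lambda$-convex for any $\lambda$'' is false: any concave function whose second derivative is bounded below is $\lambda$-convex for sufficiently negative $\lambda$. Concretely, $V_\nu(x)=\int_\R|x-y|\d\nu(y)$ satisfies $V_\nu''=2f_\nu$ in the distributional sense, so if $\nu$ has a bounded density $f_\nu$, then $-V_\nu$ is $(-2\|f_\nu\|_\infty)$-convex on $\R$, and consequently $F_\nu^+(u)=-\int_0^1 V_\nu(u(s))\d s + \text{affine}$ \emph{is} $\lambda$-convex on $L_2(0,1)$ for $\lambda=-2\|f_\nu\|_\infty$. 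Thus the claim ``in general not $\lambda$-convex'' really means ``there exists $\nu$ for which it fails'', and your proposed construction---choosing a segment where $R_\nu^\pm$ is strictly increasing (i.e.\ where $\nu$ has density) and making the segment long---targets precisely the regime where the claim is \emph{false}. Strict concavity along a segment yields a defect of order $\|u_1-u_0\|^2$ when $V_\nu$ is $C^{1,1}$, which the quadratic term $-\tfrac{\lambda}{8}\|u_1-u_0\|^2$ absorbs for $\lambda$ negative enough; neither lengthening nor rescaling the segment changes this ratio.

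What is actually needed is a \emph{kink}: a point where $V_\nu$ fails to be $C^1$, i.e.\ an atom of $\nu$. The paper takes $\nu=\delta_0$, so $Q_\nu\equiv 0$ and $G(u)=\int_0^1|u(s)|\d s$, and chooses $u_0\equiv\varepsilon$, $u_1\equiv -\varepsilon$. Then the midpoint defect is $\varepsilon$ (first order in $\varepsilon$) while $\|u_1-u_0\|^2=4\varepsilon^2$, so the $\lambda$-convexity inequality at $t=\tfrac12$ forces $0\ge 2+\lambda\varepsilon$, which fails for every fixed $\lambda$ once $\varepsilon$ is small. The essential mechanism is that the concavity defect scales like $\varepsilon$ rather than $\varepsilon^2$ across the kink, so one must take the segment \emph{short}, not long, and across an atom rather than through the absolutely continuous part of $\nu$.
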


\begin{proof}
Except for the last part, the result was shown in \cite{DSBHS2024}.
Concerning the last part, 
let $\lambda \in \R$ and consider $\nu \coloneqq \delta_0$, so that $Q_\nu \equiv 0$.
Assume that $F_{\nu}^+$ is $\lambda$-convex. 
    Then, we have for any $u_0, u_1 \in L_2(0, 1)$ that
    \begin{equation*}
         F_{\nu}^+ \left(\frac{u_0 + u_1}{2}\right)
        \le 
				\frac{F_{\nu}^+(u_0) + {F}_{\nu}^+(u_1)}{2}
        - \frac{1}{8} \lambda \| u_0 - u_1 \|_{L_2(0, 1)}^2.
    \end{equation*}
    Since the first summand of $F_{\nu}^+$ is linear and since $Q_\nu \equiv 0$,
    this implies
    \begin{align*}
           \int_{0}^{1} \left|  u_0+  u_1 \right| \d{s}
        & \ge   \int_{0}^{1} | u_0  | \d{s}
        +  \int_{0}^{1} | u_1|  \d{s} 
        +  \frac{1}{4} \lambda \int_{0}^{1} | u_0 - u_1 |^2 \d{s}.
    \end{align*}
     For $\varepsilon > 0$ and $u_0\equiv \varepsilon$, $u_1 \equiv - \varepsilon$ this implies
		$
        0
        \ge 2 + \lambda \varepsilon
        $,
		which is a contradiction for $\varepsilon$ sufficiently small.
		\end{proof}

In order to compensate for the missing convexity of $F_\nu^+$, we propose to  regularize with the function 
$F_H \colon L_2(0,1) \to [0, \infty]$ given by
\begin{equation}\label{eq:sobolev-functional}
    F_H (u) \coloneqq 
    \begin{cases}
        \frac{\lambda}{2}\int_0^1 |u'(s)|^2 \d s, & \text{if } u \in H^1(0,1), \\
        \infty, & \text{else},
    \end{cases}
\end{equation}
with the  (diffusion) constant $\lambda > 0$.
The domain $\dom (F_H) = H^1(0,1) \subset L_2(0,1)$ is the  Sobolev space of square integrable functions $u$, whose weak first derivative $u'$ is also square integrable.
The following lemma  is a classical result of the PDE theory and can be found, e.g., in \cite[Proposition 2.9]{B2010}.
Note that the homogeneous Neumann boundary conditions naturally arise from the definition \eqref{eq:sobolev-functional} of the functional $F_H$ on $H^1(0,1)$.
\begin{lemma}\label{l:laplacian}
 The functional $F_H$ in \eqref{eq:sobolev-functional} is proper, convex and lsc. 
 The subdifferential $\partial F_H(u)$ of $F_H$ at $u \in L_2(0,1)$ consists of all $f \in L_2(0,1)$  such that $u$ is the weak solution of the problem
 \begin{equation}
     \lambda \int_0^1 u'(s) v'(s) \d s = \int_0^1 f(s) v(s) \d s \quad \text{for all } v \in H^1(0,1).
 \end{equation} 
 Further, $\dom (\partial F_H)$ is dense in $L_2(0,1)$ and is given by
 \begin{align}
  \dom (\partial F_H) &= \{ u \in H^1(0,1) : u'' \in L_2(0,1) \text{ and } \, u'(0) = u'(1) = 0 \}   \\
  &= \{ u \in H^2(0,1) : u'(0) = u'(1) = 0 \}
 \end{align}
 where $u''$ denotes the second weak derivative of $u$.
 For $u \in \dom (\partial F_H)$,  it holds 
 $\partial F_H (u) = -\lambda u''$.
\end{lemma}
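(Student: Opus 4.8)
The plan is to establish the three claimed properties of $F_H$—properness, convexity, lower semicontinuity—directly from the definition, and then to identify the subdifferential by a standard variational argument involving integration by parts and elliptic regularity.

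\textbf{Properness, convexity, lsc.} First I would note that $F_H(u) = \frac{\lambda}{2}\|u'\|_{L_2}^2$ for $u \in H^1(0,1)$ is finite, e.g., on constant functions, so $\dom(F_H) \supseteq H^1(0,1) \neq \emptyset$ and $F_H$ is proper. Convexity follows because $u \mapsto u'$ is linear on $H^1(0,1)$ and $v \mapsto \|v\|_{L_2}^2$ is convex, while the extended-value convention ($F_H = \infty$ off $H^1$) preserves convexity since $H^1(0,1)$ is a linear subspace. For lower semicontinuity, I would argue that if $u_n \to u$ in $L_2(0,1)$ with $\liminf F_H(u_n) < \infty$, then along a subsequence $u_n' $ is bounded in $L_2$, hence has a weakly convergent subsequence $u_n' \rightharpoonup w$; testing against smooth compactly supported functions identifies $w = u'$ so $u \in H^1(0,1)$, and weak lower semicontinuity of the norm gives $\|u'\|_{L_2} \le \liminf \|u_n'\|_{L_2}$, i.e., $F_H(u) \le \liminf F_H(u_n)$.

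\textbf{The subdifferential.} For $f \in \partial F_H(u)$, convexity of $F_H$ means $F_H(w) \ge F_H(u) + \langle f, w - u\rangle$ for all $w \in L_2(0,1)$; in particular $u \in H^1(0,1)$. Plugging in $w = u + \epsilon v$ for arbitrary $v \in H^1(0,1)$ and $\epsilon \in \R$, expanding $\frac{\lambda}{2}\|u' + \epsilon v'\|^2$ and letting $\epsilon \to 0^{\pm}$ yields the Euler–Lagrange identity $\lambda \int_0^1 u' v' = \int_0^1 f v$ for all $v \in H^1(0,1)$; conversely this identity implies $f \in \partial F_H(u)$ by the same quadratic expansion (the cross term is exactly $\lambda\int u'v'$ and the remaining term $\frac{\lambda\epsilon^2}{2}\|v'\|^2 \ge 0$). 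This proves the weak-formulation characterization. To get the explicit description of $\dom(\partial F_H)$: given the weak identity, choosing $v \in C_c^\infty(0,1)$ shows $u'$ has weak derivative $-f/\lambda \in L_2$, so $u \in H^2(0,1)$ and $-\lambda u'' = f$; then integrating by parts in the identity for general $v \in H^1(0,1)$ leaves the boundary term $\lambda[u'(1)v(1) - u'(0)v(0)]$, which must vanish for all such $v$, forcing the Neumann conditions $u'(0) = u'(1) = 0$. The reverse inclusion is immediate: any $u \in H^2(0,1)$ with $u'(0)=u'(1)=0$ satisfies the weak identity with $f = -\lambda u''$ via integration by parts. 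Finally, density of $\dom(\partial F_H)$ in $L_2(0,1)$ follows since it contains, e.g., the finite cosine sums $\mathrm{span}\{\cos(k\pi s) : k \in \N_0\}$, which are dense.

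\textbf{Main obstacle.} None of the steps is genuinely hard—this is, as the paper says, classical—so the "obstacle" is purely one of bookkeeping: being careful that the one-sided difference quotients in the subdifferential inequality are handled correctly to extract the \emph{equality} $\lambda\int u'v' = \int fv$ (not just an inequality) for all $v \in H^1$, and making sure the integration-by-parts step is justified, which requires knowing $u' \in H^1(0,1)$ has a continuous representative so that pointwise boundary values $u'(0), u'(1)$ make sense. Since the result is standard and a reference (\cite[Proposition 2.9]{B2010}) is available, I would in fact just cite it and perhaps sketch the subdifferential computation for the reader's convenience rather than reproving elliptic regularity from scratch.
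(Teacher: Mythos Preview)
Your proposal is correct and in fact more detailed than what the paper does: the paper gives no proof at all, simply stating that the lemma ``is a classical result of the PDE theory and can be found, e.g., in \cite[Proposition 2.9]{B2010}.'' Your concluding remark that you would just cite the reference and perhaps sketch the subdifferential computation is exactly in line with the paper's treatment.
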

Regularizing $F_\nu ^-$ directly with the functional $F_H$ leads to a distortion of the target $\nu$. 
Therefore, we consider for $Q_\nu \in H^1(0,1)$ the function  
$F_{H, \nu} \colon L_2(0,1) \to [0,\infty]$ with
\begin{equation}\label{eq:sobolev-functional-target}
    F_{H, \nu} (u ) \coloneqq F_H(u - Q_\nu).
\end{equation}
It is also proper, convex, lsc and
$\dom (F_{H, \nu}) = \dom(F_{H}) = H^1(0,1)$. Further, it holds 
$\partial  F_{H, \nu} (u) = \partial F_H (u - Q_\nu)$ 
for all $u \in H^1(0,1)$ and 
$\dom (\partial F_{H, \nu}) = \dom (\partial F_{H}) + Q_\nu$. 
If  $Q_\nu \in H^2(0,1)$, then we have
$$\dom (\partial F_{H, \nu}) = \{ u \in H^2(0,1) : u'(0) = Q_\nu'(0), ~ u'(1) = Q_\nu'(1)\}.$$
Note how the Neumann boundary conditions are now prescribed by the target $Q_\nu$.
Moreover,
$$\partial F_{H, \nu} (u) = -\lambda u'' + \lambda Q_\nu'' \qquad \text{for all }
 u \in \dom (\partial F_{H, \nu}).$$

To ensure that the gradient flow stays in $\C(0,1)$, we use the indicator function \begin{equation}
    I_{\C(0,1)} (u) \coloneqq
    \begin{cases}
        0, \quad ~\,\text{if } u \in \C(0,1), \\
        \infty, \quad \text{else},
    \end{cases}
\end{equation}
which is proper, convex and lsc, since 
$\C(0,1)$ is nonempty, convex and closed. 
Further, $\dom (\partial I_{\C(0,1)}) = \C(0,1)$, and it holds $0 \in \partial I_{\C(0,1)}(u)$ for all $u \in \C(0,1)$.
Now, we can introduce the  regularized versions $\widetilde{F}_{\nu}^\pm \colon L_2(0,1) \to (-\infty, \infty]$ of $F_\nu^\pm$ given by
\begin{align}\label{eq:regular-1}
   \widetilde{F}_{\nu}^- &\coloneqq F_{\nu}^{-} + F_{H,\nu} + I_{\C(0,1)}, 
\qquad
   \widetilde{F}_{\nu}^+ \coloneqq F_{\nu}^{+} + F_H + I_{\C(0,1)}.
\end{align}
It holds that $\dom(\widetilde{F}_{\nu}^{\pm}) = \C(0,1) \cap H^1(0,1)$. 

\begin{remark}[$\widetilde{F}_{\nu}^\pm$ as associated functions of $\widetilde{\mathcal F}_{\nu}^\pm$]\label{assoc}
   Using Lemma \ref{lem:extended_fun} and the isometry $\mathcal{I}$ from Theorem \ref{prop:Q}, we see that  
	$\widetilde{F}_{\nu}^\pm \colon L_2(0,1) \to (-\infty, \infty]$
	are associated functionals of $\widetilde{\F}_{\nu}^\pm \colon \mathcal P_2(\R) \to (-\infty, \infty]$ given by
\begin{equation}\label{eq:regular-assoc-1}
\widetilde{\F}_{\nu}^-(\mu) \coloneqq \F_{\nu}^{-}(\mu) + F_{H,\nu}(\mathcal{I}(\mu))  
\quad \text{and} \quad
\widetilde{\F}_{\nu}^+(\mu) \coloneqq \F_{\nu}^{+}(\mu) + F_H(\mathcal{I}(\mu)) .
\end{equation}
If $\mu \in \P_2(\R)$ has a density $f_{\mu} \in L^1(\R)$, 
then the squared $H^1$-seminorm of $Q_{\mu}$ becomes
\begin{align*}
    | Q_{\mu} |_{1, 2}^2
        & = \int_{\supp(\mu)} \frac{1}{f_{\mu}(x)} \d{x},
\end{align*}
which delivers the second summand in the above functionals.
Note that this expression would also make sense in higher dimensions which we will consider in our future work.
\end{remark}

\subsection{Flows of the negative kernel}

We consider the function $\widetilde{F}_{\nu}^-$, i.e., $K(x,y) = -|x-y|$.
Let us start with the existence result.

\begin{theorem} \label{main_mmd}
For $Q_\nu \in \C(0,1)\cap H^1(0,1)$, let $\widetilde{F}_{\nu}^-$ be given by \eqref{eq:regular-1}, and suppose an initial datum $g_0 \in \dom(\widetilde{F}_{\nu}^{-}) = \C(0,1) \cap H^1(0,1)$.
Then, there exists a unique strong solution $g \in H^1_{\rm loc}([0,\infty); L_2(0,1))$ of
the Cauchy problem
\begin{align}\label{eq:cauchy2}
    \begin{cases}
        \partial_t g(t) \in -  \partial \widetilde{F}_{\nu}^{-} (g(t)),   \quad  t \in (0, \infty), \\
         g(0) = g_0.
    \end{cases}
\end{align}
The associated curve
$\gamma_t \coloneqq (g(t))_\# \Lambda_{(0,1)}$
is the unique Wasserstein gradient flow of $\widetilde{\F}_{\nu}^{-}$ on $(0,\infty)$ with $\gamma(0+) = (g_0)_{\#} \lebesgue_{(0,1)}$.
Both flows $g$ and $\gamma_t$ are given by minimizing movements $g \in {\rm MM}(\widetilde{F}_\nu^-, g_0)$ and $\gamma \in {\rm MM}(\widetilde{\F}_\nu^-, \gamma(0+))$, respectively. The minimizers \eqref{mms} are unique and the convergence \eqref{eq:mms-convergence} is globally uniform in $t \in [0,\infty)$ with step size $\tau \downarrow 0$.
\end{theorem}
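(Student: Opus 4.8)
The plan is to reduce the statement to the classical semigroup theory of gradient flows of convex, lower semicontinuous (lsc), proper functionals on a Hilbert space, and then to transport the result to $\P_2(\R)$ via Theorems~\ref{thm:L2_representation} and \ref{thm:WGF}. First I would record that $\widetilde F_\nu^-$ from \eqref{eq:regular-1} is proper, convex and lsc on $L_2(0,1)$: the summand $F_\nu^-=F_\nu$ has these properties by Lemma~\ref{lem:extended_fun}, $F_{H,\nu}$ by Lemma~\ref{l:laplacian} together with \eqref{eq:sobolev-functional-target}, and $I_{\C(0,1)}$ because $\C(0,1)$ is nonempty, convex and closed; properness of the sum follows since $g_0\in\C(0,1)\cap H^1(0,1)$ lies in the intersection of the three domains. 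Consequently $\partial\widetilde F_\nu^-$ is maximal monotone, and the Brezis--Komura theorem gives a unique strong solution $g$ of \eqref{eq:cauchy2} on $[0,\infty)$; since $g_0\in\dom(\widetilde F_\nu^-)$, one even obtains $g\in H^1_{\rm loc}([0,\infty);L_2(0,1))$, and the implicit Euler scheme \eqref{mms} (with $W_2$ replaced by $\|\cdot\|_{L_2(0,1)}$) has unique minimizers by strong convexity of the proximal functional, its piecewise constant interpolants converging to $g$ uniformly as $\tau\downarrow0$.

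Next, I would pass to the Wasserstein picture. For a.e.\ $t>0$ we have $g(t)\in\dom(\partial\widetilde F_\nu^-)\subseteq\dom(\widetilde F_\nu^-)\subseteq\C(0,1)$, so by continuity of $g$ into $L_2(0,1)$ and closedness of $\C(0,1)$, in fact $g(t)\in\C(0,1)$ for all $t\ge0$. By Remark~\ref{assoc}, $\widetilde F_\nu^-$ is associated with $\widetilde{\F}_\nu^-$, so Theorem~\ref{thm:L2_representation} applies and yields that $\gamma_t=(g(t))_{\#}\lebesgue_{(0,1)}$ is a Wasserstein gradient flow of $\widetilde{\F}_\nu^-$ with $\gamma(0+)=(g_0)_{\#}\lebesgue_{(0,1)}$ and $Q_{\gamma_t}=g(t)$.

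For uniqueness and the minimizing-movement characterization on $\P_2(\R)$, I would verify the hypotheses of Theorem~\ref{thm:WGF} for $\widetilde{\F}_\nu^-$ with $\lambda=0$: it is bounded below since $\F_\nu^-$ agrees with $\text{MMD}_K^2(\cdot,\nu)\ge0$ (for the negative kernel $K$) up to an additive constant and $F_{H,\nu}\circ\mathcal I\ge0$; it is lsc because $F_\nu$ is continuous and $F_{H,\nu}$ is lsc on $L_2(0,1)$ while $\mathcal I$ is a topological (isometric) embedding by Theorem~\ref{prop:Q}; and it is $0$-convex along geodesics because the constant-speed $W_2$-geodesic between $\mu_0,\mu_1$ is the pushforward of the segment $(1-t)Q_{\mu_0}+tQ_{\mu_1}$, which stays in the convex set $\C(0,1)\cap H^1(0,1)$ when $\mu_0,\mu_1$ have finite value, so convexity of $\widetilde F_\nu^-$ on $L_2(0,1)$ gives \eqref{def:lambda_convex}. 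Theorem~\ref{thm:WGF} then provides uniqueness of the Wasserstein gradient flow with initial datum $(g_0)_{\#}\lebesgue_{(0,1)}$ — hence its coincidence with the $\gamma$ constructed above — its identification with the minimizing movement $\gamma\in\text{MM}(\widetilde{\F}_\nu^-,\gamma(0+))$, uniqueness of the minimizers \eqref{mms}, and the uniform convergence \eqref{eq:mms-convergence}. Finally, since $I_{\C(0,1)}$ confines the proximal minimization in \eqref{mms} to $\C(0,1)$, the $W_2$- and $L_2$-proximal steps correspond under $\mathcal I$, so also $g\in\text{MM}(\widetilde F_\nu^-,g_0)$ with the analogous properties.

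I do not expect any individual estimate to be hard; the delicate part is the bookkeeping that glues the $L_2$- and $W_2$-pictures together — in particular ensuring that the strong $L_2$-solution never leaves $\C(0,1)$ (so that Theorem~\ref{thm:L2_representation} may be invoked) and that the abstract Hilbert-space minimizing movement is literally the scheme \eqref{mms} and not merely conjugate to it, which is exactly where the indicator $I_{\C(0,1)}$ enters. A secondary point is upgrading lsc of $F_{H,\nu}$ on $L_2(0,1)$ to lsc of $F_{H,\nu}\circ\mathcal I$ on $\P_2(\R)$, again using that $\mathcal I$ is a homeomorphism onto the closed cone $\C(0,1)$. Structurally the argument parallels the treatment of the (already convex) unregularized negative-kernel functional in \cite{DSBHS2024}; the only new point is that adding the convex terms $F_{H,\nu}$ and $I_{\C(0,1)}$ preserves convexity, which is what makes the whole machinery go through.
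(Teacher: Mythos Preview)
Your proposal is correct and follows essentially the same approach as the paper: the paper's proof simply notes that $\widetilde{F}_{\nu}^-$ and $\widetilde{\F}_{\nu}^-$ are proper, convex and lsc, and then appeals to Remark~\ref{assoc}, Theorem~\ref{thm:L2_representation}, and the standard maximal monotone/subdifferential theory (Brezis for the $L_2$-side, Ambrosio--Gigli--Savar\'e for the Wasserstein side). Your write-up is a careful unpacking of exactly these ingredients, including the verification that $g(t)\in\C(0,1)$ for all $t$, the geodesic-convexity check via linear interpolation of quantiles, and the identification of the $L_2$- and $W_2$-proximal steps through the indicator $I_{\C(0,1)}$.
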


\begin{proof}
Since $\widetilde{F}_{\nu}^-$ and $\widetilde{\F}_{\nu}^-$ are proper, convex, lsc and bounded below, the result follows from combining Remark \ref{assoc}, Theorem \ref{thm:L2_representation} and the standard literature of maximal monotone/subdifferential operators, see, e.g., \cite{B1973}, and also \cite[Theorem 11.2.1]{BookAmGiSa05} for the Wasserstein setting.
\end{proof}

\begin{remark}[Purpose of the $F_{H, \nu}$-regularization]\label{rem:sobolev-role}
  
  To be clear, the functional $F_\nu^- = F_\nu$ associated with the {negative kernel} is already convex (in one dimension), and technically, the existence of strong solutions $g \in {\rm MM}({F}_{\nu}^-, g_0)$ is already known. Nevertheless,  
  the regularization via $F_{H, \nu}$ rectifies a \emph{"dissipation-of-mass" defect} of the $F_\nu$-flow as demonstrated in Section \ref{sec:numerics-examples}. This might  be useful when working on $\P_2(\R^d)$ since there, the non-convexity of the negative kernel MMD paired with this defect\footnote{In the multi-dimensional case, mass might not only dissipate in a bounded region, but even spread to infinity due to the repulsiveness of the negative distance kernel.} 
  seems to complicate the proof of the existence of Wasserstein gradient flows. 
\end{remark}

By the following proposition, under mild assumptions on the target, 
the indicator function $I_{\C(0,1)}$ is actually not needed when considering the gradient flow of $\widetilde{F}_\nu^-$. These assumptions are fulfilled,
e.g., if $Q_\nu \in C^3([0,1])$ and $\lambda > 0$ is sufficiently small; or more generally, if $Q_\nu \in H^3(0,1)$ and 
\begin{equation}
    Q_\nu'''(s) \le \frac{2}{\lambda} \quad \text{for all } s\in (0,1).
\end{equation}
Together with Lemma \ref{lem:extended_fun} and \ref{l:laplacian}, we obtain an \emph{explicit} form of the gradient flow equation \eqref{eq:cauchy2}.

\begin{proposition}\label{prop:reduced-cauchy}
Assume that $Q_\nu \in H^2(0,1)$ and $\lambda > 0$ such that the function
\begin{equation}\label{eq:assumption-quantile}
    (0,1) \to \R, ~~ s\mapsto 2s -\lambda Q_\nu''(s)  \quad \text{is non-decreasing (except for a nullset)}.
\end{equation}
Then, for any $g_0 \in H^1(0,1) \cap \C(0,1)$, the unique strong solution of the Cauchy problem
\begin{align}\label{eq:cauchy3-reduced}
    \begin{cases}
        \partial_t g(t) + \partial F_\nu (g(t)) + \partial F_{H, \nu}(g(t)) \ni 0,   \quad  t \in (0, \infty), \\
         g(0) = g_0,
    \end{cases}
\end{align}
exists, and it coincides with the unique strong solution $g$ of \eqref{eq:cauchy2}. In particular, it satisfies the invariance property $g(t) \in \C(0,1)$ for all $t \ge 0$.
\end{proposition}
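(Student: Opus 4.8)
The plan is to reduce the assertion to the gradient flow of the convex functional $G := F_\nu^- + F_{H,\nu} = F_\nu + F_{H,\nu}$ and to show that this flow never leaves the cone $\C(0,1)$, so that the indicator $I_{\C(0,1)}$ in \eqref{eq:regular-1} is inactive. The functional $G$ is proper, convex and lsc with $\dom G = H^1(0,1) \ni g_0$ and $G \ge 0$ (indeed $F_\nu \ge F_\nu(Q_\nu) = 0$ by Lemma~\ref{lem:extended_fun}, and $F_{H,\nu} \ge 0$). Since $F_\nu$ is finite and continuous on all of $L_2(0,1)$, the sum rule yields $\partial G = \partial F_\nu + \partial F_{H,\nu}$, so a strong solution of \eqref{eq:cauchy3-reduced} is exactly a strong solution of $\partial_t g + \partial G(g) \ni 0$; by the classical theory of gradient flows of convex lsc functionals (see, e.g., \cite{B1973}), this Cauchy problem has for $g_0 \in \dom G$ a unique strong solution $g \in H^1_{\rm loc}([0,\infty);L_2(0,1))$, and $g \in {\rm MM}(G,g_0)$ is the limit of the JKO iterates $g_{n+1} = J_\tau^G(g_n)$, $J_\tau^G := (I+\tau\partial G)^{-1}$. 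Granting the key invariance $g(t) \in \C(0,1)$ for all $t \ge 0$, we have $0 \in \partial I_{\C(0,1)}(g(t))$, and since the inclusion $\partial F_\nu + \partial F_{H,\nu} + \partial I_{\C(0,1)} \subseteq \partial\widetilde F_\nu^-$ always holds,
\[
-\partial_t g(t) \in \partial F_\nu(g(t)) + \partial F_{H,\nu}(g(t)) \subseteq \partial\widetilde F_\nu^-(g(t)) \qquad \text{for a.e. } t;
\]
hence $g$ also solves \eqref{eq:cauchy2}, and by the uniqueness part of Theorem~\ref{main_mmd} it coincides with the unique strong solution of \eqref{eq:cauchy2}, which is then in $\C(0,1)$.

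So everything rests on showing that the resolvent $J_\tau^G$ maps $\C(0,1)$ into itself: by induction all JKO iterates then stay in the closed set $\C(0,1)$, and since $g(t) = \lim_{\tau \to 0} g_\tau(t)$, so does $g(t)$. I would prove this variationally. Fix $v \in \C(0,1)$, put $u := J_\tau^G(v)$ (so $u \in \dom\partial F_{H,\nu} \subseteq H^2(0,1)$), and let $P$ be the $L_2(0,1)$-projection onto the closed convex cone $\C(0,1)$; concretely, $Pu = (\widehat U)'$ where $\widehat U$ is the greatest convex minorant on $[0,1]$ of $U(s) := \int_0^s u(r)\,\d r$. One checks $Pu \in \C(0,1) \cap H^1(0,1)$ with $(Pu)' = u'\mathbbm{1}_K$, where $K := \{\widehat U = U\}$, $Pu = u$ a.e.\ on $K$, and $Pu \equiv c_I := \frac1{|I|}\int_I u$ on each component $I$ of $(0,1)\setminus K$. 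Because $Pv = v$, $P$ is $1$-Lipschitz and $\langle u - Pu, v - Pu\rangle \le 0$, we get $\|Pu - v\|^2 \le \|u-v\|^2 - \|u - Pu\|^2$. The decisive inequality is $G(Pu) \le G(u)$: combined with the previous estimate it makes $Pu$ a competitor that strictly lowers the JKO objective $G(\cdot) + \frac1{2\tau}\|\cdot - v\|^2$ unless $u = Pu$, so by uniqueness of the minimizer $u = Pu \in \C(0,1)$.

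To establish $G(Pu) \le G(u)$, observe that the integrands of $G(u)$ and $G(Pu)$ agree a.e.\ on $K$, so it suffices to show that on each gap $I = (a,b)$ the contribution of $u$ dominates that of the constant $c_I$. Write $F_\nu(w) = \int_0^1 j_\nu(w(s),s)\,\d s$ as in \eqref{eq:Fnu}, with $j_\nu(\cdot,s)$ convex and a subgradient of $j_\nu(\cdot,s)$ at $c_I$ of the form $r - 2s$ with $r \in 2[R_\nu^-(c_I),R_\nu^+(c_I)]$ independent of $s$. Set $\Psi(s) := \int_a^s (u - c_I) = (U - \widehat U)(s)$ for $s \in I$, so that $\Psi \ge 0$, $\Psi(a) = \Psi(b) = 0$ and $\Psi'(a) \ge 0 \ge \Psi'(b)$. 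Using the subgradient inequality together with $\int_I(u - c_I) = 0$, then integrating by parts (noting that \eqref{eq:assumption-quantile} forces $Q_\nu''$ to be of bounded variation, so that $\d Q_\nu''$ is a signed measure and $\d\beta := 2\,\d s - \lambda\,\d Q_\nu''$ is a \emph{non-negative} measure), one obtains
\[
\int_I \big(j_\nu(u,\cdot) - j_\nu(c_I,\cdot)\big)\,\d s + \tfrac\lambda2\int_I\big(|u' - Q_\nu'|^2 - |Q_\nu'|^2\big)\,\d s \;\ge\; \int_I \Psi\,\d\beta \;-\; \lambda\big[Q_\nu'\,\Psi'\big]_a^b \;\ge\; 0,
\]
where the boundary term is non-negative since $Q_\nu' \ge 0$ on $[0,1]$ ($Q_\nu$ being a quantile function) and $\Psi'(a) \ge 0 \ge \Psi'(b)$, and $\int_I\Psi\,\d\beta \ge 0$ because $\Psi \ge 0$ — this is precisely where assumption \eqref{eq:assumption-quantile} enters. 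Summing over all gaps $I$ gives $G(u) - G(Pu) \ge 0$.

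The one genuinely hard point is thus the $\C(0,1)$-invariance of the resolvent, and within it the inequality $G(Pu) \le G(u)$: it requires the explicit description of $P$ via the greatest convex minorant and a careful bookkeeping of the boundary terms stemming from gaps adjacent to $0$ or $1$ (controlled by $Q_\nu' \ge 0$), and it is the only place where \eqref{eq:assumption-quantile} is actually used. An alternative would be to work at the PDE level: differentiate \eqref{eq:cauchy3-reduced} in $s$ so that $w := \partial_s g$ solves a linear parabolic problem with $w \ge 0$ on the parabolic boundary (as $g'(t,0) = Q_\nu'(0) \ge 0$ and $g'(t,1) = Q_\nu'(1) \ge 0$) and with the reaction term $\partial_s(2R_\nu(g))$ and the source $\d\beta$ of the favourable sign on $\{w < 0\}$, and to invoke a comparison principle; but this needs additional spatial regularity of $g(t)$ and an approximation of $\nu$, so the discrete variational argument is the cleaner route.
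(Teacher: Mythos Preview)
Your proposal is correct and shares the overall skeleton with the paper's proof: both first use the sum rule (via the Rockafellar interiority condition) to identify $\partial(F_\nu+F_{H,\nu})=\partial F_\nu+\partial F_{H,\nu}$, invoke Br\'ezis for existence and uniqueness of the reduced flow, and then reduce everything to showing that one implicit Euler/JKO step preserves $\C(0,1)$, passing to the limit $\tau\to 0$ afterwards.

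Where you differ is precisely in that invariance step. The paper works at the Euler--Lagrange level: it writes the resolvent equation
\[
-\tau\lambda g_{n+1}''(s)\in g_n(s)-g_{n+1}(s)-2\tau[R_\nu^-,R_\nu^+](g_{n+1}(s))+\tau\bigl(2s-\lambda Q_\nu''(s)\bigr),
\]
assumes $g_{n+1}'(s_0)<0$ for some $s_0$, and uses the Neumann data $g_{n+1}'(0)=Q_\nu'(0)\ge 0$, $g_{n+1}'(1)=Q_\nu'(1)\ge 0$ to trap $s_0$ in a maximal interval $(a,b)$ on which $g_{n+1}'<0$ with $g_{n+1}'(a)=g_{n+1}'(b)=0$. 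On $(a,b)$, monotonicity of $g_n$, strict monotonicity of $-g_{n+1}$, and the assumption \eqref{eq:assumption-quantile} force $g_{n+1}''$ to be \emph{decreasing}; but $g_{n+1}'$ goes from $0$ down to a negative value and back to $0$, so $g_{n+1}''$ must be negative on a set of positive measure in $(a,s_0)$ and positive on a set of positive measure in $(s_0,b)$, a contradiction. This is a short pointwise argument that uses nothing about $P_{\C(0,1)}$.

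Your route is genuinely variational: you compare the minimizer $u$ with its monotone rearrangement $Pu$ and show $G(Pu)\le G(u)$ together with $\|Pu-v\|^2\le\|u-v\|^2-\|u-Pu\|^2$, forcing $u=Pu$. The structural input --- the greatest-convex-minorant description of $P$, the resulting gap decomposition, and the Stieltjes integration by parts producing the non-negative measure $d\beta=2\,ds-\lambda\,dQ_\nu''$ --- is heavier machinery than the paper needs, but it is correct (your observation that \eqref{eq:assumption-quantile} makes $Q_\nu''$ of bounded variation is what legitimises the Stieltjes step, and the boundary terms are controlled exactly as you say). The payoff of your approach is portability: the same ``projection decreases energy'' template would apply to other order cones or to functionals built from different convex integrands, whereas the paper's second-derivative contradiction is tailored to this specific resolvent equation. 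Conversely, the paper's argument is shorter and avoids the regularity bookkeeping for $Pu\in H^1(0,1)$ that your method implicitly requires (this does hold here because $u\in\dom\partial F_{H,\nu}\subset H^2(0,1)$, so $(Pu)'=u'\mathbbm 1_K\in L_2$).
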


\begin{proof}
 Note that by \cite[Theorem 3.16]{P1993}, it holds the equality
 $\partial F_\nu + \partial F_{H, \nu} = \partial(F_\nu + F_{H, \nu})$, 
 since the \emph{Rockafellar interior condition} 
 \begin{equation}
     {\rm int} \big(\dom (F_\nu) \big) \cap \dom (F_{H, \nu}) = L_2(0,1) \cap H^1(0,1) \ne \emptyset
 \end{equation}
 is fulfilled by Lemma \ref{lem:extended_fun}. Now, $F_\nu + F_{H, \nu}$ is proper, convex and lsc with $\dom(F_\nu + F_{H, \nu}) = H^1(0,1)$. Then, by standard results about subdifferential operators, see \cite[Theorem 3.6]{B1973}, there exists a unique strong solution $g$ of \eqref{eq:cauchy3-reduced}. 
 
 To show that $g$ also solves \eqref{eq:cauchy2}, we only need to prove that $g(t) \in \C(0,1)$ for all $t \ge 0$, since this implies
 \begin{align}
 \partial F_\nu (g(t)) + \partial F_{H, \nu}(g(t)) + \{0\}  &\subseteq  \partial F_\nu (g(t)) + \partial F_{H, \nu}(g(t)) +  \partial I_{\C(0,1)} (g(t))\\
 &\subseteq \partial (F_\nu + F_{H, \nu} + I_{\C(0,1)}) (g(t))
 = \partial \widetilde{F}_{\nu}^{-} (g(t)).
 \end{align}
 To this end, let $\tau > 0$ and consider the implicit Euler scheme with step size $\tau > 0$  of \eqref{eq:cauchy3-reduced} given by
 \begin{equation}\label{eq:implicit-Euler-negative}
  g_{n+1}(s) + 2 \tau [R_\nu^-(g_{n+1}(s)), R_\nu^+(g_{n+1}(s))] - 2\tau s - \tau \lambda g_{n+1}''(s) + \tau \lambda Q_{\nu}'' (s) \ni g_n(s) \quad \text{for a.e. } s \in (0,1),
 \end{equation}
 where the existence of $g_{n+1} \in \dom(\partial F_{H,\nu})$ is obtained via the maximal monotonicity of $\partial F_{H,\nu} + \partial F_{\nu}$. 
 We need to show that $g_n \in \C(0,1)$ implies $g_{n+1} \in \C(0,1)$. Then, the assertion follows by the arguments in \cite[Corollary 3.6]{DSBHS2024}.

 So let $g_n \in \C(0,1)$, and assume that there exists $s_0 \in (0,1)$ such that $g_{n+1}'(s_0) < 0$. Note that by assumption, $Q_\nu \in H^2(0,1)$ is continuously differentiable on $[0,1]$. Since $Q_\nu$ is increasing, it must hold $Q_\nu'(0),\, Q_\nu'(1) \ge 0$. 
 Now, since $g_{n+1} \in \dom(\partial F_{H,\nu}) = \{ u \in H^2(0,1) : u'(0) = Q_\nu'(0), ~ u'(1) = Q_\nu'(1)\}$, 
 we have that $g_{n+1}$ is continuously differentiable with $g_{n+1}'(s_0) < 0 \le \min\{g_{n+1}'(0), \,g_{n+1}'(1) \}$. Hence, by the continuity of $g_{n+1}'$, there exists an interval $\emptyset \ne (a,b) \subset (0,1)$ around $s_0$ such that $g_{n+1}'(s) < 0$ for all $s \in (a,b)$, and $g_{n+1}'(a) = g_{n+1}'(b) = 0$. In particular, $g_{n+1}$ is strictly decreasing on $(a,b)$. Now, it holds
 \begin{equation}
 - \tau \lambda g_{n+1}''(s) 
  \in g_n(s) - g_{n+1}(s) + 2 \tau [-R_\nu^+(g_{n+1}(s)), -R_\nu^-(g_{n+1}(s))]
 + \tau (2 s -  \lambda Q_{\nu}'' (s))  
 \end{equation}
 for a.e. $s \in (a,b)$. 
 Since $g_n$ is increasing, $g_{n+1}$ is strictly decreasing\footnote{Note that the \emph{strict} monotonicity is needed to estimate $-R_\nu^-(g_{n+1}(s_1))$ by $-R_\nu^+(g_{n+1}(s_2))$.} and by  assumption \eqref{eq:assumption-quantile},
 we have for a.e. $s_1, s_2 \in (a,b)$ with $s_1 < s_2$ that
 \begin{align}
- \tau \lambda g_{n+1}''(s_1)
&\le g_n(s_1) - g_{n+1}(s_1) - 2 \tau R_\nu^-(g_{n+1}(s_1))
 + \tau (2 s_1 -  \lambda Q_{\nu}'' (s_1)) \\
&< g_n(s_2) - g_{n+1}(s_2) - 2 \tau R_\nu^+(g_{n+1}(s_2))
 + \tau (2 s_2 -  \lambda Q_{\nu}'' (s_2))   \\
&\le  - \tau \lambda g_{n+1}''(s_2).
 \end{align}
  This means that, outside a nullset, $-g_{n+1}''$ is increasing on $(a,b)$, or $g_{n+1}''$ is \emph{decreasing} on $(a,b)$. Finally, note that $g_{n+1}'(a) = 0 > g_{n+1}'(s_0)$, and by the absolute continuity of $g_{n+1}'$, there must exist $A_1 \subseteq (a, s_0)$ with positive Lebesgue measure such that $g_{n+1}''(\xi_1) < 0$ for all $\xi_1 \in A_1$. 
 At the same time, since $g_{n+1}'(s_0) < 0 = g_{n+1}'(b)$, there must exist $A_2 \subseteq (s_0, b)$ with positive Lebesgue mass such that $g_{n+1}''(\xi_2) > 0$ for all $\xi_2 \in A_2$. Hence, $g_{n+1}''(\xi_1) < 0 < g_{n+1}''(\xi_2)$ for all $\xi_1 \in A_1, \, \xi_2 \in A_2$, but $g_{n+1}''$ is \emph{decreasing} on $(a,b)$ outside a nullset. 
 
 This contradiction implies $g_{n+1}'(s) \ge 0$ for all $s \in (0,1)$, from which we infer that $g_{n+1}$ is increasing on $(0,1)$, i.e., $g_{n+1} \in \C(0,1)$, and we are done.
\end{proof}

With  Proposition \ref{prop:reduced-cauchy}, we can study the $\widetilde{F}_\nu^-$-gradient flow between two Dirac measures.

\begin{example}[Flow of $\widetilde{F}_\nu^-$ from $\delta_{-1}$ to $\delta_{0}$]\label{ex:dirac-to-dirac-reg}
  We consider as target the Dirac measure $\nu \coloneqq \delta_0$, and as initial measure $\gamma_0 \coloneqq \delta_{-1}$. 
 The corresponding quantiles are $Q_\nu \equiv 0$ and $g_0 \equiv -1$. Let $g(t,s) \coloneqq \left(g(t) \right)(s)$. 
 Then, by Proposition \ref{prop:reduced-cauchy} and Lemma \ref{lem:extended_fun}, for any $\lambda > 0$
 the $ \widetilde{F}_{\nu}^{-}$-flow is given by the differential inclusion
  \begin{align}\label{eq:dirac-to-dirac-inclusion}
        \partial_t g(t,s)  + \partial F_{H, \nu}(g(t))(s) \in 2s - 2 H(g(t,s)),   \quad  \text{for a.e. } t \in (0, \infty), \, \text{a.e. } s\in (0,1),
\end{align}
with  the set-valued Heaviside function
$$
H(x) \coloneqq \begin{cases}
    0, & x < 0,\\
    1, & x > 0,\\
    [0,1], & x = 0.
\end{cases}
$$
Hence, for all $0 < t < t^*$, where $t^* > 0$ is the first point in time when $g(t^*, \cdot)$ touches the constant $Q_\nu \equiv 0$, the inclusion \eqref{eq:dirac-to-dirac-inclusion} becomes the heat equation with homogeneous Neumann boundary conditions and inhomogeneity $2s$, i.e.,
\begin{align}\label{eq:dirac-to-dirac-heat}
    {\rm (HEAT)} \begin{cases}
        &\partial_t g(t,s)  - \lambda \partial^2_{ss} g(t,s) = 2s,   \quad  t \in (0, t^*), ~ s\in (0,1),\\
        &\partial_s g(t,0) = \partial_s g(t,1) = 0, \qquad  t \in (0, t^*),\\
        &g(0, \cdot) \equiv -1.
    \end{cases}    
\end{align}
Using Fourier series, the solution to \eqref{eq:dirac-to-dirac-heat} can be explicitly computed: inserting the Ansatz 
\begin{equation}
    u(t,s) = \frac{c_0(t)}{2} + \sum_{n=1}^\infty c_n(t) \cos(n \pi s)
\end{equation}
into \eqref{eq:dirac-to-dirac-heat} yields
\begin{equation}
    \frac{c_0'(t)}{2} + \sum_{n=1}^\infty [c_n'(t) + \lambda (n\pi)^2 c_n(t)]\cos(n \pi s) = 2s \eqqcolon 
    \frac{b_0}{2} + \sum_{n=1}^\infty b_n \cos(n \pi s),
\end{equation}
where the right-hand side defines the Fourier series of $f(s) = 2s$ evenly extended to $(-1, 1)$. Hence, 
\begin{equation}
    c_0'(t) = b_0 := 2 \int_0^1 f(s) \d s = 2,
\end{equation}
and for $n \ge 1$, we obtain
\begin{equation}
    c_n'(t) + \lambda (n\pi)^2 c_n(t) = b_n := 2 \int_0^1 f(s) \cos(n \pi s) \d s = \frac{4(\cos(n\pi) -1)}{(n\pi)^2}.
\end{equation}
Using the initial condition $u(0, \cdot) \equiv -1$, we get
\begin{equation}
    c_0(t) = 2(t - 1)
    \quad \text{and} \quad 
    c_n(t) = \frac{b_n }{\lambda (n\pi)^2} (1 - e^{-\lambda (n\pi)^2 t}).
\end{equation}
Altogether, the explicit solution of \eqref{eq:dirac-to-dirac-heat} up to the time $t < t^*$ reads as
\begin{equation}\label{eq:dirac-to-dirac-explicit}
    u(t,s) = (t - 1) + \frac{1}{\lambda}
    \sum_{n=1, n \,{\rm odd}}^\infty (-8) (n\pi)^{-4}
    (1 - e^{-\lambda (n\pi)^2 t}) \cos(n \pi s), \quad t \in (0, t^*), \, s\in (0,1). 
\end{equation}
For $t \ge t^*$, the solution is given by the following \emph{free boundary problem}, or \emph{Stefan problem},
\begin{align}\label{eq:dirac-to-dirac-stefan}
    {\rm (STEFAN)} \begin{cases}
        &\partial_t g(t,s)  - \lambda \partial^2_{ss} g(t,s) = 2s,   \quad  t \in (t^*, \infty), \,  s\in (0, \beta(t)),\\
        &\partial_s g(t,0) = \partial_s g(t, \beta(t)) = 0, \quad  t \in (t^*, \infty),\\
        &g(t, s) = 0, \quad  s \in [\beta(t), 1) , ~t \in (t^*, \infty),\\
        &g(t^*, \cdot) = u(t^*, \cdot),
    \end{cases}    
\end{align}
where the right moving boundary function $\beta \colon [t^*, \infty) \to (0, 1]$ with $\beta(t^*) = 1$ is part of the solution $(g, \beta)$. Since this problem is more involved, and no explicit representation via Fourier series  can be expected, we will approximate the solution in the numerical Section \ref{sec:numerics-examples}. \hfill $\Box$
\end{example}

Let us quickly examine the behavior of the flow \eqref{eq:cauchy3-reduced} of the {negative} kernel for vanishing diffusion constants $\lambda$, see also the Figure \ref{fig:Uniform_to_Uniform_small} in the appendix.
\begin{remark}[Convergence for $\lambda \downarrow 0$]\label{rem:semigroup-convergence}
Let us explicitly denote the dependence on the diffusion constant $\lambda > 0$ by ${F}_{\nu}^{\lambda} = F_{H,\nu}^{\lambda} + F_\nu$, where again, the functional $F_{H,\nu}^{\lambda} \colon H^1(0,1) \subset L_2(0,1) \to [0,\infty]$ is given by
\begin{equation}
 F_{H,\nu}^{\lambda}(w) =  
 \begin{cases}
        \displaystyle\frac{\lambda}{2}\int_0^1 |w'(s) - Q_\nu'(s)|^2 \d s, & \text{if } w \in H^1(0,1), \\
        \infty, & \text{else}.
    \end{cases}
\end{equation}
Let $(\lambda_n)$ be an arbitrary zero sequence of positive numbers, then the Mosco convergence of $F_{\nu}^{\lambda_n}$ to $F_\nu$ for $n \to \infty$ is immediate: Let $u \in L_2(0,1)$, then for any sequence $u_n \rightharpoonup u$ in $L_2(0,1)$, it clearly holds by the nonnegativity of $F_{H, \nu}^{\lambda_n}$ and weak lower semicontinuity of $F_\nu$ that
\begin{equation}
 F_\nu(u) \le  \liminf_{n \to \infty} F_{H,\nu}^{\lambda_n}(u_n) + F_\nu(u_n) = \liminf_{n \to \infty} {F}_{\nu}^{\lambda_n} (u_n).
\end{equation}
Lastly, since $H^1(0,1)$ is dense in $L_2(0,1)$, there exists a sequence $v_n \in H^1(0,1)$ with $v_n \to u$ in $L_2(0,1)$ such that $\lambda_n \, |v_n|_{H^1} \to 0$, and hence by the continuity of $F_\nu$,
\begin{equation}
 F_\nu(u)=  \lim_{n \to \infty} F_{H,\nu}^{\lambda_n}(v_n) + F_\nu(v_n) = \lim_{n \to \infty} {F}_{\nu}^{\lambda_n} (v_n).
\end{equation}
Now, by classical convergence results of semigroup theory, see \cite{BP1972, A1979}, and also \cite{BR2020} for an extensive presentation, it holds the convergence
\begin{equation}
    \lim_{n \to \infty} \sup_{t \in [0,T]} \| g_{\lambda_n}(t) - g(t) \|_{L_2(0,1)} = 0,
\end{equation}
where $g_{\lambda_n}, g$ are the unique solutions of \eqref{eq:cauchy3-reduced} with respect to diffusion constants $\lambda_n$ and $\lambda = 0$, respectively.
\hfill $\Box$
\end{remark}

Finally, we want to remark how the addition of the Sobolev term $F_{H,\nu}$ boosts the convergence to the target $\nu$ for $t \to \infty$, see also the Figure \ref{fig:Uniform_to_Uniform_large} in the appendix.
\begin{remark}[Long-time asymptotics for $t \uparrow \infty$]\label{rem:long-time}
Consider the sum of the original MMD functional \eqref{eq:DK2} with the Sobolev term \eqref{eq:sobolev-functional-target}, i.e., 
$\phi \coloneqq \text{MMD}_K^2(\cdot, \nu) + F_{H,\nu}^{\lambda}(\mathcal{I}(\cdot))$, where $\lambda=1$ for simplicity. By \cite[Eq. 11.2.6]{BookAmGiSa05}, it holds for the Wasserstein gradient flow $\mu_t$ of $\phi$ starting in $\mu_0$ that (notice $\phi(\nu) = 0$)
    $$
    \text{MMD}_K^2(\mu(t), \nu) + |Q_{\mu(t)} - Q_{\nu}|_{H^1(0,1)}^2
    = \phi({\mu(t)}) \le \frac{W_2^2(\mu_0, \nu)}{2t}, \quad \text{for all } t\in (0,\infty),
    $$
    showing the bound $\text{MMD}_K^2(\mu(t), \nu) \le \frac{W_2^2(\mu_0, \nu)}{2t} - |Q_{\mu(t)} - Q_{\nu}|_{H^1(0,1)}^2$ instead of only $\text{MMD}_K^2(\mu(t), \nu) \le \frac{W_2^2(\mu_0, \nu)}{2t}$.
\end{remark}

\subsection{Flows of the positive kernel}
Since for the positive kernel, the functional $F_\nu^+$ is in general not $\lambda$-convex due to Lemma \ref{lem:extended_fun}, its treatment is more involved, and in general, the existence of a Wasserstein gradient flow cannot be expected. Yet, the proposed regularization allows for a description of the flow via 
the generalized minimizing movement (GMM) scheme, see also \cite{RS2006}: 

Given $g_0 \in L_2(0,1)$ and $F \colon L_2(0,1) \to (-\infty, \infty]$, consider piecewise constant curves $g_\tau$ 
constructed from the (possibly non-unique) minimizers
    \begin{equation} \label{gmm}
        g_{n+1} \in \argmin_{v \in L_2(0,1)} \Big\{ F(v) + \frac{1}{2 \tau} \|g_n - v\|_{L_2(0,1)}^2 \Big\}, \quad \tau > 0,
    \end{equation}
    via $g_\tau|_{(n \tau,(n+1)\tau]} \coloneqq g_n, ~n \in \N_0$. 
    Fixing $T > 0$, we say that $g$ is a \emph{Generalized Minimizing Movement},
    if there \emph{exists} a subsequence of step sizes $\tau_k \downarrow 0$ and a corresponding family of discrete curves $(g_{\tau_k})$ such that
    \begin{equation}\label{eq:gmm-convergence}
       \lim_{k \to \infty} g_{\tau_k}(t) = g(t) \quad \text{for all } t \in [0,T],
    \end{equation}
    where the limit is taken in $L_2(0,1)$,
    and we write $g \in {\rm GMM}(F, g_0)$.

With this concept, we can describe a flow of $\widetilde{F}_\nu^+$ via generalized minimizing movements.  We will make use of the results in \cite{RS2006}. For convenience we provide them in the appendix.

\begin{theorem} \label{main_mmd2}
For $Q_\nu \in \C(0,1)$, let $\widetilde{F}_{\nu}^+$ be given by \eqref{eq:regular-1}. Suppose that the initial datum fulfills $g_0 \in \dom(\widetilde{F}_{\nu}^{+}) = \C(0,1) \cap H^1(0,1)$.
Then, for any $0 < T < \infty$, there exist functions $g \in {\rm GMM}(\widetilde{F}_{\nu}^{+}, g_0)$,
and they are strong solutions $g \in H^1(0,T; L_2(0,1))$ of
the Cauchy problem
\begin{align}\label{eq:cauchy3}
    \begin{cases}
        \partial_t g(t) \in -  \partial_l \widetilde{F}_{\nu}^{+} (g(t)),   \quad  t \in (0, T), \\
         g(0) = g_0.
    \end{cases}
\end{align}
Here, $\partial_l \widetilde{F}_{\nu}^{+}$ denotes the limiting subdifferential, see \cite{RS2006}.
The associated curve
$\gamma_t \coloneqq (g(t))_\# \Lambda_{(0,1)}$
is a Wasserstein absolutely continuous flow on $(0,T)$ with $\gamma(0+) = (g_0)_{\#} \lebesgue_{(0,1)}$, and it is given by $\gamma \in {\rm GMM}(\widetilde{\F}_\nu^+, \gamma(0+))$.
\end{theorem}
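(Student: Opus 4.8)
The plan is to apply the general existence theory for generalized minimizing movements of De Giorgi-type developed in \cite{RS2006} to the functional $F \coloneqq \widetilde{F}_{\nu}^+ = F_\nu^+ + F_H + I_{\C(0,1)}$ on the Hilbert space $L_2(0,1)$, and then transfer the resulting $L_2$-flow back to $\P_2(\R)$ via the isometry $\mathcal I$ of Theorem \ref{prop:Q} together with the association established in Remark \ref{assoc}. First I would verify the structural hypotheses of the relevant theorem in \cite{RS2006} (which we recall in the appendix): (i) $F$ is proper, with $\dom(F) = \C(0,1)\cap H^1(0,1) \ni g_0$; (ii) $F$ is lower semicontinuous with respect to (weak or strong) $L_2$-convergence — here $F_\nu^+ = -F_\nu$ is continuous on $L_2(0,1)$ by Lemma \ref{lem:extended_fun}, $F_H$ is convex and lsc by Lemma \ref{l:laplacian}, and $I_{\C(0,1)}$ is convex and lsc; (iii) coercivity/compactness: the sublevel sets $\{F \le c\}$ are bounded in $H^1(0,1)$ (because $F_H$ controls the $H^1$-seminorm while the linear-plus-indicator part is bounded below on bounded sets), hence relatively compact in $L_2(0,1)$ by Rellich--Kondrachov, which gives the compactness needed to extract the convergent subsequence in \eqref{eq:gmm-convergence}; and (iv) a one-sided lower bound on $F$ of the form $F(v) \ge -A - B\|v\|^2$, which follows since $F_\nu^+$ grows at most linearly and $F_H, I_{\C(0,1)} \ge 0$.

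Next, with these hypotheses in place, the abstract theorem of \cite{RS2006} yields: the minimizers in the GMM scheme \eqref{gmm} exist for $\tau$ small enough (by the direct method, using the coercivity and lsc just established, even though they may fail to be unique because $F$ is not convex); any sequence of discrete curves admits a subsequence $g_{\tau_k}$ converging pointwise in $t$ to a limit curve $g$; this $g$ lies in $H^1(0,T;L_2(0,1))$ (from the standard discrete energy/velocity estimates, i.e. $\sum \tfrac{1}{2\tau}\|g_{n+1}-g_n\|^2 \le F(g_0) - \inf F < \infty$ passed to the limit); and $g$ satisfies the differential inclusion $\partial_t g(t) \in -\partial_l F(g(t))$ for a.e. $t$, where $\partial_l$ is the limiting subdifferential used in \cite{RS2006}. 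This establishes \eqref{eq:cauchy3} and $g \in {\rm GMM}(\widetilde{F}_{\nu}^+, g_0)$. A small point to check is that the $\argmin$ in \eqref{gmm} is automatically taken over $\C(0,1)\cap H^1(0,1)$ since $F = +\infty$ elsewhere, so each $g_n$ stays in the cone and the associated measure $(g_n)_\#\Lambda_{(0,1)}$ is well-defined with quantile function $g_n$.

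For the transfer to the Wasserstein side, I would use that $\mathcal I$ is an isometric embedding of $\P_2(\R)$ onto $\C(0,1) \subset L_2(0,1)$ (Theorem \ref{prop:Q}): the proximal/MM step \eqref{mms} for $\widetilde{\F}_\nu^+$ in $(\P_2(\R), W_2)$ corresponds exactly to the step \eqref{gmm} for $\widetilde{F}_\nu^+$ in $(L_2(0,1), \|\cdot\|)$ restricted to $\C(0,1)$, because $\tfrac{1}{2\tau}W_2^2(\mu_n,\mu) = \tfrac{1}{2\tau}\|Q_{\mu_n} - Q_\mu\|_{L_2}^2$ and $\widetilde{\F}_\nu^+(\mu) = \widetilde{F}_\nu^+(Q_\mu)$ by Remark \ref{assoc}; and the restriction to $\C(0,1)$ is harmless since $F = +\infty$ off the cone. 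Hence the GMM curves correspond under $\mathcal I$, $\gamma_{\tau_k}(t) = (g_{\tau_k}(t))_\#\Lambda_{(0,1)} \to (g(t))_\#\Lambda_{(0,1)} = \gamma(t)$ in $W_2$ by the isometry, so $\gamma \in {\rm GMM}(\widetilde{\F}_\nu^+, \gamma(0+))$; Wasserstein absolute continuity of $\gamma$ on $(0,T)$ follows from $g \in H^1(0,T;L_2(0,1))$ again via the isometry (the metric derivative of $\gamma$ equals $\|\partial_t g\|_{L_2}$), and $\gamma(0+) = (g_0)_\#\Lambda_{(0,1)}$ from continuity of $g$ at $0$. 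The main obstacle I anticipate is purely bookkeeping rather than conceptual: one must match our hypotheses precisely to the list of assumptions in the cited theorem of \cite{RS2006} — in particular confirming that the mixed lsc condition (strong coercivity plus weak/strong lsc of the three summands, which have genuinely different semicontinuity behavior) is exactly what their framework requires, and that the limiting subdifferential $\partial_l$ appearing in their conclusion is the object we name in \eqref{eq:cauchy3}; no single estimate is hard, but the compatibility check between the non-convex term $F_\nu^+$ (continuous, hence well-behaved) and the convex-but-extended-valued terms $F_H, I_{\C(0,1)}$ is where care is needed.
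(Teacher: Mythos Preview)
Your overall strategy matches the paper's: verify the hypotheses of the abstract machinery in \cite{RS2006} for $\widetilde{F}_\nu^+$ on $L_2(0,1)$, then push the resulting curve through the isometry $\mathcal I$. The compactness verification via Rellich--Kondrachov and the transfer to the Wasserstein side are both handled correctly.

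There is, however, one genuine gap. Your enumerated hypotheses (i)--(iv) are exactly what is needed for \cite[Lemma~1.2]{RS2006}, which gives ${\rm GMM}(\widetilde{F}_\nu^+,g_0)\neq\emptyset$ and $g\in H^1(0,T;L_2(0,1))$. But to obtain the differential inclusion \eqref{eq:cauchy3} you need \cite[Theorem~3]{RS2006}, which imposes the \emph{additional} chain rule condition \eqref{eq:chain2}: whenever $v\in H^1(0,T;L_2)$, $\xi\in L_2(0,T;L_2)$ with $\xi(t)\in\partial\phi(v(t))$ a.e.\ and $\phi\circ v$ bounded, then $\phi\circ v$ is absolutely continuous with $\tfrac{d}{dt}\phi(v(t))=\langle\xi(t),v'(t)\rangle$. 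You do not list this, and it is not automatic for non-convex $\phi$; it is the step where the ``compatibility check'' you flag as bookkeeping actually has content.

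The paper closes this gap via \cite[Remark~1.9]{RS2006}: the chain rule holds whenever $\phi=\psi_1-\psi_2$ with $\psi_1,\psi_2$ convex, lsc, and $\dom(\partial\psi_1)\subset\dom(\partial\psi_2)$. One takes $\psi_1\coloneqq F_H+I_{\C(0,1)}$ and $\psi_2\coloneqq F_\nu$, so that $\widetilde{F}_\nu^+=\psi_1-\psi_2$; the domain inclusion is trivially satisfied because $\dom(\partial F_\nu)=L_2(0,1)$ by Lemma~\ref{lem:extended_fun}. Adding this DC-decomposition argument to your plan makes it complete and essentially identical to the paper's proof.
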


\begin{proof}
    Note that $\widetilde{F}_\nu^+$ and $g_0$ satisfy the compactness criterion of \cite[Lemma 1.2]{RS2006} by the compact Sobolev embedding $H^1(0,1) \overset{c}{\hookrightarrow} L_2(0,1)$. Furthermore, since the functionals $F_H + F_{\C(0,1)}$ and $F_\nu$ are convex and lsc, and since $\dom(\partial F_\nu) = L_2(0,1)$ by Lemma \ref{lem:extended_fun}, the conditions of \cite[Remark 1.9]{RS2006} are fulfilled. Therefore, \cite[Theorem 3]{RS2006} yields the existence of a strong solution $g \in H^1(0,T; L_2(0,1))$ of \eqref{eq:cauchy3} via {generalized minimizing movements} $g \in {\rm GMM}(\widetilde{F}_{\nu}^{+}, g_0)$ for any $0 < T < \infty$.
    The remaining part follows directly by using the isometry of Theorem \ref{prop:Q}, and Remark \ref{assoc}.
\end{proof}

\begin{remark}[Purpose of the $F_H$-regularization]\label{rem:sobolev-role2}
  Since the associated functional $F_\nu^+ = -F_\nu$ may lack any form of $\lambda$-convexity by Lemma \ref{lem:extended_fun}, the existence of generalized minimizing movements $g \in {\rm GMM}(F_\nu^+, g_0)$ might not be given \emph{in general}.
  Although it can be shown that the minimizers in \eqref{gmm} exist, the convergence \eqref{eq:gmm-convergence} of the discrete solutions is unclear in general.  
  On a technical level,
  regularizing via the Sobolev norm $F_H$ gives control over the convergence \eqref{eq:gmm-convergence}, ensuring that $g \in {\rm GMM}(\widetilde{F}_{\nu}^{+}, g_0)$ always exists.
\end{remark}

As stated in Remark \ref{rem:sobolev-role2} the regularization via $F_H$ plays more of a technical role ensuring general existence of generalized minimizing movements.
Nevertheless, in the following example it is not needed, and we can explicitly calculate $g \in {\rm GMM}(\widetilde{F}_{\nu}^{+}, g_0)$. 
Note that this together with Example \ref{ex:dirac-to-dirac-reg} shows that $\widetilde{F}_{\nu}^{-}$-flows are \emph{not} the time reversal of $\widetilde{F}_{\nu}^{+}$-flows.

\begin{example}[Flow of $\widetilde F_\nu^+$ from $\delta_{-1}$ away from  $\delta_0$]\label{ex:dirac-from-dirac}
We consider again as target (or more precisely, \emph{repulsion point}) the Dirac measure $\nu \coloneqq \delta_0$, and as initial measure $\gamma_0 \coloneqq \delta_{-1}$. 
The corresponding quantiles are $Q_\nu \equiv 0$ and $g_0 \equiv -1$.

Let $\tau>0$ be a positive step size and $g<0$ some constant function, where we denote both the function and the constant by $g$.
The minimizer \eqref{gmm} of one step of the GMM scheme is given by
\begin{align}
\argmin _{f\in L_2(0,1)} &\left\{ \widetilde F_\nu^+(f)+\frac{1}{2\tau} \int _0^1 |g-f(x)|^2 \d x \right\}
\\
= \argmin_{f\in \mathcal C(0,1)\cap H^1(0,1)} &\Big\{ -\int_0^1\left( (1-2s)f(s)+|f(s)| \right) \d s +\frac{\lambda}{2}  \int_0^1 |f'(x)|^2\d x \\
 + &\frac{1}{2\tau} \int_0^1 |g-f(x)|^2\d x \Big\}.
\label{eq:minPosR}
\end{align}
Next we show that the minimizer in \eqref{eq:minPosR} is a constant function. 
Let $f\in \mathcal C(0,1)\cap H^1(0,1)$ and set $\bar f \coloneqq -\int _0^1 |f(s)|\d s\le 0$. 
We consider the summands of \eqref{eq:minPosR} separately.
Obviously, the Sobolev term favors the constant function $\bar f$, because 
\begin{align*}
    \int_0^1 |\bar f'|^2\d\lambda =0 \le \int_0^1 |f'|^2\d\lambda,
\end{align*}
where the inequality is strict if $f$ is not constant.
Further, we have for $g < 0$ that
\begin{align*}
    \int_0^1 |g-\bar f|^2 \d x &
    = |g-\bar f|^2
    = \left( \int_0^1 g+ |f(x)|\d x\right)^2
    \le \left( \int_0^1 \bigl|g+|f(x)|\bigr|\d x\right)^2
    \\&
    = \left( \int_0^1  \bigl||f(x)|-|g|\bigr|\d x\right)^2
    \le  \left(\int_0^1 |f(x)-g|\d x\right)^2
    \le \int_0^1 |g-f(x)|^2 \d x.
\end{align*}
Finally, 
\begin{align*}
- \int_0^1 |f(s)|\d s =  \bar f = - \int _0^1 |\bar f|\d s,
\end{align*}
and for the remaining summand we have by monotonicity of $f$ that 
\begin{align*}
    -\int_0^1 (1-2s)f(s)\d s&
    =-\int_0^1 (1-2s) \int_0^s f'(t)\d t \d s
    =-\int_0^1 f'(t)\int_t^1 (1-2s)\d s \d t\\&
    =-\int_0 ^1 f'(t) [s-s^2]_t^1\d t
     =\int_0^1 f'(t)t(1-t)\d t\ge 0.
\end{align*}
Plugging the constant function $\bar f$ in the above equation, we obtain 
\begin{align*}
    -\int_0^1 (1-2s)\bar f(s)\d s = \int_0^1 0\cdot t(1-t)\d t = 0.
\end{align*}
In summary, the minimizer must be a constant function $f$ with value $f\in \R$ and the minimization problem \eqref{eq:minPosR} reduces 
for $g <0$ to \begin{align*}
    \argmin_{f\in \R} \left\{ -|f|+\frac{1}{2\tau} |g-f|^2\right\}
    =\argmin_{f\le 0} \left\{ f+\frac{1}{2\tau} (g-f)^2\right\}.
\end{align*}
The unique minimizer is given by $f = g-\tau < 0$, so that for the next GMM step the same argument can be applied. 
Hence, the generalized minimizing movement scheme with initial $g_0 \equiv -1$ is given by $g_n \equiv -1-n\tau$, and
the limiting flow for $\tau \to 0$ is just
\begin{equation}
    g(t, s) = -1 - t,
\end{equation}
corresponding to a Dirac measure $\mu_t =\delta_{-1-t}$ which moves away from the target with constant speed. \hfill $\Box$
\end{example}

We end this section with a remark concerning more general initial datums $\gamma_0$ and targets $\nu$.
\begin{remark}[Relaxing the limitation of $H^1(0,1)$]\label{rem:relax}
 Note that the assumption $g_0 \in H^1(0,1)$,
 and in case of the negative kernel, also the assumption $Q_\nu \in H^1(0,1)$ are integrated into our approach. In particular, the initial quantile $g_0$ is assumed to be absolutely continuous on $[0,1]$, which implies that the initial measure $\gamma_0$ shall have \emph{compact and convex} support. This can be drastically relaxed if we, instead of working with $F_H$, consider fractional derivatives given by the squared \emph{Sobolev-Slobodeckij} seminorm
 \begin{equation}
     F_H^\sigma (u) \coloneqq
     \begin{cases}
        \displaystyle\frac{\lambda}{2}\int_0^1 \int_0^1 \frac{|u(s_1)-u(s_2)|^2}{|s_1 - s_2|^{d+2\sigma}} \d s_1 \d s_2, & \text{if } u \in H^\sigma(0,1), \\
        \infty, & \text{else},
    \end{cases}
 \end{equation}
defined on the \emph{fractional} Sobolev space $H^\sigma(0,1) \subset L_2(0,1)$ for $0 < \sigma < 1$. Note that we still have the compact embedding $H^\sigma(0,1) \overset{c}{\hookrightarrow} L_2(0,1)$ for any $\sigma \in (0,1)$, and our Theorems \ref{main_mmd} and \ref{main_mmd2} are still valid with the suitable adjustments. Importantly, choosing $\sigma < \frac{1}{2}$ small enough, we can allow discontinuous jump functions $g_0, Q_\nu \in H^\sigma(0,1)$, which further may admit singularities at the boundary of $(0,1)$. This translates to measures $\gamma_0, \nu$ possibly having \emph{disconnected and unbounded} support. \hfill $\Box$
\end{remark}

\section{Numerics}\label{sec:numerics}

In this section, we numerically explore the Wasserstein gradient flow of $\widetilde{\F}_{\nu}^{-}$ via the flow of quantile functions with respect to the associated functional $\widetilde{F}_{\nu}^{-}$. We derive the associated implicit Euler scheme, and demonstrate a crucial advantage over the unregularized ${\F}_{\nu}^{-}$-flow.

\subsection{The Implicit Euler Scheme}\label{sec:numerics-Euler}
Let $g_0 \in H^1(0,1)\cap \C(0,1)$ and assume that the condition \eqref{eq:assumption-quantile} is fulfilled. Then, by Proposition \ref{prop:reduced-cauchy}, the flow $g$ of $\widetilde{F}_{\nu}^{-}$  coincides with the flow of 
$F_\nu + F_{H, \nu}$ given by \eqref{eq:cauchy3-reduced}, and it holds 
\begin{equation}
  \partial(F_\nu + F_{H, \nu}) = \partial F_\nu + \partial F_{H, \nu}. 
\end{equation}
The associated MM scheme \eqref{gmm} is then given by
\begin{equation} \label{eq:gmm-reduced}
        g_{n+1}^\tau = \argmin_{v \in L_2(0,1)} \Big\{ F_\nu(v) + F_{H, \nu}(v) + \frac{1}{2 \tau} \|v - g_n^\tau\|_{L_2(0,1)}^2 \Big\}, \quad \tau > 0.
    \end{equation}
The Euler equation of \eqref{eq:gmm-reduced} leads to the implicit Euler scheme 
\begin{equation}
   \frac{g_{n+1}^\tau - g_{n}^\tau}{\tau} + \partial F_\nu(g_{n+1}^\tau) + \partial F_{H, \nu}(g_{n+1}^\tau) \ni 0.
\end{equation}
By Lemma \ref{lem:extended_fun}
and \ref{l:laplacian}, given $g_n \coloneqq g_n^\tau \in H^1(0,1)\cap \C(0,1)$, we need to solve
\begin{equation}\label{eq:implicit-Euler-num}
  g_{n+1}(s) + 2 \tau [R_\nu^-(g_{n+1}(s)), R_\nu^+(g_{n+1}(s))] - 2\tau s - \tau \lambda g_{n+1}''(s) + \tau \lambda Q_{\nu}'' (s) \ni g_n(s), \quad  s \in (0,1),
 \end{equation}
 for $g_{n+1} \in \dom(\partial F_{H,\nu}) = \{ u \in H^2(0,1) : u'(0) = Q_\nu'(0), ~ u'(1) = Q_\nu'(1)\}$.  The resulting piecewise constant curve $g_\tau|_{(n \tau,(n+1)\tau]} \coloneqq g_n^\tau, ~n \in \N_0$, satisfies 
 \begin{equation}
       \lim_{\tau \to 0} g_{\tau}(t) = g(t) \quad \text{for all } t \in [0,\infty),
    \end{equation}
 with uniform  convergence in $\tau$ since $g_0 \in \dom(F_{H,\nu})$, see \cite[Eq.(4.0.6)]{BookAmGiSa05}.
We need to solve the following second-order differential inclusion Neumann problem:
\begin{align}\label{eq:neumann-problem}
{\rm (NP)}
\begin{cases}
 & -\tau \lambda g_{n+1}''(s) 
  ~\in~ g_n(s) - g_{n+1}(s)+ 2 \tau [-R_\nu^+(g_{n+1}(s)), -R_\nu^-(g_{n+1}(s))]\\
 &\hspace{23mm}  + \,\tau (2 s -  \lambda Q_{\nu}'' (s)) \hspace{44mm} \text{on } s\in (0,1), \\
 &\hspace{2mm} g_{n+1}'(0) = Q_\nu'(0), \quad g_{n+1}'(1) = Q_\nu'(1) \hspace{36mm} \text{on } s \in \partial(0,1).   
\end{cases}
 \end{align}

\paragraph{Technical implementation of solving the Neumann problem \eqref{eq:neumann-problem}:} 
To solve the second-order differential equation \eqref{eq:neumann-problem}, we  employ the SciPy solver \texttt{scipy.integrate.solve\_bvp}. Since this solver can only solve boundary value problems for first-order differential equations with a single-valued right-hand side, we transform \eqref{eq:neumann-problem} into a system of first order problems as follows:
we substitute \(y_0 = g_{n+1}\) and \(y_1 = g_{n+1}'\) and define
\begin{align}\label{eq:dirichlet-problem}
{\rm (NP')}\begin{cases}
y_0'(s) &= y_1(s) \hspace{56.1mm}\text{on } s \in (0,1),\\
y_1'(s) &= \big( y_0(s) + 2\tau R_\nu(y_0(s)) - (g_n(s) - \lambda \tau Q_\nu''(s) + 2\tau s) \big)/(\lambda \tau),\\
y_1(0) &= Q_\nu'(0), \quad y_1(1) = Q_\nu'(1).    
\end{cases}
\end{align}
Note that if the transformed problem \eqref{eq:dirichlet-problem} has a solution \(y\colon (0,1) \to \mathbb{R}^2\), then \(y_0\colon (0,1) \to \mathbb{R}\) is a solution of the Neumann problem \eqref{eq:neumann-problem}. Technically, \eqref{eq:neumann-problem} and \eqref{eq:dirichlet-problem} are only equivalent in the case of a continuous CDF $R_\nu$,
because \eqref{eq:dirichlet-problem} only considers the single-valued right-hand side. \\
For the numerical computation of the flow, let \(\tau > 0\) be the step size and \(\lambda > 0\) be the regularization parameter. Given \(R_\nu\) and the Laplacian \(Q_\nu''\) of the target measure \(\nu\) together with its derivative at the boundary \(\{Q_\nu'(0), Q_\nu'(1)\}\), we compute the discrete flow \(\gamma_n\) from an initial measure \(\gamma_0\) given in the form of its quantiles \(g_n\).\\

\RestyleAlgo{ruled}

\begin{algorithm}[H]
\caption{Sobolev regularized Riesz MMD Flow}
\label{alg:unregularizedEuler}
\SetKwFunction{RHS}{RHS} 
\SetKwFunction{bvp}{solve\_bvp} 

\KwData{$R_\nu,\, Q_\nu'',\, [Q_\nu'(0),\, Q_\nu'(1)], \, g_0,\, \lambda,\, \tau,\, N$}
\KwResult{$g_N,\, g_N'$}

\For{$n=0,\ldots, N-1$}{ \tcp{Define right hand side of \eqref{eq:dirichlet-problem}}
\SetKwProg{Fn}{Function}{:}{end} 
\Fn{\RHS{$s,y$}}{
    $z_0 \gets y_1(s)$\;
    $z_1 \gets  \left( y_0(s) +2\tau R_\nu(y_0(s))- (g_n(s)-\lambda \tau Q_\nu''(s)+2\tau s) \right)/(\lambda \tau)$\;
    \Return $[z_0,z_1]$\; 
}

\tcp{Solve boundary value problem \eqref{eq:dirichlet-problem} with \bvp}
$g_n, g_n'\gets \bvp{$\RHS, [Q_\nu'(0), Q_\nu'(1)]$}$

}
\tcp{Return quantile and its derivative after $N$ steps at time $t= \tau \cdot N$.}
\Return{$g_N, \, g_N'$}\; 

\end{algorithm}

\newpage
\subsection{Numerical Examples}\label{sec:numerics-examples}
Based on the MM scheme \eqref{eq:gmm-reduced} and the resulting Neumann problem \eqref{eq:neumann-problem}, we calculate and visualize the $\widetilde{F}_{\nu}^{-}$-flow for several examples. In order to demonstrate the advantage of the regularization via $F_{H,\nu}$, we also plot the flows of the unregularized functional $F_\nu^-$, showing a "dissipation-of-mass" defect.

\paragraph{Dirac-to-Dirac:} Consider as target the Dirac measure $\nu \coloneqq \delta_0$, and as initial measure the Dirac point $\gamma_0 \coloneqq \delta_{-1}$. The corresponding quantiles satisfy $Q_\nu \equiv 0$ and $Q_0 \equiv -1$. Hence, the assumptions of Proposition \ref{prop:reduced-cauchy} are fulfilled for any $\lambda > 0$.\\
In Figure \ref{fig:Dirac_to_Dirac_unreg}, the unregularized $\F_\nu^-$-flow is depicted. We can observe that the support of the flow grows monotonically with time $t$, which was proven in \cite[Theorem 6.11]{DSBHS2024}. As a consequence, the mass expands and can never totally vanish once it touches a point. Instead, it slowly dissipates and becomes arbitrarily slim on $(-1,0)$ for $t \to \infty$.
In Figure \ref{fig:Dirac_to_Dirac_quantiles}, this can be seen as the corresponding quantiles of the $F_\nu^-$-flow become \emph{arbitrarily steep at $s = 0$}.
\\
On the contrary, the support of the regularized $\widetilde{\F}_\nu^-$-flow can actually \emph{move towards the target} as seen in Figure \ref{fig:Dirac_to_Dirac_reg}. Their quantile functions do flatten out and \emph{smoothly approximate} the target $Q_\nu \equiv 0$ as shown in Figure \ref{fig:Dirac_to_Dirac_quantiles}. Note that the Neumann boundary values $Q_\nu'(0), \, Q_\nu'(1)$ of the problem \eqref{eq:neumann-problem} correspond to the fact that the regularized $\widetilde{\F}_\nu^-$-flow immediately develops "horn-shaped barricades" at the boundary of its support with the height $Q_\nu'(0)^{-1}, \, Q_\nu'(1)^{-1}$, respectively. In this case of a Dirac target, since $Q_\nu'(0)= Q_\nu'(1)=0$, the height of the "horns" is $\infty$.\\
Moreover, due to the explicit representation \eqref{eq:dirac-to-dirac-explicit} of the flow, we know that the initial Dirac point instantly becomes absolutely continuous up to the time $t < t^*$, where the flow touches the target Dirac point, mirroring the unregularized case. From there on, we technically do not know whether a Dirac point at $0$ develops as in the MMD-case, but our numerical calculations strongly suggest this, also see Figure \ref{fig:Dirac_to_Dirac_quantiles} and the upcoming paragraph for the technical implementation.

\paragraph{Technical implementation in the Dirac case:} 
To split the quantile \(g_n\) into the part with density and its singular part, we find the first point \(s_0\) such that \(|g_n(s)| < 10^{-3}\) for all $s \ge s_0$. For the interval \([0, s_0]\), we compute the density, while the constant part \([s_0, 1]\) becomes a Dirac at \(0\) with mass \(1 - s_0\). In the unregularized case, we indeed have a closed form of the gradient flow, see \cite[Example 6.8]{DSBHS2024}. If the tolerance is chosen too small, parts of the singular component may be incorrectly identified as having density, causing the plot to display a false behavior. Based on the closed form, we found that a tolerance of \(10^{-3}\) works best in our numerical computations. In the regularized case, we do not have a closed form for the entire flow. Here, we also divide the measure into an absolutely continuous and a singular part in the same way as in the unregularized case. The numerical results strongly suggest that the regularized flow also accumulates to a Dirac at \(0\). 
Another interesting feature of Algorithm \ref{alg:unregularizedEuler} is that it returns both \(g_N\) and \(g_N'\), allowing the density to be computed directly instead of employing finite differences.

\begin{figure}[H]
    \centering
    \includegraphics[width=.32\textwidth]{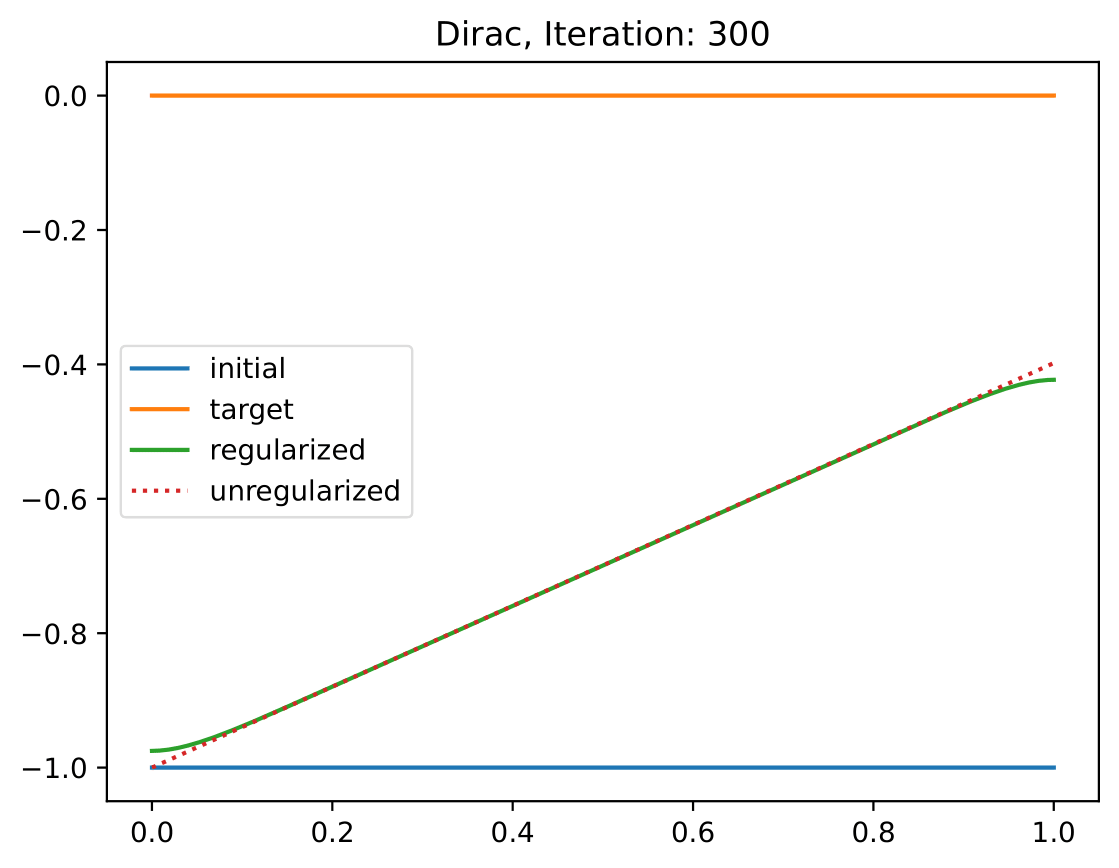}
    \includegraphics[width=.32\textwidth]{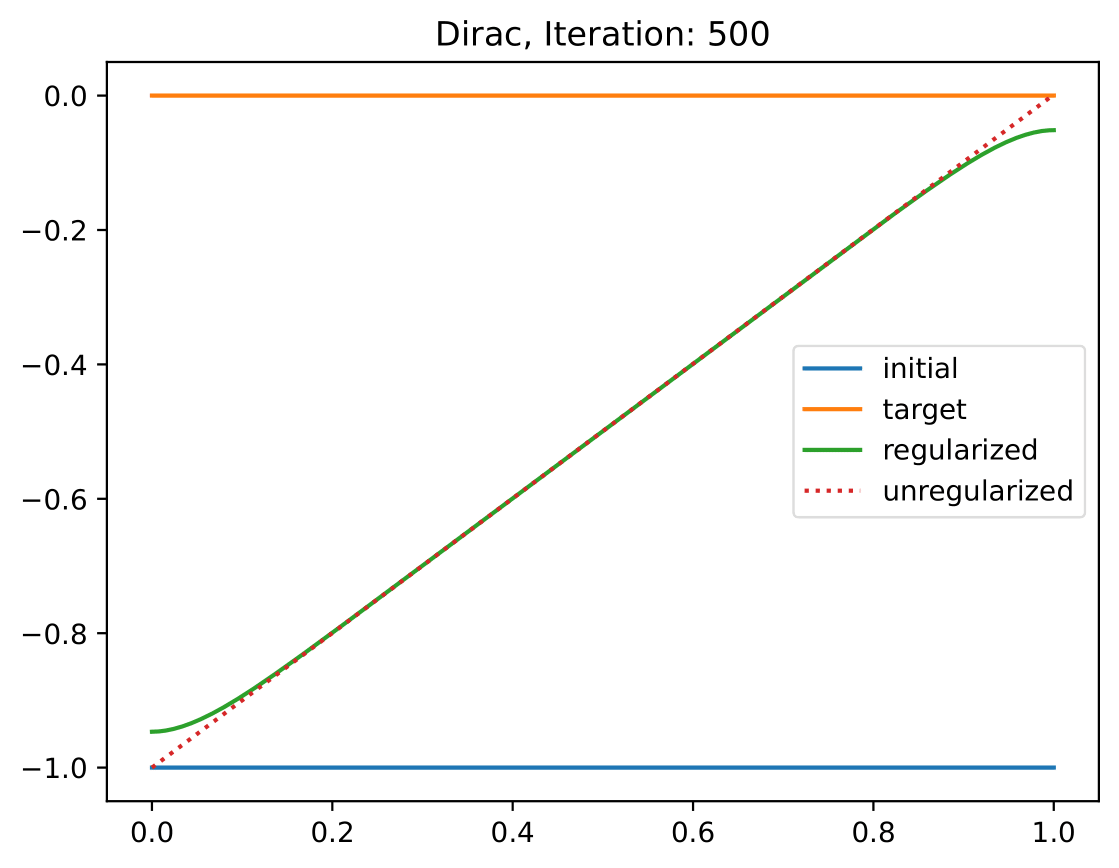}
    \includegraphics[width=.32\textwidth]{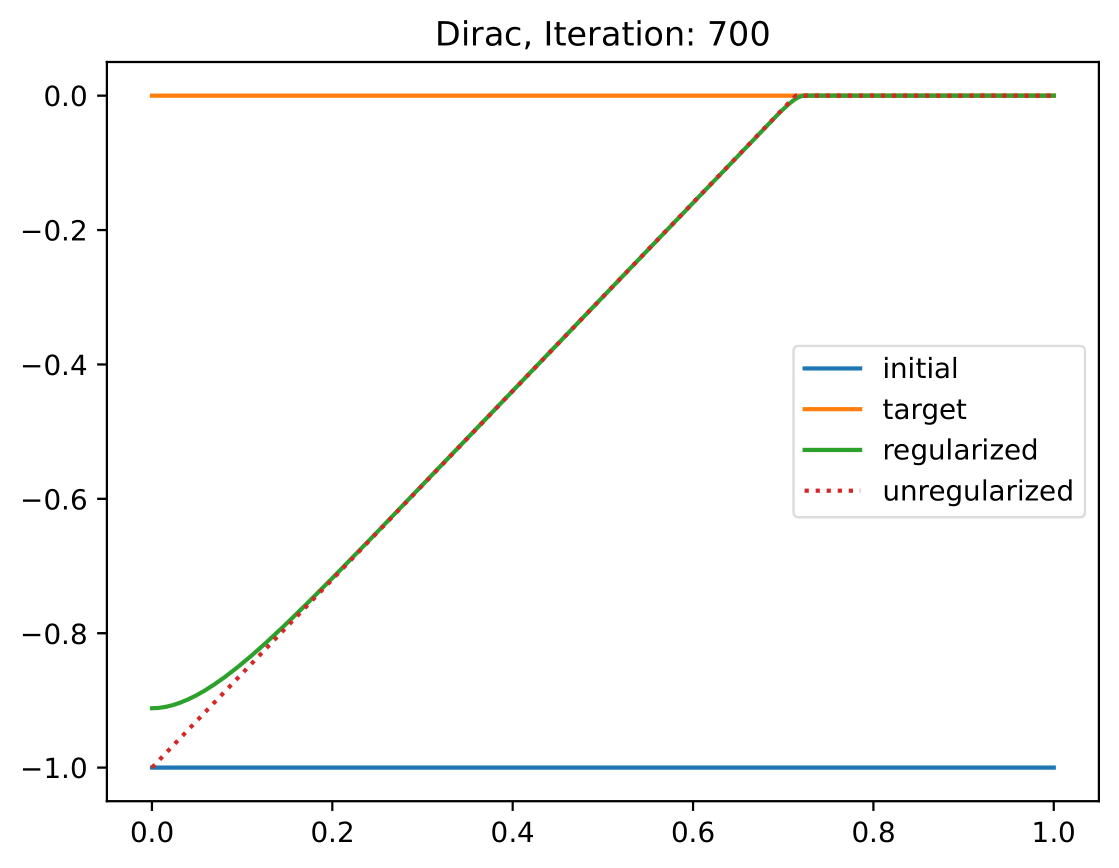}
    \\
    \includegraphics[width=.32\textwidth]{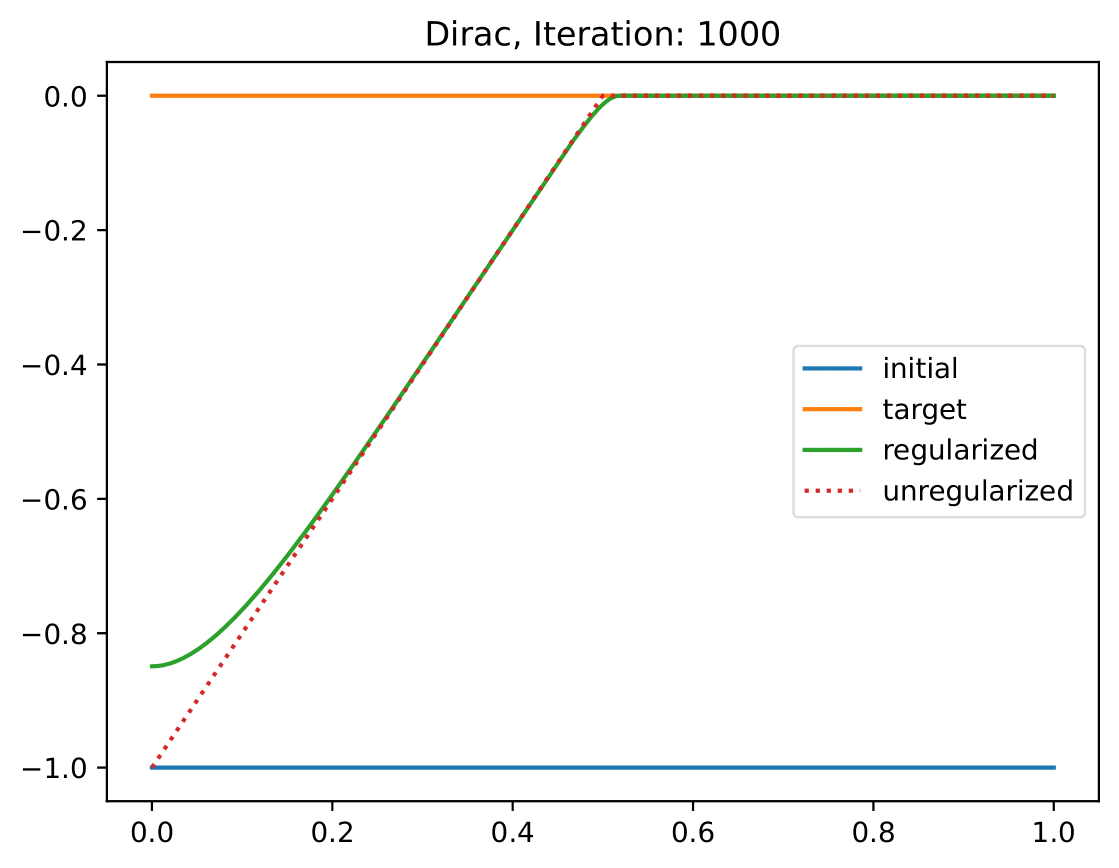}
    \includegraphics[width=.32\textwidth]{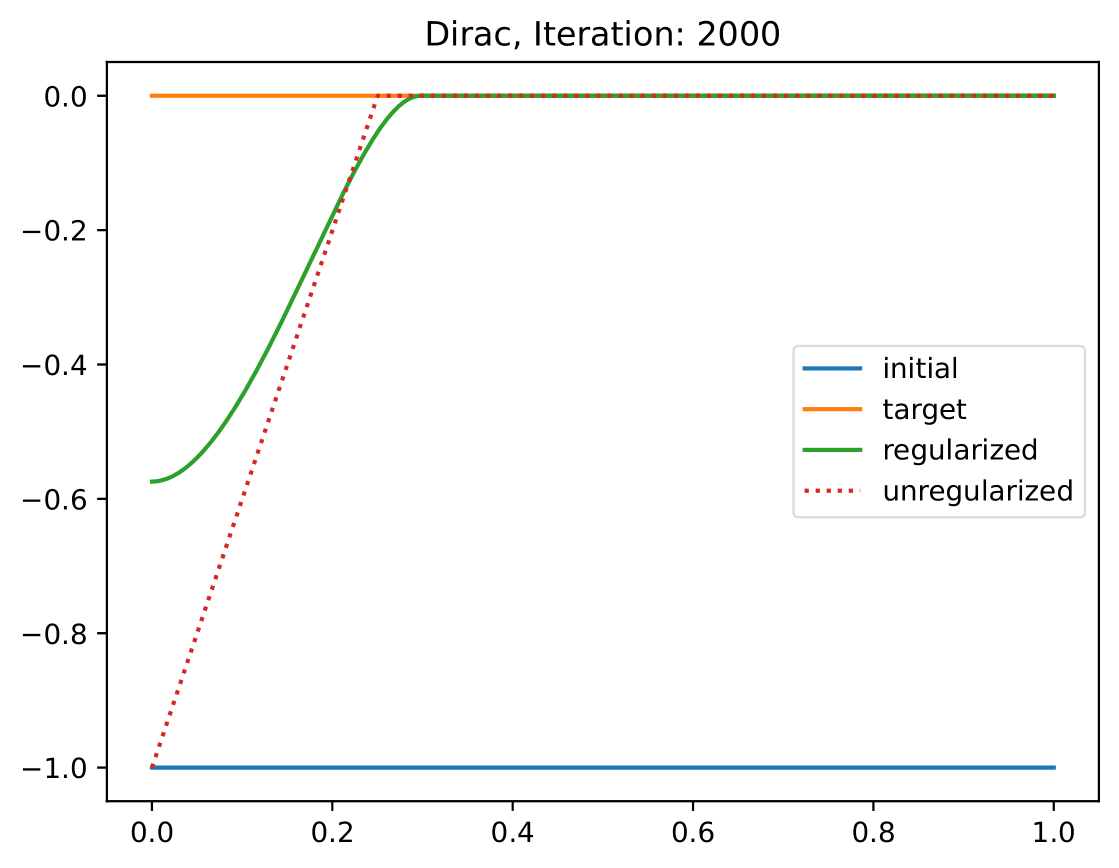}
    \includegraphics[width=.32\textwidth]{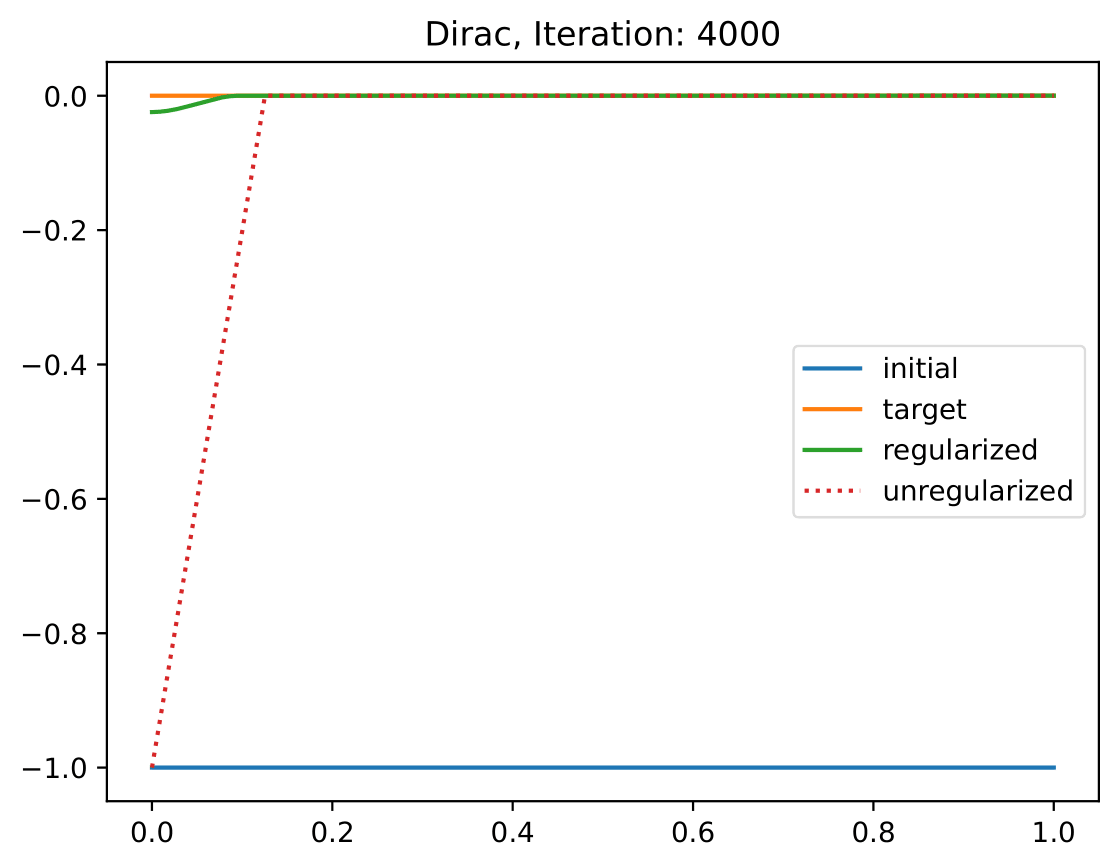}
    \caption{Quantiles of the regularized 
    and unregularized ${\F}_\nu^-$-flow from $\delta_{-1}$ ($Q_0 \equiv -1$) to $\delta_{0}$ ($Q_\nu \equiv 0$)
		for $\tau = 10^{-3}, \,\lambda = 10^{-2}$. The results strongly suggest an emergence of a Dirac at $0$ in the regularized flow. 
    }
    \label{fig:Dirac_to_Dirac_quantiles}
\end{figure}

The size of the diffusion constant $\lambda > 0$ naturally determines the effective contributions of the two functionals, which are the \emph{repulsiveness} of $F_\nu^-$ and the \emph{support-shifting} of $F_{H,\nu}$. For small $\lambda$, the mass expands as quickly as the $F_\nu^-$-flow, while the support moves very slowly. However, for large $\lambda$, the mass expands slowly and the support shifts very quickly; see Remark \ref{rem:semigroup-convergence}, \ref{rem:long-time}, and the Figures \ref{fig:Uniform_to_Uniform_large}, \ref{fig:Uniform_to_Uniform_small} in the appendix.

\begin{figure}[H]
    \centering
    \includegraphics[width=.32\textwidth]{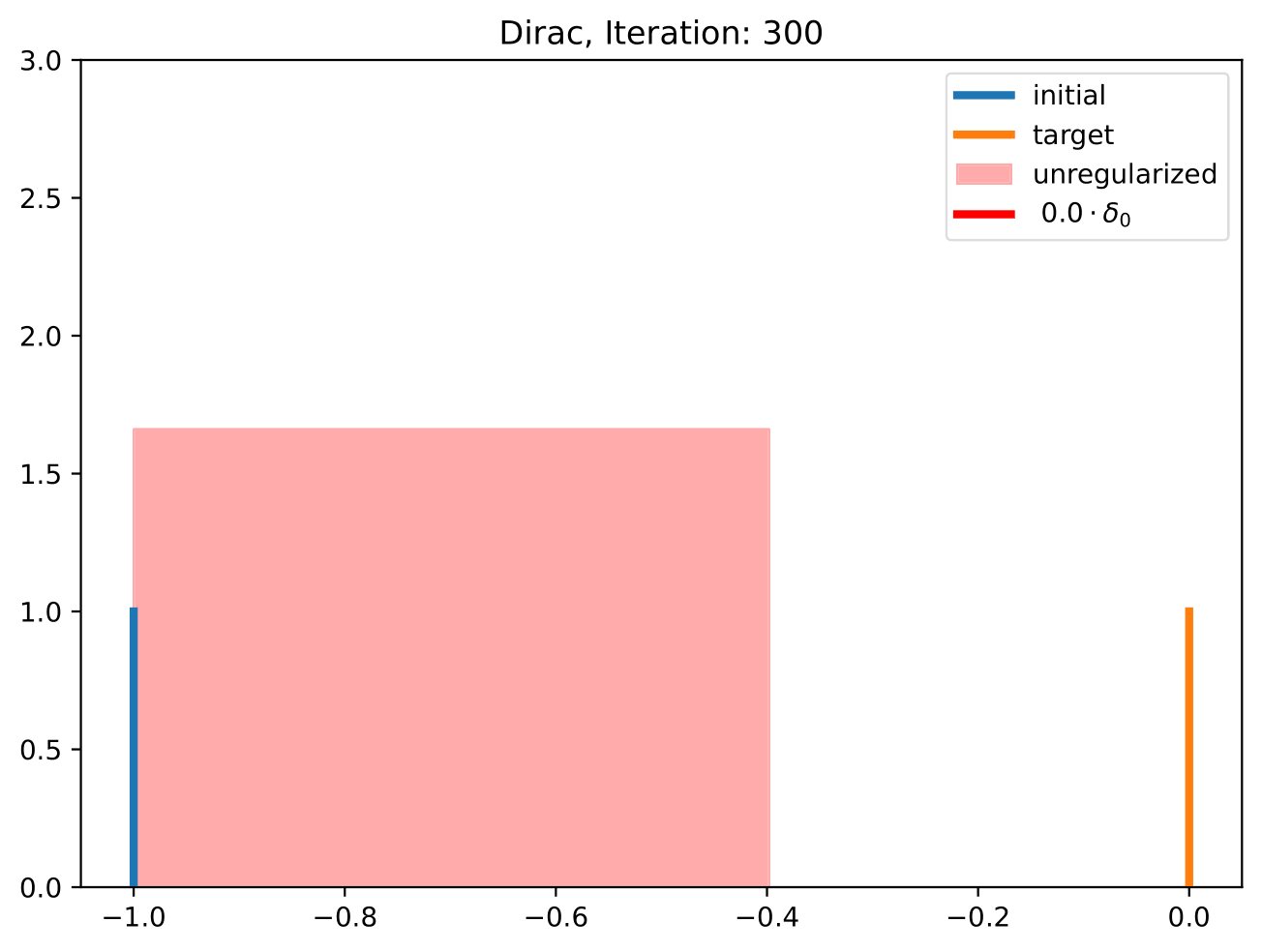}
    \includegraphics[width=.32\textwidth]{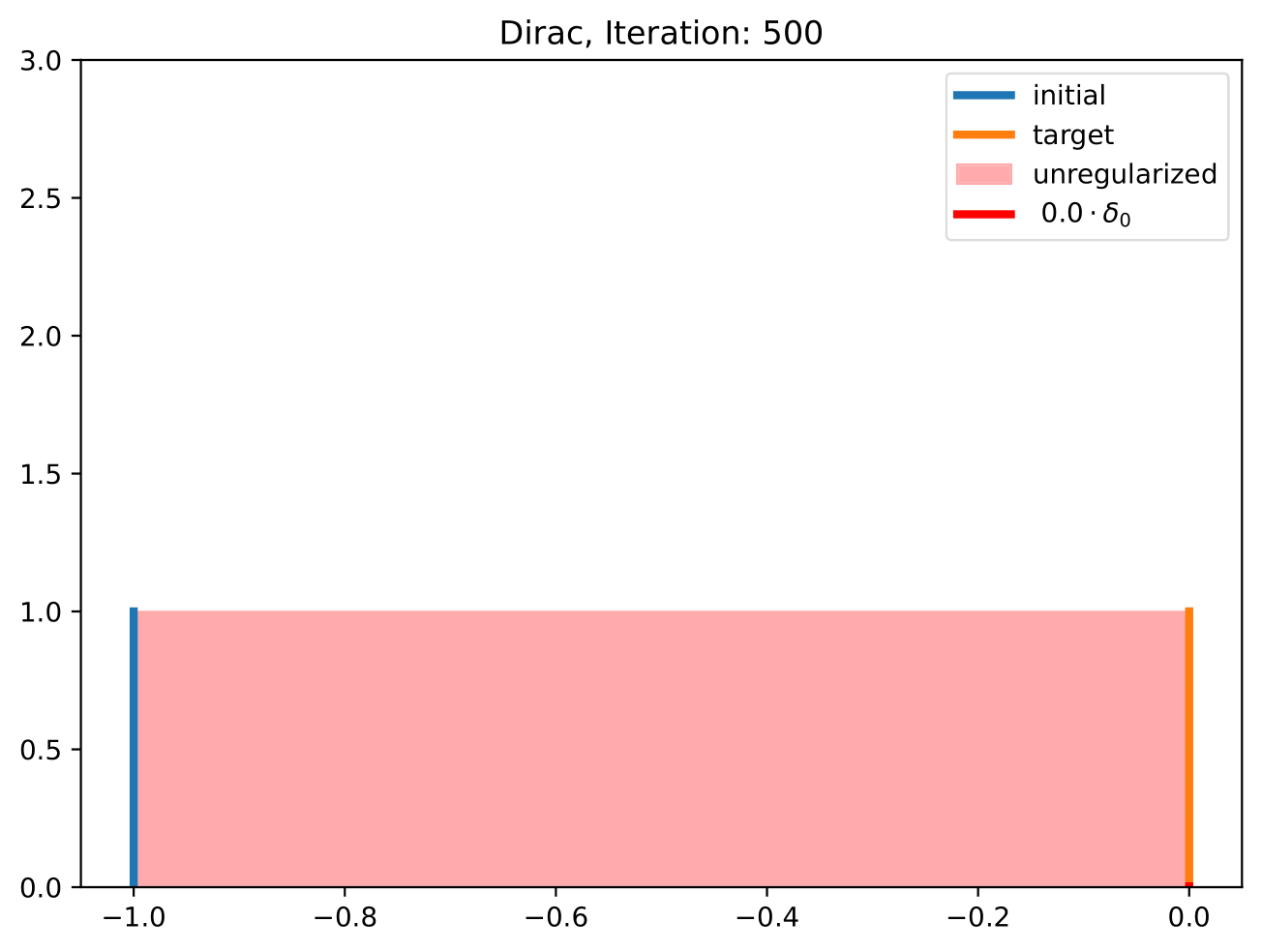}
    \includegraphics[width=.32\textwidth]{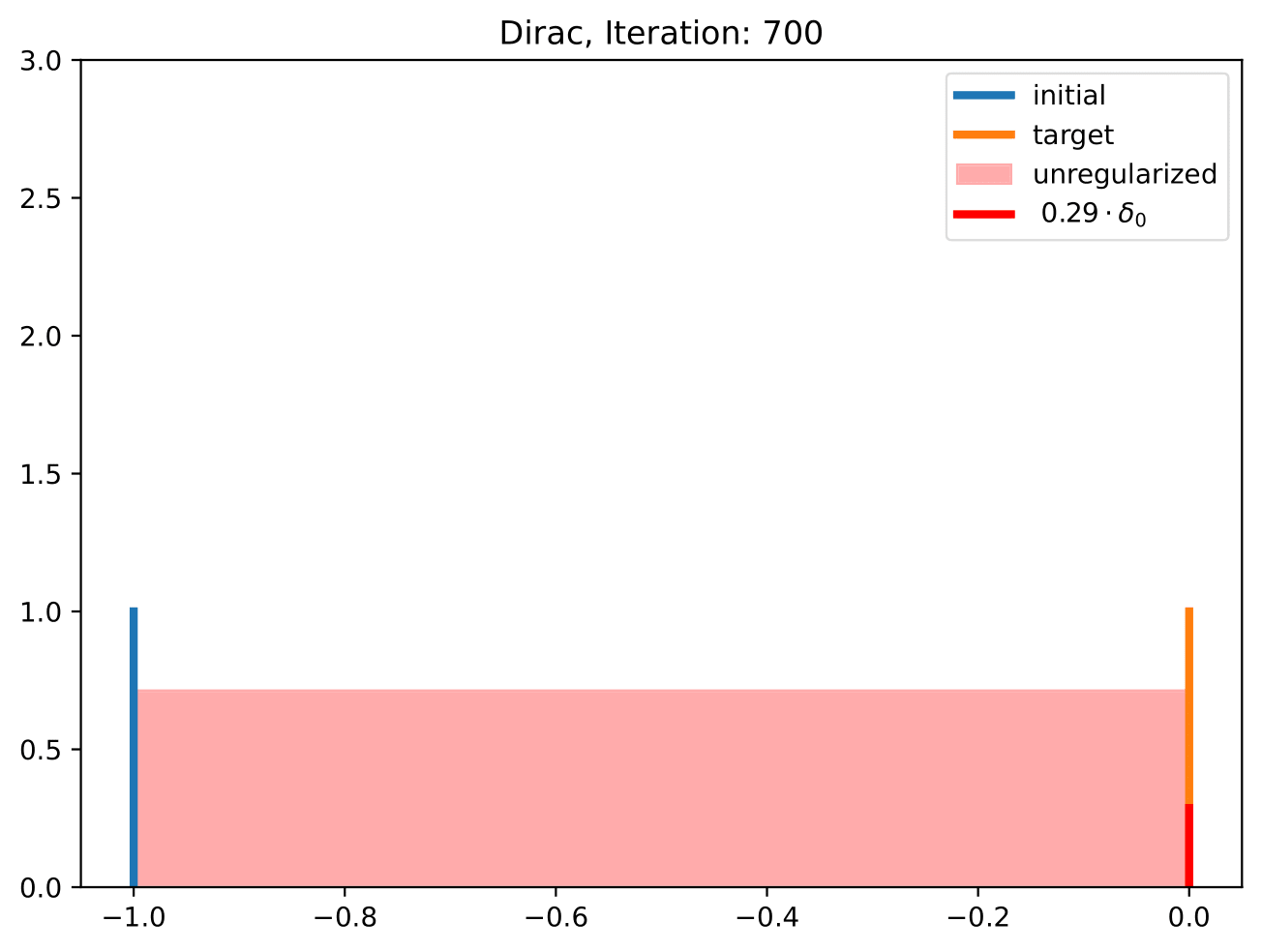}
    \\
    \includegraphics[width=.32\textwidth]{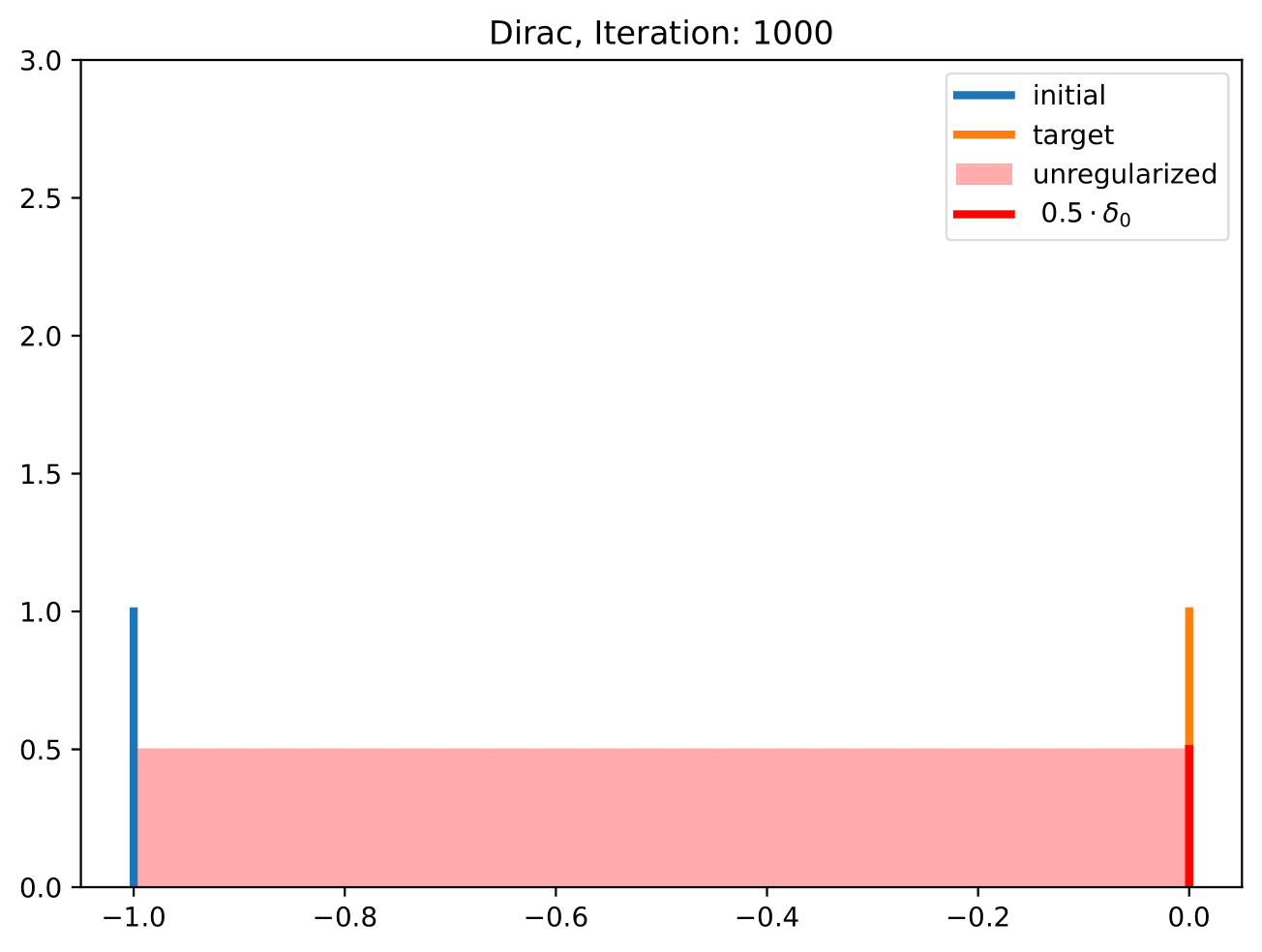}
    \includegraphics[width=.32\textwidth]{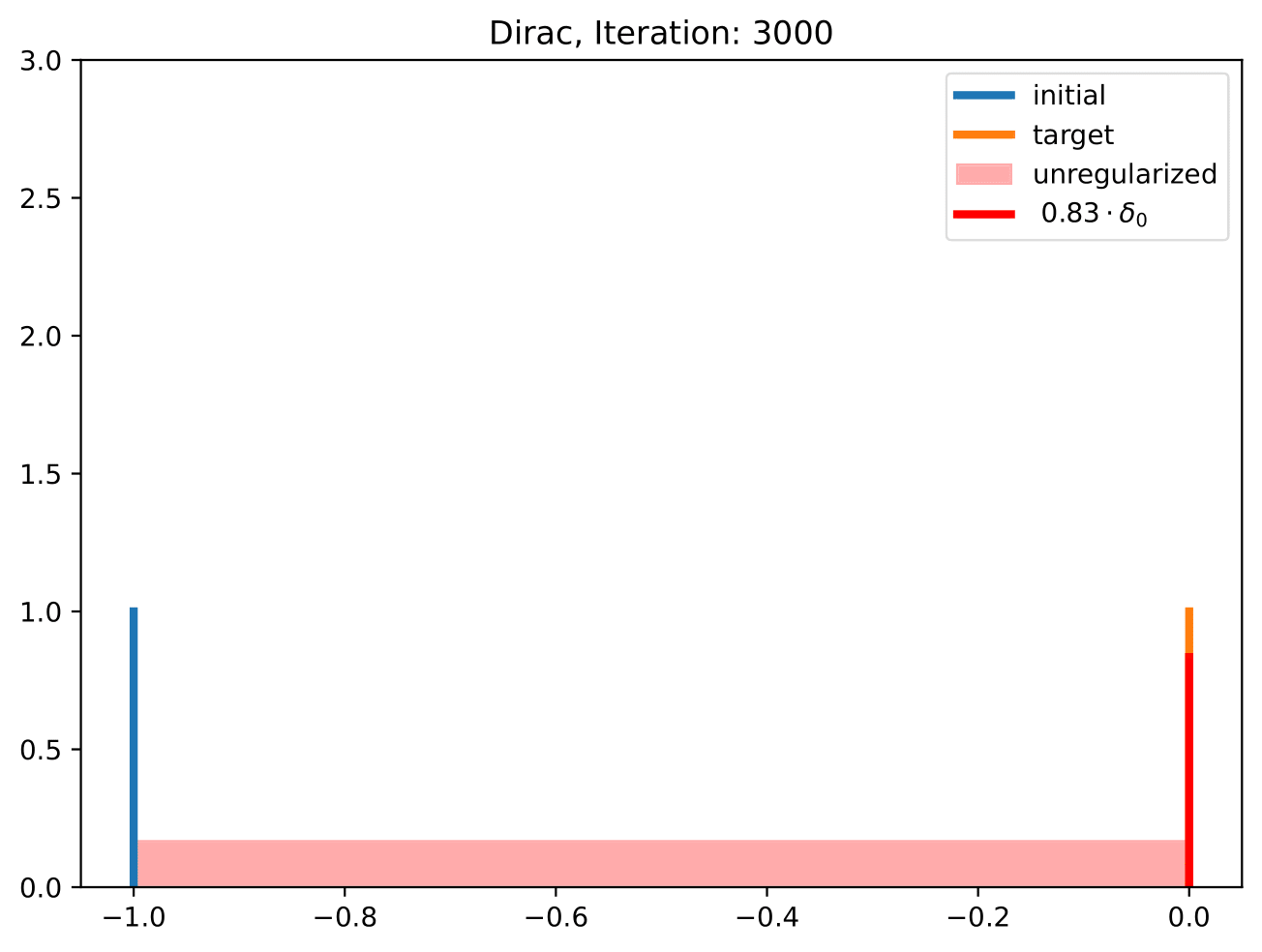}
    \includegraphics[width=.32\textwidth]{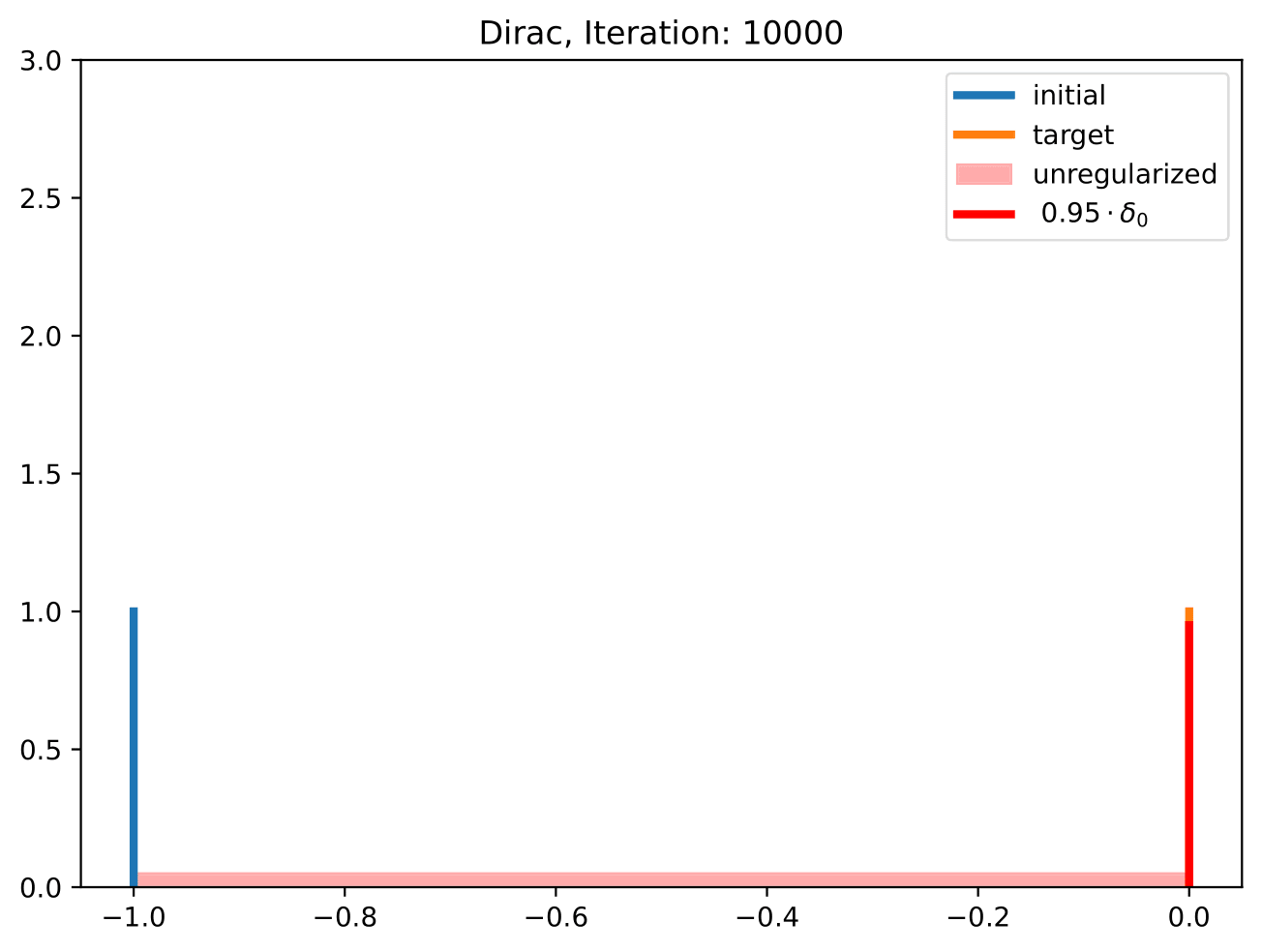}
    \caption{Unregularized ${\mathcal F}_\nu^-$-flow from $\delta_{-1}$ to $\delta_{0}$ 
		 for $\tau = 10^{-3}$.  The light color represents the absolute continuous part, while the dark color displays the singular part of the flow. The mass in $(-1,0)$ slowly dissipates, but never totally vanishes.
    }
    \label{fig:Dirac_to_Dirac_unreg}
\end{figure}

\begin{figure}[H]
    \centering
    \includegraphics[width=.32\textwidth]{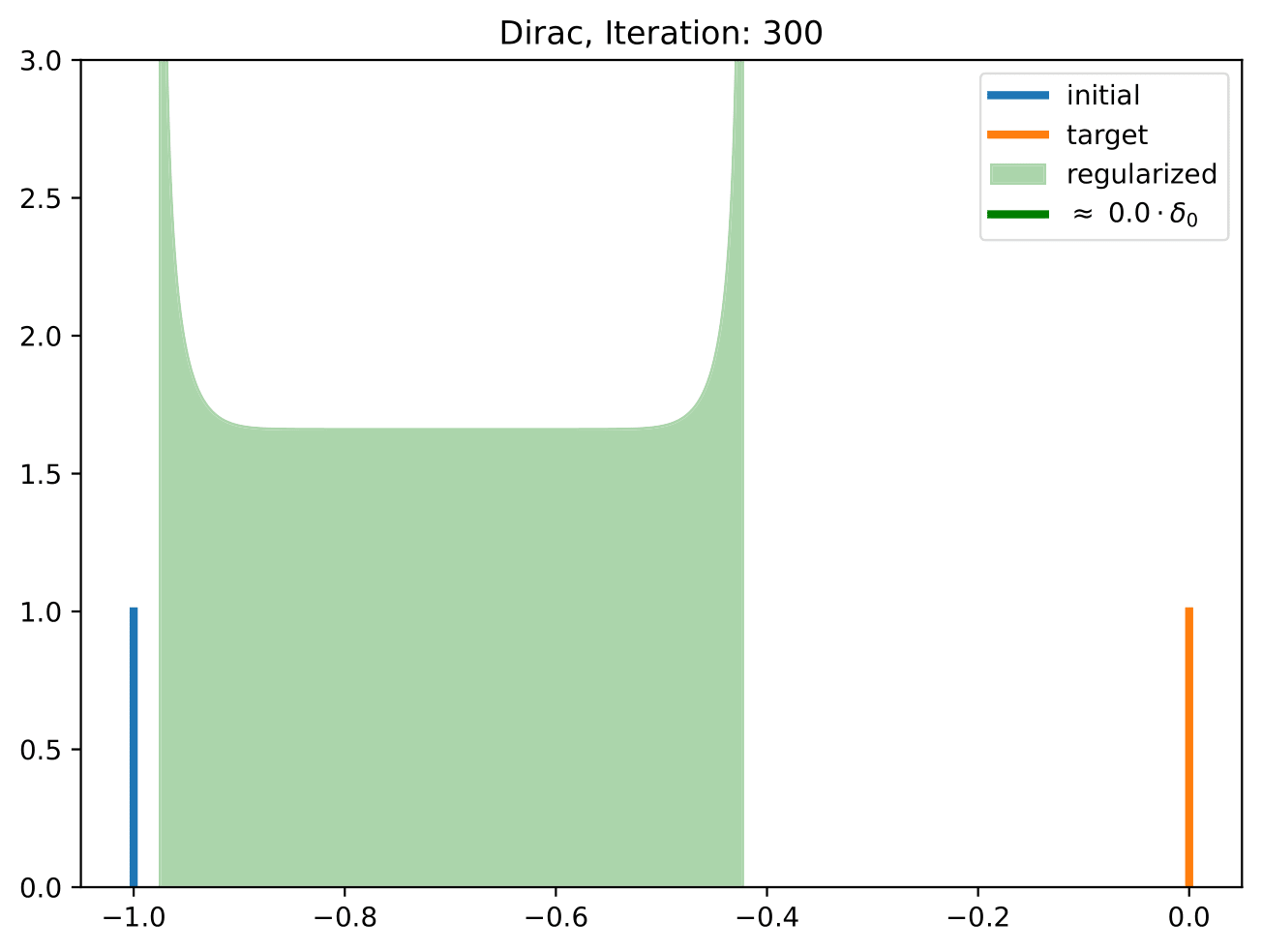}
    \includegraphics[width=.32\textwidth]{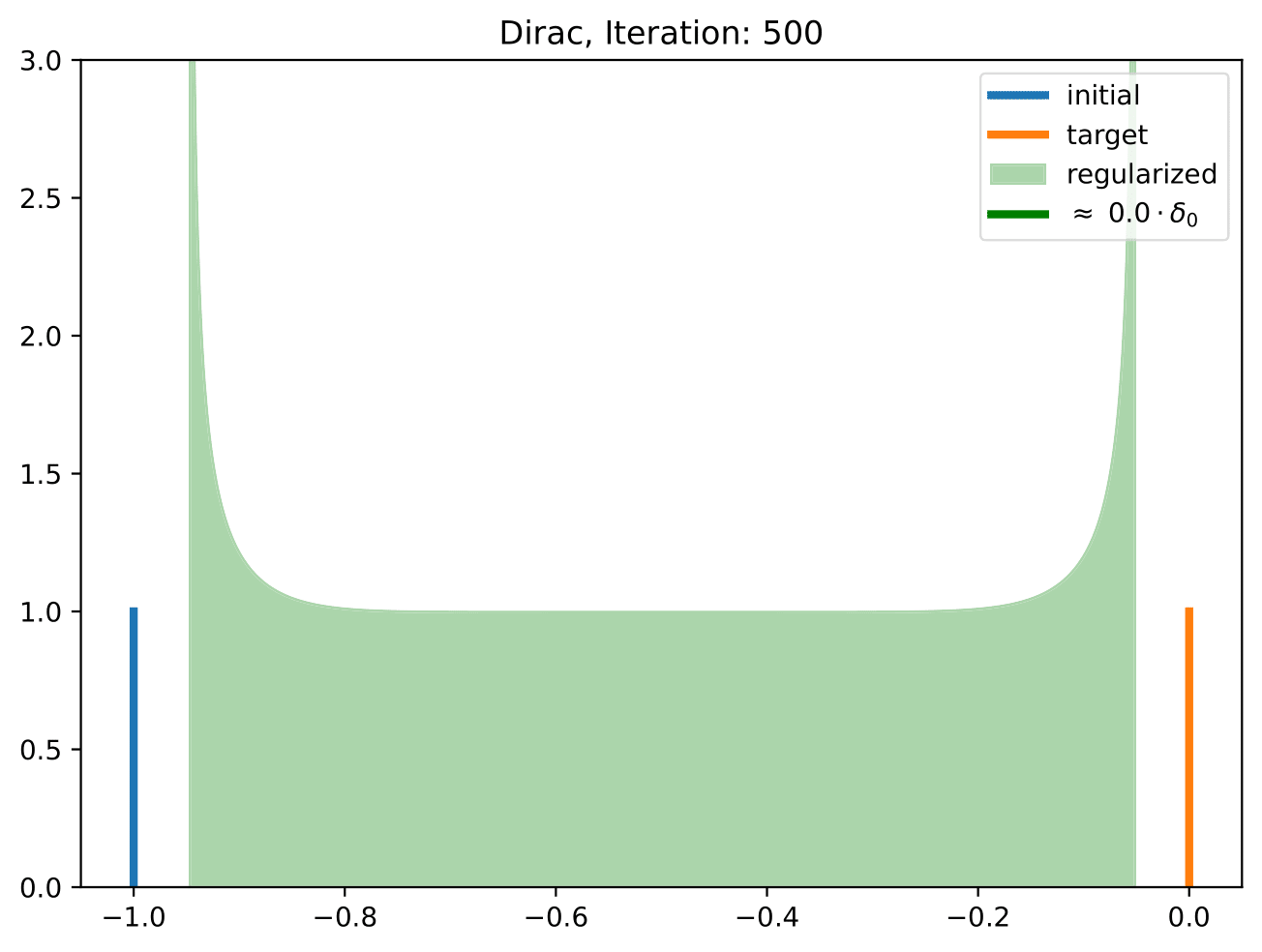}
    \includegraphics[width=.32\textwidth]{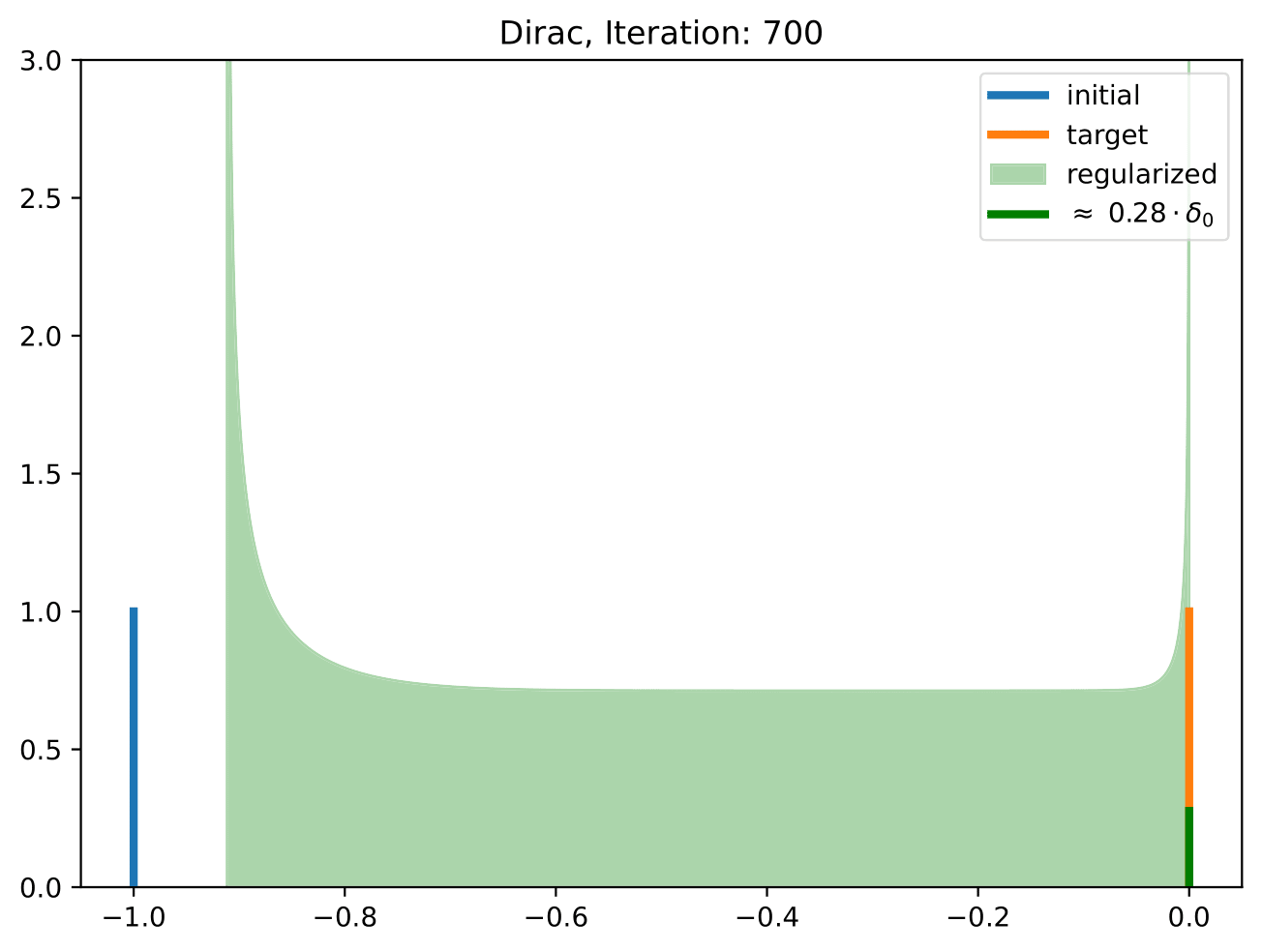}
    \\
    \includegraphics[width=.32\textwidth]{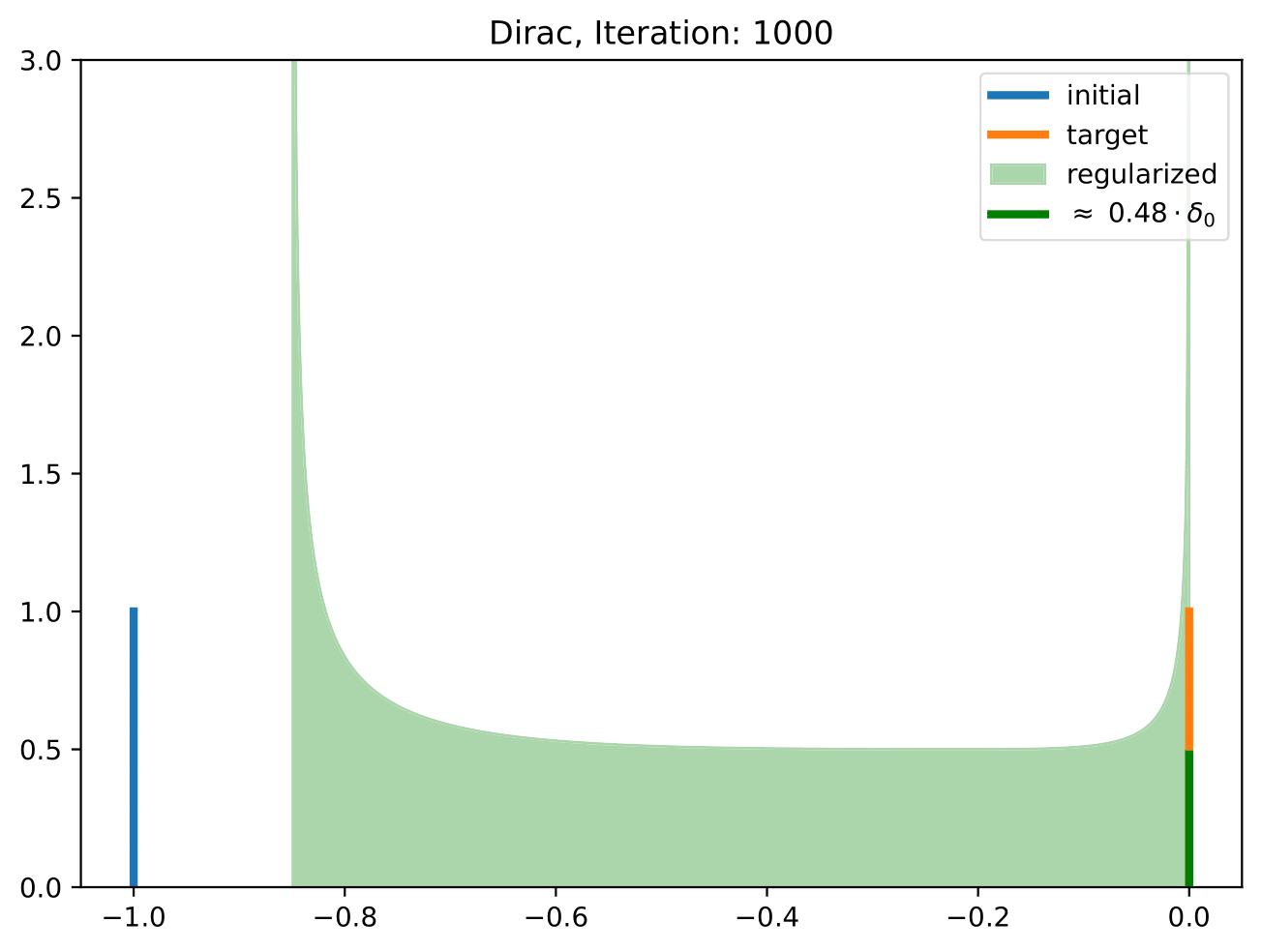}
    \includegraphics[width=.32\textwidth]{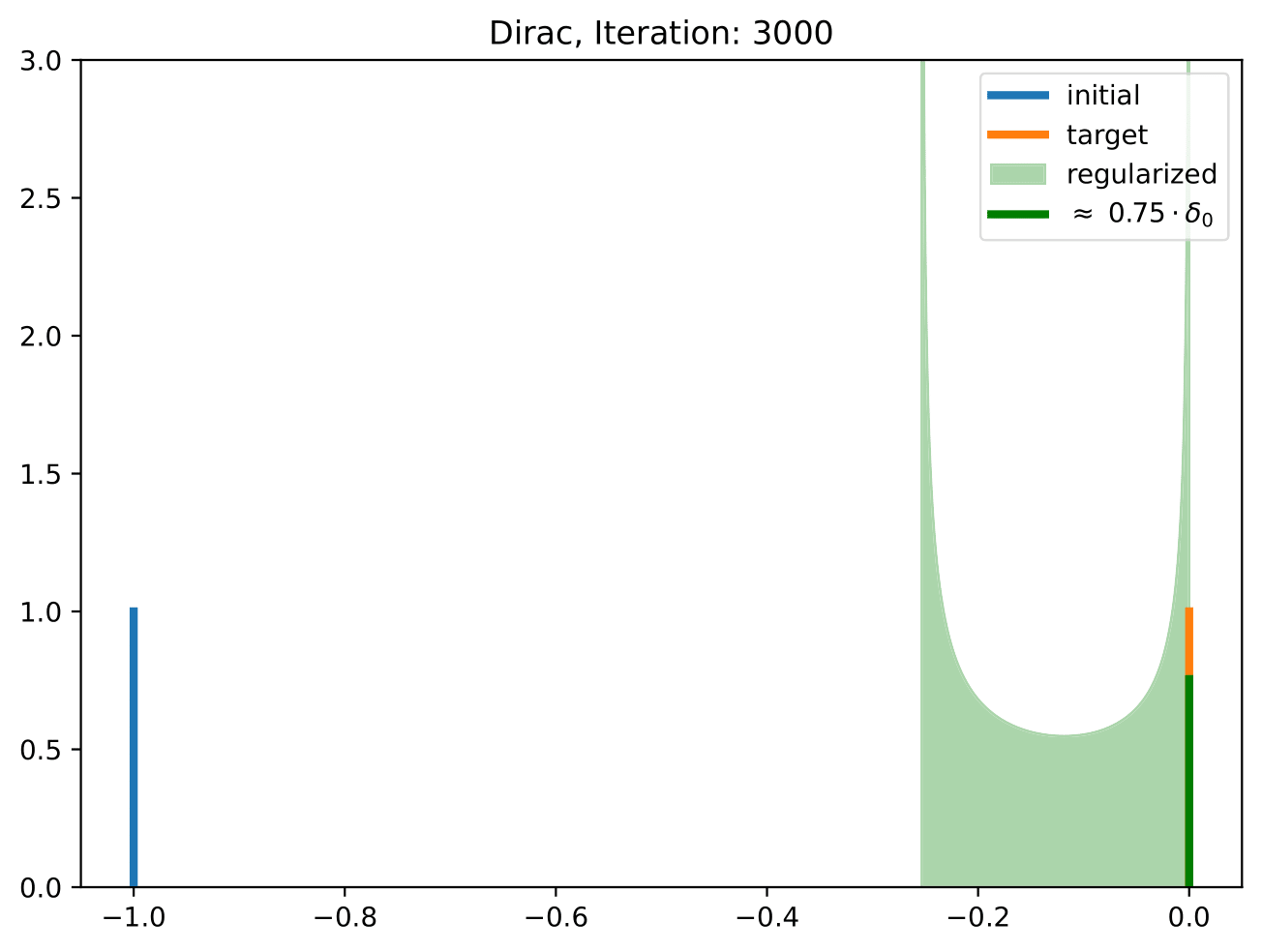}
    \includegraphics[width=.32\textwidth]{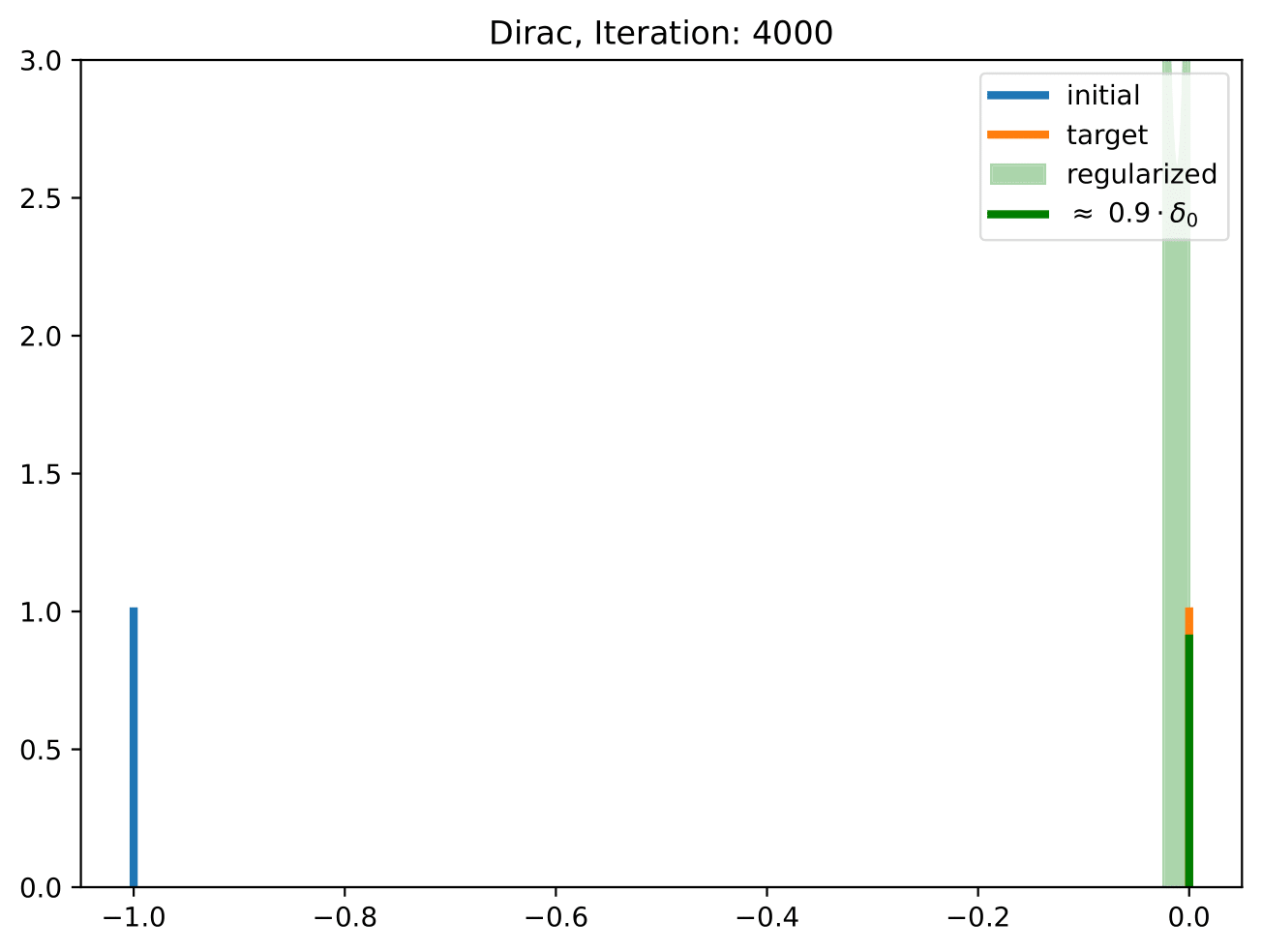}
    \caption{Regularized ${\mathcal F}_\nu^-$-flow from $\delta_{-1}$ to $\delta_{0}$ 
		for $\tau = 10^{-3}, \,\lambda = 10^{-2}$.  
		The "horns" at the boundary of the support are of height $Q_\nu'(0)^{-1} = Q_\nu'(1)^{-1} = \infty$. The support shifts towards the target.
    }
    \label{fig:Dirac_to_Dirac_reg}
\end{figure}
\newpage

In all the remaining examples, we observe a similar behaviour:
\emph{a dissipation of "sticky" mass} in the unregularized ${\F}_\nu^-$-flow; and a rectified flow of $\widetilde{\F}_\nu^-$ where the support actually \emph{moves} towards the target and the quantiles approximate the target in a smoother fashion. Hence, we are left to check whether the assumptions of Proposition \ref{prop:reduced-cauchy} are fulfilled.

\paragraph{Uniform-to-Uniform:} Consider as target the uniform distribution $\nu \sim \mathcal{U}[0,1]$, and as initial measure another Uniform distribution $\gamma_0 \sim \mathcal{U}[-3,-1]$. The corresponding quantiles are given by $Q_\nu(s) = s$ and $Q_0(s) = 2s-3$, and hence satisfy the assumptions of Proposition \ref{prop:reduced-cauchy} for any $\lambda > 0$. Note that here, the Neumann boundary values $Q_\nu'(0) = Q_\nu'(1) = 1$ instantly lead to "horn-shaped barricades" of height $1$. More precisely, by the inverse function rule, it holds 
\begin{equation*}
    Q_\nu'(0) = \frac{1}{f_\nu (Q_\nu(0))}, \quad Q_\nu'(1) = \frac{1}{f_\nu (Q_\nu(1))},
\end{equation*}
where $f_\nu$ denotes the density of $\nu$. Hence, the height of the "horns" is given by $f_\nu (Q_\nu(0)), \, f_\nu (Q_\nu(1))$, i.e., the values of the target density $f_\nu$ at the boundary of its support $\supp \nu$. \\
The flows of $\F_\nu^-$, $\widetilde{\F}_\nu^-$ and their quantile flows are depicted in Figues \ref{fig:Uniform_to_Uniform_unreg}, \ref{fig:Uniform_to_Uniform_reg} and \ref{fig:Uniform_to_Uniform_quantiles}, respectively.

\paragraph{Gaussian-to-Gaussian \emph{like}:} Consider as target a suitably modified, compactly supported version of a Gaussian distribution $\widetilde{\nu} \sim \mathcal{N}(-5,1)$, and as initial measure an analogous version of $\widetilde{\gamma_0} \sim \mathcal{N}(5,1)$.
The modifications can be chosen the following way:
Since the quantile $Q_{\widetilde{\nu}} \in C^\infty(0,1)$ is unbounded, we cut it off outside an interval $[a,b] \subset (0,1)$, which can be chosen arbitrarily close to $(0,1)$. Then, it holds that $Q_{\widetilde{\nu}}|_{[a,b]} \in C^\infty([a,b])$ is smooth and bounded. Hence, we can extend $Q_{\widetilde{\nu}}|_{[a,b]}$ to a function $Q_\nu \in C^3([0,1])$ with arbitrary boundary values $Q_\nu'(0), Q_\nu'(1) > 0$, and we can proceed the same way with $Q_0$.
The hereby modified quantiles $Q_\nu$ and $Q_0$ hence satisfy the assumptions of Proposition \ref{prop:reduced-cauchy} for sufficiently small $\lambda > 0$. The approximate flows of $\F_\nu^-$, $\widetilde{\F}_\nu^-$ and their quantile flows are depicted in Figues \ref{fig:Gaussian_to_Gaussian_unreg}, \ref{fig:Gaussian_to_Gaussian_reg} and \ref{fig:Gaussian_to_Gaussian_quantiles}, respectively.

\paragraph{Technical implementation in the Gaussian-like case:} 
{
For simplicity, we chose to only work with the cut-off function $Q_{\widetilde{\nu}}|_{[a,b]}$ on the smaller interval $[a,b] = [1/(n+1), n/(n+1)]\subset (0,1)$, where we chose $n=1000$. 
Therefore, we only solve \eqref{eq:dirichlet-problem} on $[1/(n+1), n/(n+1)]$, where the boundary values are given by $Q_{\tilde{\nu}}'(1/(n+1))$ and $Q_{\tilde{\nu}}'(n/(n+1))$. As a result, the "horns" of the theoretical extension $Q_\nu \in C^3([0,1])$ are cut-off in Figure \ref{fig:Gaussian_to_Gaussian_reg}. But since the horns can be chosen to have small height $Q_\nu'(0)^{-1}, \, Q_\nu'(1)^{-1} \approx 0$ anyway, this simplification is reasonable.
}

\newpage
\begin{figure}[H]
    \centering
      \includegraphics[width=.32\textwidth]{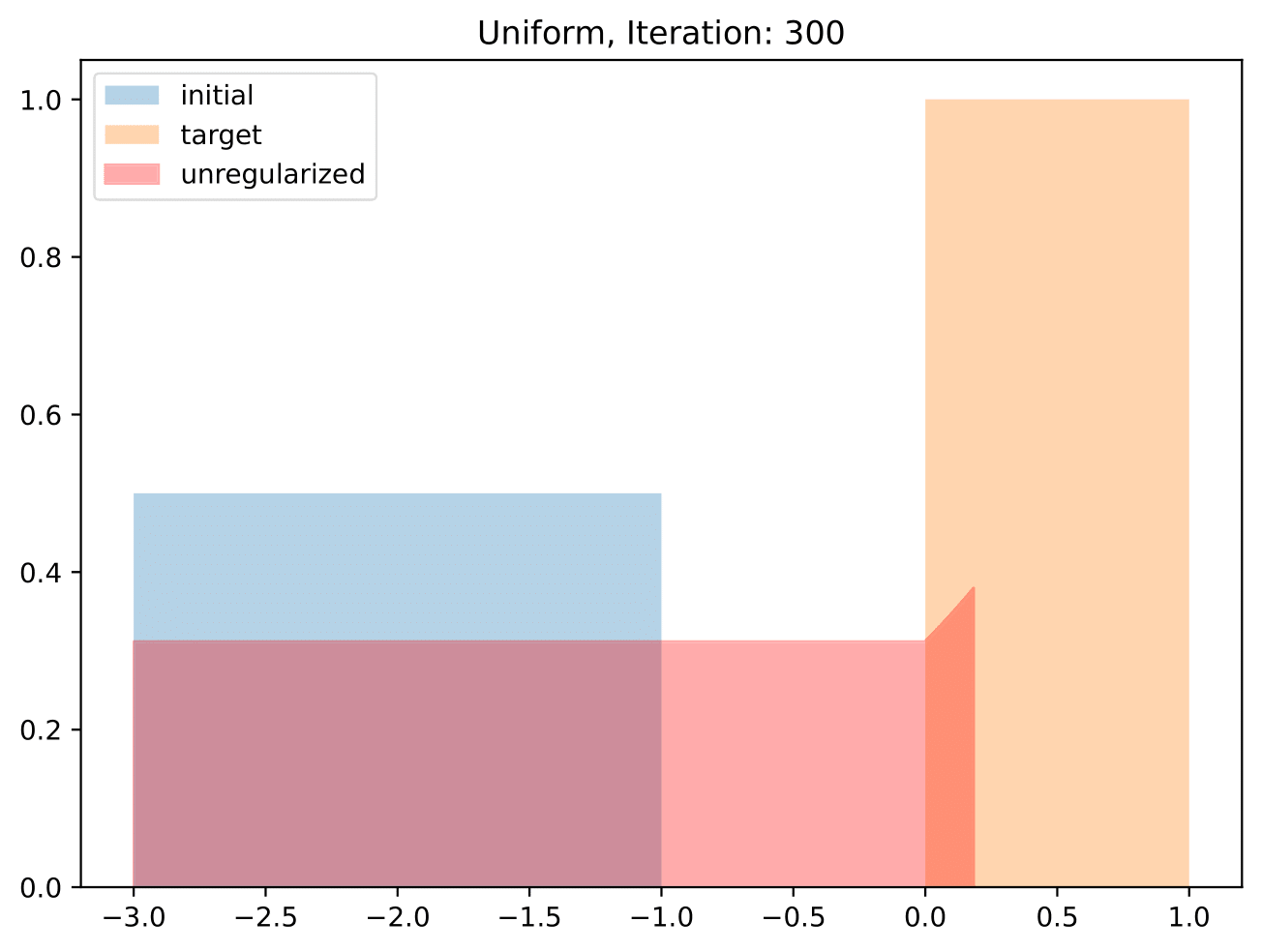}
    \includegraphics[width=.32\textwidth]{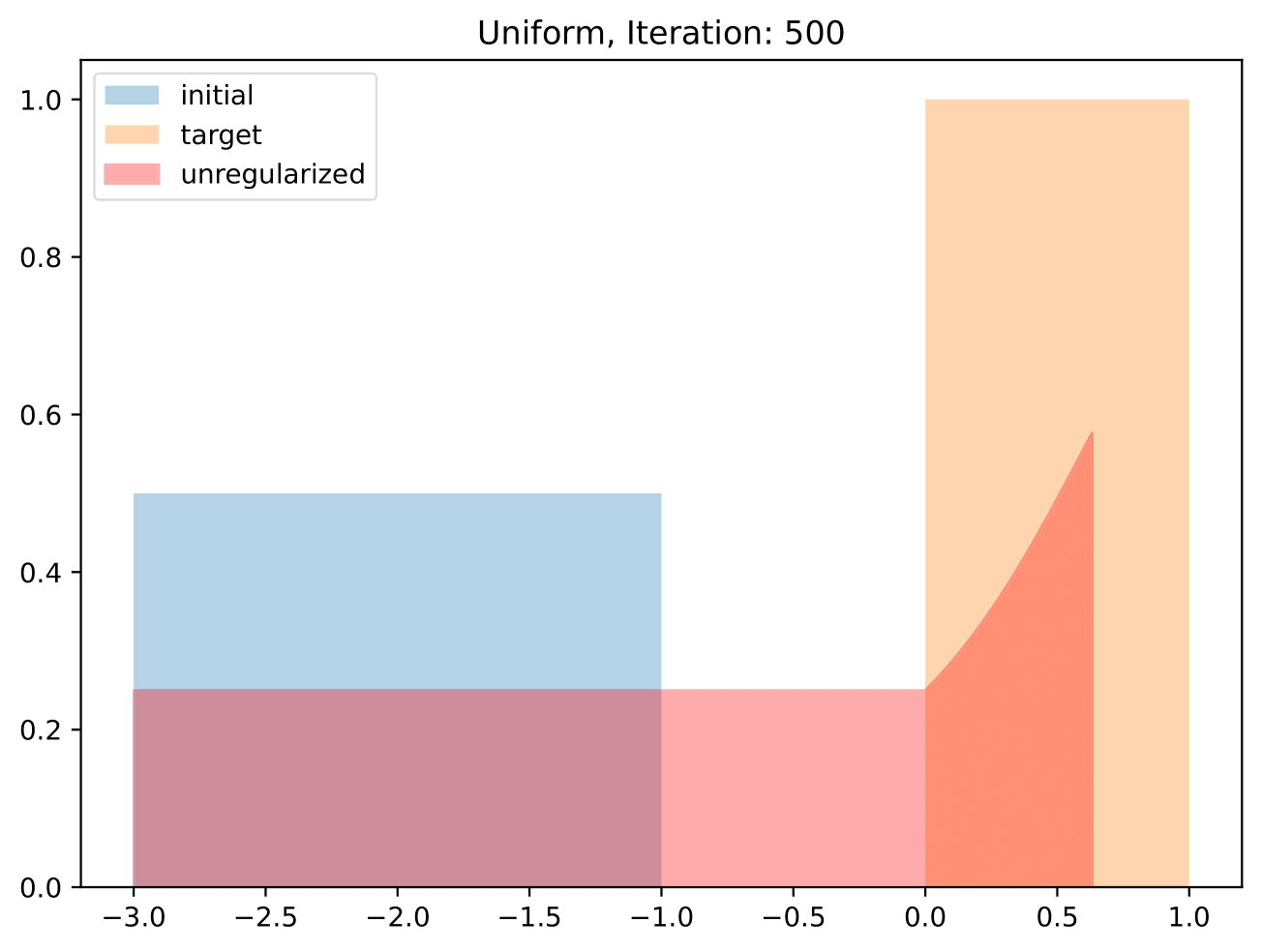}
    \includegraphics[width=.32\textwidth]{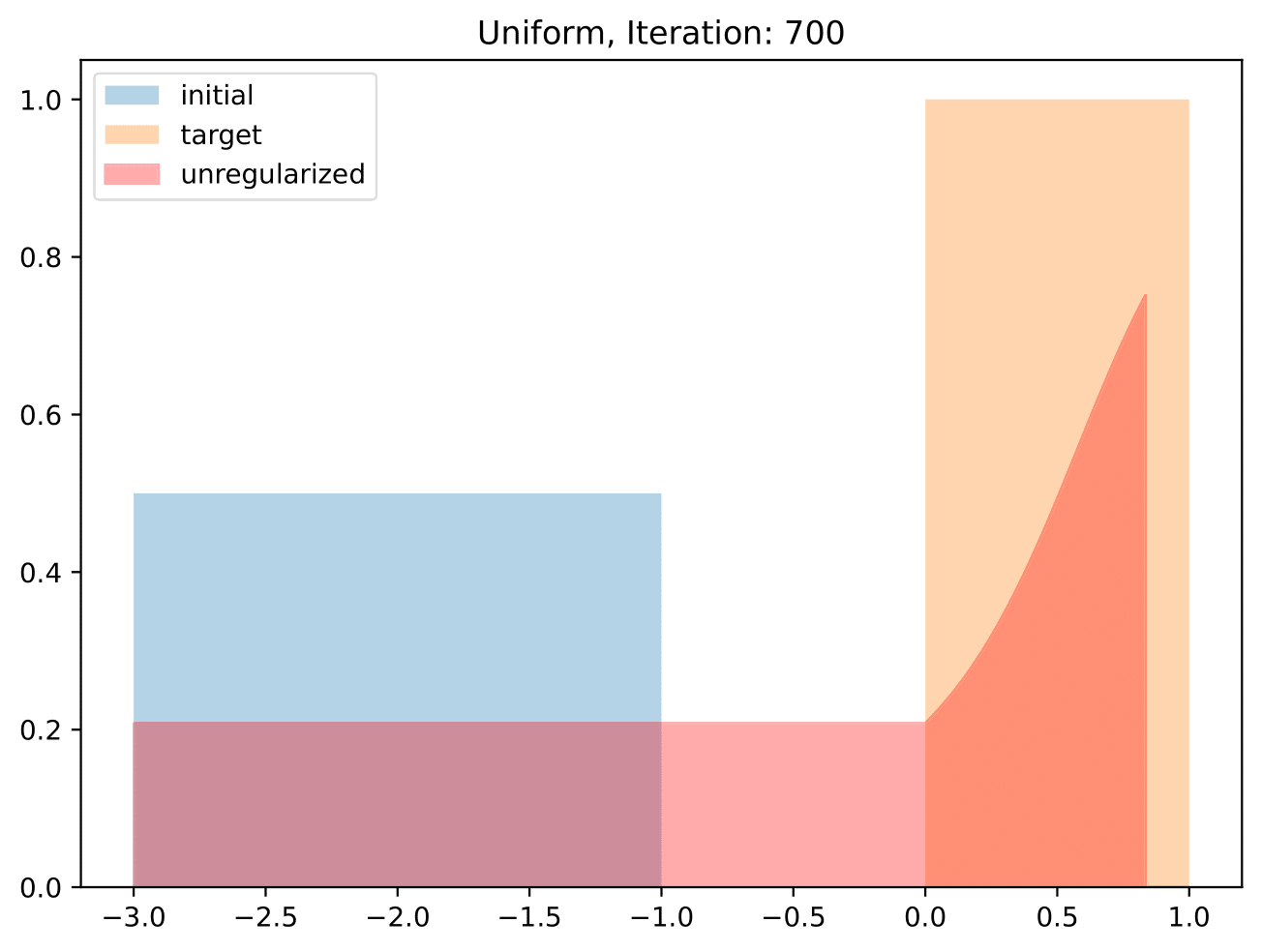}
    \\
    \includegraphics[width=.32\textwidth]{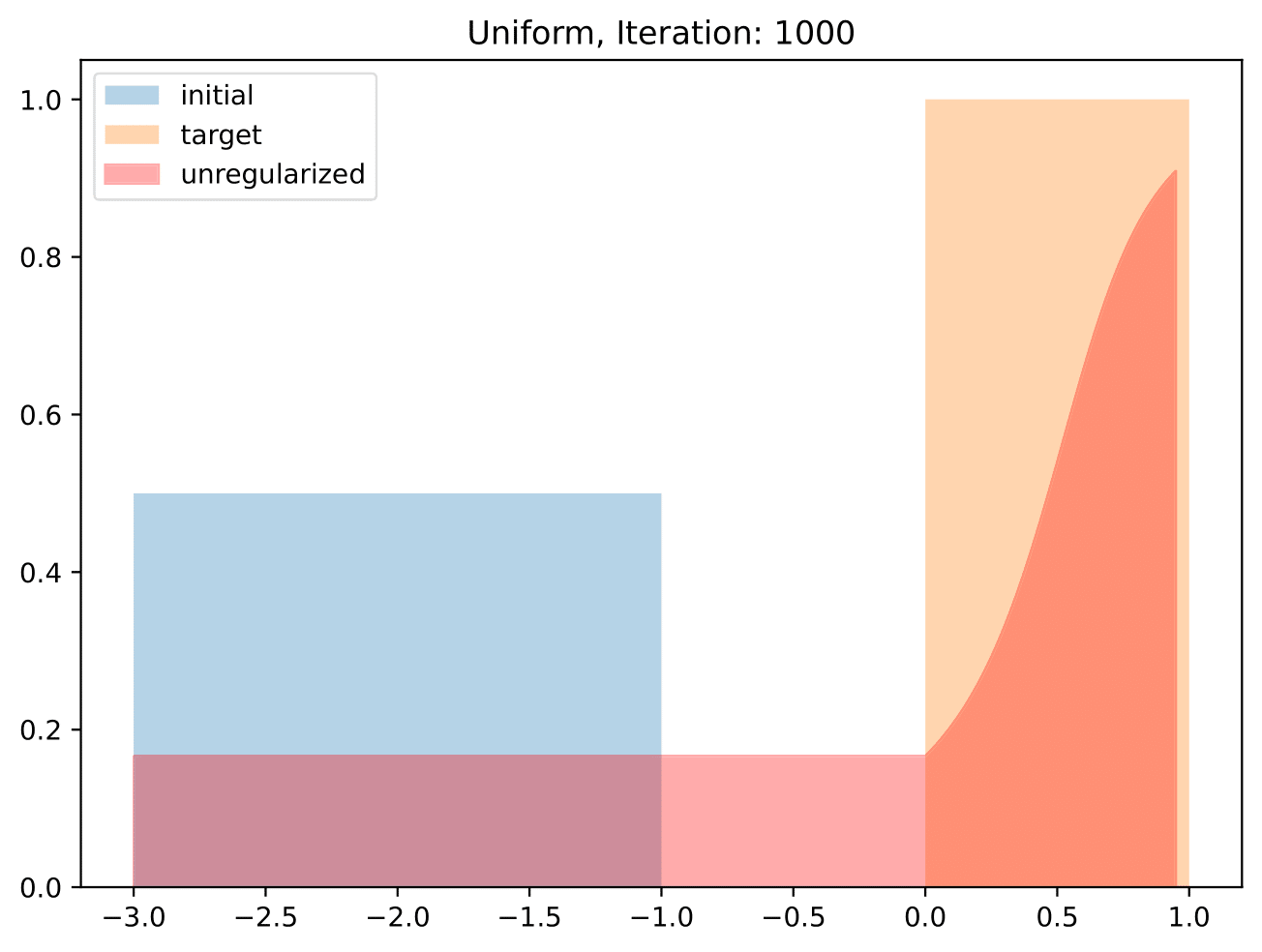}
    \includegraphics[width=.32\textwidth]{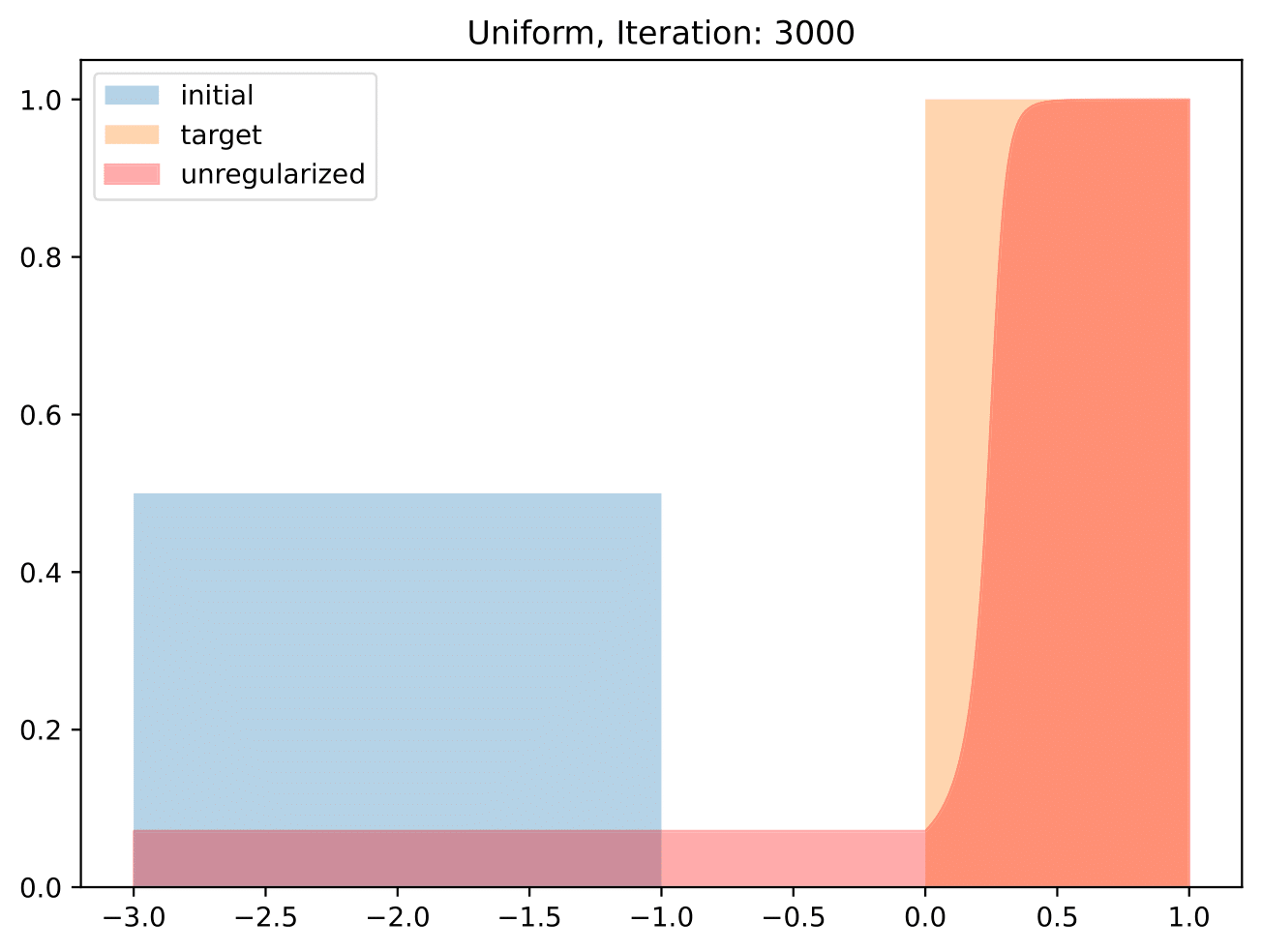}
    \includegraphics[width=.32\textwidth]{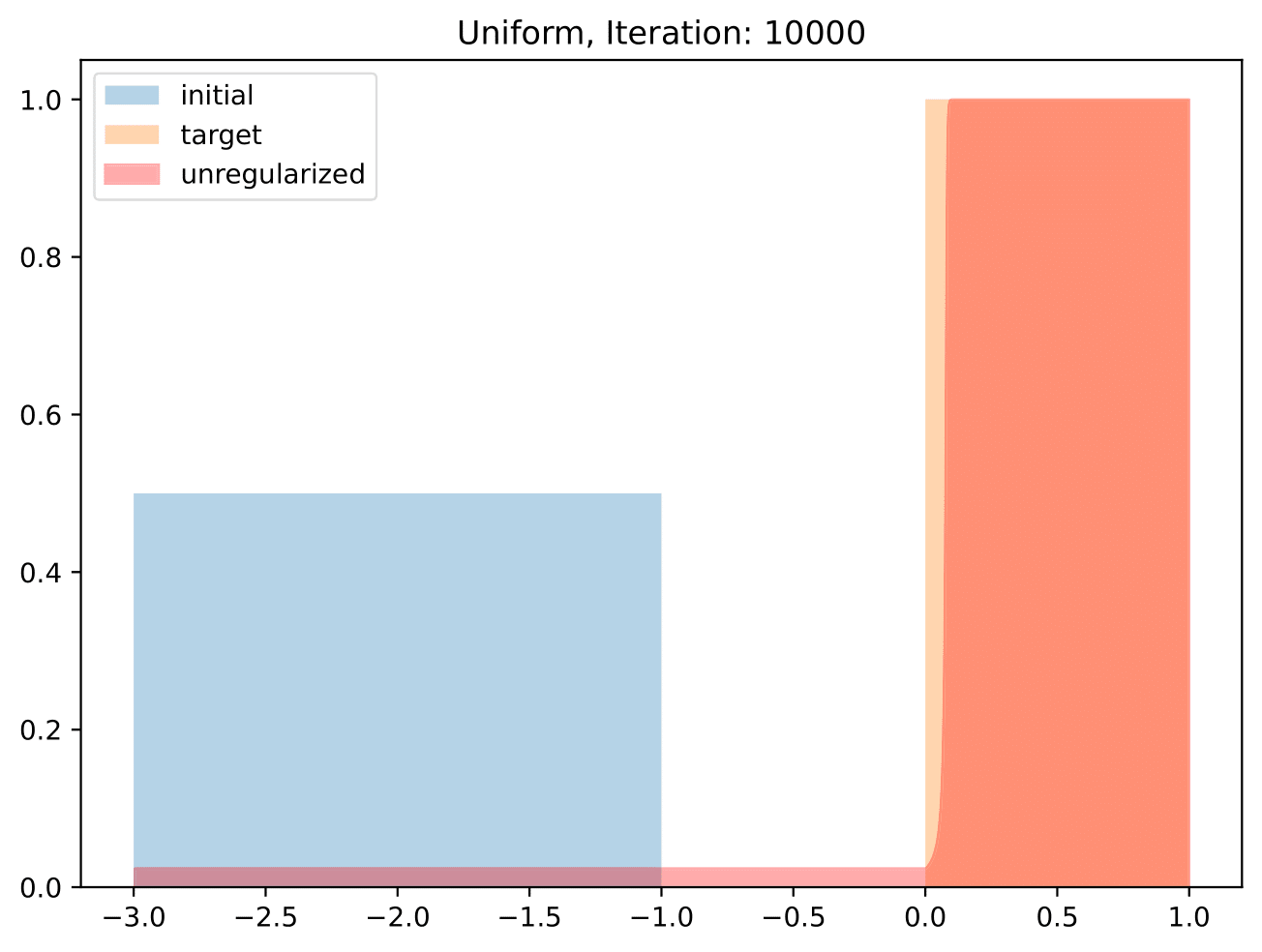}
    \caption{Unregularized ${\mathcal F}_\nu^-$-flow  between the uniform measures $\gamma_0 \sim \mathcal{U}[-3,-1]$ 
		and $\nu \sim \mathcal{U}[0,1]$ for
		$\tau = 2\cdot 10^{-3}$. The mass outside $\supp \nu$ dissipates, but never totally vanishes.
    }
    \label{fig:Uniform_to_Uniform_unreg}
\end{figure}

\begin{figure}[H]
    \centering
    \includegraphics[width=.32\textwidth]{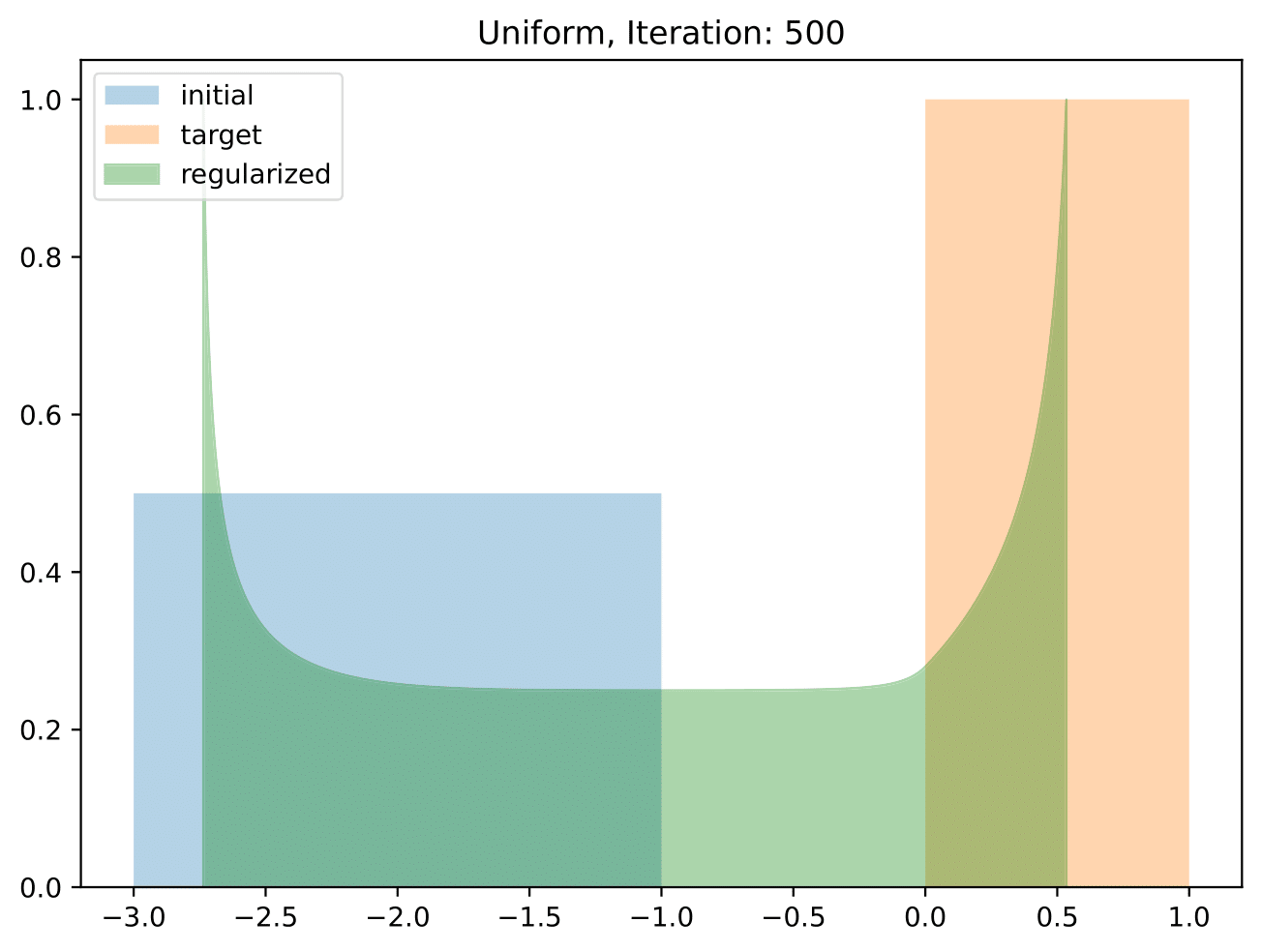}
    \includegraphics[width=.32\textwidth]{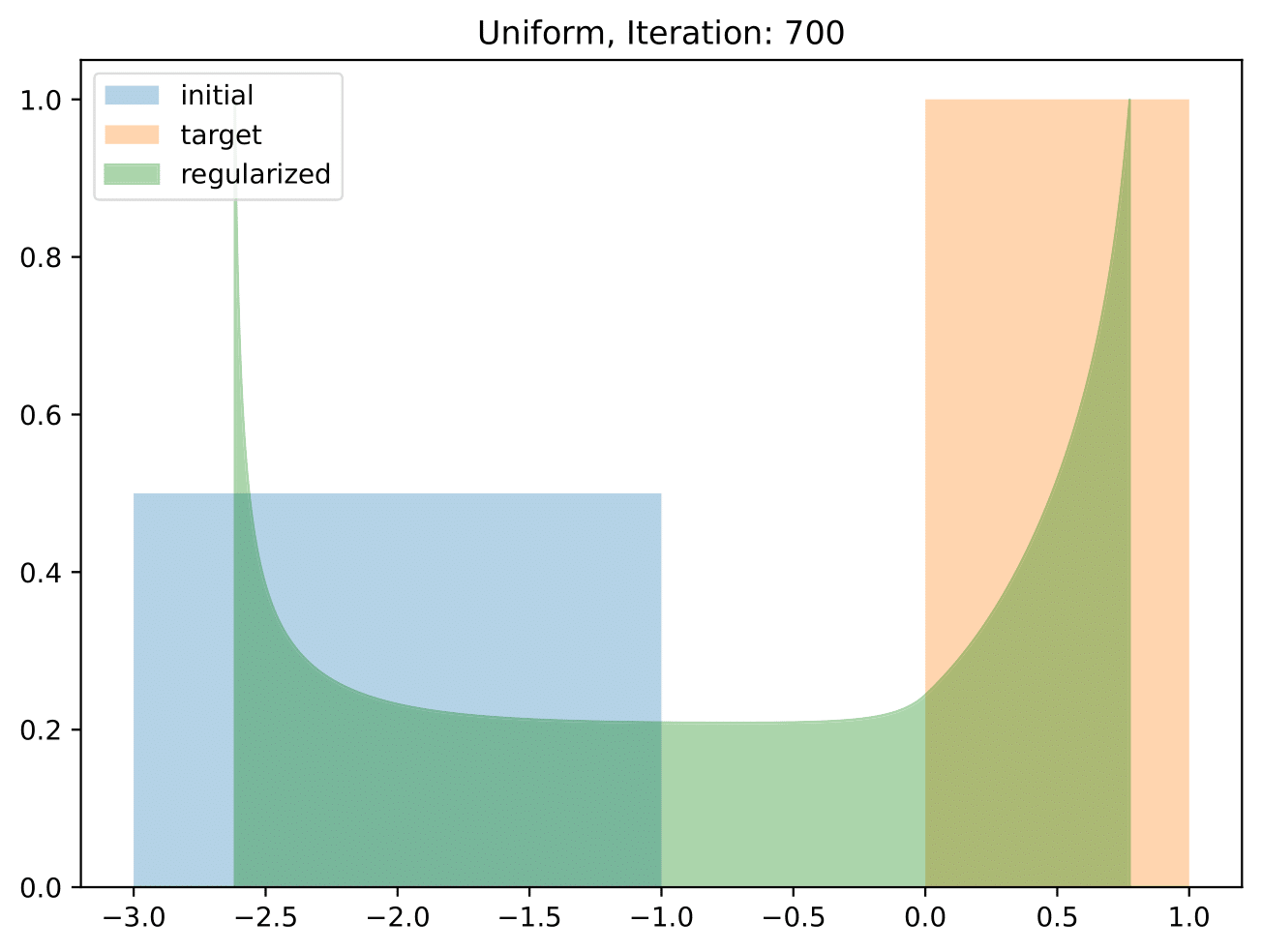}
    \includegraphics[width=.32\textwidth]{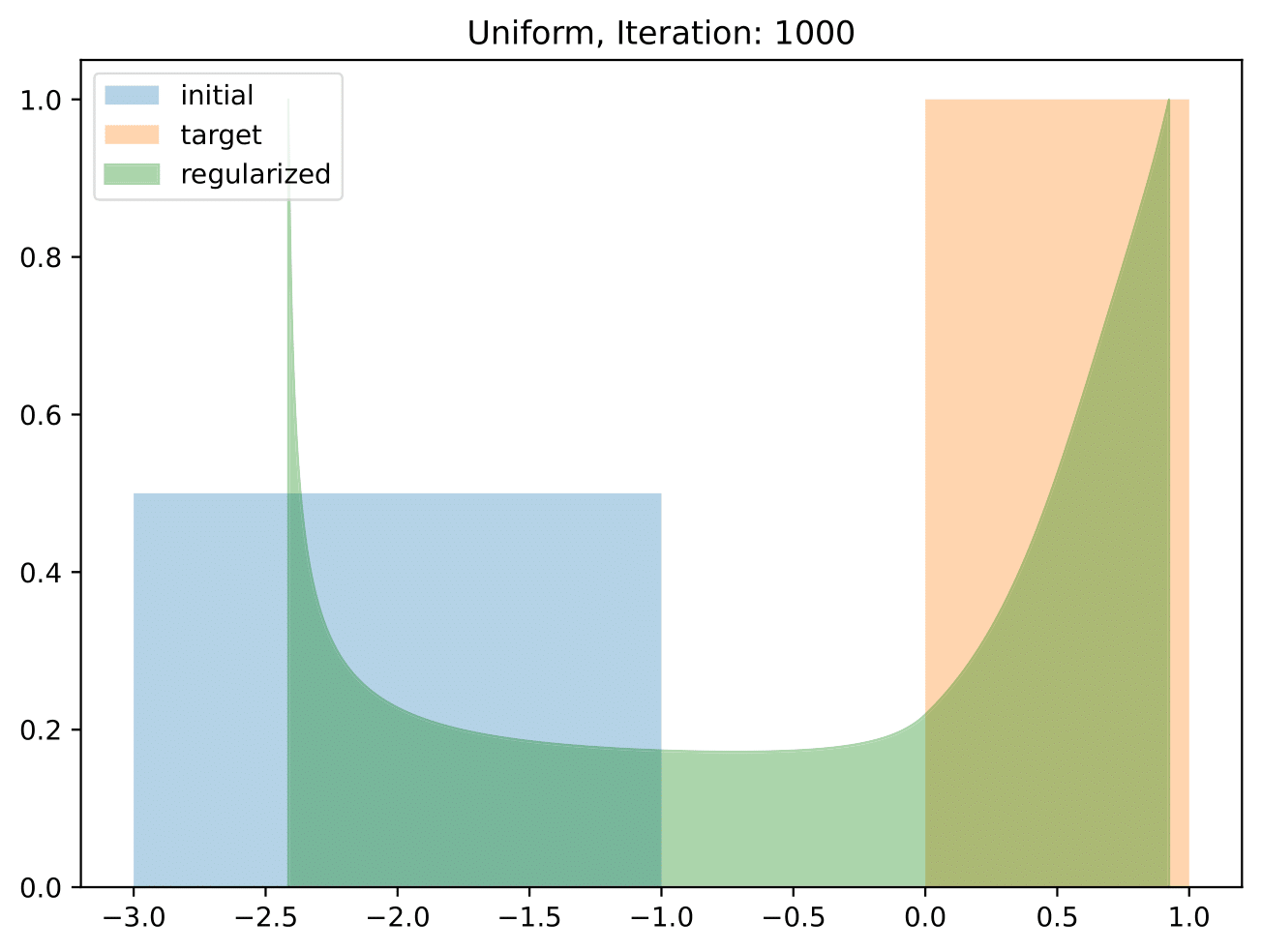}
    \\
    \includegraphics[width=.32\textwidth]{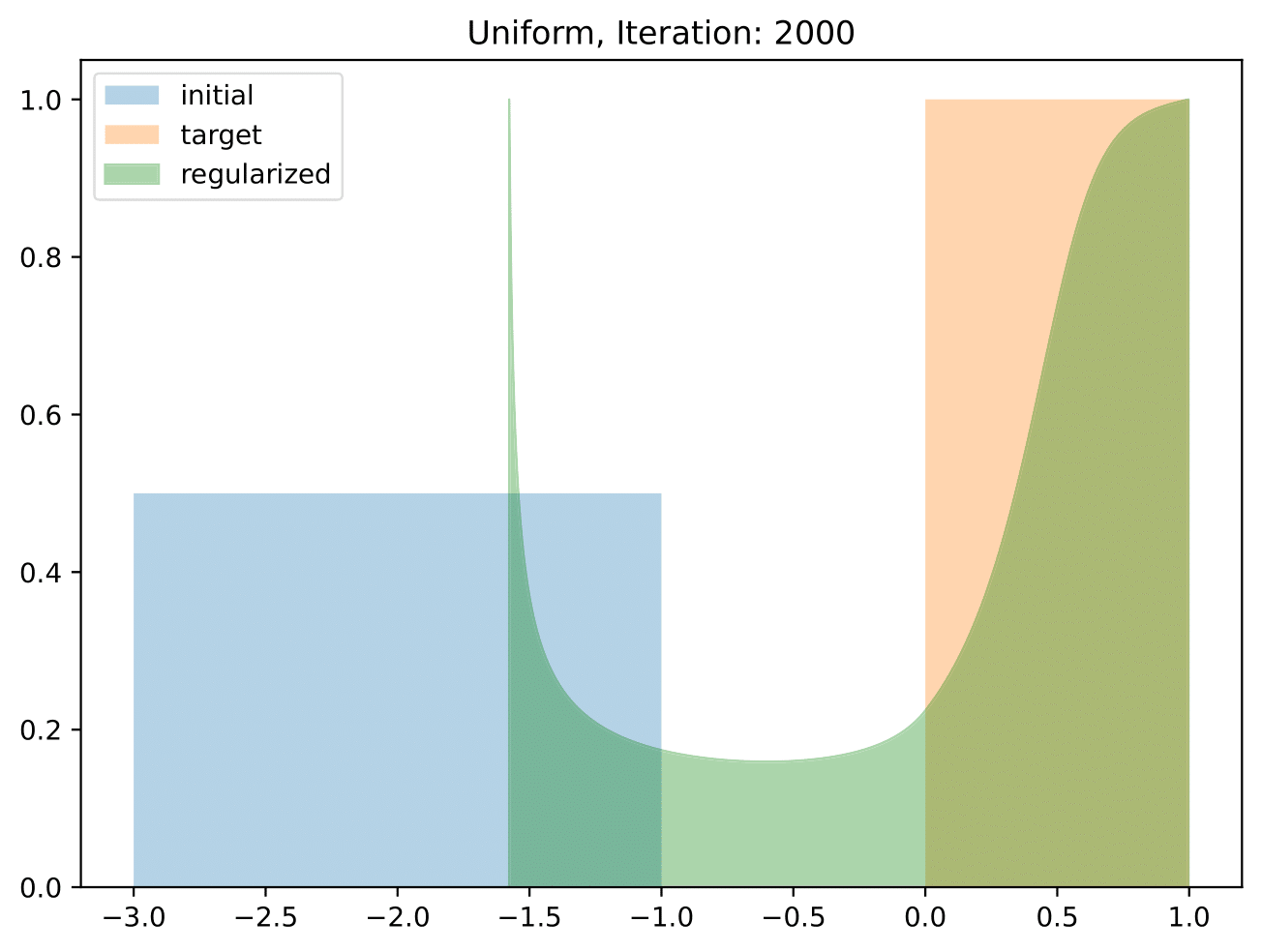}
    \includegraphics[width=.32\textwidth]{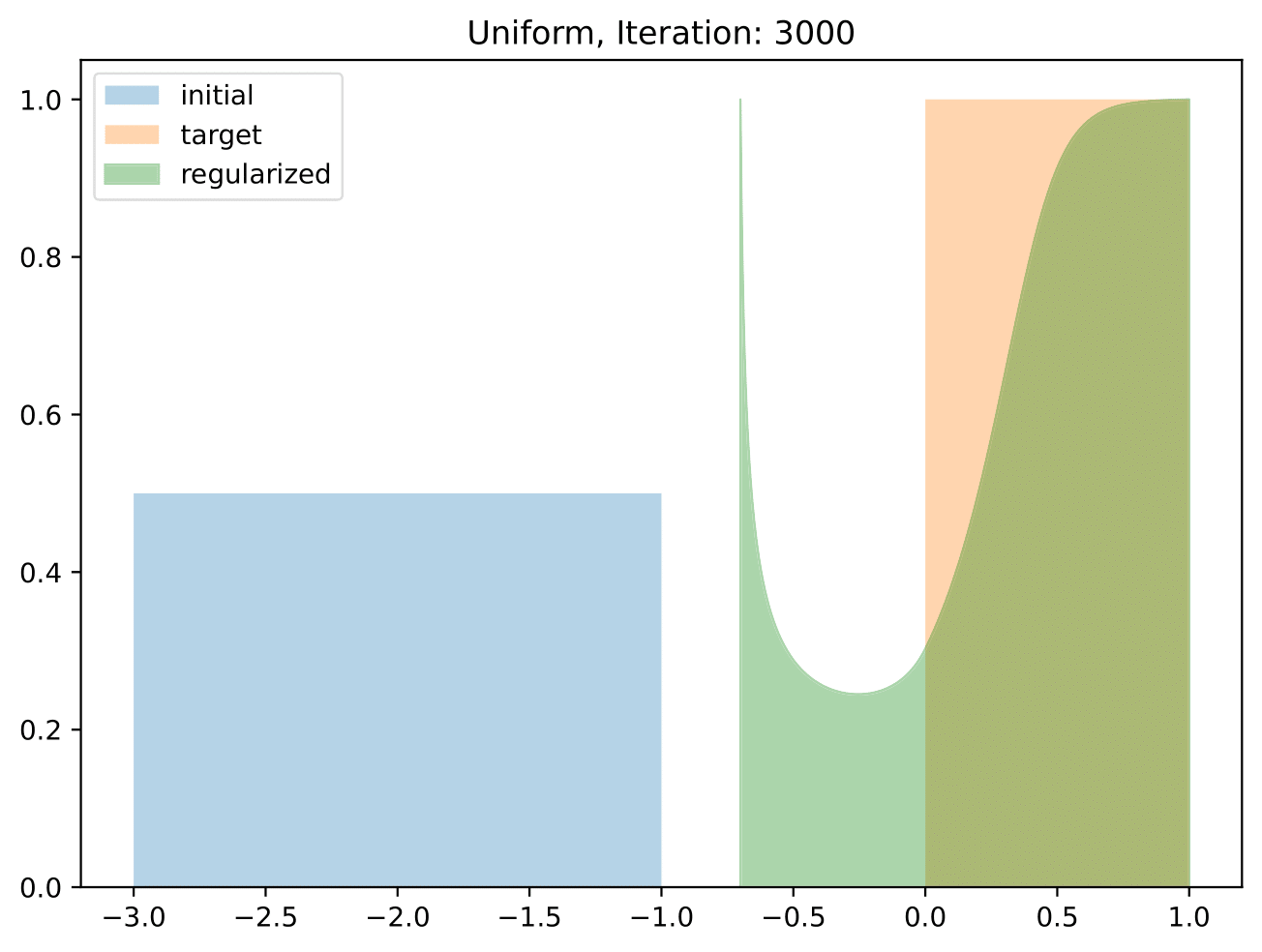}
    \includegraphics[width=.32\textwidth]{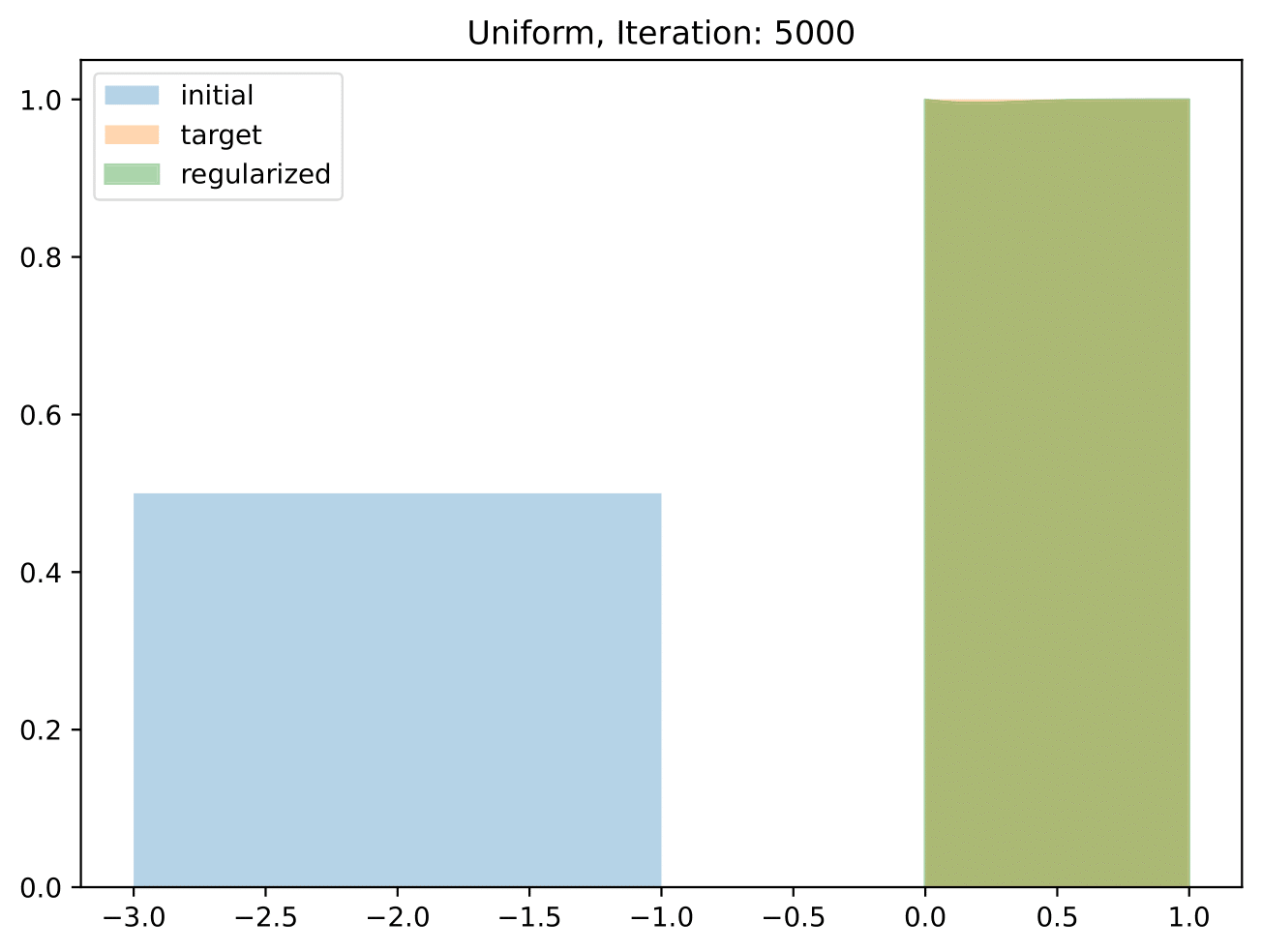}
    \caption{Regularized ${\mathcal F}_\nu^-$-flow  between the uniform measures $\gamma_0 \sim \mathcal{U}[-3,-1]$ 
		and $\nu \sim \mathcal{U}[0,1]$ for
		$\tau = 2\cdot 10^{-3}$, $\lambda = 10^{-2}$. The "horns" at the boundary of the support are of height $Q_\nu'(0)^{-1} = Q_\nu'(1)^{-1} = 1$, mirroring the target density values $f_\nu$ at the boundary of $\supp \nu$. The support shifts towards the target.
    }
    \label{fig:Uniform_to_Uniform_reg}
\end{figure}

\begin{figure}[H]
    \centering    
    \includegraphics[width=.318\textwidth]{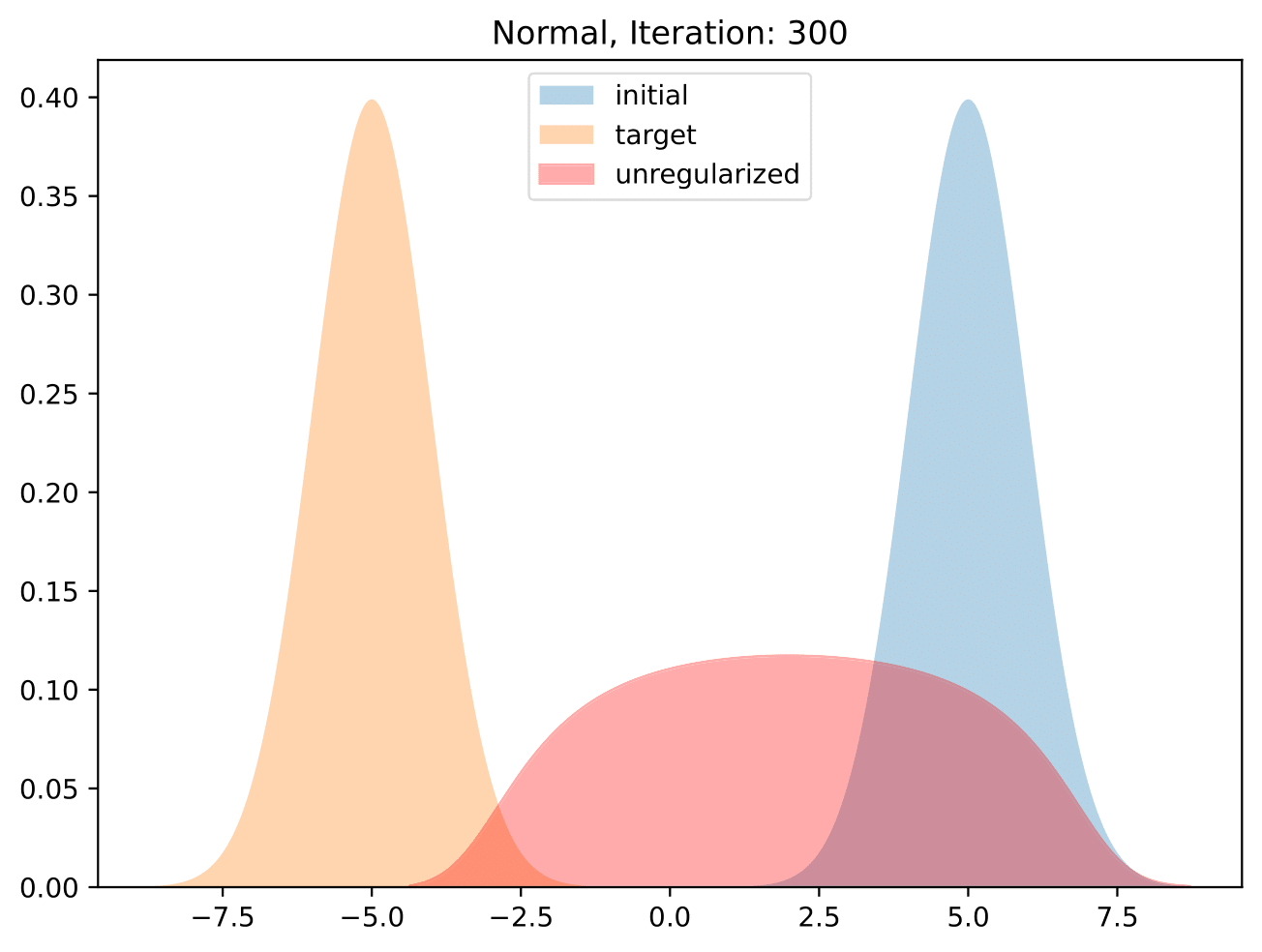}
    \includegraphics[width=.318\textwidth]{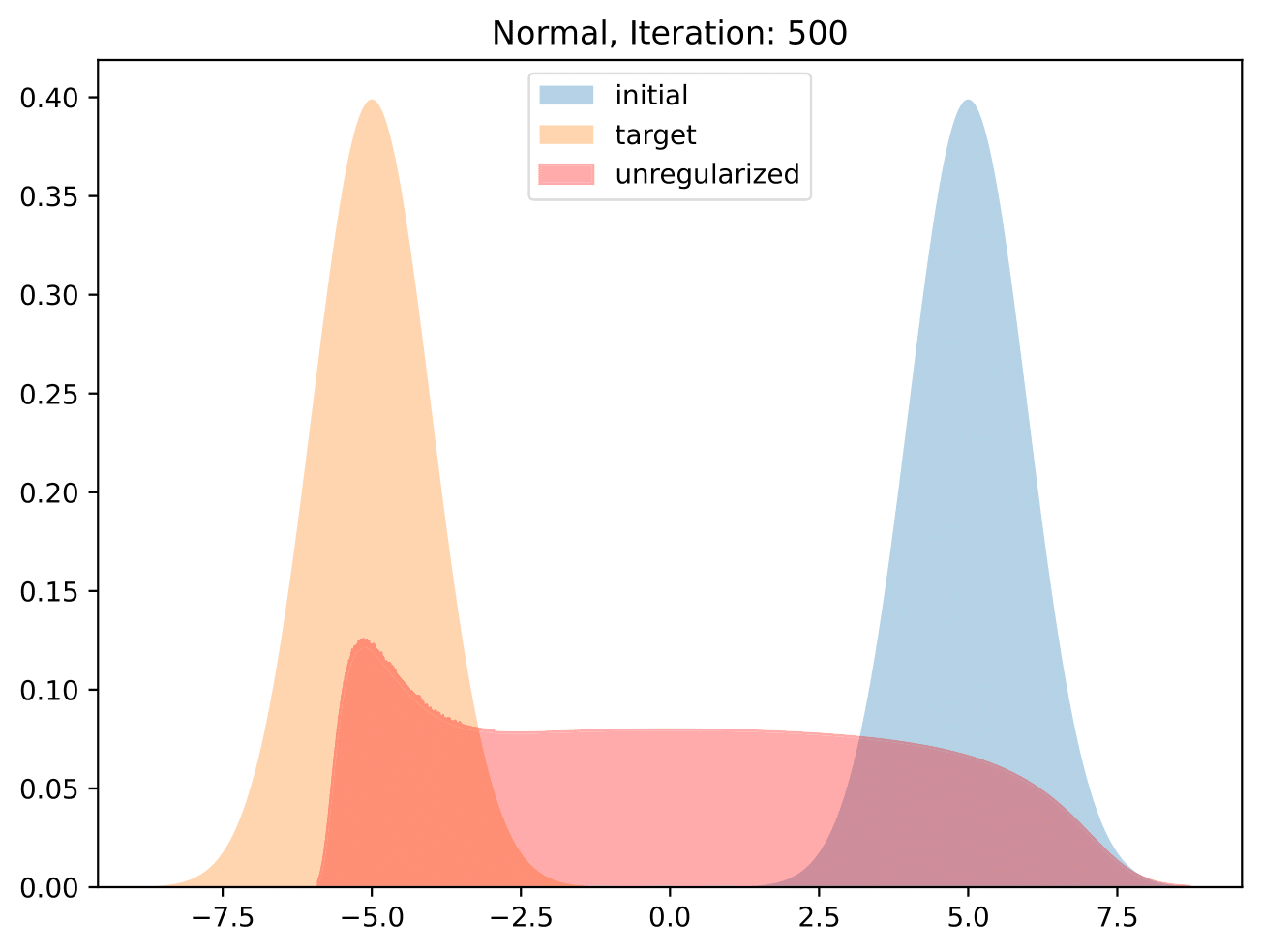}
    \includegraphics[width=.318\textwidth]{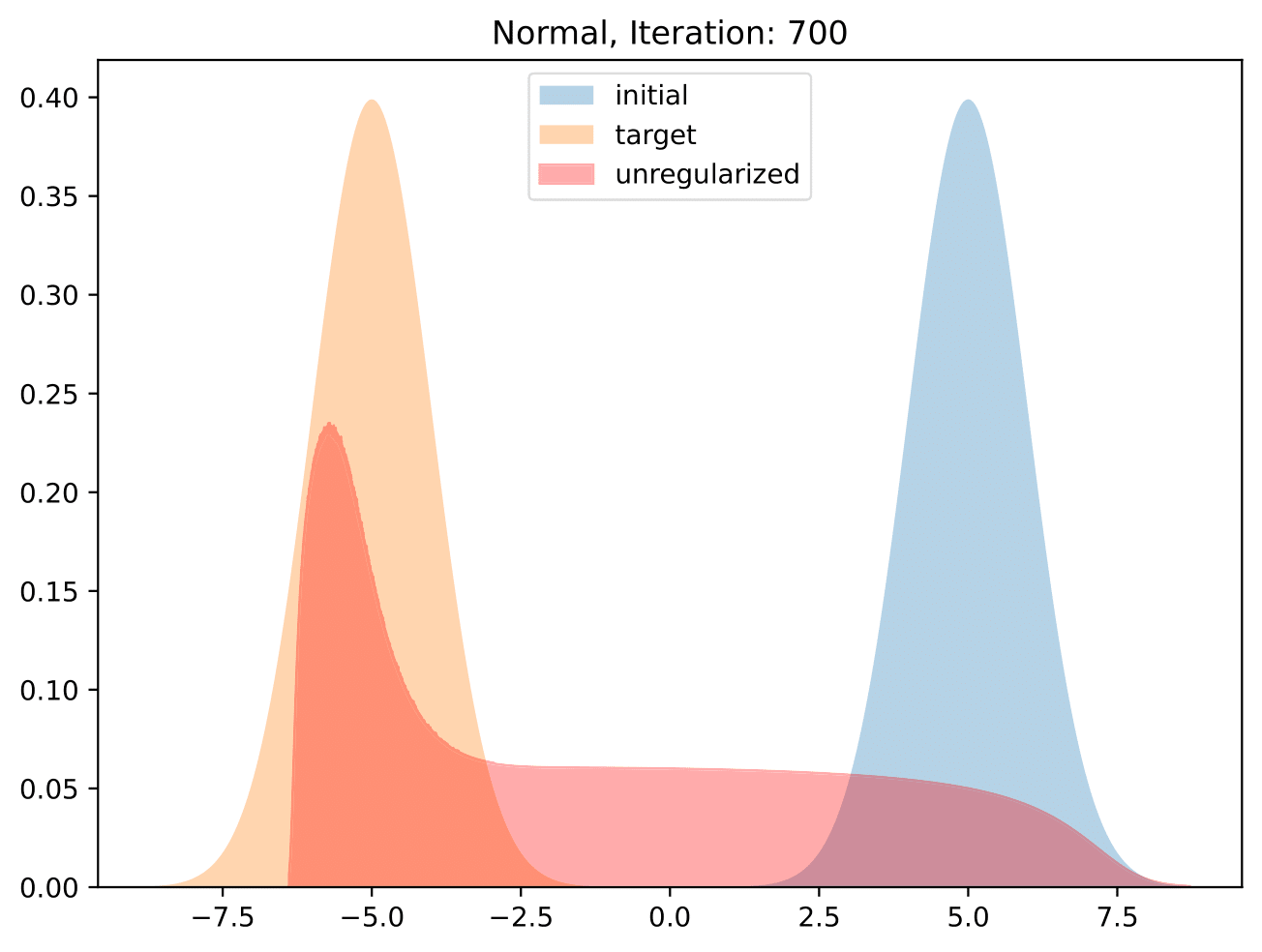}
    \\
    \includegraphics[width=.318\textwidth]{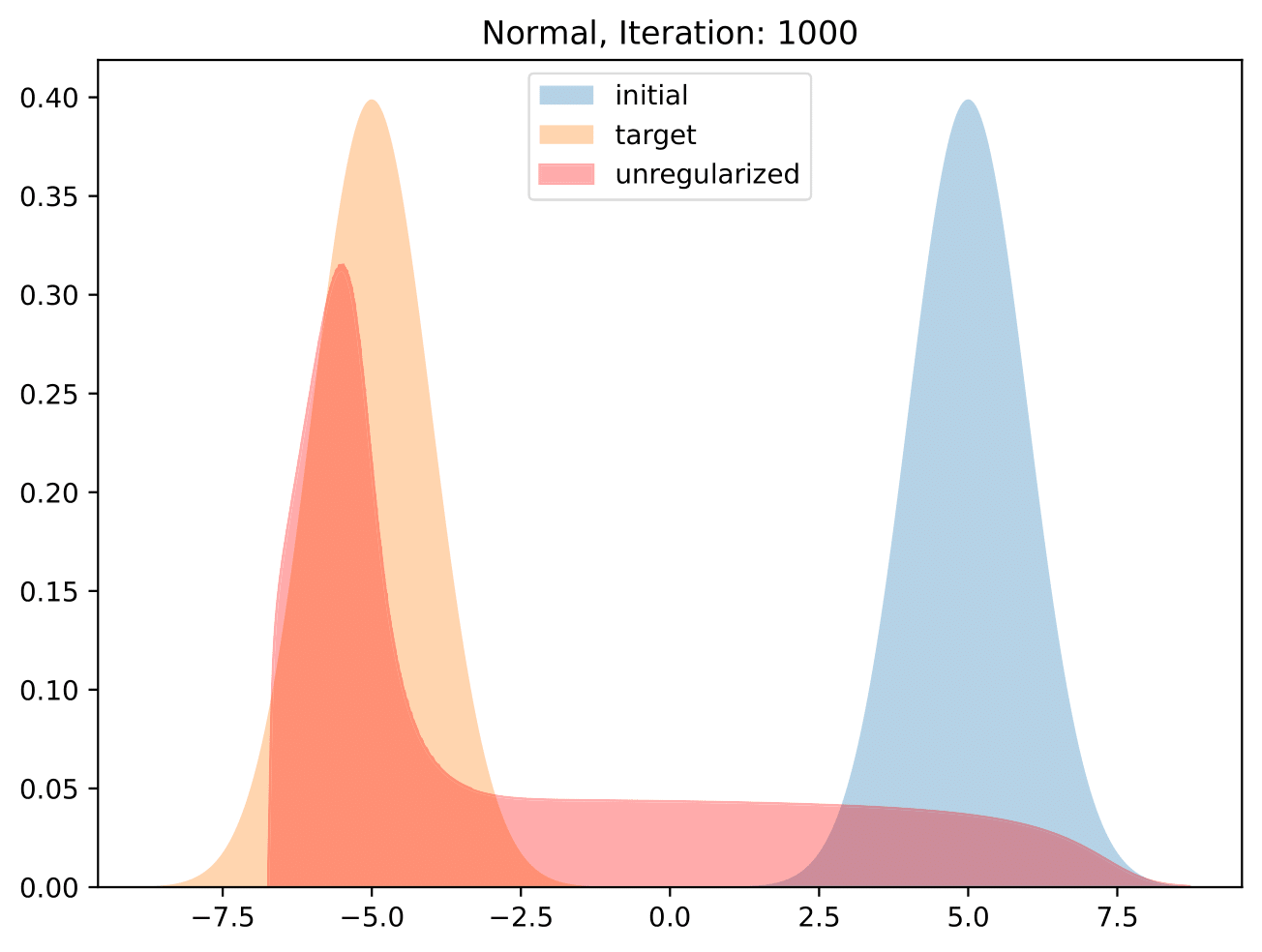}
    \includegraphics[width=.318\textwidth]{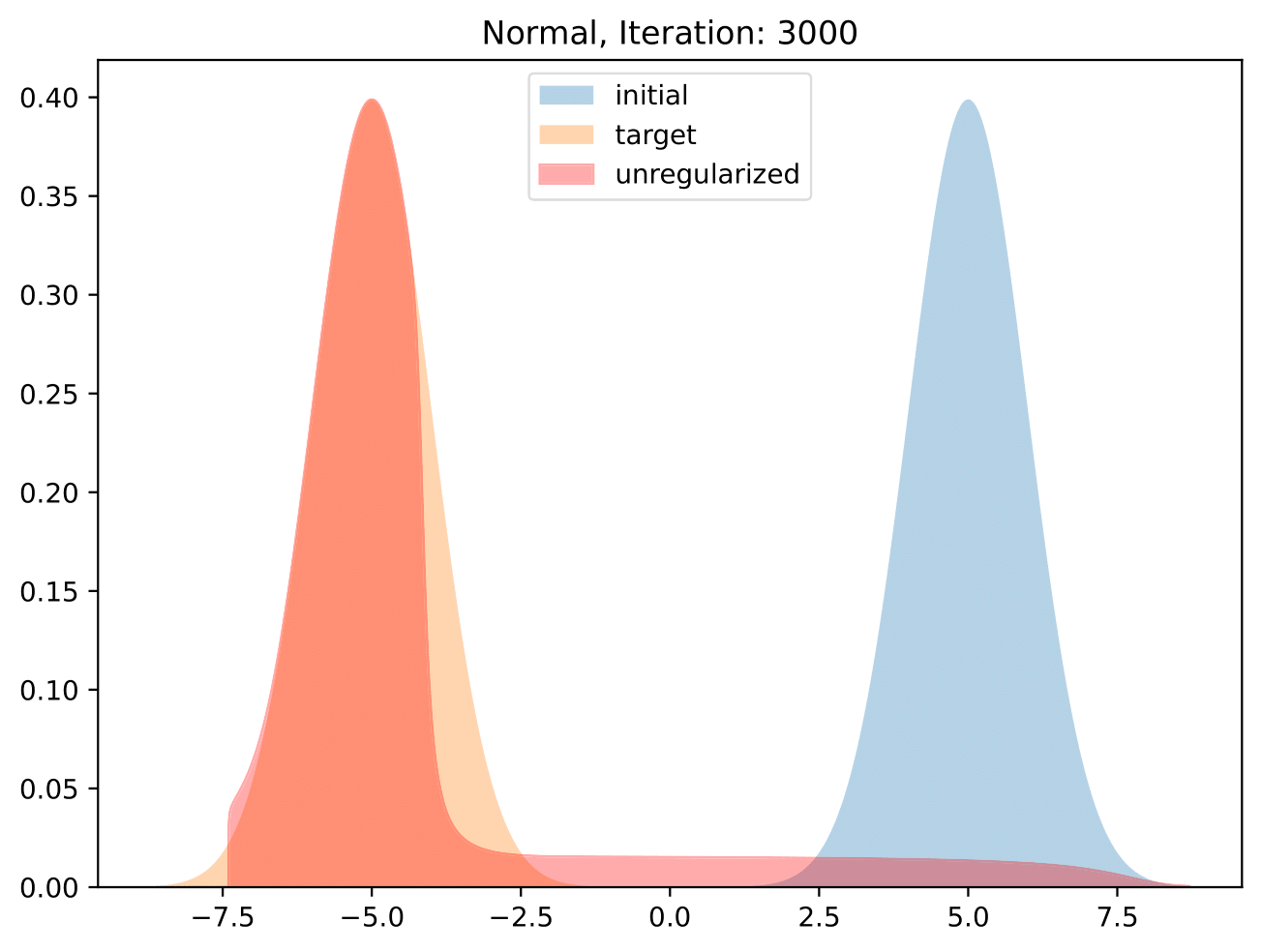}
    \includegraphics[width=.318\textwidth]{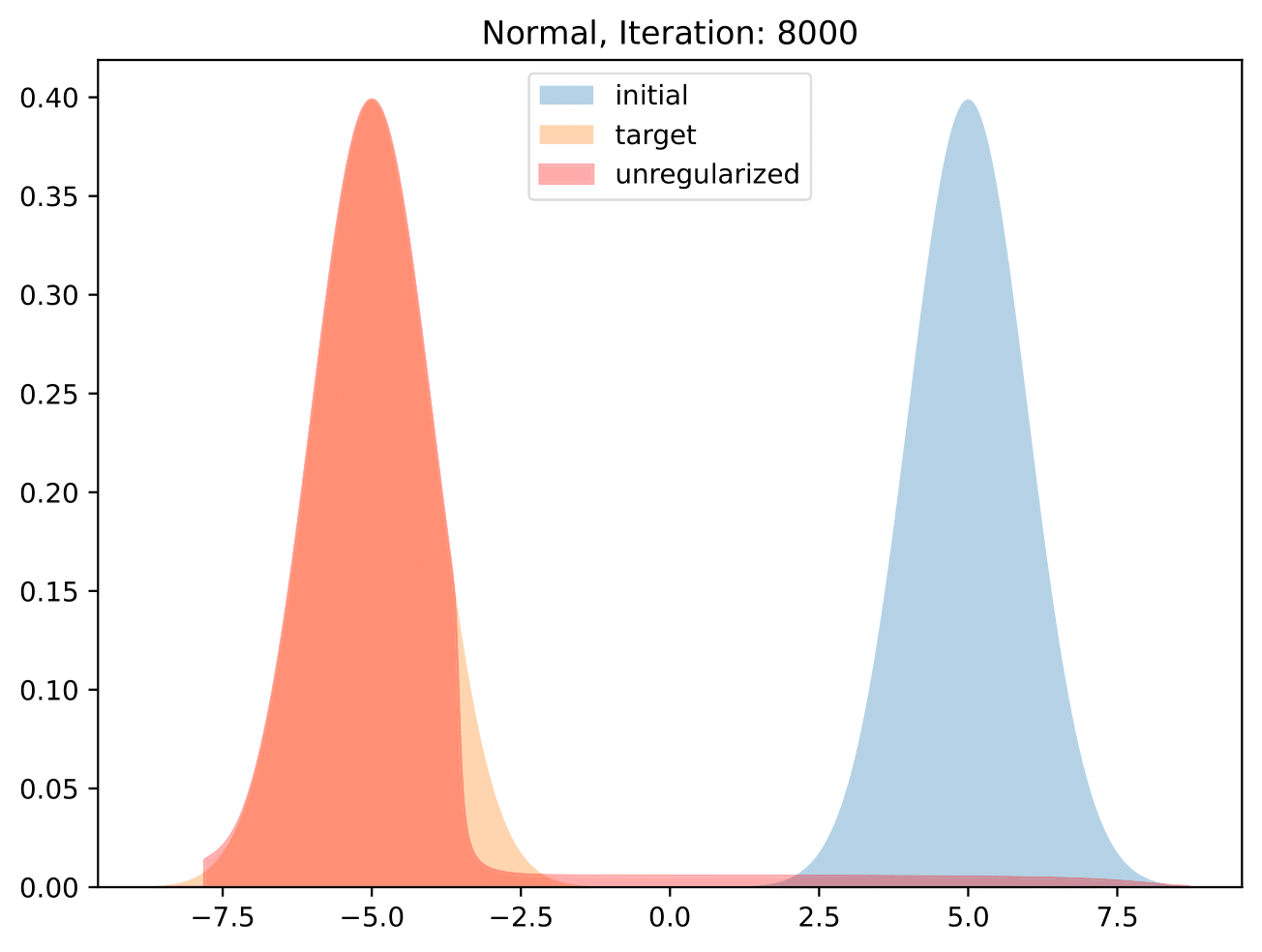}
    \caption{Unregularized ${\mathcal F}_\nu^-$-flow  between the cut-off Gaussians $\mathcal{N}(5,1)$ and $\mathcal{N}(-5,1)$ for
		$\tau = 10^{-2}$. The mass "sticks" to its once attained position.
    }
    \label{fig:Gaussian_to_Gaussian_unreg}
\end{figure}

\begin{figure}[H]
    \centering
    \includegraphics[width=.318\textwidth]{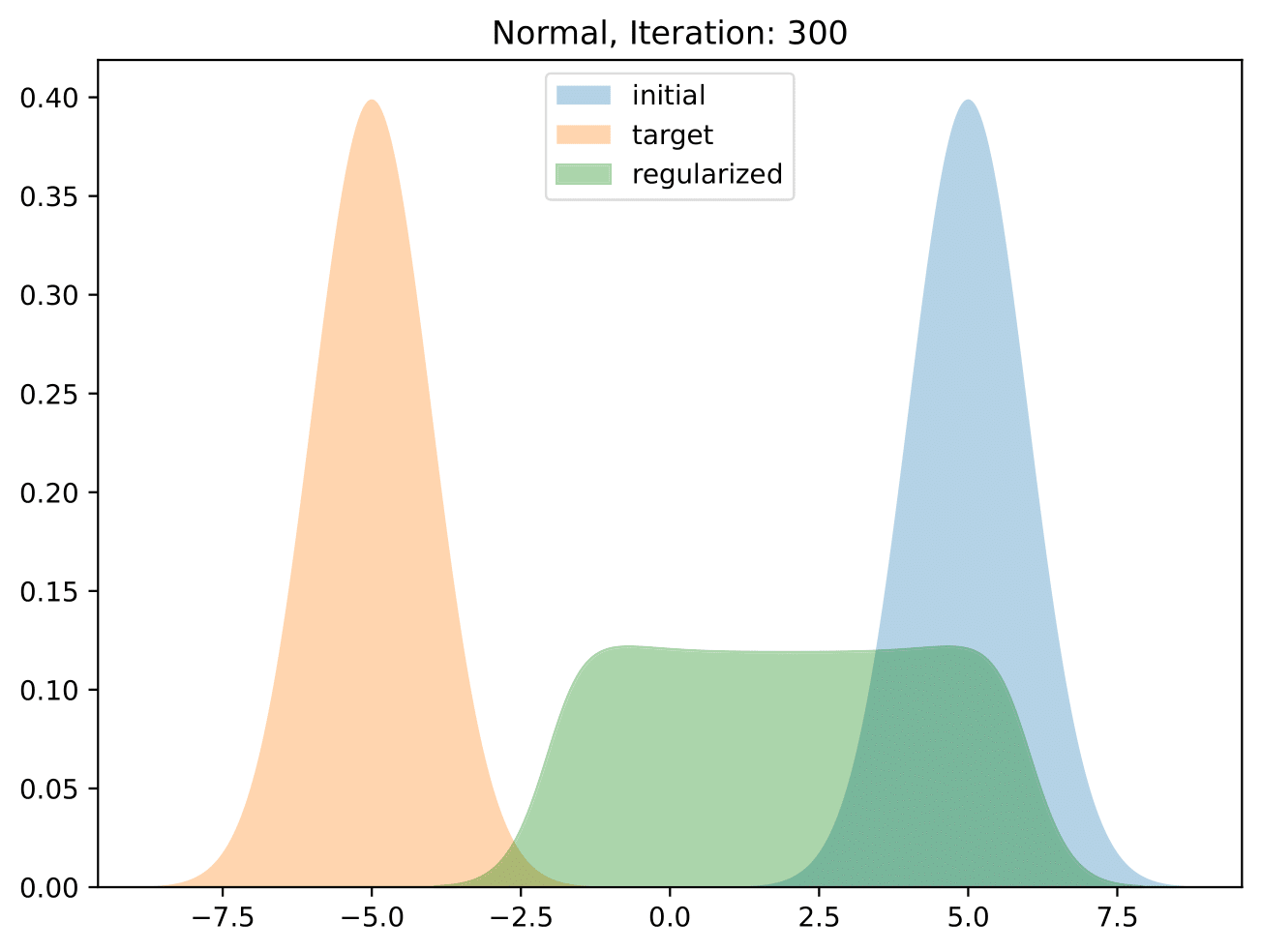}
    \includegraphics[width=.318\textwidth]{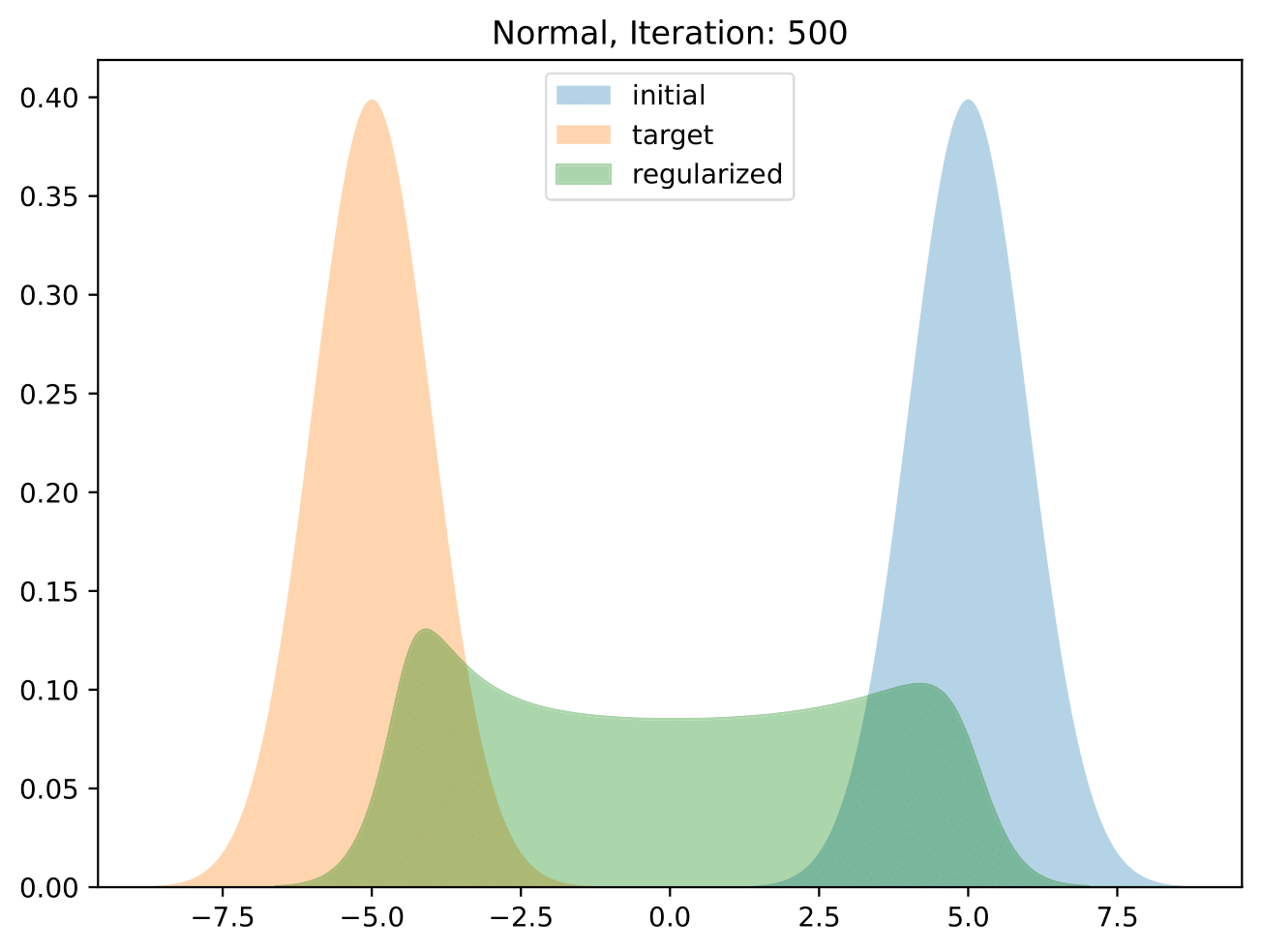}
    \includegraphics[width=.318\textwidth]{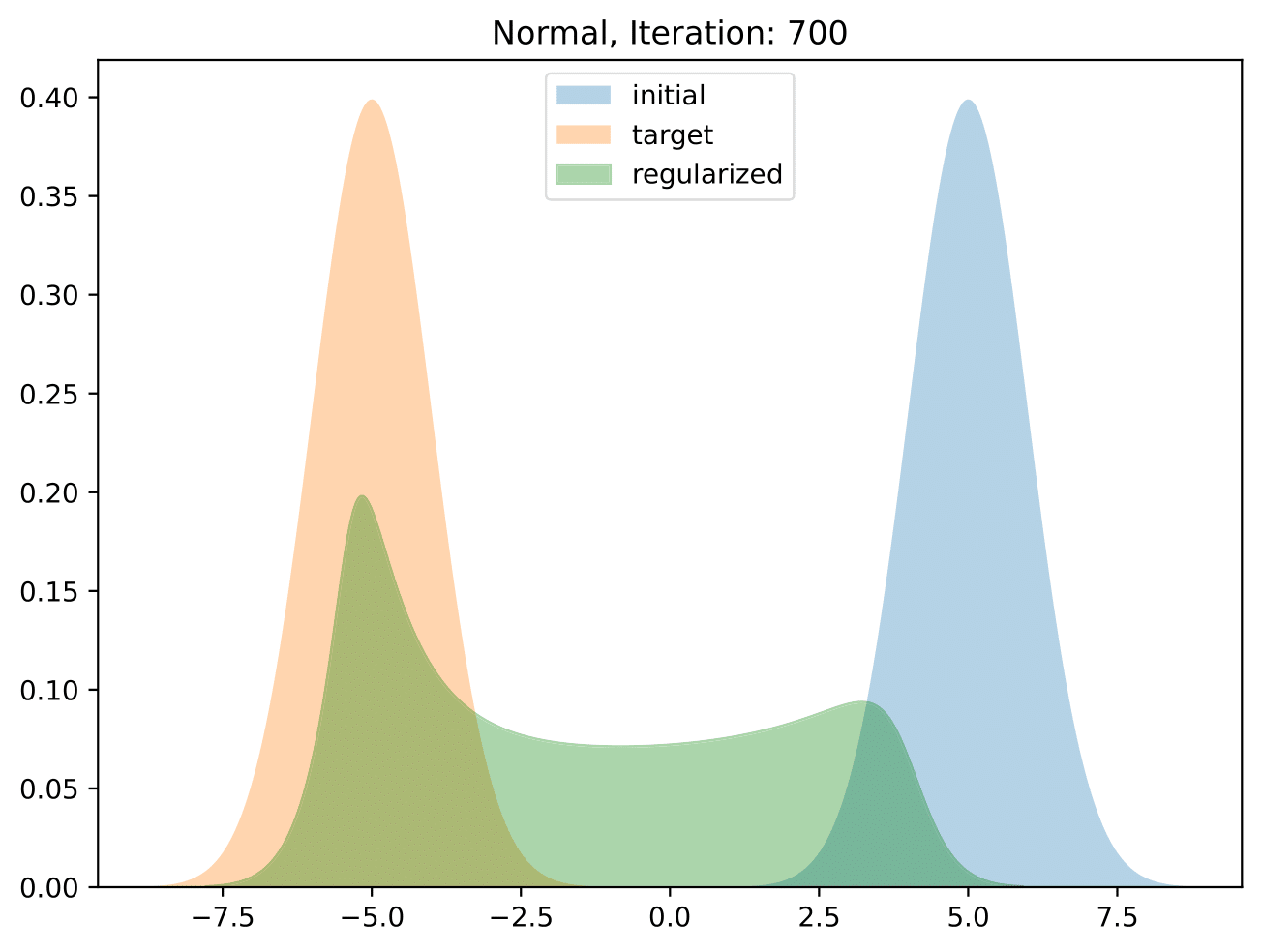}
    \\
    \includegraphics[width=.318\textwidth]{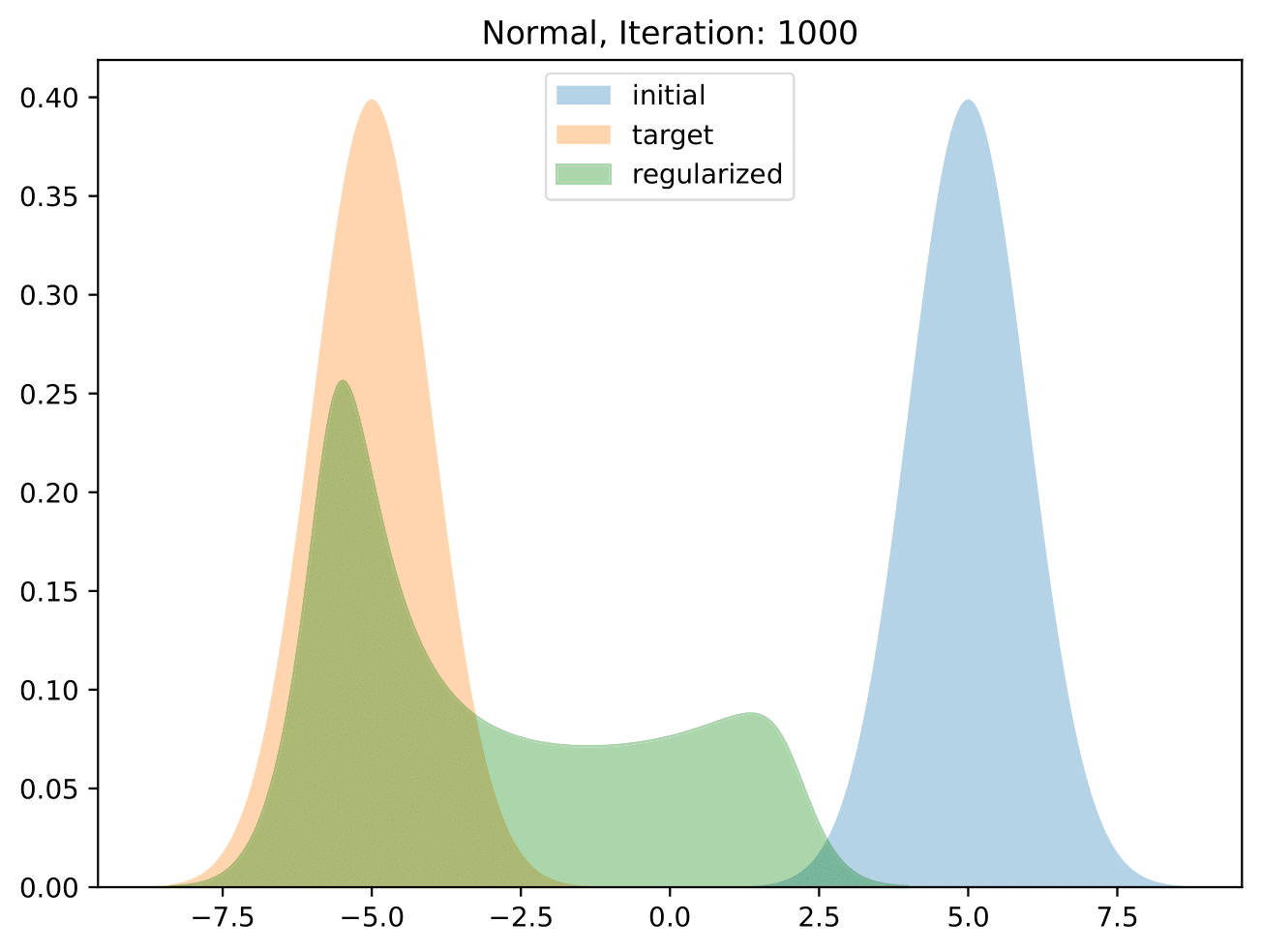}
    \includegraphics[width=.318\textwidth]{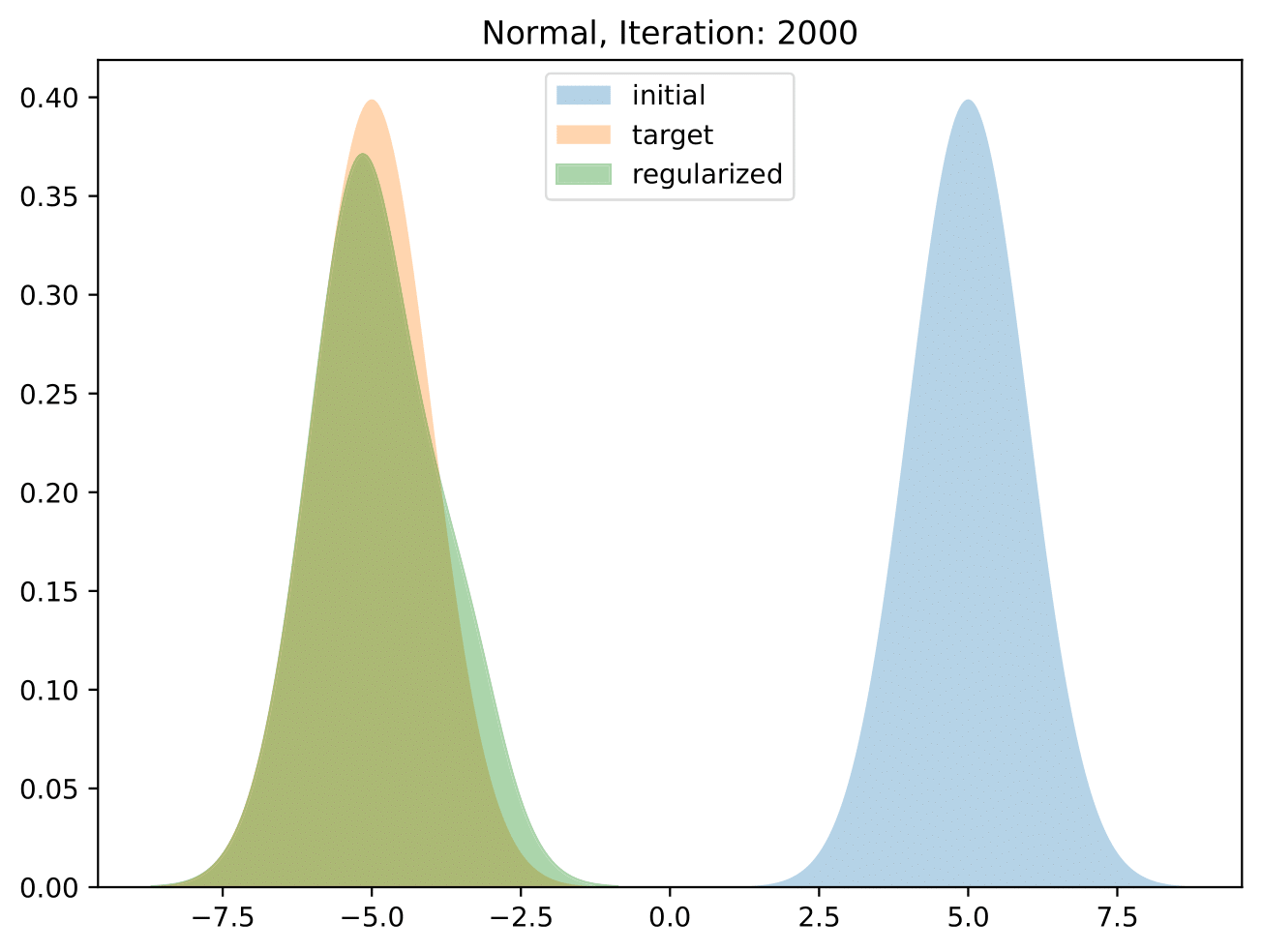}
    \includegraphics[width=.318\textwidth]{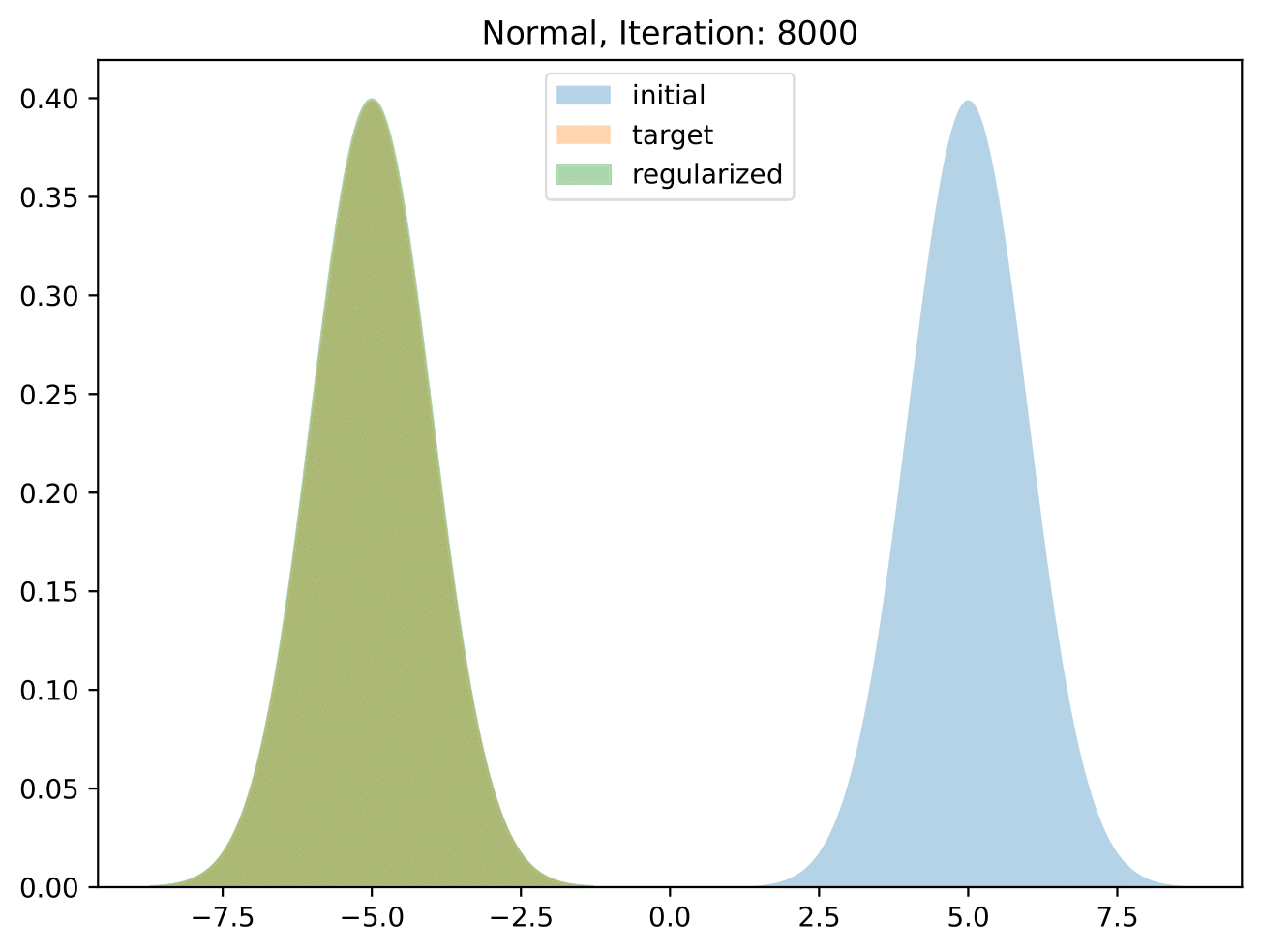}
    \caption{{Regularized ${\mathcal F}_\nu^-$-flow  between the cut-off Gaussians $\mathcal{N}(5,1)$ and $\mathcal{N}(-5,1)$ for
		$\tau =  \lambda = 10^{-2}$. Since for simplicity of implementation, we chose to only work with quantiles defined on a slightly smaller interval $[a,b] \subset (0,1)$}, the "horns" at the boundary are cut off. Still, the support visibly moves in direction of the target.
    }
    \label{fig:Gaussian_to_Gaussian_reg}
\end{figure}

\begin{figure}[H]
    \centering
    \includegraphics[width=.32\textwidth]{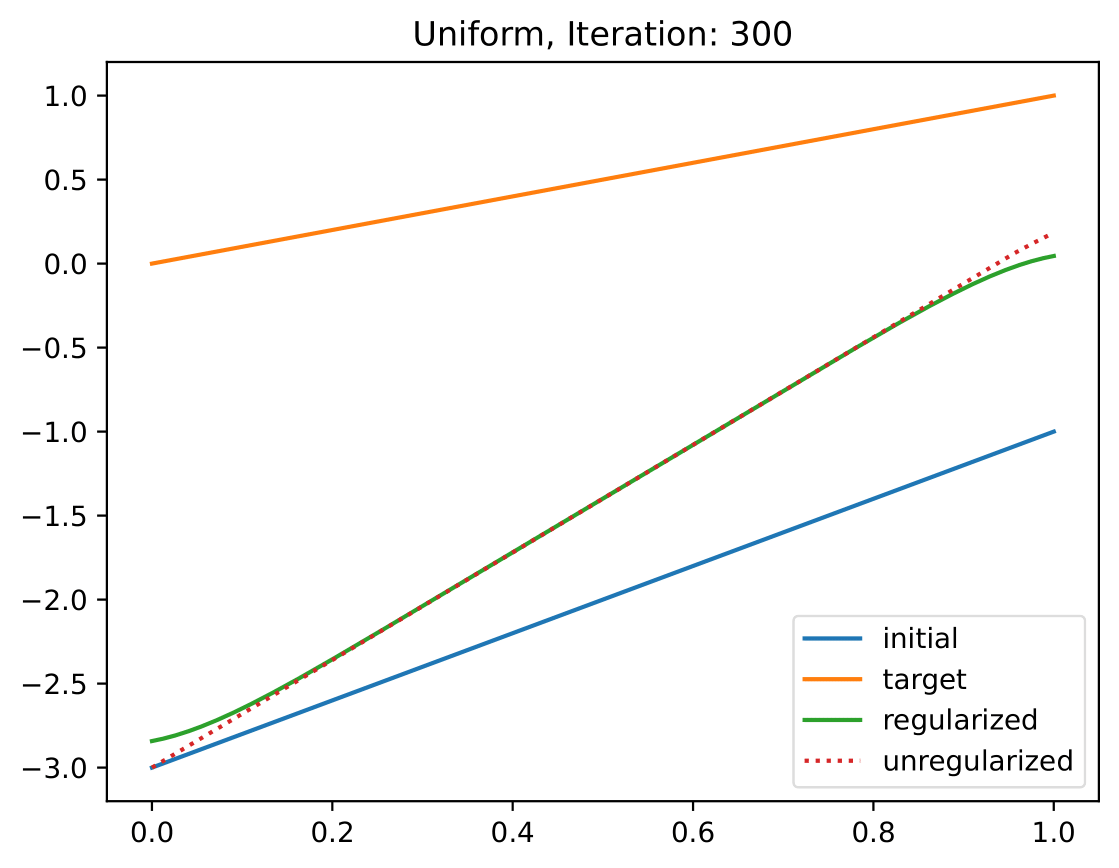}
    \includegraphics[width=.32\textwidth]{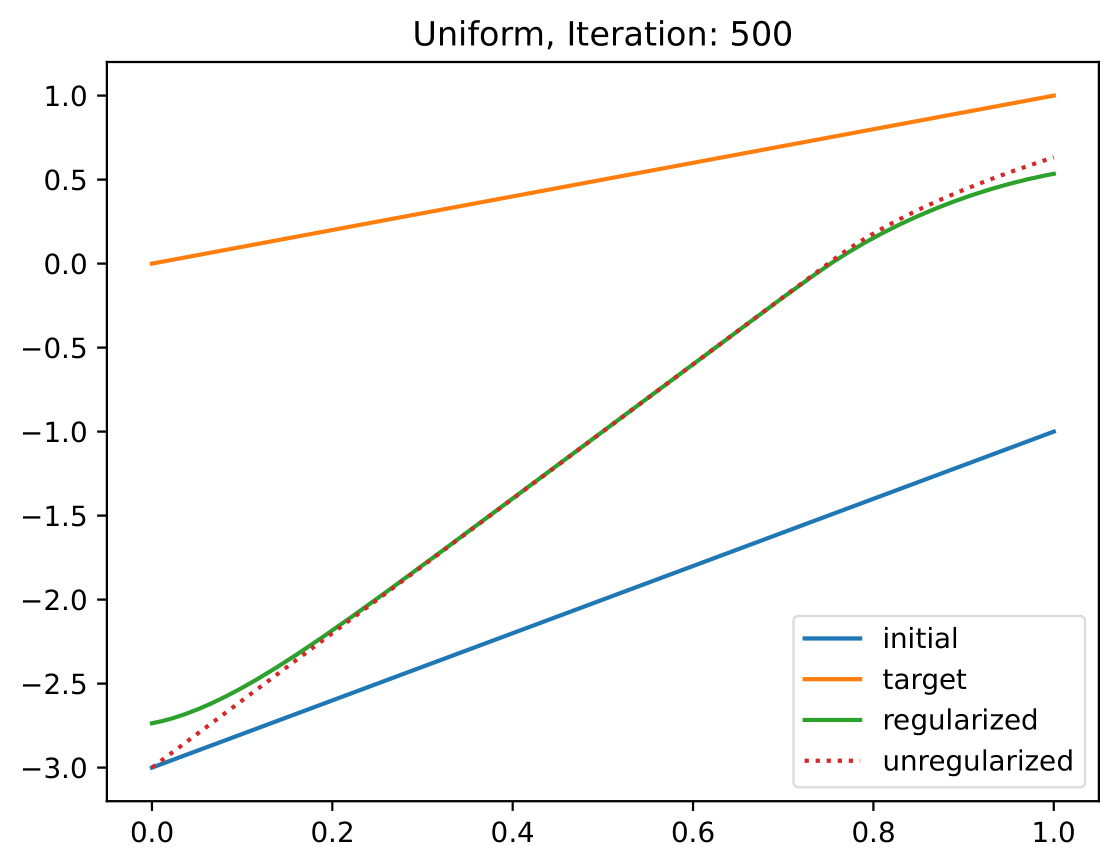}
    \includegraphics[width=.32\textwidth]{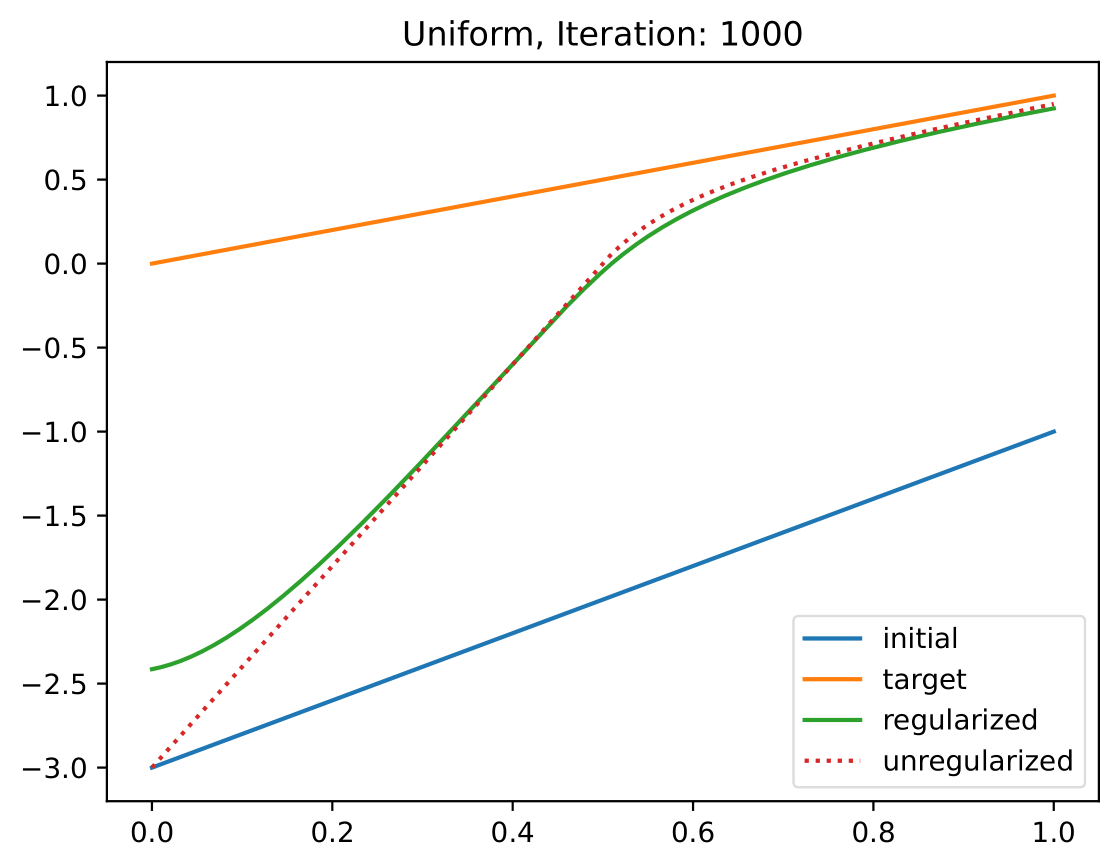}
    \\
    \includegraphics[width=.32\textwidth]{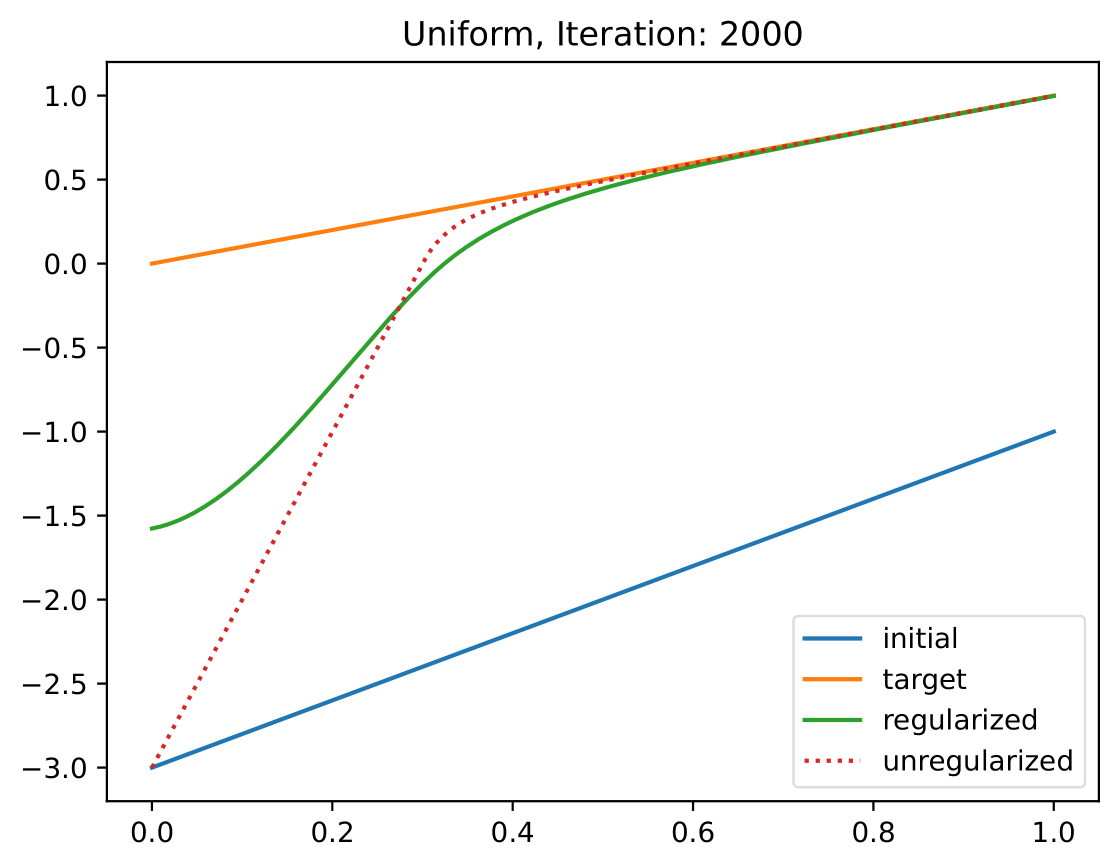}
    \includegraphics[width=.32\textwidth]{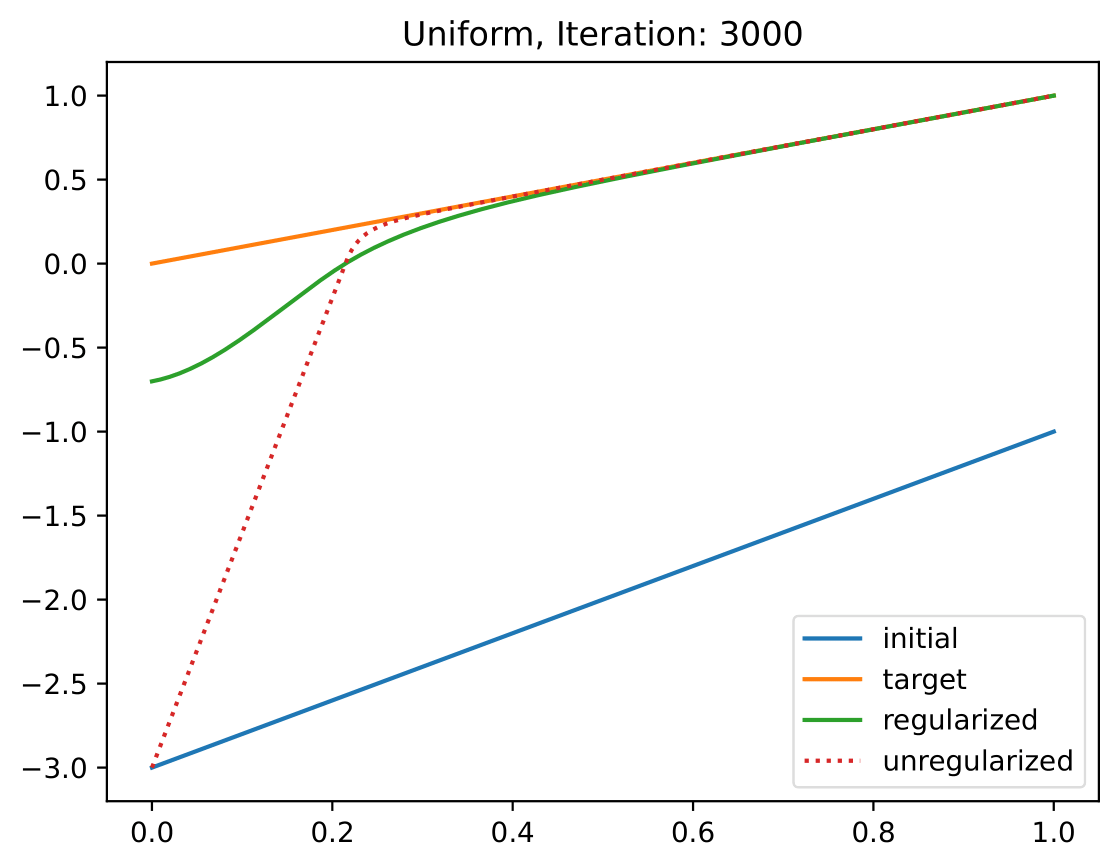}
    \includegraphics[width=.32\textwidth]{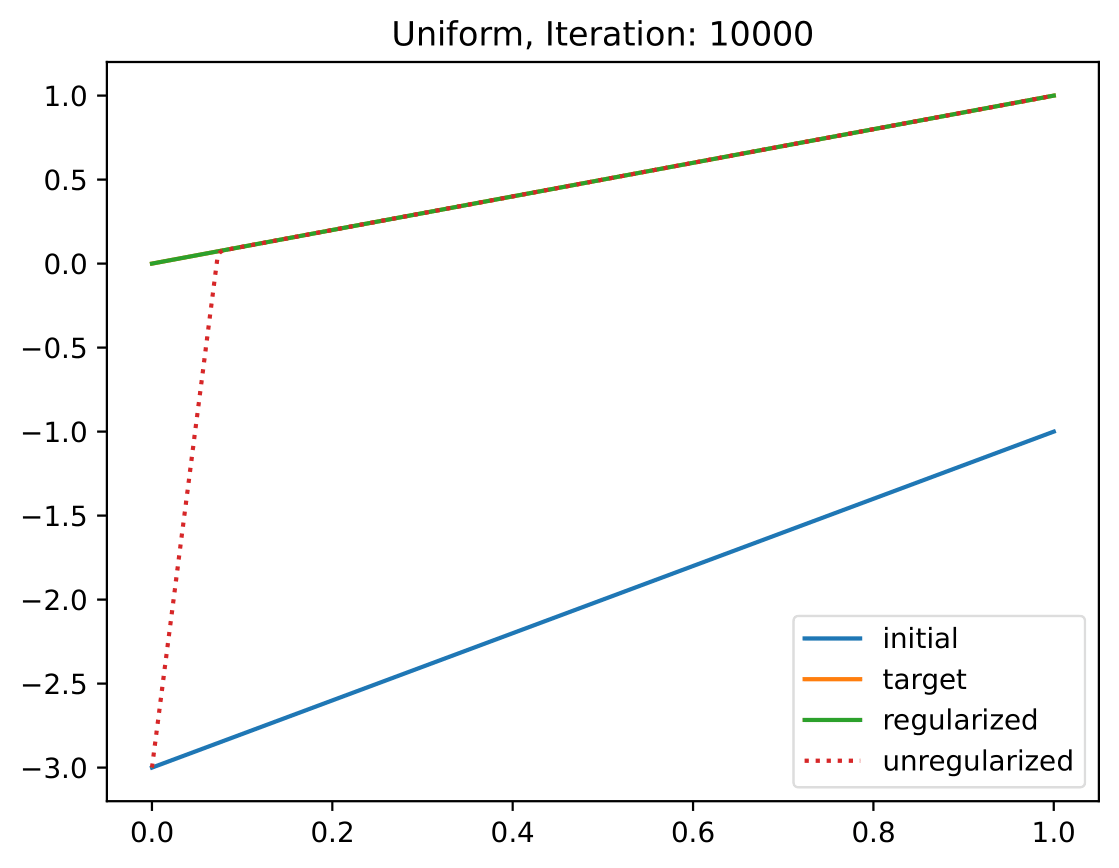}
    \caption{Quantiles of the regularized 
    and unregularized ${F}_\nu^-$-flow between uniform measures from $Q_0(s) = 2s-3$ to $Q_\nu(s) = s$
    with $\tau = 2\cdot 10^{-3}, \,\lambda = 10^{-2}$.
    }
    \label{fig:Uniform_to_Uniform_quantiles}
\end{figure}

\begin{figure}[H]
    \centering
    \includegraphics[width=.32\textwidth]{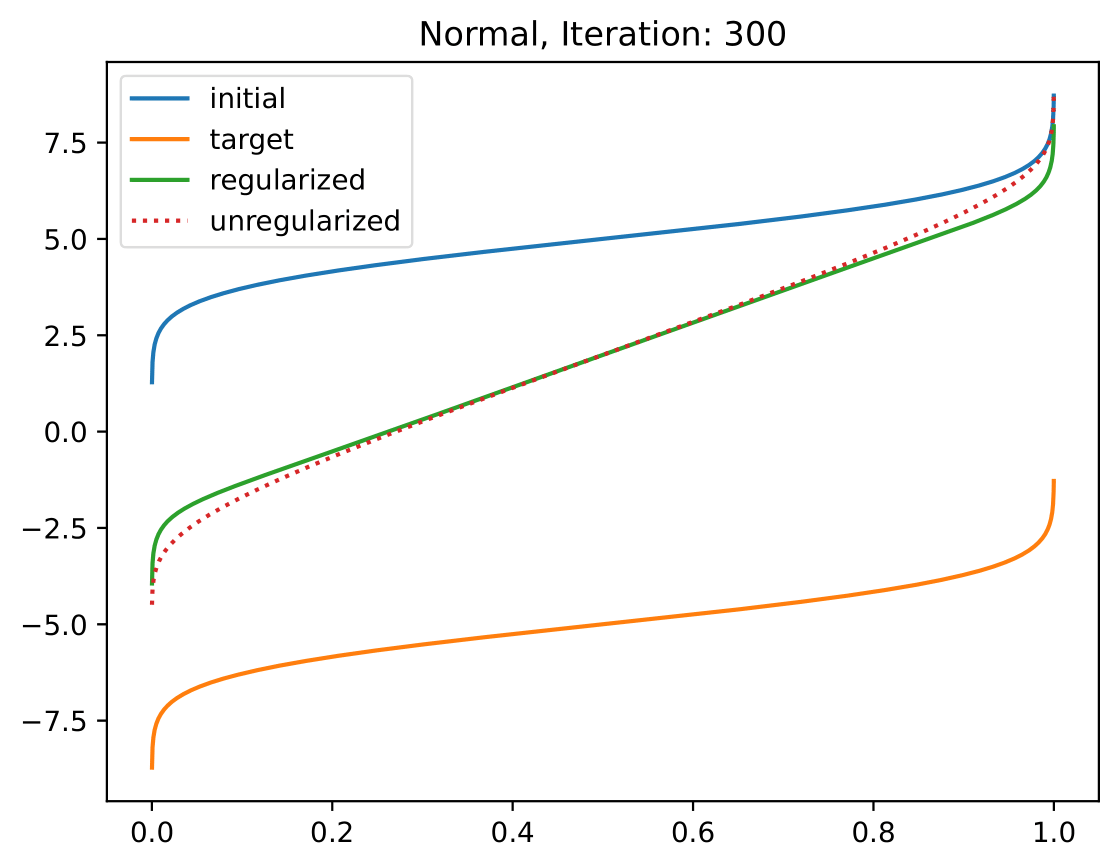}
    \includegraphics[width=.32\textwidth]{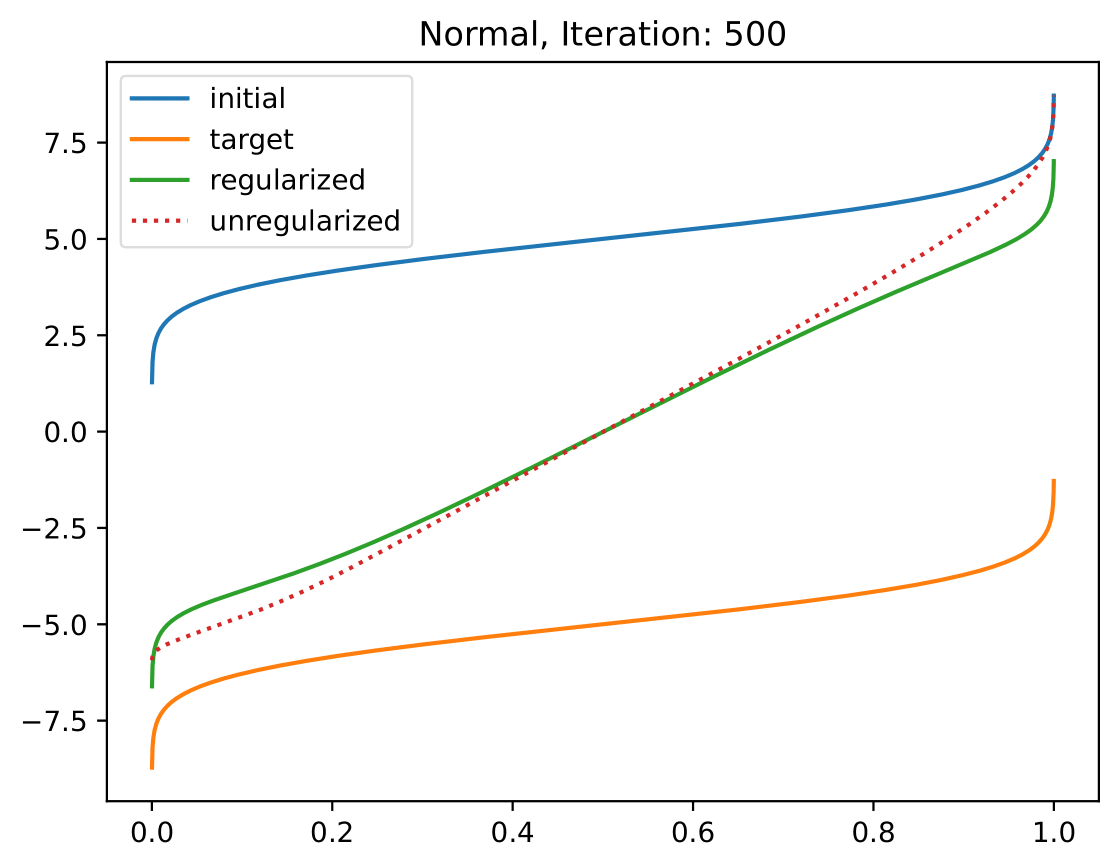}
    \includegraphics[width=.32\textwidth]{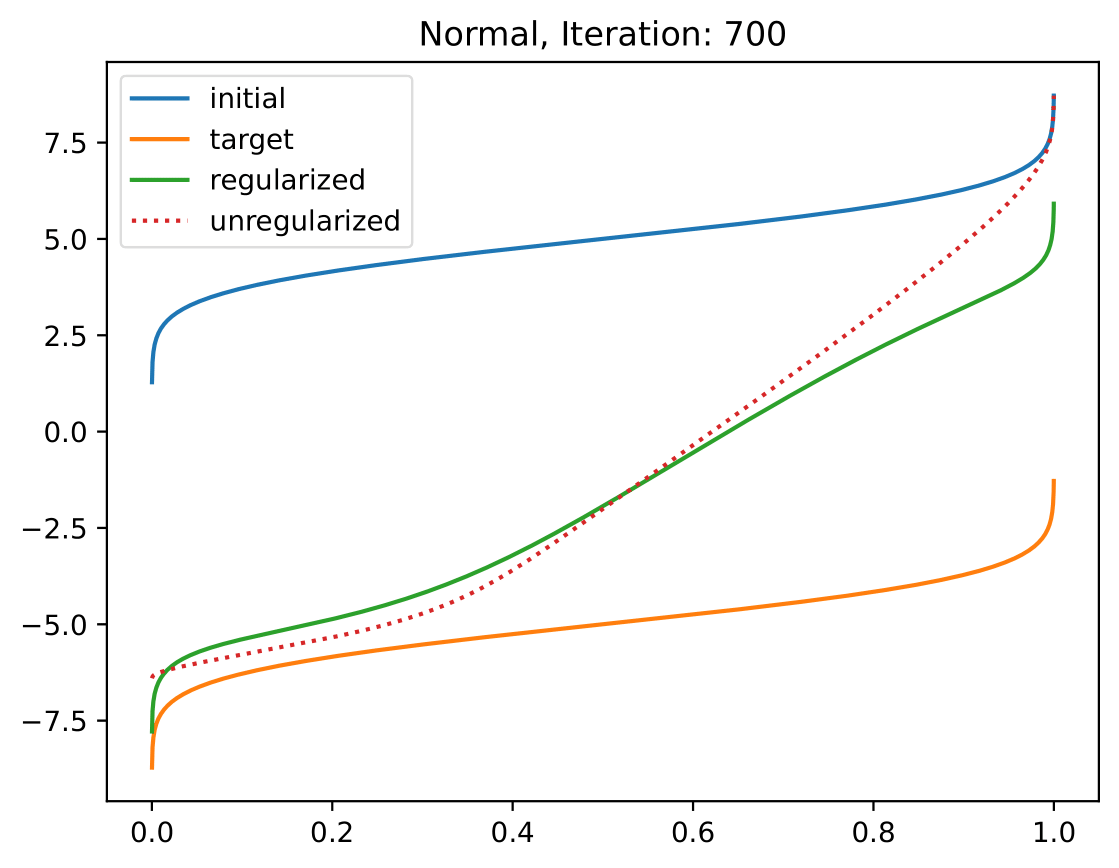}
    \\
    \includegraphics[width=.32\textwidth]{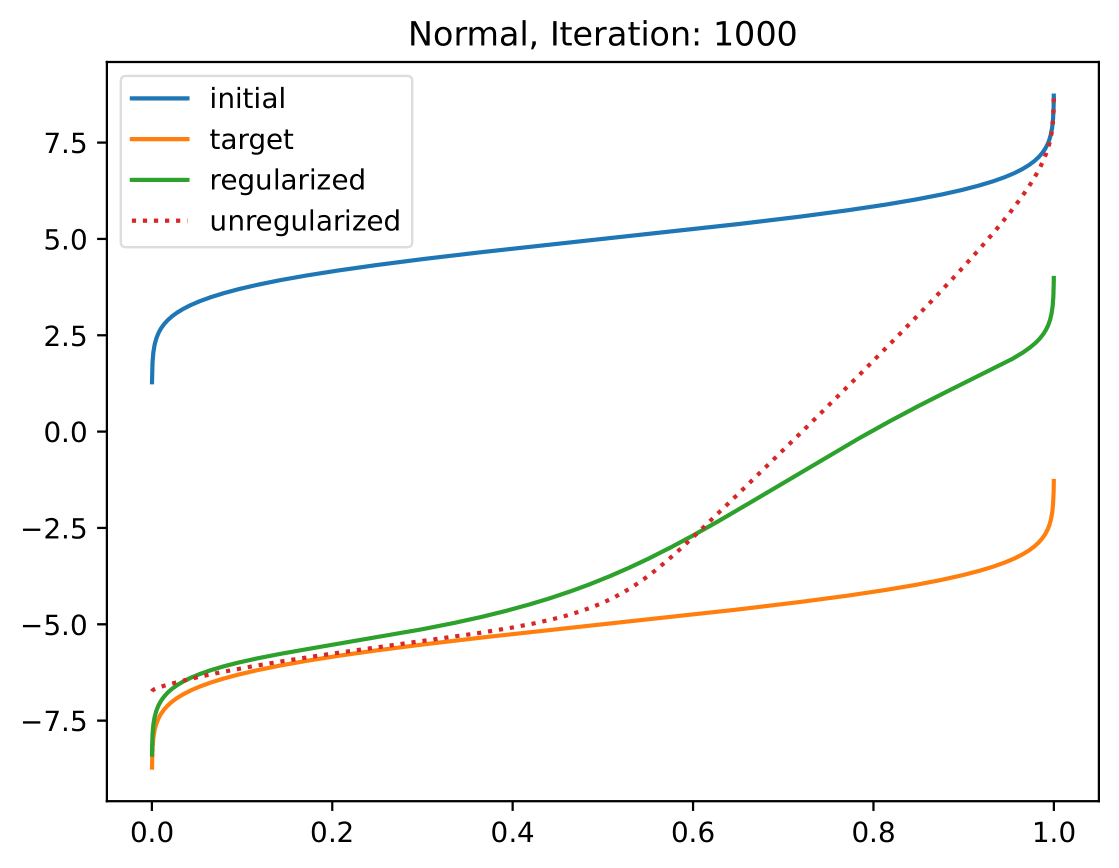}
    \includegraphics[width=.32\textwidth]{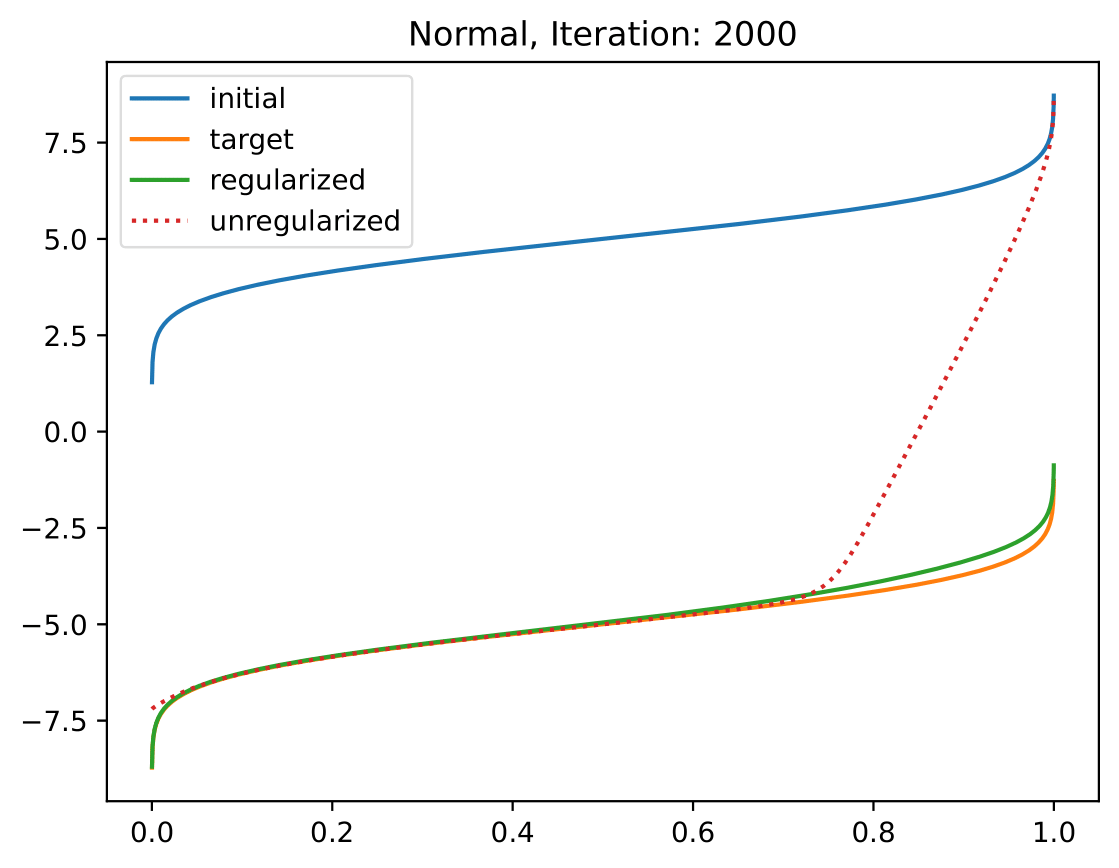}
    \includegraphics[width=.32\textwidth]{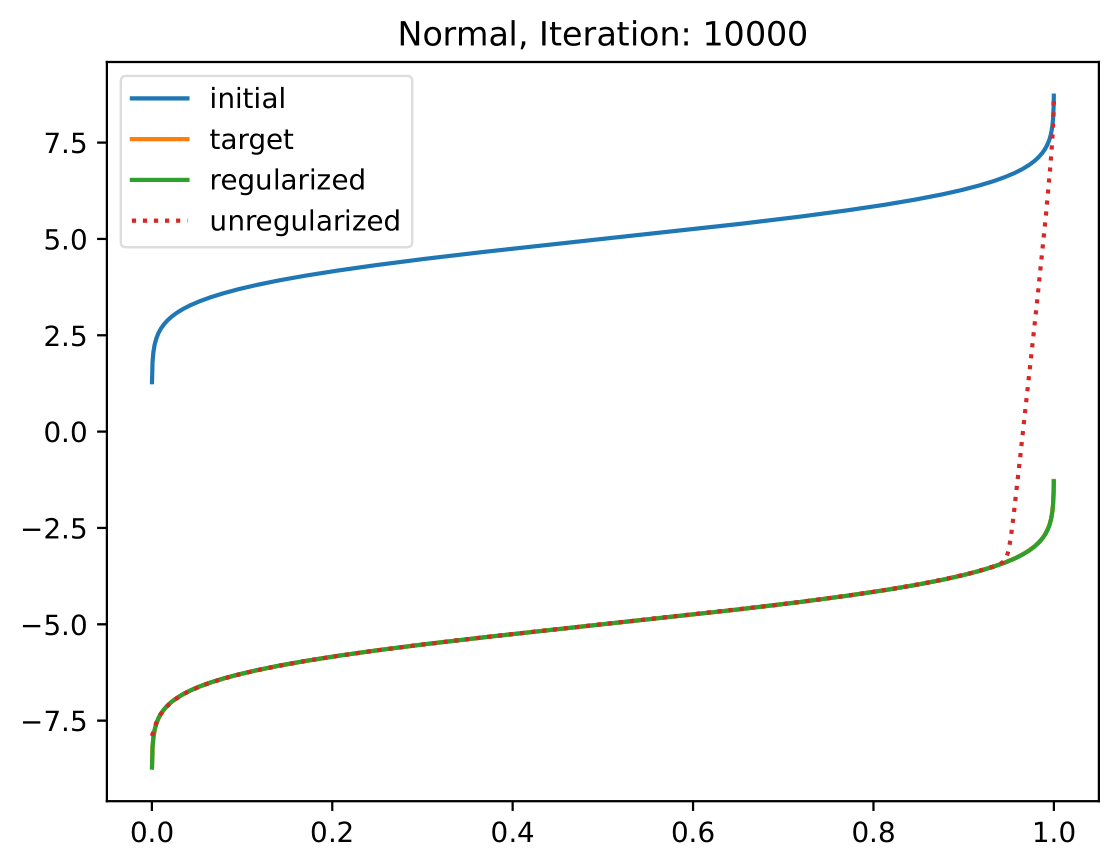}
    
    \caption{Quantiles of the regularized 
    and unregularized ${F}_\nu^-$-flow between the cut-out Gaussians $\mathcal{N}(5,1)$ and $\mathcal{N}(-5,1)$
		for $\tau = \lambda = 10^{-2}$.
    }
    \label{fig:Gaussian_to_Gaussian_quantiles}
\end{figure}

\paragraph*{Acknowledgments.}
RD acknowledges funding by the 
 German Research Foundation (DFG) within the project STE 571/16-1.
Furthermore, NR  gratefully acknowledges funding by the German Federal Ministry of Education and
Research BMBF 01|S20053B project SA$\ell$E.

\bibliographystyle{abbrv}
\bibliography{Bibliography}

\appendix
\section{Appendix}\label{sec:appendix}
\subsection{Proof of Theorem \texorpdfstring{\ref{thm:L2_representation}}{2.4}}
   The proof is taken from \cite[Theorem 3.5]{DSBHS2024} with slight adjustments to our setting.
    Let $\gamma_t \coloneqq (g(t))_{\#} \lebesgue_{(0,1)}$. Then, it is easy to check that $\gamma_t$ are probability measures in $\P_2(\R)$ with quantile functions $Q_{\gamma_t} = g(t)$.
    Next, we show that $\gamma$ is locally absolutely continuous (more precisely, in ${AC}^2_{\rm loc}((0,\infty); \P_2(\R))$).
    It holds by the isometry of Theorem \ref{prop:Q} that
    \begin{equation*}
        W_2(\gamma_t, \gamma_s)
        = \| g(t) - g(s) \|_{L_2(0, 1)}
        \le \int_{s}^{t} \|g'(\tau) \|_{L_2(0, 1)} \d{\tau} \qquad \text{for all } 0 < s \le t < \infty,
    \end{equation*}
    where with $m(\tau) \coloneqq \|g'(\tau) \|_{L_2(0, 1)}$, it holds 
    $m \in L_{2, {\rm loc}}(0, \infty)$ by assumption.
    Hence, the curve $\gamma \colon [0, \infty) \to (\P_2(\R), W_2)$ is locally absolutely continuous.
    
    Next, we show that the optimal velocity field $v_t$ of $\gamma_t$ given by \eqref{eq:CE} fulfills $v_t\in -\partial \F(\gamma_t)$.
    By \cite[Prop~8.4.6]{BookAmGiSa05},
    for a.e.~$t \in (0,\infty)$,
    the velocity field satisfies
    \begin{align}
        0
        &=\lim_{\tau\downarrow0}\frac{W_2(\gamma_{t+\tau},(\mathrm{Id}+\tau \, v_t)_\#\gamma_t)}{\tau} \\
        & =\lim_{\tau\downarrow0}\frac{W_2(g(t + \tau)_\#\lebesgue_{(0,1)},
        \bigl(g(t) +\tau \left(v_t\circ g(t) \right) \bigr)_\#\lebesgue_{(0,1)})}{\tau}.
    \end{align}
    Since $v_t \in \mathrm T_{\gamma_t} \mathcal P_2 (\R^d)$, 
    there exists $\tilde\tau > 0$
    such that
    $(\mathrm{Id}, \mathrm{Id} + \tilde\tau v_t)_\# \gamma_t$
    becomes an optimal transport plan.
    Moreover,
    \cite[Lem~7.2.1]{BookAmGiSa05} implies
    that the transport plans remain optimal 
    for all $0 < \tau \le \tilde \tau$.
    For small $\tau > 0$,
    the mappings $\mathrm{Id} + \tau v_t$ are thus optimal
    and, especially, monotonically increasing.
    Consequently,
    the functions $g(t) + \tau (v_t \circ g(t))$ are also monotonically increasing,
    and their left-continuous representatives are quantile functions.
    Employing the isometry to $L_2(0,1)$,
    we hence obtain
    \begin{equation}
    0
         = \lim_{\tau \downarrow0} \Big\| \frac{g(t+\tau)-g(t)}{\tau}- v_t \circ g (t) \Big\|_{L_2(0,1)}
        = \|\partial_t g(t) - v_t \circ g(t) \|_{L_2(0,1)}.
    \end{equation}
    Thus, since $g$ solves \eqref{eq:cauchy}, we see that
    $v_t \circ g(t) \in - \partial F \left( g(t) \right)$ for a.e.~$t$.
    In particular, 
    for any $\mu\in\P_2(\R)$,
    we obtain for a.e. $0 < t < \infty$ that
    \begin{align}
       0 \,\le \, & F(Q_\mu) - F\left(g(t) \right)
        + \int_0^1 \left( v_t\circ g(t)\right)(s) \, \big(Q_\mu(s)- \left(g(t)\right)(s) \big) \d s + o(\|Q_\mu - g(t)\|_{L_2(0,1)}) \\
        =\, &\mathcal F(\mu)-\mathcal F(\gamma_t) +\int_{\R\times\R} v_t(x) \, (y-x) \d \pi(x,y) + o(W_2(\mu, \gamma_t)),
    \end{align}
    where $\pi\coloneqq (g(t),Q_\mu)_\#\lebesgue_{(0,1)}$.
    By Theorem~\ref{prop:Q}, the plan
    $\pi$ is optimal between $\gamma_t$ and $\mu$, see also \cite[Thm.~2.18]{Vil03}.
    By \cite[Thm.~16.1(i),(ii)]{M2023}, we also know that $\pi$ is unique, so  that $v_t\in -\partial \F(\gamma_t)$. 
    
    Finally, the initial condition $\gamma(0+) = (g_0)_{\#} \lebesgue_{(0,1)}$ is satisfied, since $g$ is continuous at $t=0$ with $g(0) = g_0$. 
\hfill $\Box$

\subsection{Material from \texorpdfstring{\cite{RS2006}}{[Rossi, Savaré]}}
\begin{lemma}[{\cite[Lemma 1.2]{RS2006}}]
\label{lem: compacnes lemma}
Let us assume that
\begin{align}
\phi \colon L_2(0,1) \to (-\infty, +\infty]&\text{ is proper, lower semicontinuous, and} \\
\exists \, \tau_* > 0 : v \mapsto \varphi(v) + &\frac{1}{2 \tau_*} |v|^2 \text{ has compact sublevels,} \label{eq:comp} 
\end{align}
and the data satisfy
\begin{align}
u_0 \in \dom(\phi). \label{eq:data} 
\end{align}
Then, $\textup{GMM}(\phi, u_0)$ is not empty and every $u \in \textup{GMM}(\phi, u_0)$ belongs to $H^1(0, T ;L_2(0,1))$.
\end{lemma}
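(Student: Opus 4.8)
The plan is to run the classical De Giorgi minimizing movement argument (see \cite{BookAmGiSa05}) in the Hilbert space $L_2(0,1)$. First I would show that the scheme \eqref{gmm} is well posed for small step sizes: fix $\tau \le \tau_*/2$ and $u_n = u_n^\tau$; the functional $v \mapsto \phi(v) + \tfrac{1}{2\tau}\|v-u_n\|^2$ is proper and lsc, and expanding the square and using Young's inequality shows that its sublevels are contained in sublevels of $v\mapsto \phi(v) + \tfrac{1}{2\tau_*}\|v\|^2$, hence compact by \eqref{eq:comp}; the direct method then produces a (possibly non-unique) minimizer $u_{n+1}$, so every discrete curve $\bar u_\tau$ is defined. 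I would also record that \eqref{eq:comp}, together with properness and lower semicontinuity of $\phi$, forces the quadratic lower bound $\phi(v) \ge m - \tfrac{1}{2\tau_*}\|v\|^2$ for some $m \in \R$.

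Next come the a priori estimates. Testing the minimality of $u_{n+1}$ with the competitor $u_n$ yields the discrete energy inequality
\begin{equation*}
  \phi(u_{n+1}) + \frac{1}{2\tau}\|u_{n+1}-u_n\|^2 \le \phi(u_n),
\end{equation*}
valid for any choice of minimizers. Hence $\phi(u_n) \le \phi(u_0) < \infty$ for all $n$, and summation gives $\sum_{n=0}^{N-1}\|u_{n+1}-u_n\|^2 \le 2\tau\big(\phi(u_0) - \phi(u_N)\big)$. For $N\tau \le T$, Cauchy--Schwarz gives $\|u_N\| \le \|u_0\| + (T/\tau)^{1/2}\big(\sum\|u_{n+1}-u_n\|^2\big)^{1/2}$; inserting this and the quadratic lower bound for $\phi(u_N)$ and absorbing the square term, one obtains constants $C, R$ depending only on $\phi(u_0), \|u_0\|, \tau_*, T$ but not on $\tau$ such that
\begin{equation*}
  \sum_{n=0}^{N-1}\|u_{n+1}-u_n\|^2 \le C\tau, \qquad \|u_n\| \le R \quad (n\tau \le T).
\end{equation*}
In particular all discrete values lie in the fixed set $\{v : \phi(v) \le \phi(u_0),\ \|v\|\le R\} \subseteq \{v : \phi(v)+\tfrac{1}{2\tau_*}\|v\|^2 \le \phi(u_0)+R^2/(2\tau_*)\}$, which is compact by \eqref{eq:comp}; call it $\mathcal K$.

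Finally I would pass to the limit. Let $\hat u_\tau$ be the piecewise-affine interpolant of the nodes $u_n^\tau$ and $\bar u_\tau$ the piecewise-constant one from \eqref{gmm}. Then $\hat u_\tau$ is bounded in $H^1(0,T;L_2(0,1))$ uniformly in $\tau$, with $\|\hat u_\tau(t)-\hat u_\tau(s)\| \le (C|t-s|)^{1/2}$, so $\{\hat u_\tau\}$ is equicontinuous; and since $\|\bar u_\tau(t)-\hat u_\tau(t)\| \le \max_n\|u_{n+1}-u_n\| \le (C\tau)^{1/2}$ while $\bar u_\tau(t) \in \mathcal K$, the set $\{\hat u_\tau(t)\}_\tau$ is precompact in $L_2(0,1)$ for each $t$. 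Ascoli--Arzelà then provides $\tau_k \downarrow 0$ with $\hat u_{\tau_k} \to u$ uniformly in $C([0,T];L_2(0,1))$, whence also $\bar u_{\tau_k}(t) \to u(t)$ for every $t$, i.e.\ $\mathrm{GMM}(\phi,u_0) \ne \emptyset$. Boundedness of $(\hat u_{\tau_k})$ in $H^1(0,T;L_2(0,1))$ gives $\hat u_{\tau_k}' \rightharpoonup w$ in $L_2(0,T;L_2(0,1))$ along a further subsequence, and the uniform convergence $\hat u_{\tau_k}\to u$ identifies $w$ with $u'$, so $u \in H^1(0,T;L_2(0,1))$. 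Since every element of $\mathrm{GMM}(\phi,u_0)$ is by definition such a subsequential limit of discrete curves, and the estimates above apply to every discrete curve, the same argument shows every $u \in \mathrm{GMM}(\phi,u_0)$ lies in $H^1(0,T;L_2(0,1))$.

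The step I expect to be the main obstacle is the a priori estimate: one must trap \emph{all} discrete iterates, uniformly in $\tau$, inside a single compact set, and this is precisely where the compact-sublevel hypothesis \eqref{eq:comp} enters — first via the direct method for existence of the iterates, and then via the combination of the discrete energy inequality, the induced quadratic lower bound on $\phi$, and $N\tau \le T$ to close the bootstrap for $\|u_N\|$. Once that is done, the compactness-in-time via Ascoli--Arzelà and the weak compactness of the discrete time derivatives are routine.
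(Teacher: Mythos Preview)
The paper does not supply its own proof of this lemma: it appears in the appendix under ``Material from \cite{RS2006}'' as a verbatim restatement of \cite[Lemma~1.2]{RS2006}, and is only invoked (without argument) in the proof of Theorem~\ref{main_mmd2}. So there is nothing in the paper to compare your proposal against.

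That said, your outline is precisely the standard De Giorgi minimizing-movement argument underlying \cite{RS2006} and \cite[Chap.~2--3]{BookAmGiSa05}: the direct method via \eqref{eq:comp} for well-posedness of each step, the discrete energy inequality from testing with $u_n$, the quadratic lower bound $\phi \ge m - \tfrac{1}{2\tau_*}|\cdot|^2$ forced by compact sublevels, and then Ascoli--Arzel\`a on the affine interpolants together with weak $L_2$-compactness of the discrete velocities. The one place that needs more care than you indicate is the ``absorbing the square term'' step: plugging your Cauchy--Schwarz bound for $\|u_N\|$ into $\sum\|u_{n+1}-u_n\|^2 \le 2\tau(\phi(u_0)-m) + \tfrac{\tau}{\tau_*}\|u_N\|^2$ produces a factor $2T/\tau_*$ in front of $\sum\|u_{n+1}-u_n\|^2$ on the right, so the absorption closes directly only when $T<\tau_*/2$; for arbitrary $T$ one either iterates on short subintervals (using that $\phi(u_N^\tau)\le\phi(u_0)$ at each restart) or runs a discrete Gronwall argument as in \cite{RS2006,BookAmGiSa05}. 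With that routine fix the argument is correct.
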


\begin{theorem}[{\cite[Theorem 3]{RS2006}}]
 Let us suppose that $
\phi \colon L_2(0,1) \to (-\infty, +\infty]$ and $ u_0 $
satisfy the assumptions \eqref{eq:comp} and \eqref{eq:data} of the compactness Lemma \ref{lem: compacnes lemma}, so that \( \text{GMM}(\phi, u_0) \) is not empty.
If \( \phi \) satisfies the following  chain rule condition: \begin{align} \text{if } v \in H^1(0, T; L_2(0,1)),\, \xi \in \, &L_2(0, T; L_2(0,1)), \, \xi \in \partial \phi(v)  \text{ a.e. in }  (0, T),\\
\text{ and }  \phi \circ v  \text{ is bounded, then }&\phi \circ v  \text{ is absolutely continuous on }(0, T) \text{ and } \label{eq:chain2} 
\\
\frac{d}{dt} \phi(v(t)) = &\langle \xi(t), {v}'(t) \rangle
\text{ for a.e. }  t \in (0, T),
\end{align}
then, every \( u \in \textup{GMM}(\phi, u_0) \) is a solution in \( H^1(0, T; L_2(0,1)) \) to 
\begin{align}
\begin{cases}
u'(t) + \partial_l \phi(u(t)) \ni 0, & \text{a.e. in } (0, T), \\
u(0) = u_0. & 
\end{cases}
\end{align}
\end{theorem}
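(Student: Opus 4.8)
The plan is to verify, one by one, the hypotheses of the two results quoted from \cite{RS2006} (Lemma \ref{lem: compacnes lemma} and \cite[Theorem 3]{RS2006}) for the functional $\phi \coloneqq \widetilde F_\nu^+ = F_\nu^+ + F_H + I_{\C(0,1)}$ on $L_2(0,1)$ with initial datum $u_0 \coloneqq g_0$, and then to transport the resulting $L_2$-statement back to the Wasserstein space via the isometry $\mathcal I$ of Theorem \ref{prop:Q}.

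First I would check the compactness criterion \eqref{eq:comp}. We have $\dom(\widetilde F_\nu^+) = \C(0,1) \cap H^1(0,1)$ by the discussion following \eqref{eq:regular-1}, and $g_0 \in \dom(\widetilde F_\nu^+)$ is assumed, so \eqref{eq:data} holds. For \eqref{eq:comp}, fix any $\tau_* > 0$ and consider a sublevel set $\{v : \widetilde F_\nu^+(v) + \tfrac{1}{2\tau_*}\|v\|_{L_2}^2 \le c\}$. On this set, $\widetilde F_\nu^+(v) < \infty$ forces $v \in H^1(0,1)$, and since $F_\nu^+ = -F_\nu$ is continuous hence bounded on $L_2$-balls (by Lemma \ref{lem:extended_fun}, $F_\nu$ is convex and continuous on all of $L_2(0,1)$), and $I_{\C(0,1)} \ge 0$, the term $\tfrac{\lambda}{2}|v|_{H^1}^2 = F_H(v)$ stays bounded. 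Together with the bound on $\|v\|_{L_2}$, this gives a bound on $\|v\|_{H^1}$; the sublevel set is thus a bounded subset of $H^1(0,1)$, and by the compact Sobolev embedding $H^1(0,1) \overset{c}{\hookrightarrow} L_2(0,1)$ it is relatively compact in $L_2(0,1)$. It is also closed in $L_2(0,1)$: $\widetilde F_\nu^+$ is lower semicontinuous (being the sum of the continuous $F_\nu^+$ and the lsc $F_H + I_{\C(0,1)}$), so the sublevel set is closed, hence compact. This establishes \eqref{eq:comp}, and Lemma \ref{lem: compacnes lemma} yields ${\rm GMM}(\widetilde F_\nu^+, g_0) \ne \emptyset$ with every element in $H^1(0,T;L_2(0,1))$.

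Next I would verify the chain rule condition \eqref{eq:chain2} needed for \cite[Theorem 3]{RS2006}. The clean way is to exploit the splitting $\widetilde F_\nu^+ = F_\nu^+ + \psi$ with $\psi \coloneqq F_H + I_{\C(0,1)}$ convex and lsc, and $F_\nu^+ = -F_\nu$ with $F_\nu$ convex, continuous and everywhere $L_2$-subdifferentiable ($\dom(\partial F_\nu) = L_2(0,1)$ by Lemma \ref{lem:extended_fun}). This is exactly the situation covered by \cite[Remark 1.9]{RS2006}: a proper lsc functional that is a sum of a convex lsc part and a smooth/continuous part admitting a well-behaved subdifferential satisfies the chain rule. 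Concretely, if $v \in H^1(0,T;L_2(0,1))$, $\xi \in L_2(0,T;L_2(0,1))$ with $\xi(t) \in \partial \widetilde F_\nu^+(v(t))$ a.e.\ and $\widetilde F_\nu^+ \circ v$ bounded, one decomposes $\xi(t) = -\eta(t) + \zeta(t)$ with $\eta(t) = \nabla F_\nu(v(t))$ (single-valued a.e.\ by continuity/convexity, and measurable, bounded) and $\zeta(t) \in \partial \psi(v(t))$; the convex functional $\psi$ obeys the chain rule $\tfrac{d}{dt}\psi(v(t)) = \langle \zeta(t), v'(t)\rangle$ by \cite[Lemma 3.3 / Remark 1.9]{RS2006}, and the continuity of $F_\nu$ gives $\tfrac{d}{dt}F_\nu(v(t)) = \langle \eta(t), v'(t)\rangle$; adding the two yields \eqref{eq:chain2}. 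Then \cite[Theorem 3]{RS2006} applies and gives that every $g \in {\rm GMM}(\widetilde F_\nu^+, g_0)$ is a strong solution in $H^1(0,T;L_2(0,1))$ of $\partial_t g(t) + \partial_l \widetilde F_\nu^+(g(t)) \ni 0$ a.e., with $g(0) = g_0$, which is precisely \eqref{eq:cauchy3}.

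Finally, the transfer to $\P_2(\R)$ is routine: since $g_0 \in \C(0,1)$ and — using that $\dom(\widetilde F_\nu^+) \subseteq \C(0,1)$ so that the GMM minimizers in \eqref{gmm} stay in $\C(0,1)$, and $\C(0,1)$ is $L_2$-closed — the limit curve satisfies $g(t) \in \C(0,1)$ for all $t$; hence $g(t) = Q_{\gamma_t}$ with $\gamma_t \coloneqq (g(t))_\# \Lambda_{(0,1)}$ by Theorem \ref{prop:Q}. The isometry $\mathcal I$ of Theorem \ref{prop:Q} then turns the $L_2$-absolute continuity $g \in H^1(0,T;L_2(0,1))$ into Wasserstein absolute continuity of $\gamma$ on $(0,T)$, the $L_2$-continuity at $t=0$ into $\gamma(0+) = (g_0)_\#\Lambda_{(0,1)}$, and (using Remark \ref{assoc}, i.e.\ $\widetilde F_\nu^+ = \widetilde{\F}_\nu^+ \circ \mathcal I$ on $\C(0,1)$) the discrete minimization \eqref{gmm} into \eqref{mms}, so that $\gamma \in {\rm GMM}(\widetilde{\F}_\nu^+, \gamma(0+))$. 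I expect the main obstacle to be the rigorous verification of the chain rule condition \eqref{eq:chain2} for the non-convex sum $\widetilde F_\nu^+$: one must be careful that $F_\nu^+ = -F_\nu$ does \emph{not} contribute a genuine subdifferential but rather a Fréchet derivative (justifying the use of the Clarke/limiting subdifferential calculus of \cite{RS2006}), and that the measurability and integrability of the selection $\eta(t) = \nabla F_\nu(v(t))$ along the absolutely continuous curve $v$ is genuinely available — this is where invoking \cite[Remark 1.9]{RS2006} does the heavy lifting and should be cited rather than re-proved.
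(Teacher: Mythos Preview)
There is a mismatch between the statement you were asked to prove and what your proposal actually does. The statement is \cite[Theorem~3]{RS2006}, an \emph{abstract} result about generalized minimizing movements for an arbitrary functional $\phi$ on $L_2(0,1)$ satisfying a compactness condition and a chain rule. The paper does \emph{not} prove this theorem; it is quoted in the appendix purely as a black-box tool from Rossi--Savar\'e, with no argument given.

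Your proposal does not prove this abstract theorem either. Instead, you fix $\phi = \widetilde F_\nu^+$, verify that it satisfies the hypotheses of \cite[Lemma~1.2]{RS2006} and of \cite[Theorem~3]{RS2006} (the latter via \cite[Remark~1.9]{RS2006}), and then push the conclusion to $\P_2(\R)$ via the isometry $\mathcal I$. That is exactly the paper's proof of Theorem~\ref{main_mmd2}, with considerably more detail filled in (your compactness check and your chain-rule decomposition $\widetilde F_\nu^+ = (F_H + I_{\C(0,1)}) - F_\nu$ with $\dom(\partial F_\nu) = L_2(0,1)$ are precisely what the paper's one-paragraph proof of Theorem~\ref{main_mmd2} invokes). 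So as a proof of Theorem~\ref{main_mmd2} your write-up is correct and follows the same route as the paper.

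If the intended target really was the abstract \cite[Theorem~3]{RS2006}, then your proposal does not touch it: that proof requires the De~Giorgi variational-interpolation and energy-dissipation machinery --- a discrete energy inequality for the minimizing-movement iterates, compactness to pass to the limit, and the chain rule to upgrade the limiting energy inequality to the differential inclusion $u'(t) + \partial_l\phi(u(t)) \ni 0$. None of these steps appear in your argument, because you are \emph{applying} the theorem rather than proving it.
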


\begin{remark}[{\cite[Remark 1.9]{RS2006}}] Assume \(\psi_1\) and \(\psi_2\) are convex and 
\begin{align}
\phi = \psi_1 - \psi_2 \quad \text{in } \dom(\phi)&, \text{ where}\\
\psi_1, \psi_2 \colon \dom(\phi) \to \mathbb{R} \text{ are lsc and convex}&, ~\dom(\partial \psi_1) \subset \dom(\partial \psi_2). 
\end{align}
Then, $\phi$ satisfies the chain rule \eqref{eq:chain2}.
\end{remark}

\newpage
\subsection{Influence of different values of the regularization parameter \texorpdfstring{$\lambda$}{ }}

\begin{figure}[H]
    \centering
    \includegraphics[width=.314\textwidth]{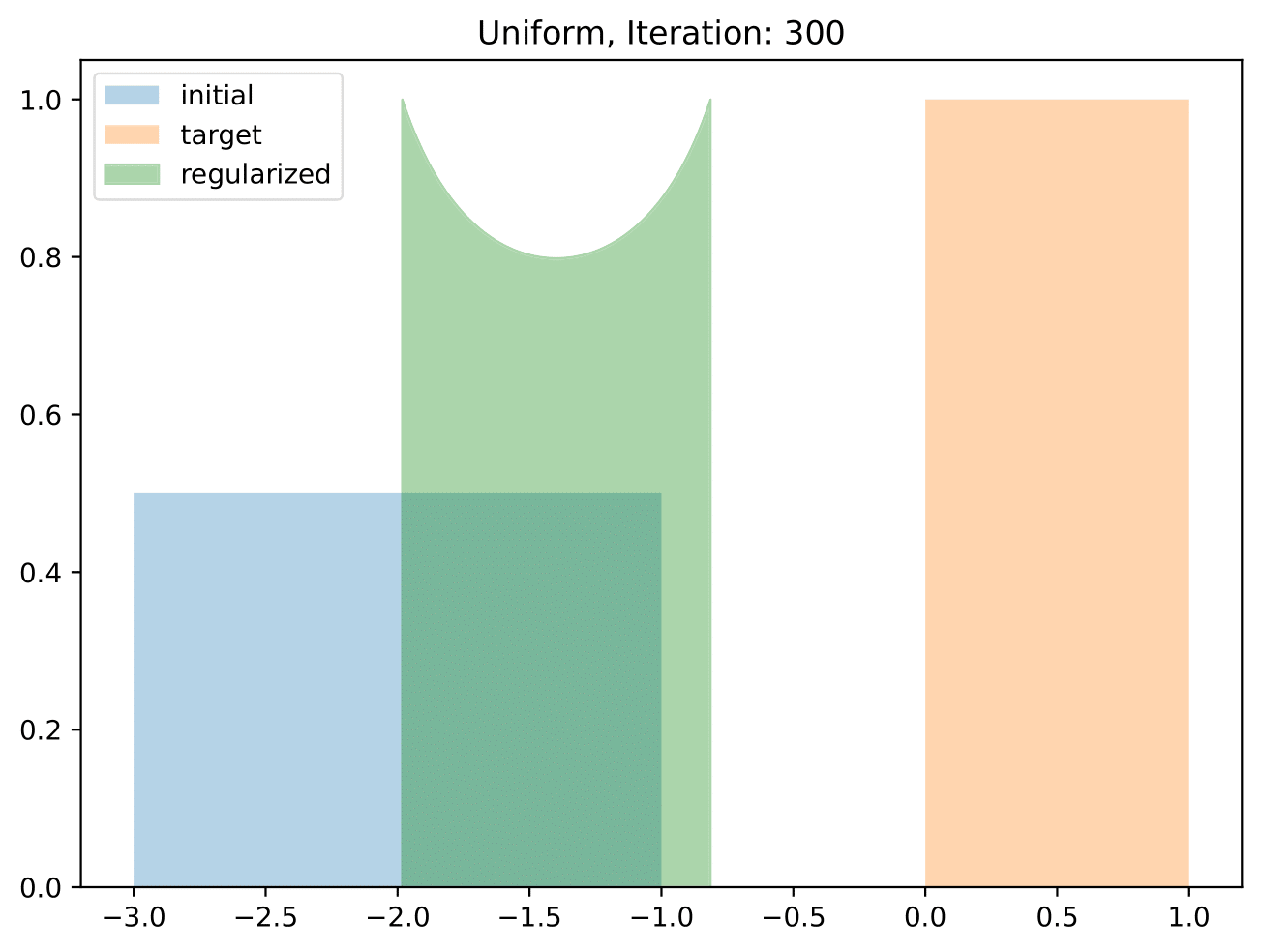}
    \includegraphics[width=.314\textwidth]{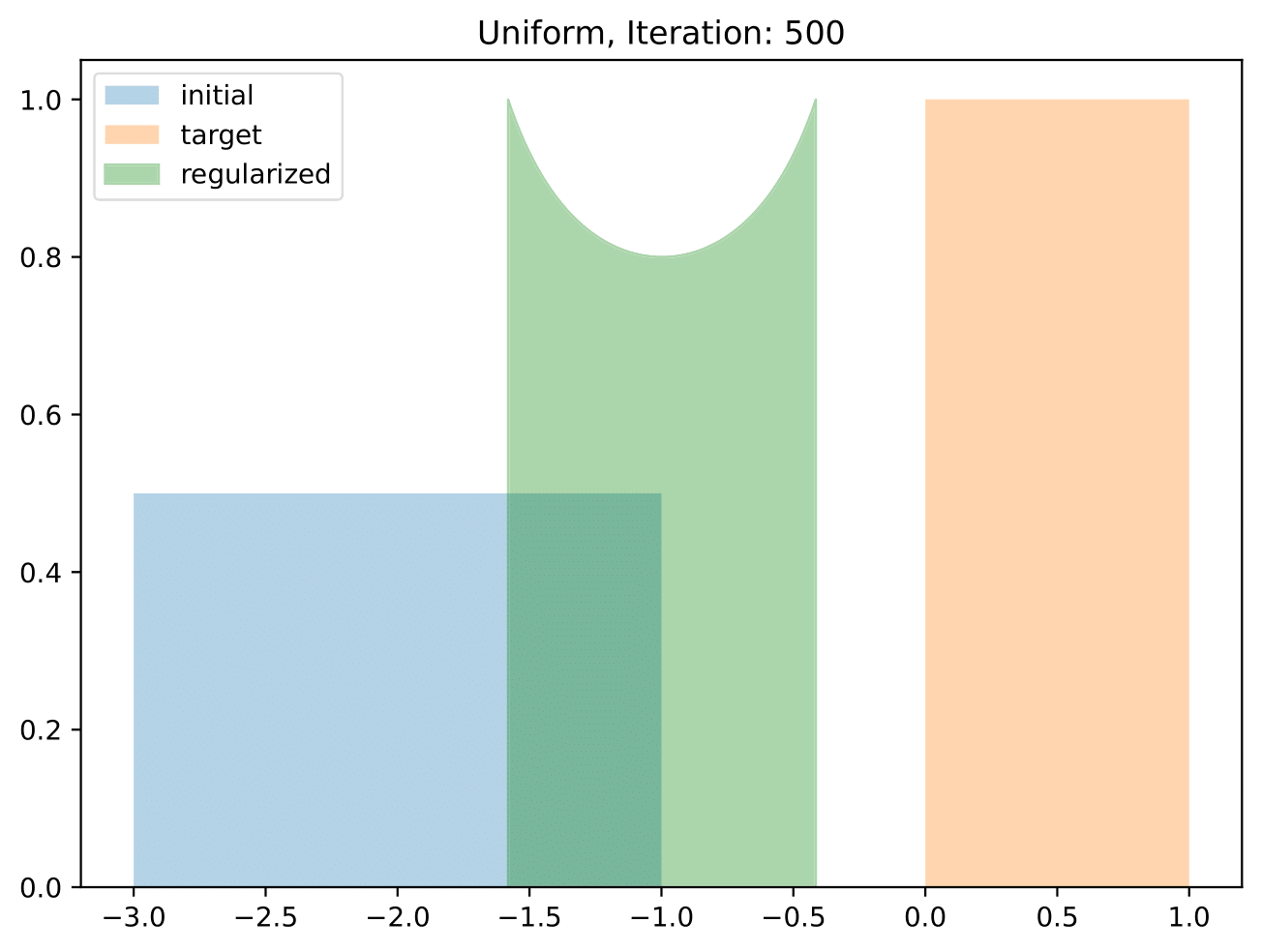}
    \includegraphics[width=.314\textwidth]{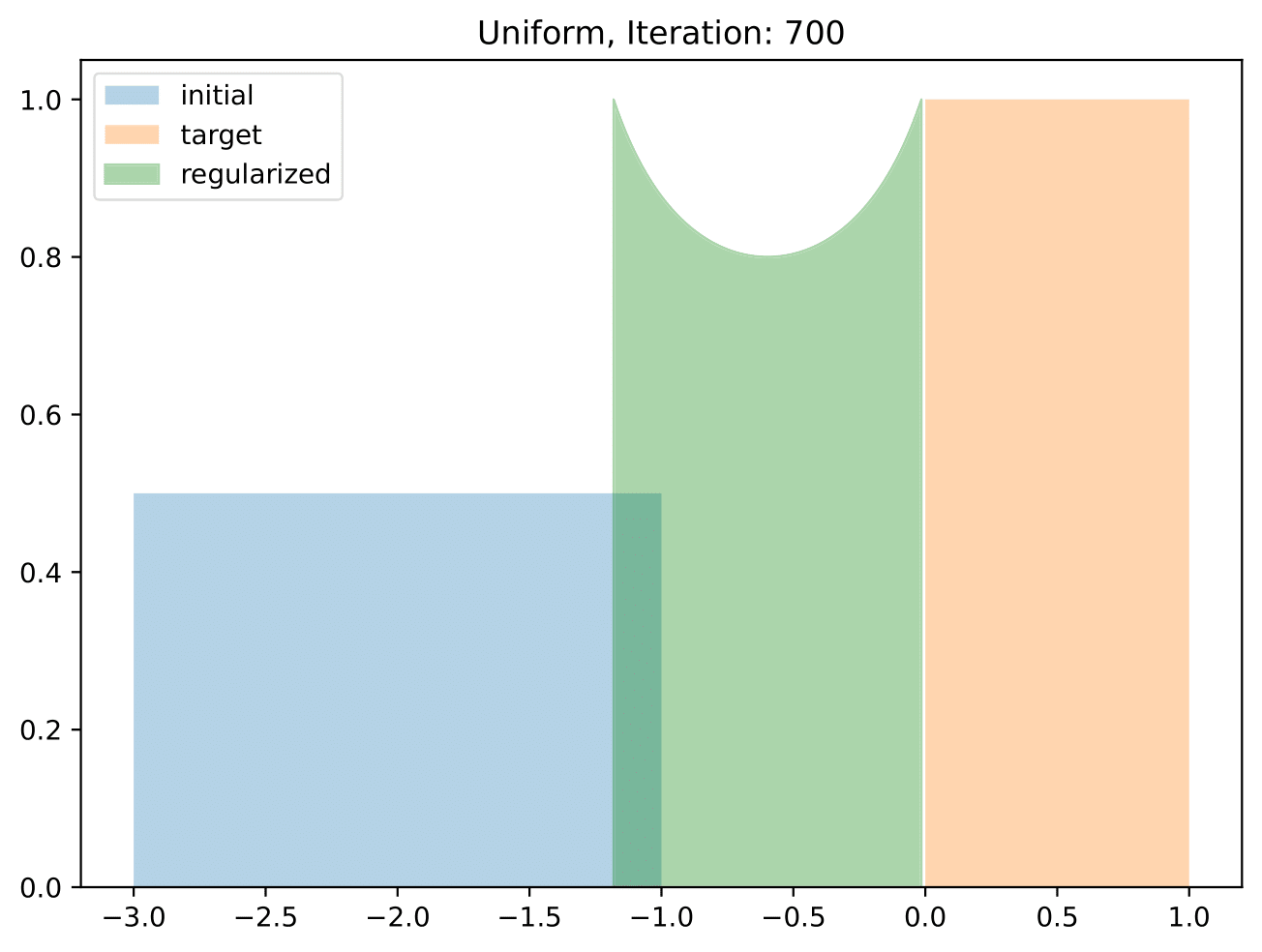}
    \\
    \includegraphics[width=.314\textwidth]{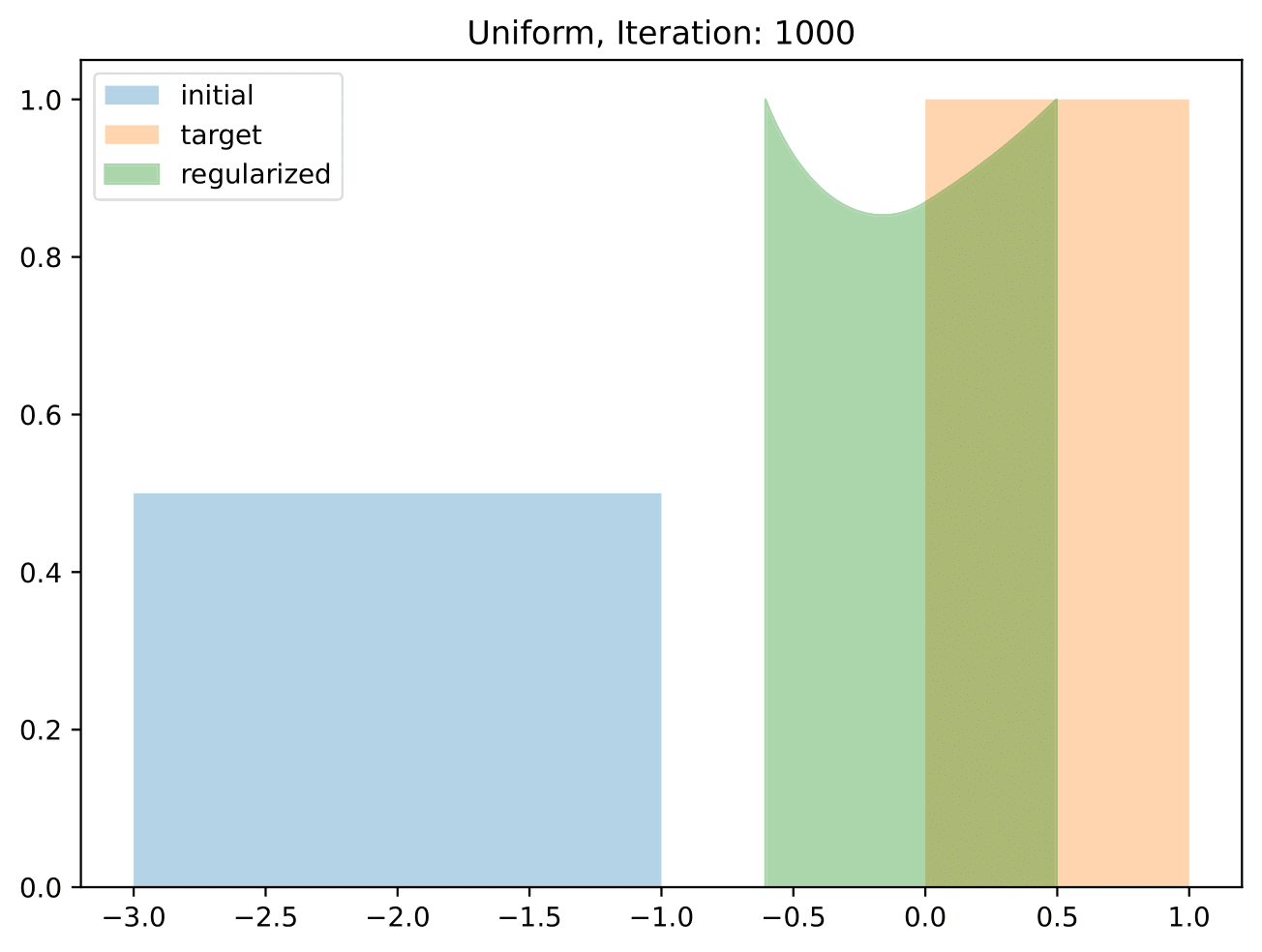}
    \includegraphics[width=.314\textwidth]{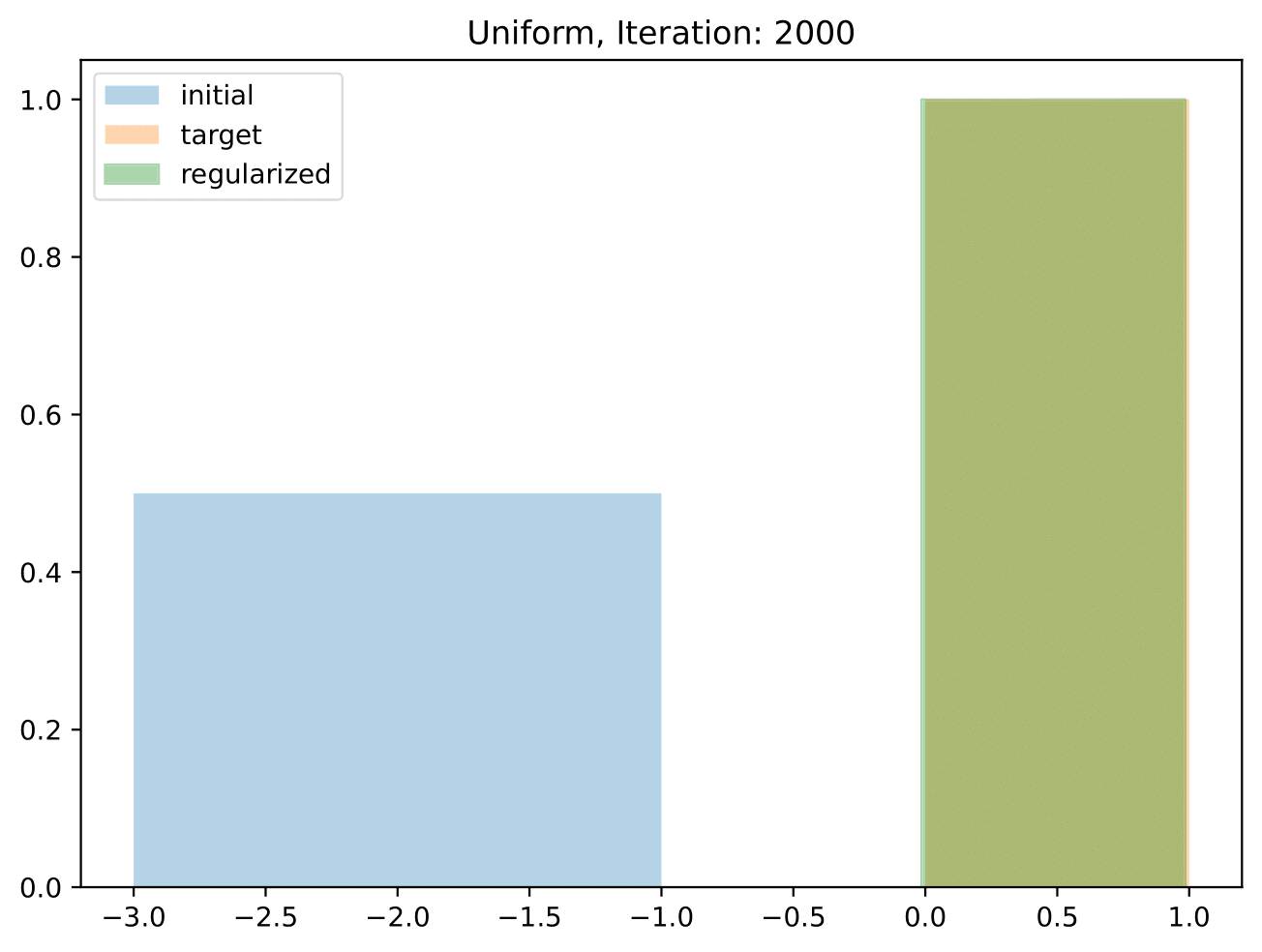}
    \includegraphics[width=.314\textwidth]{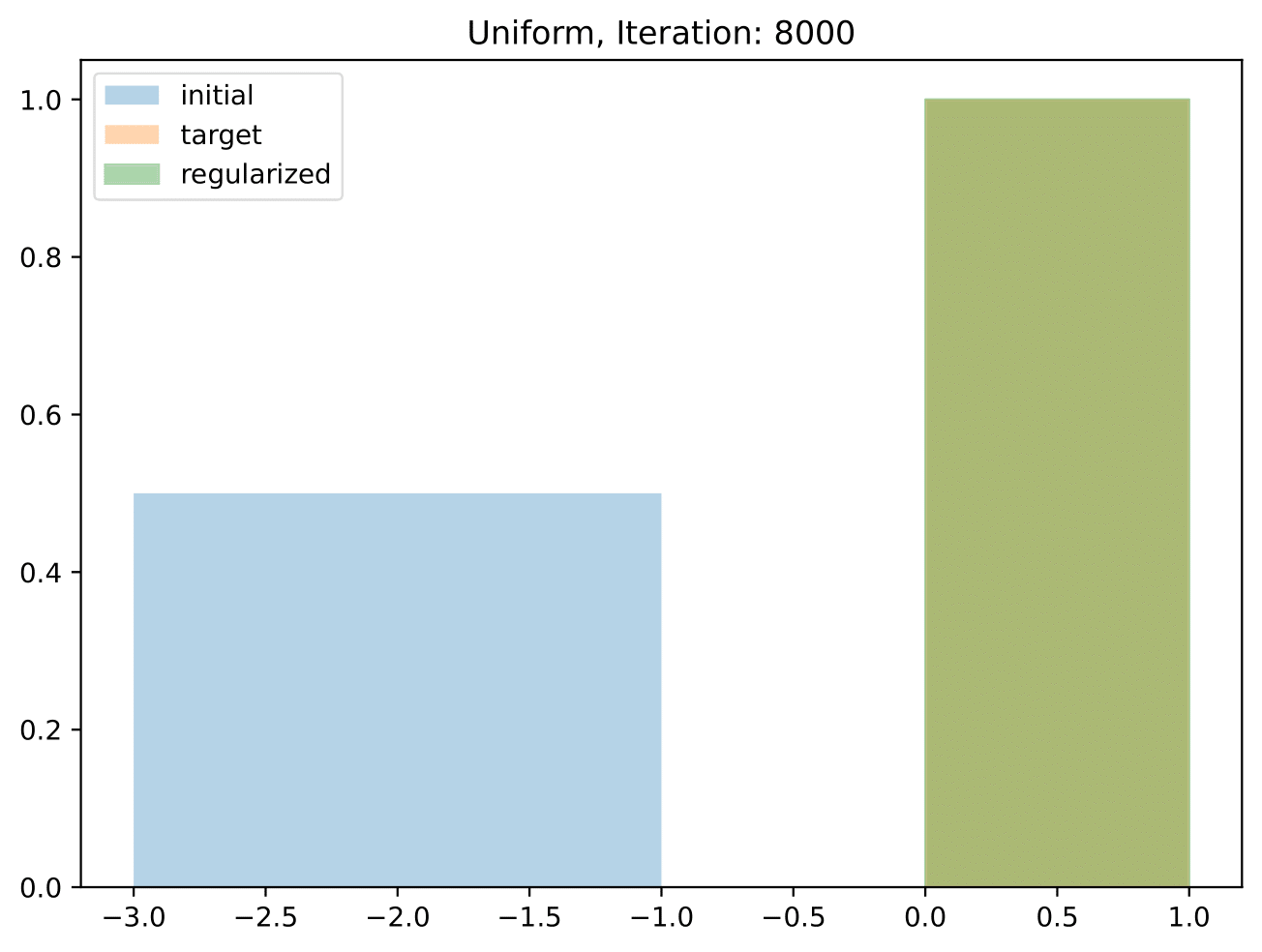}
    \caption{Regularized ${\mathcal F}_\nu^-$-flow  between the uniform measures $\gamma_0 \sim \mathcal{U}[-3,-1]$ 
		and $\nu \sim \mathcal{U}[0,1]$ for
		$\tau = 2\cdot 10^{-3}$ and "large" $\lambda = 1$. The support shifts quickly towards the target, as indicated in Remark \ref{rem:long-time}.
    }
    \label{fig:Uniform_to_Uniform_large}
\end{figure}
    
\begin{figure}[H]
    \centering
    \includegraphics[width=.316\textwidth]{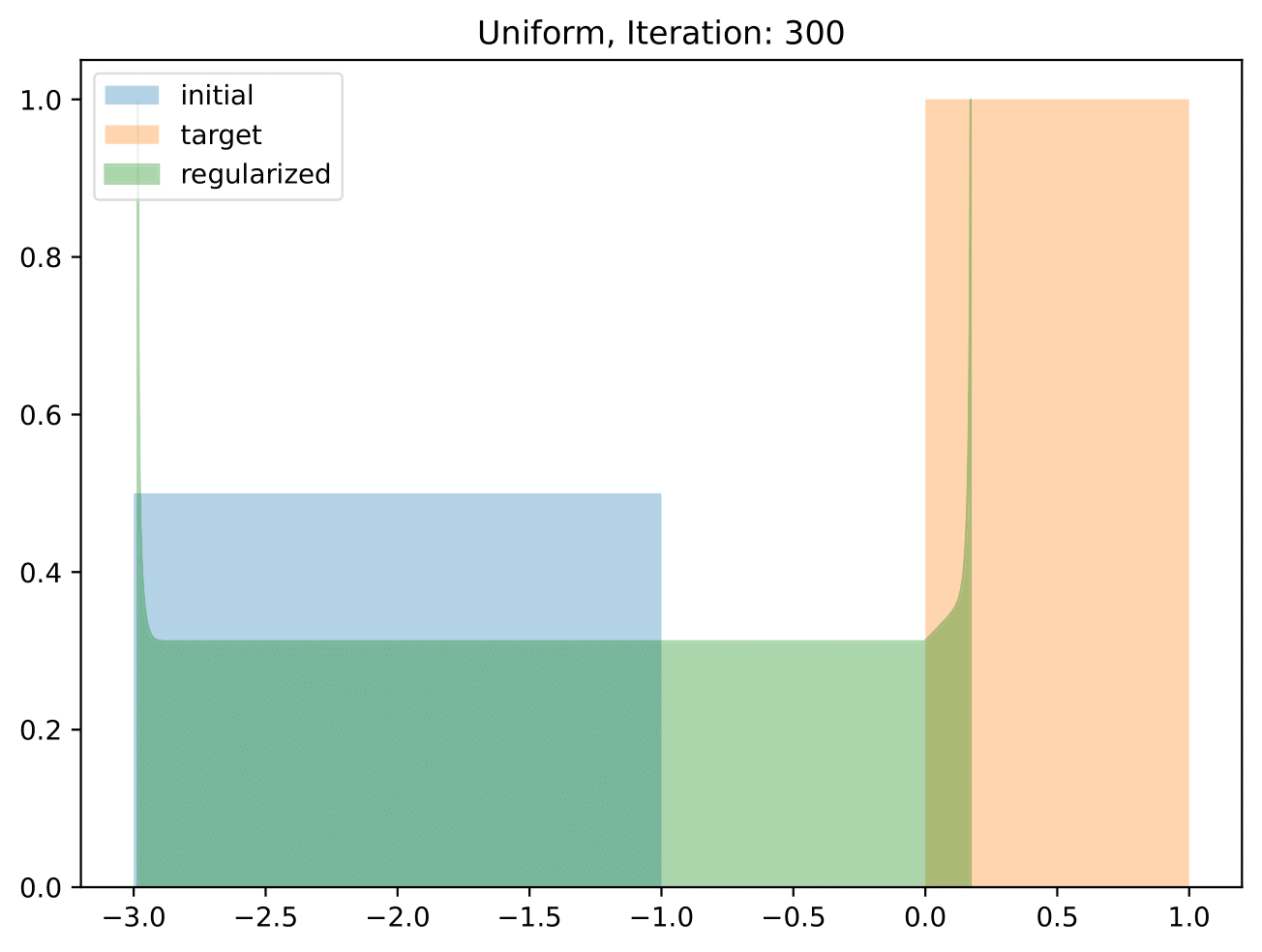}
    \includegraphics[width=.316\textwidth]{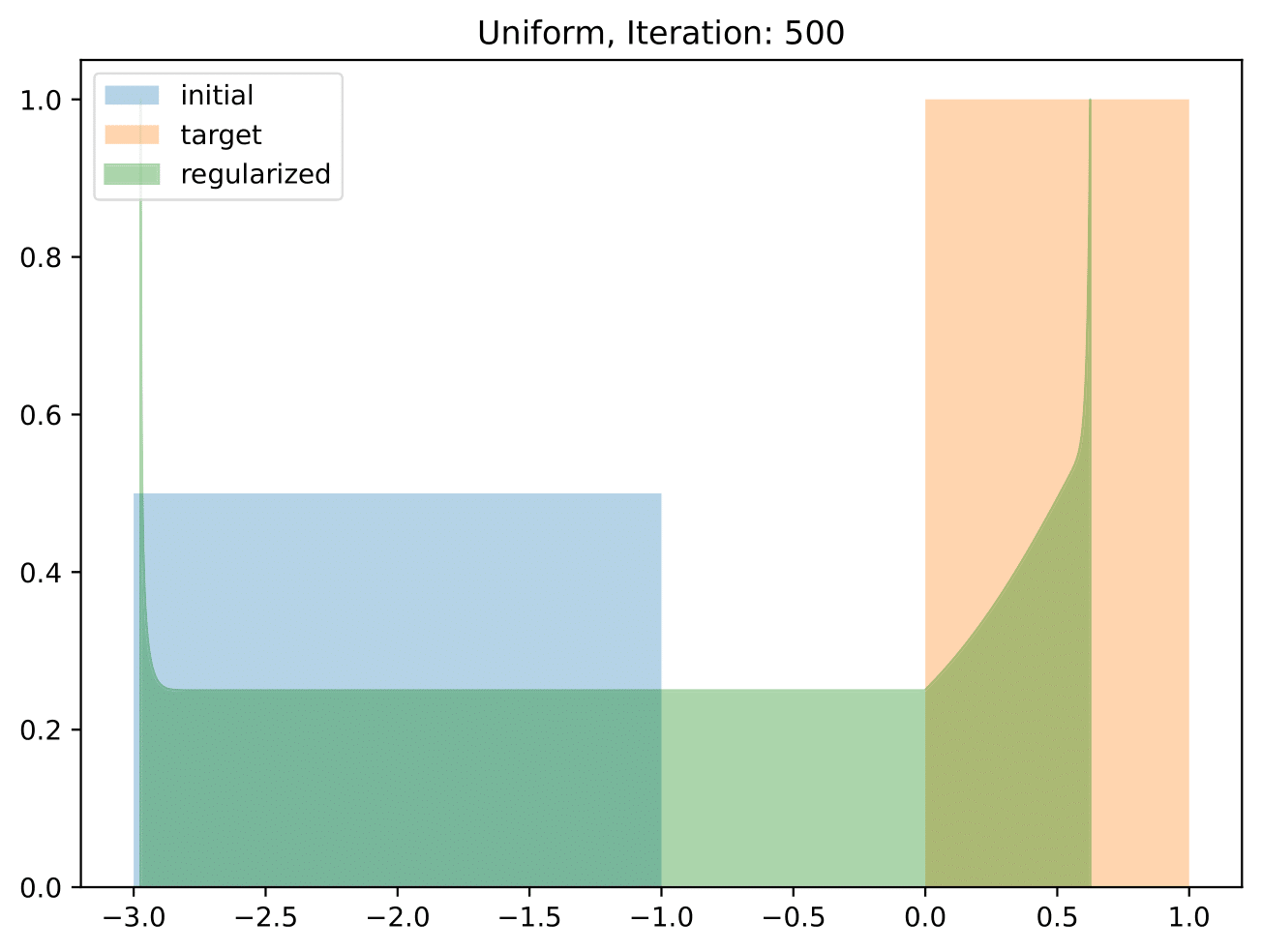}
    \includegraphics[width=.316\textwidth]{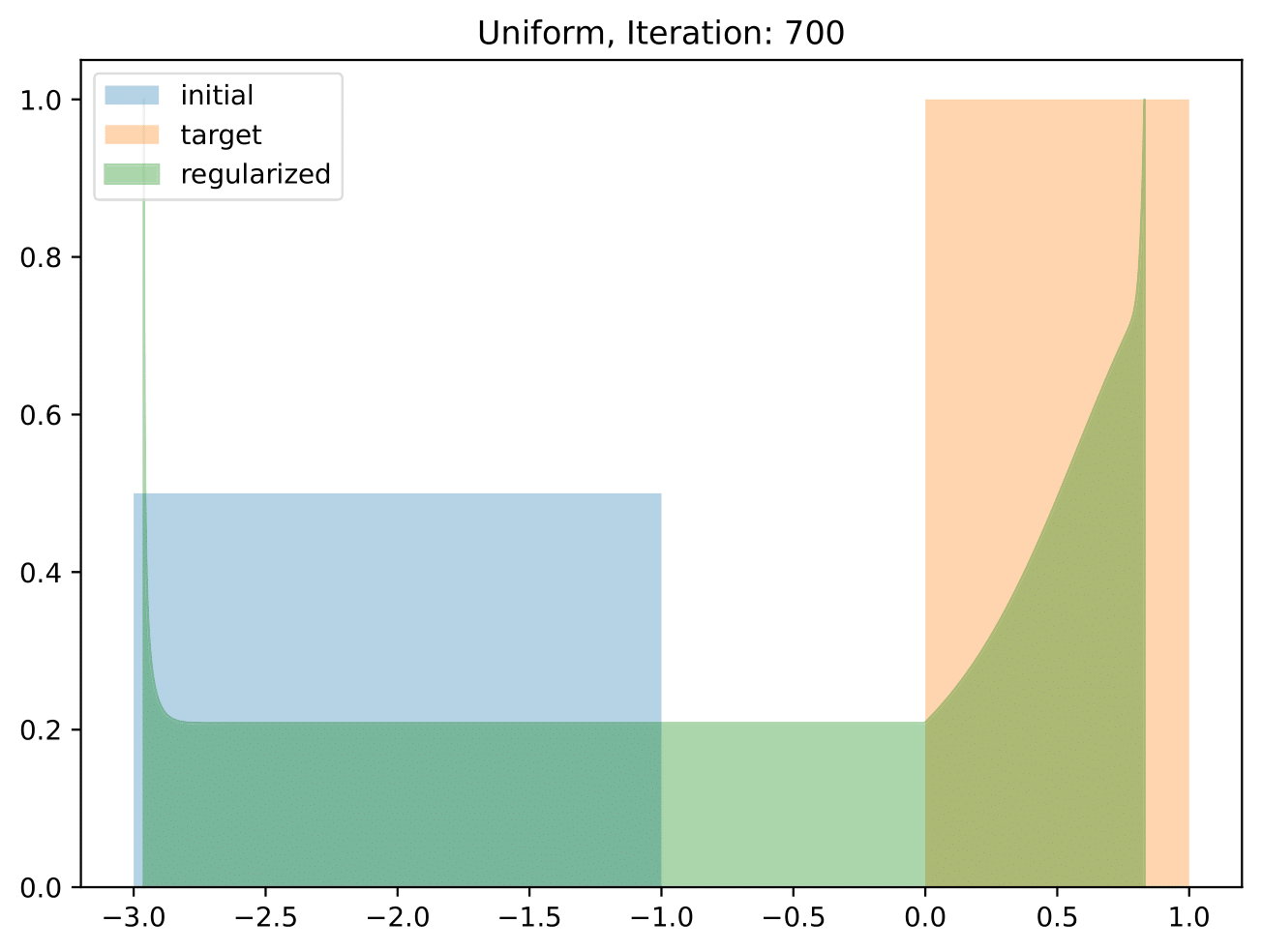}
    \\
    \includegraphics[width=.316\textwidth]{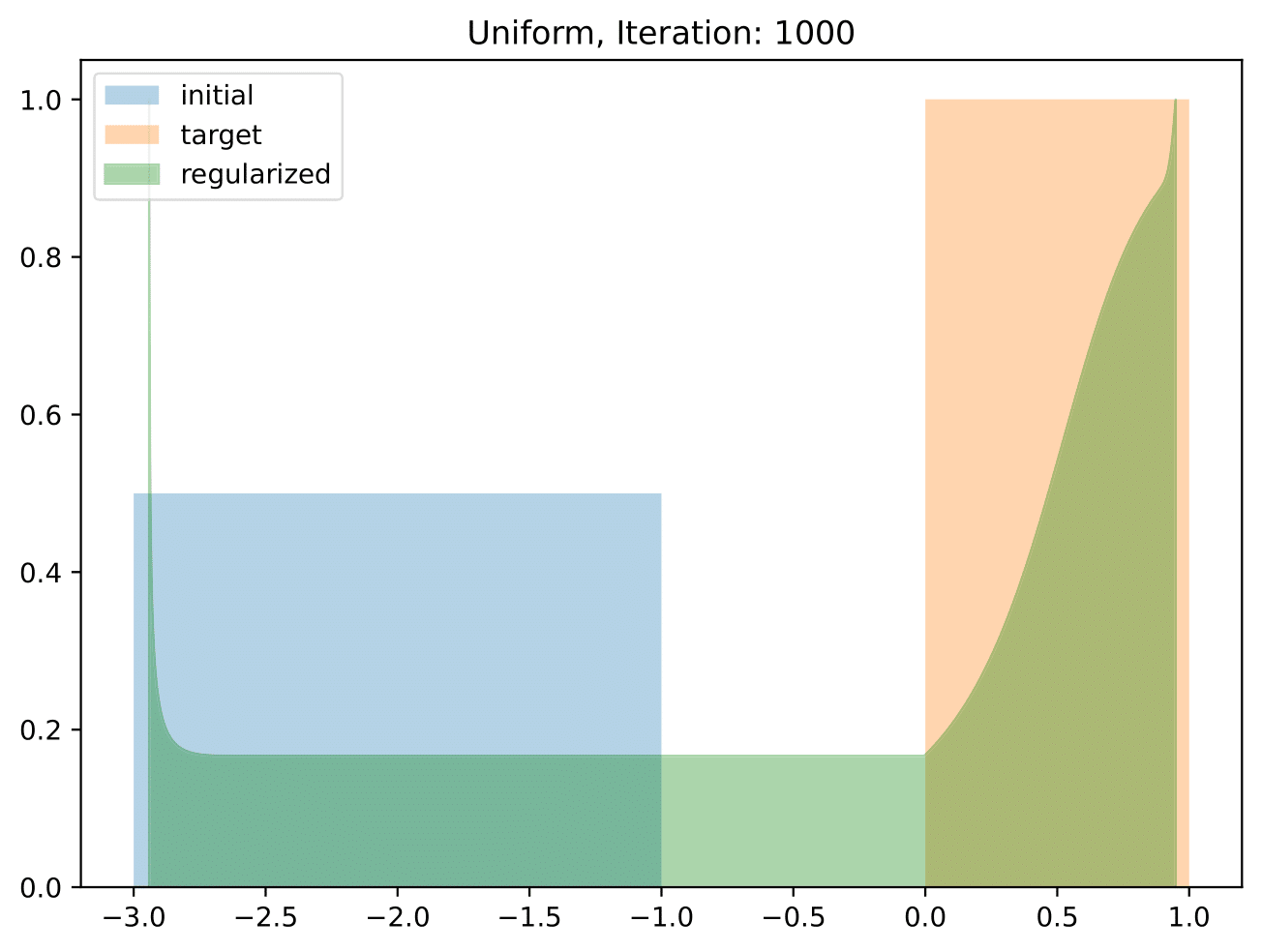}
    \includegraphics[width=.316\textwidth]{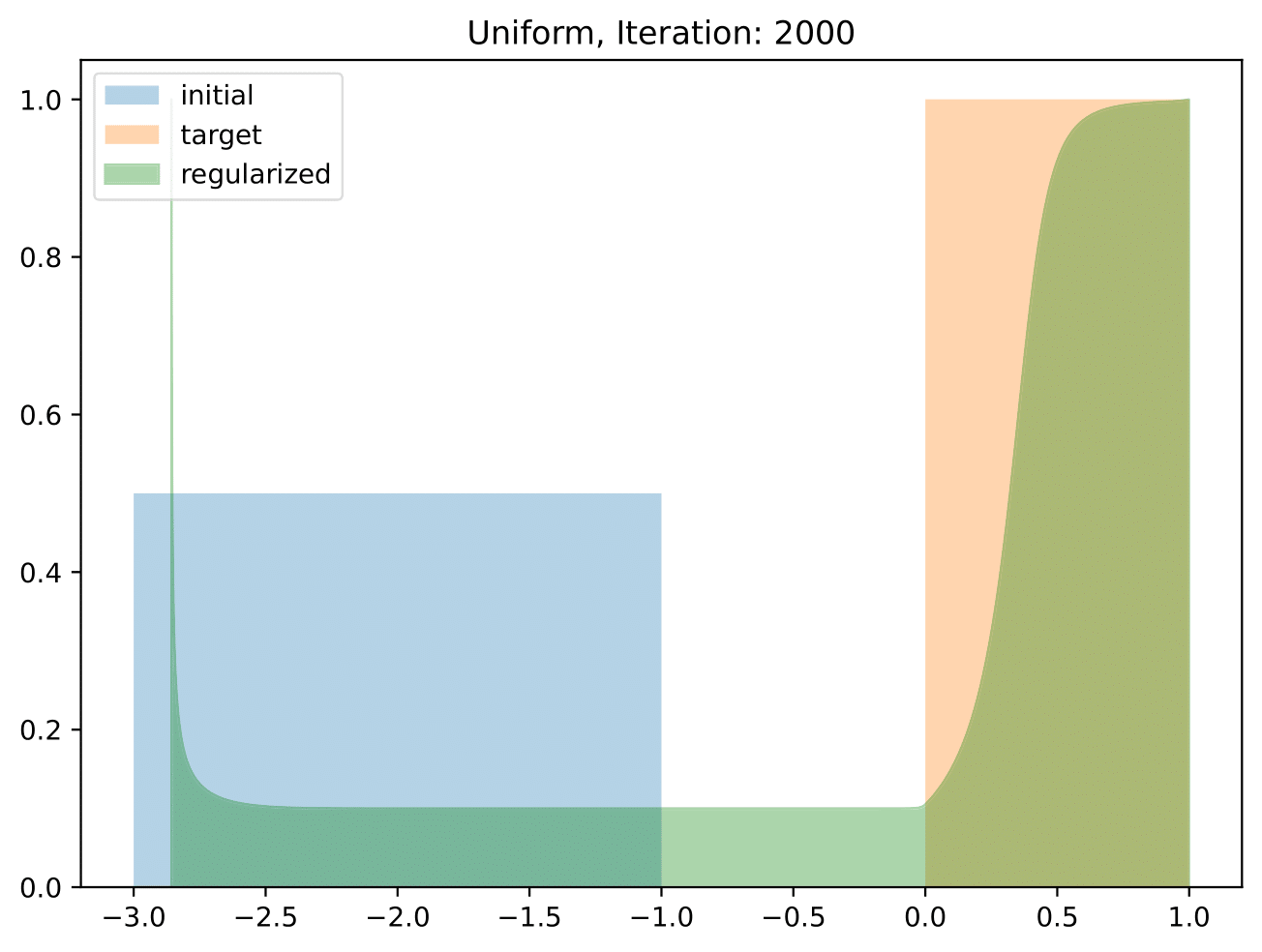}
    \includegraphics[width=.316\textwidth]{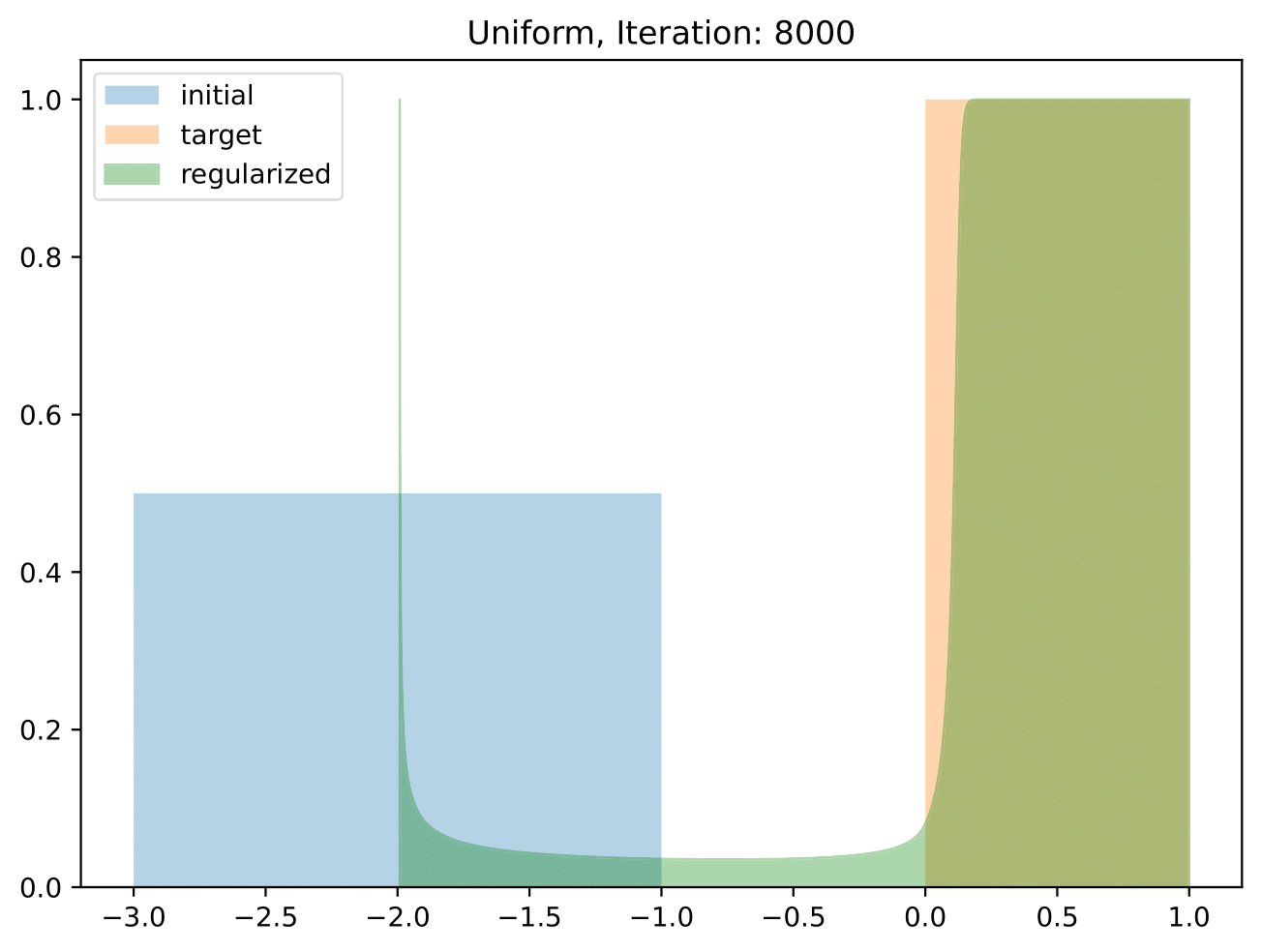}
    \caption{Regularized ${\mathcal F}_\nu^-$-flow  between the uniform measures $\gamma_0 \sim \mathcal{U}[-3,-1]$ 
		and $\nu \sim \mathcal{U}[0,1]$ for
		$\tau = 2\cdot 10^{-3}$ and "small" $\lambda = 10^{-4}$. The support shifts only slowly towards the target but expands quickly, similar to the unregularized flow for small times, see Remark \ref{rem:semigroup-convergence}.
    }
    \label{fig:Uniform_to_Uniform_small}
\end{figure}

\end{document}